\numberwithin{equation}{section}
\theoremstyle{definition}
\newtheorem{theorem}{Theorem}[section]
\newtheorem{proposition}[theorem]{Proposition}
\newtheorem{lemma}[theorem]{Lemma}
\theoremstyle{definition}
\newtheorem{definition}[theorem]{Definition}
\newtheorem{remarks}[theorem]{Remark}
\newcommand{\Cov}{\mathrm{Cov}}
\newcommand{\dd}{\mathrm d}
\newcommand{\scrC}{\mathscr{C}}
\newcommand{\calA}{\mathcal{A}}
\newcommand{\calC}{\mathcal{C}}
\newcommand{\calD}{\mathcal{D}}
\newcommand{\calN}{\mathcal{N}}
\newcommand{\calR}{\mathcal{R}}
\newcommand{\calS}{\mathcal{S}}
\newcommand{\calT}{\mathcal{T}}
\newcommand{\bbC}{\mathbb{C}}
\newcommand{\bbE}{\mathbb{E}}
\newcommand{\bbP}{\mathbb{P}}
\newcommand{\bbR}{\mathbb{R}}
\newcommand{\bbT}{\mathbb{T}}
\newcommand{\bbZ}{\mathbb{Z}}
\newcommand{\eps}{\varepsilon}
\newcommand{\ind}{\mathbbm 1}
\setlist[itemize]{itemsep=0pt, topsep=2pt}
\setlist[enumerate]{itemsep=1pt, topsep=4pt}
\begin{document}

\title{The near-critical random bond FK-percolation model}

\author{Emile Avérous\thanks{\textsc{Université de Genève. \emph{E-mail:}  \texttt{emile.averous@unige.ch, remy.mahfouf@unige.ch}}}\ , Rémy Mahfouf\footnotemark[1]}

\maketitle

\abstract{We study FK-percolation where the edge parameters are chosen as independent random variables in the near-critical regime. We show that if these parameters satisfy a natural centering condition around the critical point, then the quenched model typically exhibits critical behaviour at scales much larger than the deterministic characteristic length. More precisely, in a box of size $N$, if the homogeneous model with deterministic edge parameter $p$ looks critical in the regime $|p-p_c|\le \textrm W$, then the quenched model with random edge parameters $\mathbf p$ that typically satisfy $|\mathbf p-p_c|\le \textrm W^{1/3}$ looks critical, assuming some conjectured inequality on critical exponents, and up to logarithmic corrections. We also treat the special case of Bernoulli percolation, where we show that if one first samples non-degenerate independent random edge parameters centered around $\frac12$, and then a percolation configuration on these edges, the quenched model almost surely looks critical at large scales.}

\section{Introduction}

\subsection{General context}

The two-dimensional Fortuin--Kasteleyn (FK) random cluster model, introduced in \cite{fortuin1972random}, provides a unifying framework for studying several central models in statistical mechanics, including Bernoulli percolation ($q = 1$), the Ising model ($q = 2$), and the Potts model for general $q \geq 1$. It was developed as a representation of the Potts model through random subgraphs of a fixed domain, weighted by a bond parameter $p \in (0;1)$ and the number of connected components raised to a power governed by another parameter $q > 0$. In the planar case with nearest-neighbour interactions, the FK model is conjectured to exhibit \emph{conformal invariance} in the scaling limit at its critical point when $q \leq 4$, in agreement with predictions from conformal field theory (CFT) \cite{BPZ84a,BPZ84b} and the Coulomb gas formalism. In the infinite-volume limit, the critical point on the square lattice, given by $p_c(q) = \sqrt{q}\cdot(1+\sqrt{q})^{-1}$, has been rigorously established for over a decade \cite{beffara2012self}, confirming the longstanding conjecture that it coincides exactly with the self-dual point. On the square lattice, the only value of $q$ for which precise and complete statements of conformal invariance are known is the so-called FK-Ising model (corresponding to $q = 2$). In this case, Smirnov’s breakthrough result \cite{Smi07} proved the conformal invariance of FK interfaces by showing their convergence to $\mathrm{SLE}(16/3)$, using the so-called Ising fermionic observables. This approach shares many features with another seminal result by the same author, namely the proof of the Cardy formula for critical percolation on the triangular lattice \cite{Smi01}. Beyond crossing probabilities, it was later shown in this case that interfaces converge to $\mathrm{SLE}(6)$ \cite{WerSmi}, and that the entire loop ensemble converges to $\mathrm{CLE}(6)$ \cite{camia2006sle}. To date, it appears that generalisations of fermionic observables techniques are insufficient to directly address conformal invariance for all parameters $1 \leq q \leq 4$, where the scaling limits of FK interfaces are conjectured to be described by the nested $\mathrm{CLE}(\kappa)$ loop ensemble, where $\kappa \in [4,6]$ is liked to $q$ via the relation
\[
\sqrt{q} = -2 \cos\left( \frac{4\pi}{\kappa} \right).
\]

An extensive body of research on the critical FK model over the past decade \cite{duminil2021planar,duminil2016new, duminil2021macroscopic,glazman2025delocalisation,duminil2019sharp, duminil2017continuity} has yielded numerous important results, significantly advancing our understanding of the model. In this context, two landmark developments are the recent proof of rotational invariance in the full-plane setting \cite{duminil2020rotational}, and the convergence of the associated six-vertex model to the Gaussian Free Field (GFF) \cite{GFFConv}, both of which represent major steps toward establishing conformal invariance.

It is widely believed that all reasonable statistical mechanics models exhibit a rather specific type of behaviour near their critical point, notably through the existence of the so-called \emph{critical exponents}, which link the large-scale properties of the critical model to the behaviour of near-critical models. Among other things, the connectivity properties, cluster density, susceptibility, and the regularity of the free energy are expected to follow algebraic laws near the critical point. Moreover, these critical exponents are predicted to be related to one another through elegant relations originating from physics \cite{essam1963pade,fisher1964correlation}, as recalled in \cite[Introduction]{FK_scaling_relations}. In \cite{FK_scaling_relations}, Duminil-Copin and Manolescu provided a rigorous proof that, assuming the existence of these exponents, they indeed satisfy the relations conjectured by physicists. Their results extend Kesten’s scaling relations \cite{kesten1987scaling} (see also \cite{nolin2008near}), originally proved for Bernoulli percolation (where independence of edges plays a crucial role), to the full range of parameters $1 \leq q \leq 4$. Understanding the precise behaviour of near-critical model exactly at its scaling window remains an active area of research for general FK models. Let us still mention that significantly more is known in the case of the near-critical FK-Ising model \cite{beffara2012smirnov,duminil2014near,park2018massive,park2019ising,park2022convergence,chelkak2023universality,wan2022statistical}, where the extension of Ising fermionic observable techniques to the massive regime has led to substantial progress, in line with analogous results in the dimer context \cite{chhita2012height,rey2024doob,berestycki2022near}.

Once a refined understanding of critical models has been achieved, a natural question arises -that also connects to physical experiments- concerning the behaviour of critical models in \emph{random environments}, particularly those that might average out to the deterministic critical setting. One may ask what happens if the bond parameters of the random cluster model are first randomly sampled, and one then performs statistical mechanics in this random environment. This is in the spirit of the famous (and still largely open) question concerning the critical behaviour of the random bond Ising model \cite{cho1997criticality,shalaev1994critical,merz2002two}. It is believed that for general FK models, such questions fall within the broader meta-principle of \emph{universality} in statistical mechanics, which roughly states that the macroscopic behaviour at criticality should not be influenced by microscopic details, provided the parameters are tuned appropriately. In this setting, the critical parameter for the model in a random environment should almost surely (with respect to the randomness of the environment) exist and be well-defined, although it is generally different from the deterministic critical one. The study of statistical mechanics models in random environments remains largely unexplored, with the notable exception of random walks, for which homogenisation techniques yield rather precise results (see, e.g.\ \cite{egloffe2015random,gloria2015quantification, armstrong2019quantitative}). For the random cluster model in a random environment, one notable result is the continuity of the quenched phase transition \cite{aizenman1990rounding,aizenman1989rounding}.

 In most of the present paper, we focus on the regime $1 < q \leq 4$ and study \emph{weakly random environments}, where the deviation from the homogeneous critical parameter in a box of size $N$ is modelled by independent random variables on each edge, centred at the critical point, with variances tending to zero as $N \to \infty$. Our main result is that, under mild assumptions, the weak randomness of the environment around the critical point averages out effectively, keeping the system in its critical phase. In particular, we show that, with high probability over the environment, one can allow random variables whose standard deviation from the critical point is \emph{cubicly larger than the intrinsic deterministic critical window of the model}, while still remaining within the critical phase \emph{at all scales}. In all of these cases, had one replaced the deviation from criticality by its absolute value (i.e., enforcing a fixed deterministic offset), the system would have been significantly off-critical. We apply an approach similar to the one in \cite{Mah25}, moving continuously the random environment from the deterministic critical point to the desired random one.

We also give special attention to the case of Bernoulli percolation, where we prove that it is possible to take bond parameters whose standard deviation from the self-dual parameter is, in the exact Gaussian case, $O(\log(N)^{-1/2})$ in a box of size $N$, still ensuring criticality \emph{at every scale}. Moreover, by leveraging noise sensitivity results \cite{benjamini1999noise,garban2013pivotal}, we observe that it is even possible to allow \emph{macroscopic deviations} (i.e., deviations \emph{don't go to } $0$ as $N$ goes to $\infty $), centred at the critical point, while retaining critical behaviour \emph{at large scales}. For instance, we establish an asymptotic version of Cardy’s formula that holds with high probability over the environment. As an concrete example, one can sort for each vertex $v$ of the triangular lattice $\mathcal{T}$, a random bond parameter $\mathbf{p}_{v}\in \{\frac{1}{4},\frac{3}{4}\}$, whose average is $\mathbf{E}[p_v]=\frac{1}{2}$, creating a random environment $\frak{p}=(\mathbf{p}_v)_{v\in \mathcal{T}}$ for some probability measure $\mathbf{P}$. Consider the percolation $\phi_{\frak{p}}$ whose weights are given by $\frak{p}$. Then, with high $\mathbf{P}$-probability, the percolation measure  $\phi_{\frak{p}}$ satisfies the Cardy formula at large scale. This means that, the \emph{quenched} percolation satisfies (with high $\mathbf{P}$-probability), the Cardy formula at large scale, exactly as its annealed version $\phi_{\mathbf{E}[\frak{p}]}$, which corresponds to the critical deterministic Bernoulli percolation on the triangular lattice. In particular, it appears that the for Bernoulli percolation, the quenched disorder is irrelevant.

\subsection{Setting and graph notations}

Let $G=(V,E)$ be a finite subgraph of $\mathbb{Z}^2$ and define its boundary as $\partial G:= \{v\in V(G): v \text{ is incident to at least one edge outside of } G \} $. We call a boundary condition $\xi$ a partition of the set $\partial G$. Among all possible boundary conditions, we highlight two special ones, the so called \emph{wired} boundary conditions $\xi =1 $ corresponding to the partition $\{\partial G\}$ where all the vertices of $\partial G$ are wired together as a single vertex, and the \emph{free} boundary conditions $\xi =0$ coresponding to the partition made of singletons, where no vertices of $\partial G$ are wired together. We are now going to define a probability measure on random subgraphs of $G$. 
\begin{definition}[Planar FK-percolation]
	Given $q\geq 1$ and $p\in (0;1)$, one can define the FK-percolation probability measure $\phi_{G,p,q}^{\xi} $ on configurations $\omega \in \{0;1\}^{E}$ by setting
	\begin{equation}
		\phi_{G,p,q}^{\xi}[\omega]:=\frac{1}{Z^{\xi}_{G,p,q}}p^{|\omega|}(1-p)^{|E\backslash \omega|}q^{k(\omega^\xi)}, 
	\end{equation}
where $|\omega|$ denotes the number of vertices in the configuration $\omega $, $k(\omega^\xi)$ the number of connected components of $\omega$ when identifying boundary vertices in the same part of $\xi$ and $Z^{\xi}_{G,p,q}$ is the partition function of the model, which is chosen so that $\phi_{G,p,q}^{\xi}[\cdot]$ is a probability measure.

 In the case where the bonds weights are not uniform and given by $\underline p=(p_{e})_{e\in E} $, one can generalise the previous definition setting 
 \begin{equation}
		\phi_{G,\underline{p},q}^{\xi}[\omega]:=\frac{1}{Z_{G,\underline{p},q}}  q^{k(\omega^{\xi})} \prod\limits_{e\in \omega} p_{e} \prod\limits_{e\not \in \omega}(1-p_{e}). 
	\end{equation}
\end{definition}
An edge $e$ is called open in $\omega$ if $\omega(e)=1$, and closed otherwise. For two vertices $v,v'\in G\cup \partial G$, we define the event $v \overset{\omega}{\longleftrightarrow} v'$, or simply $v \longleftrightarrow v' $ to be the event that the two vertices $v,v'$ belong to the same connected component in $\omega$. In the same way, for two (disjoint) subsets $A,B$ of $G$, define $A\overset{\omega}{\longleftrightarrow}B$ or simply $A\longleftrightarrow B$ to be the event that there exist $v\in A,v'\in B$ such that $v\longleftrightarrow v'$. This model of dependent percolation is a generalisation of the usual independent bond percolation percolation on $\mathbb{Z}^2$ (corresponding to $q=1$), interpolating also the so-called FK-Ising model (corresponding to $q=2$). 
The dual of the lattice $\mathbb{Z}^2$ is $(\mathbb{Z}^2)^\star:=\mathbb{Z}^2+(\frac{1}{2};\frac{1}{2})$, corresponding to faces of $\mathbb{Z}^2$. Each edge $e\in E$ can be mapped to the associated dual edge $e^\star $ that links the two faces separated by $e$. One can now define the dual configuration $\omega^\star$ as a graph on $G^\star$ by setting
\begin{equation}
	\omega^{\star}(e^\star):= 1 -\omega(e).
\end{equation}
In the case where the random cluster on $G$ is chosen with boundary conditions $\xi $ equal to $0$ or $1$, the duality can even be extended as a probability measures on clusters $\omega^\star\in G^\star\cup \partial G^{\star}$, with boundary conditions $\xi^\star =1-\xi $.

To circumvent some additional technicalities appearing when dealing with the uniqueness of the infinite volume measure, a fair share of our work is done on the torus, which requires changing slightly the above definition to take into account the non-planar nature of the torus. Let us state once and for all that all the results presented in the present paper can be easily transferred to the planar case. 

For $N\geq 1$, set $\mathbb{T}_N:=(\mathbb{Z}/N\mathbb{Z})^2$ the standard torus of width $N$, whose edges are denoted by $E_N$. In that context, one can make a slightly different definition of the random cluster model. Note that the torus does not have a boundary, so there is no need to specify boundary conditions in the definition. In the following, a cluster is called \emph{non-contractible} if it contains a path with non-trivial winding around the torus, and \emph{contractible} otherwise.
\begin{definition}[Torus FK-percolation]
	Given $q\geq 1$ and $\underline p=(p_e)_{e\in \mathbb{T}_N} $ one can define the FK-percolation probability measure $\phi_{\mathbb{T}_N,p,q}$ on $\{0;1\}^{E_N}$ by setting
 \begin{equation}
		\phi_{\bbT_N,\underline{p},q}[\omega]:=\frac{\sqrt{q}^{s(\omega)}}{Z_{\bbT_N,\underline{p},q}}  q^{k(\omega)} \prod\limits_{e\in \omega} p_e \prod\limits_{e\not \in \omega}(1-p_e),
	\end{equation}
where $|\omega|$, $k(\omega)$ are defined as above, $Z_{\mathbb{T}_N,p,q}$ is the partition function of the model and 
\begin{itemize}
	\item $s(\omega)=0 $ is all clusters of $\omega $ are contractible
	\item $s(\omega)=1 $ if there exists at least one non-contractible cluster in $\omega $ and one non-contractible cluster in $\omega^{\star} $.
	\item $s(\omega)=2 $ if all clusters of $\omega^{\star} $ are contractible.
\end{itemize}
\end{definition}
The modification in the definition compared to the planar case is encoded in the term $\sqrt{q}^{s(\omega)}$ which is added to make the model self-dual in the homogeneous bond case, using the fact that for any configuration $\omega$, one has $s(\omega)=2-s(\omega^*)$ (see \cite{beffara2012self} or \cite[Exercise 28]{pims_lecture_duminil}). One could have in principle used a definition without the $\sqrt{q}^{s(\omega)}$ which would have led to the same large scale properties of the critical model, since this factor is always between $1$ and $q$.

In the last two decades, several groundbreaking results provided a very refined understanding of the sharp phase transition of the FK-random cluster models on $\mathbb{Z}^2$, with a specific focus on the large scale behaviour at critical model. The identification of the critical point was made by Beffara and Duminil-Copin in \cite{beffara2012self} using the self-duality of the model on $\mathbb{Z}^2$ and differential equations on the probability of crossing large topological rectangles. More precisely, the critical point $p_c(q)$ is given by the self-dual parameter $p_{sd}(q)$ that makes the model self-dual (i.e.\ $\omega \overset{(d)}{=} \omega^\star$), which is given explicitly by
\begin{equation}
	p_{sd}(q)=p_c(q):=\frac{\sqrt{q}}{1+\sqrt{q}}.
\end{equation}
In the range of parameters $1\leq q \leq 4$, some sharp dichotomy in the connectivity properties of the full-plane model appears. The restriction of this is two-fold. In a nutshell, if $p<p_c(q)$ the model is called sub-critical and the probability that two vertices are connected decays to $0$ exponentially fast in the distance separating them. Conversely if $p>p_c(q)$, the model is called super-critical and the connection probability between vertices converges, exponentially in the distance separating the vertices, to some positive constant. The most interesting and delicate question to treat is that of the behaviour of the model at its critical point $p_c(q)$. In that case, the probability that two points are connected decays polynomially fast in their distance and the model displays some intermediate behaviour. Let us also mention (see \cite{duminil2021discontinuity}) that when $q>4$, the phase transition of the model is discontinuous and correlations at criticality for the full plane free measure decay exponentially fast. On the other hand, when $q<1$, the models is negatively associated, meaning that increasing events disfavour each other.

The scaling limit of the model is conjectured to be conformally invariant, which was only proven by Smirnov in the special case of site percolation on the triangular lattice \cite{Smi01} and the Ising model \cite{Smi07}. Let us highlight the recent result that all FK-percolation models with $1\leq q\leq 4$ are rotationally invariant at large scales \cite{duminil2020rotational}, providing some landmark progress towards the rigorous identification of the full-plane scaling limit, which is strengthened by the recent identification of the scaling limit of the associated $6$ vertex model \cite{GFFConv} in the full-plane. 

To date, one of the best understood features of the critical self-dual models is some coexistence phenomenon of large macroscopic primal and dual clusters that surround each other. One way to quantify this phenomenology is the so-called Russo-Seymour-Welsh (RSW) \cite{kesten1982russo} strong box crossing property which reads as follows. Let $\Lambda_N:=[-N;N]^2$ the box centred at the origin and define $\scrC(\Lambda_N)$ to be the event that there exists an open circuit linking the left to the right vertical boundaries of $ \Lambda_N$. Then there exists an absolute constant $c(q)>0 $ (which, crucially, does not depend on $N$) \cite{beffara2012self,duminil2017continuity} such that
\begin{equation}\label{eq:crossing-annulus}
	c(q)\leq \phi_{\Lambda_{2N},p_c(q),q}^{0}\Big[ \textrm { there exist a vertical crossing in }  \Lambda_N\Big]\leq 1-c(q),
\end{equation}
where FK-measure is chosen with the free boundary conditions. One can fix some universal $\delta >0$ small enough (depending on $q$), whose exact value is not relevant for the theory and can be explicitly recovered by \cite[Theorem 2.1]{FK_scaling_relations}. Defining the event $\scrC(\Lambda_N)$ to be the existence of a vertical crossing between the boundaries of $\Lambda_N$, then for the unique full-plane Gibbs measure $\phi_p $ on $\mathbb{Z}^2$, one can define the \emph{characteristic length} by setting 
\begin{equation}\label{eq:characteristic-length}
	\textrm L(p)=\textrm L(p,q):=\inf \Big\{ R\geq 1, \phi_p [\scrC(\Lambda_N)] \not\in [\delta;1-\delta] \Big\} \in [1;\infty].
\end{equation}
The duality and the sharpness of the phase transition in \cite{beffara2012self} ensure that $\textrm L(p)<\infty $ when $p\ne p_c(q)$ while $\textrm L(p_c(q))=\infty$ by \eqref{eq:crossing-annulus}. Informally speaking, the characteristic length is the maximal size of a box for which the FK-model of parameters $(p,q)$ still behaves like the critical one $(p_c(q),q)$, at least in terms of crossing events. One can define similarly the \emph{critical window} on the box $\Lambda_N $ by setting
\begin{equation}
	\textrm{W}(N)=\textrm{W}(N,q):= \inf\limits_{p\in [0,1]} \Big\{ |p-p_c(q)|, \, \exists \textrm{ a square } S \subset \Lambda_N, \,  \phi_p [\scrC(S)] \not\in [\delta;1-\delta] \Big\}. 
\end{equation}
The critical window and characteristic length can be interpreted as being two parametrisations of the set of pairs $(R,p)$ such that, under $\phi_p$, one has some RSW property at scale $R$. From this, one gets that these functions are essentially inverses of each other, and it can be shown that they satisfy the identities $\textrm L(p_c(q)\pm \textrm W(N))\asymp N$ and $\textrm W(\textrm L(p))\asymp |p-p_c(q)|$. In the case of Bernoulli bond percolation, one of the scaling relations proved by Kesten \cite{kesten1987scaling} relates the critical window $\textrm W(N)$ to some geometric quantity called the $4$-\emph{arm event probability} at criticality $\pi_{4,p_c}(N)$, through the identity $\textrm W(N)\asymp N^{-2}\pi_{4,p_c}(N)^{-1}$. For general FK models (with $1<q\leq 4 $), one can define the so-called \emph{mixing rates} $\Delta_{p_c}(N)$
\begin{equation}\label{eq:def-mixing-rate}
	\Delta_{p_c}(N):=\phi^{1}_{p_c(q),q}[0\longleftrightarrow \partial \Lambda_N]-\phi^{0}_{p_c(q),q}[0\longleftrightarrow \partial \Lambda_N].
\end{equation}
Informally speaking, the mixing rate measures precisely the influence of boundary conditions for connection probabilities for the model at criticality. One of the core innovations of \cite{FK_scaling_relations} is that for $q\in (1,4]$, one can replace the stability of the $4$-arm event probabilities below the characteristic length by the stability of the mixing rates to derive the scaling relations.  Note that when $q=1$, the bond independence of the edges enforces that $\Delta_{p_c}(N)=0$ for any $N\geq 1$, making this quantity useless. One of the main results of \cite{FK_scaling_relations} is the identity 
\begin{equation}\label{eq:scaling-Delta-W}
	\textrm W(N)\asymp N^{-2}\Delta_{p_c}(N)^{-1},
\end{equation}
which allows to rewrite the scaling window (inherently tied to the off-critical behaviour of the model) to the stochastic geometry at criticality.
\bigskip

\textit{Notational convention} In the entire paper, for $f,g$ two functions that may depend on the bond parameter $p$, we denote respectively $f \asymp g $, $f\lesssim g $ and $f\gtrsim g $ the fact that there exist some universal contants $C$ only depending on $1\leq q \leq 4$, and the constants appearing in Definition \ref{def:proprietes_pe}, such that one has $C^{-1}f \leq g  \leq Cf $ (respectively $C^{-1}f\leq g $, $g\leq Cf$). When $f\lesssim g$, we also use sometimes the standard notation $f=O(g)$. Moreover, all the constants will implicitly be allowed to depend on the $\delta$ parameter of the $\textrm{RSW}(\delta,N)$ strong box crossing property of Definition \ref{def:RSW-Torus}.
\subsection{Main results}

The main goal of the present paper is to define a similar notion of critical window in the case of a random environment, meaning here that
one first sorts the edge weights of the random cluster model under some probability measure $\mathbf{P}_N$ and then studies the random cluster model attached to these weights. We will stick here to the case where the random weights are centred around the self-dual value in the deterministic case, with an addition natural condition on its variance detailed below. In a nutshell, the main output of the present paper is that randomness when sorting the environment under $\mathbf{P}_N$ averages very well and preserves the critical phase of the model way beyond the deterministic parameters, at least with a very high $\mathbf{P}_N$-probability. Let us fix some parameter $ 1 < q \leq 4$, excluding the case of percolation which will be treated separately, and add a few notations. First define, for each $q$, the value
\begin{equation}
\label{eq:mu_q_def}
    \mu(q):= \frac{q-1}{2\sqrt q}.
\end{equation}
\begin{definition}[Properties $(\star)_N^{\text A}$ and $(\star)_N^{\text B}$]
\label{def:proprietes_pe}
Consider a collection of random variables $\frak{p}=(\mathbf{p}_e)_{e\in E_N}$ indexed by the edges of $\mathbb{T}_N$, which will be called a \emph{random environment}. For each $e\in E_N$, denote by ${\sigma_e:=\bbE[(\mathbf{p}_e-p_c(q))^2]^{\frac12}}$ the standard deviation of $\mathbf p_e$. Consider the following properties for the random environment $\frak p$: 
\begin{enumerate}
\item (Independence) The variables $(\mathbf{p}_e)_{\in E_N}$ are mutually independent.
\item \begin{enumerate}[label=\Alph*)]
\item (Naive centering) Their first moment is centred at $p_c(q)$ i.e.\
 \begin{equation*}\mathbb{E}_{\mathbf{P}_N}[\mathbf{p}_e] =p_c(q).
 \end{equation*}

\item (Natural centering) Their first and second moments are related by 
 \begin{equation*}\mathbb{E}_{\mathbf{P}_N}[\mathbf{p}_e]=p_c(q)-\mu(q)\sigma_e^2+O(\sigma_e^3).
    \end{equation*}
    where $\mu(q)$ is defined above and the constant in $O$ is uniform in $N$.
\end{enumerate}
\item (Exponential tails) For each $e\in E_N$, the tails of $\mathbf p_e$ satisfy
    \begin{equation*}
        \forall s>0,\quad \bbP_N\Big[|\mathbf p_e-p_c(q)|>s\Big]\lesssim \exp\left(-c\cdot \left(\frac s{\sigma_e}\right)\right)
    \end{equation*}
    for some fixed $c>0$ independent of $N$. 
    \end{enumerate}
We say that the variables $\frak p=(\mathbf p_e)_e$ satisfy $(\star)_N^{\text A}$ (respectively $(\star)_N^{\text B}$) if they satisfy properties $1,2\text A$ and $3$ (respectively $1,2\text B$ and $3$). Note that these definitions implicitly depend on the constants $O,\lesssim$ and the constant $c$, which we do not mention for the sake of readability. In particular, all the constants appearing in the following theorems might depend on $O,\lesssim,c$.
\end{definition}

The first condition is set to run some central limit theorem regarding the random environment $\mathbf{P}_N$, while the second condition on moments comes from some natural approximate self-duality for the first two moments of the random variables $\mathbf p_e$, as we want \emph{both} crossing probabilities of the primal and dual models not to degenerate. The third condition is only technical and can be relaxed. In principle, all the results in the present paper also hold if the $\mathbf p_e$ have stretched exponential tails, with modified estimates. We also do not assume that the variables $\mathbf p_e$ are $[0,1]$-valued, but their tails ensure that, with high probability, all of the $\mathbf p_e$ belong to $[0,1]$ as long as the $\sigma_e$ go to $0$ fast enough in $N$ (see e.g.\ the discussion at the beginning of Section \ref{sub:centred-gaussian}). Except for the special case of Bernoulli percolation, the spirit of the present paper is to send $\Sigma_{\frak{p}}:= \max_{e\in E_N} \sigma_e \to 0$ as the size of $\bbT_N $ goes to $\infty$, but at a speed which is much slower than the deterministic critical window $\textrm W(N)$. 
In particular, randomness on the environment produces with a high $\mathbf{P}_N$ probability some FK model which is a mixture of locally subcritical and supercritical models which average to a critical model on a much larger scale than the deterministic one. To make the setup more concrete, let us present two families of i.i.d.\ random variables that satisfy respectively $(\star)_N^{\text A}$ and $(\star)_N^{\text B}$ :
\begin{enumerate}
	\item Given a family $(X^{(e)})_{e\in E_N} $ of i.i.d.\ centred $\pm 1$ random variables and some non-negative $\sigma_N$, set 
\begin{flalign*}
	(\textrm{A}) \quad & \mathbf p_e=p_c(q)+ \sigma_N \cdot X^{(e)} = p_c(q) \pm \sigma_N \\
	(\textrm{B}) \quad & \mathbf p_e=p_c(q)+ \sigma_N \cdot X^{(e)} - \mu(q)\cdot \sigma_N^2 = p_c(q) \pm \sigma_N - \mu(q)\cdot \sigma_N^2.
	\end{flalign*}
	\item  Given a family $(\mathcal{N}^{(e)})_{e\in E_N} $ of i.i.d.\ standard centred gaussian variables and some non-negative $\sigma_N$, set 
\begin{flalign*}
	(\textrm{A}) \quad & \mathbf p_e=p_c(q)+ \sigma_N \cdot \mathcal{N}^{(e)},  \\
	(\textrm{B}) \quad & \mathbf p_e=p_c(q)+ \sigma_N \cdot \mathcal{N}^{(e)} - \mu(q)\cdot \sigma_N^2.
	\end{flalign*}
\end{enumerate}
When $\sigma_N\to 0 $ as $N\to \infty$, both families of variables satisfy respectively $(\star)_N^{\text A}$ and $(\star)_N^{\text B}$, while the $\sigma_N$ scaling ensures that $\Sigma_{\frak{p}}\to 0$. 
The Gaussian setup of the second item is of crucial interest and will be treated separately, as it allows to deduce the general case via some form of Skorokhod embedding theorem. 

In the random non-homogeneous setup, the FK percolation measure with bonds inherited from $\mathbf{P}_N$ lacks translation invariance and self-duality. Therefore, to state some criticality conditions, one needs to check that crossing events (both primal and dual) are preserved in all of a given torus. This is the classical Russo-Seymour-Welsh (RSW) theory, which leads us to the following definition.

\begin{definition}\label{def:RSW-Torus}
    Let $\delta>0$ be some small enough positive constant. A \emph{$2$ by $1$ rectangle} in $\bbT_N$ is a rectangle $\calR$ of dimensions $R \times R/2$ or $R/2 \times R$  in $\bbT_N$, with $1\le R\le N/4$. For such an $\calR$, let $\calR'$ be the twice bigger rectangle with the same center as $\calR$. Denote $\scrC_h(\calR)$ (respectively $\scrC^{\star}_h(\calR)$) the event that $\calR$ is crossed in the horizontal direction by a primal (respectively dual) path, and $\scrC_v(\calR),\scrC^{\star}_v(\calR)$ the same events but for a vertical crossing. For an environment $\underline p=(p_e)_{e\in E_N}$, we say that the measure $\phi_{\bbT_N,\underline p,q}$ satisfies the strong RSW property on the torus $\bbT_N $ for the parameter $\delta$ if for \emph{any} $2$ by $1$ rectangle $\calR\subset \bbT_N$ one has
    \begin{equation*}
        \begin{cases}
            \phi^0_{\calR',\underline p,q}[\scrC_{\bullet}(\calR)]\ge\delta, \\
            \phi^1_{\calR',\underline p,q}[\scrC^{\star}_{\bullet}(\calR)]\ge\delta,
        \end{cases}
    \end{equation*}
    for $\bullet=h$ and $\bullet =v$. We denote $\textrm{RSW}(\delta,N)$ the set of measures $\phi_{\bbT_N,\underline p,q}$ for which the strong RSW property with parameter $\delta$ is satisfied on $\bbT_N$.
\end{definition}
In our setup, individual edge weights could in principle get close to $0$ or $1$. Therefore, we take the convention that a rectangle of dimensions $2\times 1$ or $1\times 2$ is a single edge, so that the $\textrm{RSW}(\delta,N)$ property implies that each edge is open with some probability between $\delta$ and $1-\delta$, and this implies that for each edge $e \in \bbT_N$, one has ${p_e \in \big[\delta,q(1-\delta)\big(\delta+q(1-\delta)\big)^{-1}\big]}$, remaining away from $0$ and $1$. It is standard to see (e.g.\ \cite[Theorem 2.1]{FK_scaling_relations} that the rectangle crossing introduced in Definition \ref{def:RSW-Torus} implies a stronger version of the strong box crossing property, for \emph{both} the primal and dual models. For example, assuming that $\phi_{\bbT_N,\underline{p},q}$ satisfies $\textrm{RSW}(\delta,N)$, if we consider the FK measure on an annulus ${A(R,2R)=\Lambda_{2R}\backslash\Lambda_R\subset \bbT_N}$ with \emph{free} boundary conditions, then with probability bounded from below by a positive constant depending only on $\delta$, there is a primal cluster in $A(R,2R)$ surrounding the inner boundary of the annulus. A similar result holds for the dual model.
We are now in position to state the first result of the paper, which proves that, when naively centerings the random bond parameters at the critical value, randomness with respect to the environment allows to deviate all coupling constants from the critical one inside $\bbT_N $ by some random variables whose standard deviation is of order $\textrm W(N)^{\frac{1}{2}}$.

\begin{theorem}
\label{thm:extension_scaling_window}
Fix $1<q\leq 4$. Then there exists $\delta=\delta(q)>0$ and $c=c(q)>0$ such that the following holds. Let $N\geq 1$ be an integer and consider a random environment  $\frak p=(\mathbf{p}_e)_{e\in \bbT_N}$ satisfying $(\star)_N^{\text A}$. Define $\Sigma_{\frak p}:=\max_{e\in E_N}\sigma_e$. Then, assuming that ${\Sigma_{\frak p}\le c\cdot \textrm W(N)^{1/2}}$, one has
\begin{equation*}
        \mathbf P_N\Big[\phi_{\bbT_N,\frak p,q}\in \textrm{RSW}(\delta,N)\Big]>1-O\left( \exp\Bigg[-c N^c\cdot \Bigg(\frac{\textrm W(N)^{1/2}}{\Sigma_{\frak p}}\Bigg)^{1/2}\Bigg] \right).
\end{equation*}
\end{theorem}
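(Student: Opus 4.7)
The strategy, in the spirit of \cite{Mah25}, is to continuously deform the homogeneous critical measure into the quenched random one, and show that the crossing probabilities of individual rectangles move little along this path. Fix a $2\times 1$ rectangle $\calR\subset\bbT_N$, one of its four primal/dual crossing events $A\in\{\scrC_h(\calR),\scrC_v(\calR),\scrC^\star_h(\calR),\scrC^\star_v(\calR)\}$, and the appropriate boundary condition $\xi\in\{0,1\}$ on $\calR'$. Introduce the interpolation $\mathbf p_e(t):=p_c(q)+t(\mathbf p_e-p_c(q))$ for $t\in[0,1]$ and let $\phi_t^\xi$ be the FK measure on $\calR'$ with these bond parameters. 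At $t=0$ this is the homogeneous critical measure, for which $\phi_0^\xi[A]\in[\delta_0,1-\delta_0]$ by the deterministic RSW estimate recalled in \eqref{eq:crossing-annulus}, for some $\delta_0 > \delta$; at $t=1$ it coincides with the quenched measure. As there are only $O(N^2\log N)$ such rectangles, a union bound reduces the problem to showing that $|\phi_1^\xi[A]-\phi_0^\xi[A]|$ stays small with the required $\mathbf P_N$-probability for a single event $A$.

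The Russo-type differentiation formula for FK measures gives
\begin{equation*}
\frac{d}{dt}\phi_t^\xi[A]\;=\;\sum_{e\in\calR'}(\mathbf p_e-p_c(q))\cdot R_e(A,t),\qquad R_e(A,t):=\frac{\Cov_{\phi_t^\xi}(\omega_e,\ind_A)}{p_e(t)(1-p_e(t))},
\end{equation*}
and the plan is to integrate in $t$ while controlling the right-hand side by two complementary estimates. First, a deterministic $\ell^2$-bound on the influences $\sum_e R_e(A,t)^2\lesssim \textrm W(N)^{-1}$: since each $R_e(A,t)$ is bounded, this reduces to a bound on $\sum_e R_e(A,t)$, which is essentially $\partial_p\phi_t^\xi[A]$ at a homogeneous parameter. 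The scaling relation \eqref{eq:scaling-Delta-W} combined with the mixing-rate and pivotal estimates of \cite{FK_scaling_relations} controls this quantity by $\textrm W(N)^{-1}$, \emph{provided} $\phi_t^\xi$ satisfies the strong RSW property at the scales involved. Second, given these influences, the mutual independence of the $(\mathbf p_e)_e$ and the centering $\bbE_{\mathbf P_N}[\mathbf p_e-p_c(q)]=0$ from $(\star)^{\text A}_N$ make $\sum_e(\mathbf p_e-p_c(q))\,R_e(A,t)$ a centred sum of independent sub-exponential variables (thanks to the tail assumption~$3$), to which a Bernstein-type inequality applies with variance proxy ${\Sigma_{\frak p}^2\sum_e R_e(A,t)^2\lesssim \Sigma_{\frak p}^2/\textrm W(N)}$. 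The resulting fluctuations are of order $\Sigma_{\frak p}/\textrm W(N)^{1/2}$, which is small exactly in the regime $\Sigma_{\frak p}\leq c\,\textrm W(N)^{1/2}$.

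The main obstacle is the circularity between the $\ell^2$-bound on influences and the RSW property it is used to propagate: one needs RSW at time $t$ to invoke the bound, but the bound is what maintains RSW forward. This is resolved by a bootstrap on a time-discretization $0=t_0<t_1<\cdots<t_M=1$, showing inductively on $k$ that $\phi_{t_k}^\xi$ lies in $\textrm{RSW}(\delta_0-k\eta_M,N)$ on a favourable event of high $\mathbf P_N$-probability, with a step size $\eta_M$ chosen so that $M\eta_M<\delta_0-\delta$. A secondary subtlety is the second-order ``drift'': under $(\star)^{\text A}_N$, Taylor-expanding the derivative at $\mathbf p=p_c$ produces a deterministic contribution of the form $\sum_e\sigma_e^2\,\partial_{p_e}^2\phi_t^\xi[A]/2$, which under the scaling relations is of size at most $\Sigma_{\frak p}^2/\textrm W(N)$, hence $O(1)$ precisely under the hypothesis $\Sigma_{\frak p}^2\leq c^2\textrm W(N)$ and therefore still fits inside the critical window (this is also the structural reason why $(\star)^{\text B}_N$, where this drift is cancelled by the explicit $-\mu(q)\sigma_e^2$ shift, will allow a much larger $\Sigma_{\frak p}$). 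Union-bounding the bootstrap estimate over the $O(N^2\log N)$ rectangles and the $M$ time steps yields the stated tail $\exp[-cN^c(\textrm W(N)^{1/2}/\Sigma_{\frak p})^{1/2}]$, where the sub-exponential rate $(\textrm W^{1/2}/\Sigma_{\frak p})^{1/2}$ encodes the loss incurred by the discretization scale $\eta_M$ in the Bernstein bound.
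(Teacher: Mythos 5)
Your proposal shares the paper's high-level philosophy (a continuous interpolation from the critical homogeneous measure to the quenched one, with RSW bootstrapped forward via derivative bounds), but it substitutes a \emph{linear} interpolation $\mathbf p_e(t)=p_c+t(\mathbf p_e-p_c)$ for the paper's Brownian/Skorokhod embedding, and this substitution breaks the central probabilistic step.

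The key gap is the assertion that $\sum_e(\mathbf p_e-p_c)\,R_e(A,t)$ is ``a centred sum of independent sub-exponential variables''. It is not: each $R_e(A,t)=\Cov_{\phi_t}(\omega_e,\ind_A)/(p_e(t)(1-p_e(t)))$ is a nontrivial function of \emph{all} the parameters $(\mathbf p_f)_{f\in E_N}$, so the summands are neither independent nor obviously mean-zero, and no off-the-shelf Bernstein inequality applies. This is precisely the problem the paper's Brownian interpolation is engineered to solve: with $\mathbf p_e(t)=p_c+B^{(e)}_t$ and the filtration $\calF^N_t$ generated by the paths up to time $t$, the coefficient $\Cov_{\frak p(t)}(\omega_e,A)$ is $\calF^N_t$-measurable and the increment $\sum_e(\cdots)\,\dd B^{(e)}_t$ is a genuine local-martingale increment, to which exponential martingale inequalities apply through the bracket. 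The linear interpolation offers no filtration with this adaptedness property, because the full vector $(\mathbf p_e)_e$ is already revealed at $t=0$. A time discretization and bootstrap does not repair this; it just converts one ill-posed concentration step into many.

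A second omission, which is a genuine logical hole even granting concentration, is the stability of mixing rates (the paper's $\textrm{Stab}_\Delta$ event and Lemma~\ref{lem:stability-StabDelta}). The RSW property at $\frak p(t)$ controls covariances only in terms of the \emph{environment-dependent} mixing rates $\Delta^{(e)}_{\underline p}(\cdot)$; to get the advertised bound $\sum_e\Cov_{\frak p(t)}(A,\omega_e)\lesssim\textrm W(N)^{-1}$ one needs $\Delta^{(e)}_{\frak p(t)}(r)\asymp\Delta_{p_c}(r)$ uniformly in $e$ and $r\le N$, and RSW alone does not give this: it only yields a priori polynomial sandwich bounds on $\Delta^{(e)}_{\underline p}(r,R)$, not pointwise comparability to the critical mixing rate. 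The paper tracks this separately by differentiating edge covariances and closing a second circular argument (via third cumulants), with a careful $\rho=\rho(\delta)$ bookkeeping; none of that appears in your outline. Relatedly, the $\ell^2$ bound you write, $\sum_e R_e(A,t)^2\lesssim\textrm W(N)^{-1}$, is too crude: the paper proves the stronger $\sum_e\Cov(A,\omega_e)^2\lesssim\Xi(N)^{-1}\lesssim N^{-c}\textrm W(N)^{-1}$, and this extra polynomial gain is exactly where the $N^c$ inside the exponential in the statement comes from; your route would lose it. You do correctly identify the size of the second-order drift ($\Sigma_{\frak p}^2/\textrm W(N)$) and the qualitative reason why $(\star)^{\text B}_N$ can cancel it, which matches the paper's picture.
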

The previous statement shows that randomness in the environment sampled according to $\mathbf{P}_N$, enhances the mixing between slightly subcritical and slightly supercritical bond parameters (chosen at random), whose standard deviation matches the square root of the original critical window, while preserving the model within its critical phase. This result can, in fact, be improved. Under the assumption $(\star)_N^{\text A}$, the environmental randomness is, to first order, adapted to a linear deviation around the critical point. However, there is no \emph{a priori} reason for the environment to linearise perfectly around criticality (except in the case of percolation). In particular, one can extend the expansion of a self-dual setup beyond the first moment by incorporating second-order terms. This correction leads to the refined condition $(\star)_N^{\text B}$. Under this improved condition, one can in principle replace the critical window $\textrm W(N)$ by its cube root $\textrm W(N)^{1/3}$, up to some sub-polynomial correction. More precisely, define
\begin{equation}
\label{def:W_tilde}
    \widetilde{\textrm W}(N) := \textrm W(N) \Big( \sum_{r \leq N} r \Delta_{p_c}(r)^4 \Big)^{-1/2}.
\end{equation}
Predictions from conformal field theory and the convergence to CLE \cite{liu2025mixing,FK_scaling_relations} suggest that $\Delta_{p_c}(r) \asymp r^{-\iota(q)}$, where $\iota(q) \geq \frac{1}{2}$, with equality if and only if $q = 4$. In particular, one should have $(\sum_{r \leq N} r \Delta_{p_c}(r)^4)^{-1/2} \asymp 1$ when $1 < q < 4$, and ${(\sum_{r \leq N} r \Delta_{p_c}(r)^4)^{-1/2} = N^{-o(1)}}$ in the case $q = 4$. Let us mention an ongoing work \cite{FK_two_arm_exponent}, where notably the bound $\iota(q) > \frac{1}{4}$ can be derived for all $1<q\leq 4$, while no other results are known except for the FK-Ising model. Therefore, assuming conformal invariance of the critical limiting model, one gets $\widetilde{\textrm W}(N) \asymp \textrm W(N)$, with a potential sub-polynomial correction when $q=4$. One can now extend the critical window in a random environment to the cubic root of the original one, when working with a random environment naturally centred at $p_c(q)$. Let us point out that in the following theorem, the estimates and the exponents in the logarithmic factors depend on the tail bounds for the $\mathbf p_e$, and better tail bounds would allow us to get better estimates. Recall that $\Sigma_{\frak p}$ is defined as $\Sigma_{\frak p}:=\max_{e\in E_N}\sigma_e$.
\begin{theorem}
\label{thm:extension_scaling_window_drift}
Fix $1<q\leq 4$. Then there exists $\delta=\delta(q)>0$ and $c=c(q)>0$ such that the following holds. For any $N\geq 1 $, any random environment  $\frak p=(\mathbf{p}_e)_{e\in E_N}$ satisfying $(\star)_N^{\text B}$ and such that $\Sigma_{\frak{p}} \leq c\cdot \widetilde{\textrm{W}}(N)^{1/3} \log(N)^{-2}$, one has
\begin{equation*}
        \mathbf P_N\Big[\phi_{\bbT_N,\frak p,q}\in \textrm{RSW}(\delta,N)\Big]>1-O\left(\exp\left[-c\cdot \left(\frac{\widetilde{\textrm{W}}(N)^{1/3}}{\Sigma_{\frak p}}\right)^{1/2}\right]\right).
 \end{equation*}
\end{theorem}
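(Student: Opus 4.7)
The strategy follows the interpolation approach of \cite{Mah25} announced in the introduction, and parallels the proof of Theorem \ref{thm:extension_scaling_window}: continuously morph the deterministic critical environment $\underline p\equiv p_c(q)$ into $\frak p$ and track how $2$ by $1$ rectangle crossing probabilities evolve. The improvement from $\mathrm W(N)^{1/2}$ to $\widetilde{\mathrm W}(N)^{1/3}$ comes from the fact that the natural centering $(\star)_N^{\text B}$ is tailored to cancel the leading \emph{diagonal} second-order self-dual defect in the Taylor expansion of the crossing probability, pushing the per-edge mean error from $O(\sigma_e^2)$ to $O(\sigma_e^3)$. By a union bound over the $\mathrm{poly}(N)$ choices of rectangle $\calR\subset\bbT_N$, crossing event $\calC$ and boundary condition $\xi\in\{0,1\}$, and since the deterministic critical measure satisfies $\mathrm{RSW}(\delta_0,N)$ for some $\delta_0=\delta_0(q)>0$, it suffices to bound, for a single such choice, the fluctuation
\[
F(\frak p):=\phi_{\calR',\frak p,q}^{\xi}[\calC]-\phi_{\calR',p_c(q),q}^{\xi}[\calC]
\]
by $\delta_0/2$ with the claimed probability.

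\textbf{Taylor expansion and mean cancellation.} Writing $u_e:=\mathbf p_e-p_c(q)$ and iterating Russo's formula for FK percolation to third order, we obtain
\[
F(\frak p)=\sum_{e} A_e\, u_e+\tfrac12 \sum_{e,e'} B_{e,e'}\, u_e u_{e'}+R_3(\frak p),
\]
with deterministic coefficients evaluated at the critical measure; the $A_e$ and the diagonals $B_{e,e}$ are one- and two-point pivotal-type quantities whose total weights are controlled by the mixing rate $\Delta_{p_c}$ via the theory of \cite{FK_scaling_relations}. Taking $\bbE_{\mathbf P_N}$ and using independence of the $\mathbf p_e$, only the first-order term and the diagonal $\tfrac12\sum_e \sigma_e^2 B_{e,e}$ survive through second order. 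Expanding the duality involution as $p^\star=p_c(q)-(p-p_c(q))-2\mu(q)(p-p_c(q))^2+O((p-p_c(q))^3)$ and using self-duality of the FK measure at $p_c(q)$ yields the key identity $B_{e,e}=2\mu(q) A_e+\mathrm{err}_e$, where $\mathrm{err}_e$ is a boundary correction controlled by mixing-rate estimates. Substituting the prescription $\bbE[u_e]=-\mu(q)\sigma_e^2+O(\sigma_e^3)$ from $(\star)_N^{\text B}$ then cancels the first- and diagonal second-order contributions simultaneously, leaving only a third-order piece of size $O(\Sigma_{\frak p}^3\cdot N^2 \Delta_{p_c}(N)^2\cdot\mathrm{polylog})$, which by \eqref{eq:scaling-Delta-W} is $\lesssim(\Sigma_{\frak p}/\widetilde{\mathrm W}(N)^{1/3})^3=o(1)$ under the hypothesis on $\Sigma_{\frak p}$.

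\textbf{Concentration via Doob martingale.} To upgrade from the mean to a tail estimate, I would reveal the edges $(\mathbf p_e)_{e\in E_N}$ sequentially and apply a Freedman/Bernstein inequality to the resulting Doob martingale of $F(\frak p)$. The conditional increments have sub-exponential norm $\lesssim |A_e|\sigma_e$ thanks to the exponential tails of condition (3) in Definition \ref{def:proprietes_pe}, and the predictable quadratic variation is dominated by the two-point contribution $\sum_{e,e'}\sigma_e^2\sigma_{e'}^2 B_{e,e'}^2\lesssim\Sigma_{\frak p}^4\cdot N^2\sum_{r\le N} r\,\Delta_{p_c}(r)^4$. The definition \eqref{def:W_tilde} of $\widetilde{\mathrm W}(N)$ is precisely tailored so that this quantity is of order $(\Sigma_{\frak p}/\widetilde{\mathrm W}(N)^{1/3})^6$, which plugged into Bernstein yields the stated tail $\exp[-c(\widetilde{\mathrm W}(N)^{1/3}/\Sigma_{\frak p})^{1/2}]$, the $\log(N)^{-2}$ safety margin absorbing the union bound of the first paragraph and the truncation of sub-exponential increments.

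\textbf{Main obstacle.} The heart of the argument is the second-order cancellation $B_{e,e}=2\mu(q)A_e+\mathrm{err}_e$ of the second paragraph: this identity would be exact for a truly self-dual full-plane measure, but on the finite rectangle $\calR'$ with fixed boundary conditions self-duality is only approximate, and bounding $\mathrm{err}_e$ requires a careful combination of the duality involution with stability of connection probabilities under boundary changes—that is, the mixing-rate estimates of \cite{FK_scaling_relations}. This is precisely the reason why the \emph{corrected} window $\widetilde{\mathrm W}(N)$, rather than $\mathrm W(N)$, appears in the statement, and why the hypothesis tolerates only a $\log(N)^{-2}$ margin rather than an unconditional $\widetilde{\mathrm W}(N)^{1/3}$ threshold.
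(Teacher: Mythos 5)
Your approach --- Taylor-expand the crossing probability to second order around $p_c(q)$ with coefficients $A_e,B_{e,e'}$ frozen at the critical measure, exploit the cancellation $B_{e,e}=2\mu(q)A_e$ against the bias prescribed by $(\star)^{\text B}_N$, and then concentrate via a Doob martingale --- is genuinely different from the paper's continuous interpolation, but as written it contains a gap and an erroneous estimate. The gap is that nothing controls the remainder $R_3(\frak p)$: by Taylor's theorem this is an integral of third derivatives of $\underline p\mapsto\phi_{\calR',\underline p,q}^{\xi}[\calC]$ evaluated at points on the segment from $p_c(q)$ to $\frak p$, and the bounds of Section~\ref{sub:near-critical_geometric_estimates} (sums of cumulants, products of covariances) hold for a near-critical environment only \emph{provided it satisfies} $\textrm{RSW}(\delta,N)\cap\textrm{Stab}_\#\cap\textrm{Stab}_\Delta$, which is precisely what you are trying to prove. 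Worst-case bounds on third derivatives are polynomially large in $N$, far too weak for a window $\Sigma_{\frak p}\asymp\widetilde{\textrm W}(N)^{1/3}\log(N)^{-2}$ that is only polynomially small. The paper breaks this circularity by introducing the stopping time $T_b(\delta,\rho)$ and running the deformation $\frak p(t)$ continuously: the stability events are propagated forward in $t$, so the geometric estimates are always available for $\frak p(t)$ with $t\le T_b$, and one then shows $T_b>\sigma_N^2$ with high probability. A discrete Doob filtration revealing edges one at a time offers no analogous bootstrap, and your proposal does not supply one.

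Separately, the predictable-quadratic-variation estimate is wrong. You claim the Doob-martingale bracket is dominated by $\sum_{e,e'}\sigma_e^2\sigma_{e'}^2B_{e,e'}^2$, but condition $(\star)_N^{\text B}$ cancels only the \emph{mean} of the linear term $\sum_e A_e u_e$, not its fluctuation, and the first-order contribution $\sum_e A_e^2\,\mathrm{Var}(u_e)\lesssim\Sigma_{\frak p}^2\,\Xi(N)^{-1}$ (via \eqref{eq:sum_cov_crossing_edge_squared}) dominates. That gives, using Lemma~\ref{lem:window-inequalities}, a variance $\lesssim\big(\Sigma_{\frak p}/\widetilde{\textrm W}(N)^{1/3}\big)^2$, not the power six you state; and feeding a variance of order $\big(\Sigma_{\frak p}/\widetilde{\textrm W}(N)^{1/3}\big)^6$ into Bernstein with an $O(1)$ deviation would yield $\exp\!\big[-c\big(\widetilde{\textrm W}(N)^{1/3}/\Sigma_{\frak p}\big)^6\big]$, not the claimed $\exp\!\big[-c\,(\cdot)^{1/2}\big]$, so the stated exponent is internally inconsistent. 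Finally, the boundary-condition correction $\mathrm{err}_e$ you flag is real (the identity $B_{e,e}=2\mu(q)A_e$ requires $\phi_{p_c}[\omega_e]=\tfrac12$), but the paper sidesteps it entirely by reducing $\textrm{RSW}(\delta,N)$ to full-torus events $\scrC_\bullet^0(\calR),\scrC_\bullet^{\star 1}(\calR)$ (Definition~\ref{def:modified_RSW}), where self-duality of $\phi_{\bbT_N,p_c}$ is exact and the drift $-\mu(q)t$ makes $\mathbf r_e(0)=0$ identically. Also note that $\widetilde{\textrm W}(N)$ enters the paper's proof through the bracket $\sum_{r\le N}r\Delta_{p_c}(r)^4$ of the local martingale controlling $\phi_{\bbT_N,\frak p(t)}[\omega_e]$ in Lemma~\ref{lem:error-term-estimate}, not through boundary corrections, and the $\log(N)^{-2}$ margin mainly absorbs the Skorokhod stopping-time tails and the union bound, not $\mathrm{err}_e$.
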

One should read the previous theorem keeping in mind the conjectured convergence of FK-percolation models towards CLE loop ensembles. In that setup, (see e.g.\ \cite[Table Introduction]{FK_scaling_relations}) there should exist $\nu = \nu(q)>0$  such that $\textrm W(N)=N^{-\nu + o(1)}$. In the random environment, it is in principle possible to work with bond parameters whose standard deviation from the critical point satisfies $\max_{e\in \bbT_N}\sigma_{e} \asymp N^{-\frac{\nu}{3}+o(1)} \gg N^{-\nu}$ while remaining in the critical phase of the model, at least with high $\mathbf{P}_N$-probability. If one replaces $\frak{p}=(\mathbf{p}_e)_{e\in E_N} $ by $(p_c(q)+|\mathbf{p}_e-p_c(q)|)_{e\in E_N}$, the model would have exhibited a fairly off-critical behaviour. One could wonder about the existence of some $(\star)^{\text C}_N$ set of conditions on the random bonds $\frak{p}$ that would have kept $\Sigma_{\frak p}$ allowing some even (polynomially) larger window than $\textrm W(N)^{1/3}$. While we do not claim that finding such an environment $\frak p$ is impossible, we argue in Section \ref{sub:optimality-random-window} that for an environment with independent bonds $\mathbf p_e$, the window $\textrm W(N)^{1/3}$ is a natural barrier when only requiring approximate self-duality for the bond parameters. Still, one can explore some slightly less restrictive setup, allowing the bonds to be \emph{dependent}. For instance, one can construct a model where the bond parameters are the sum of an i.i.d\ process at each edge, together with a correction (of much smaller order of magnitude) which takes into account \emph{all} the edges in $\bbT_N$. Furthermore, the bond parameters in this interacting model should become essentially independent for edges far away from each other in $\bbT_N$. In this case, the (random) correction term is chosen in such a way that \emph{all} the terms of the form $\phi_{\bbT_N,\frak{p},q}[\scrC(\calR)]$ have expectation their value at criticality $\phi_{\bbT_N,p_c,q}[\scrC(\calR)]$. We explain how to achieve this in the proof of the following result. To lighten notations, set $\Xi(N):=\left(\sum_{r\le N}r\Delta_{p_c}(r)^2\right)^{-1}$.
\begin{theorem}\label{thm:non-independent-bonds}
Fix $1<q\le 4$ and let $\frak p=(\mathbf p_e)_{e\in E}$ a random environment satisfying $(\star)^{\text B}_N$. There exist some positive constants $\delta,O,c_{1,2}$ such that if $\Sigma_{\frak p}\le c_1\cdot \Xi(N)^{1/2}\log(N)^{-2}$, one can construct some modified environment $\widetilde{\frak p}=(\widetilde{\mathbf p}_e)_{e\in E_N}$ such that
\begin{itemize}
	\item Under $\mathbf P_N$, for each $e\in E_N$, the difference $|\widetilde{\mathbf p}_e-\mathbf p_e|$ is of order $\Xi(N)^{-1/2}\sigma_e^3$, in the sense that
	\begin{equation*}
        \forall \alpha>0,\quad \mathbf P_N\Big[|\widetilde{\mathbf p}_e-\mathbf p_e|>\alpha\cdot\Xi(N)^{-1/2}\sigma_e^3\Big]\lesssim \exp(-c\alpha^{1/3}).
    \end{equation*}
	\item One has \begin{equation*}
        \mathbf P_N\Big[\phi_{\bbT_N,\widetilde{\frak p},q}\in \textrm{RSW}(\delta,N)\Big]>1- O\Bigg(   \exp \Bigg[ -c_2\cdot \left(\frac{\Xi(N)^{1/2}}{\Sigma_{\frak p}}\right)^{1/2} \Bigg] \Bigg).
    \end{equation*}
\end{itemize}
\end{theorem}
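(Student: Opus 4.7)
The plan is to realise $\widetilde{\frak p}$ as a correction of $\frak p$ that \emph{enforces expectation criticality} for every rectangle crossing, and then to rerun the proof of Theorem \ref{thm:extension_scaling_window_drift} for this modified environment without having to control any systematic drift. Concretely, view each crossing probability $F_\calR(\underline p):=\phi_{\bbT_N,\underline p,q}[\scrC(\calR)]$ as a smooth function of the bond parameters via the interpolation of \cite{Mah25}, and Taylor expand it around $p_c(q)$. Taking expectation under $\mathbf P_N$ and using $(\star)^{\text B}_N$, the first two moment terms combine and cancel thanks to the self-dual centering encoded by $\mu(q)$, exactly as in the proof of Theorem \ref{thm:extension_scaling_window_drift}. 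What remains is a third-order drift $R_\calR$ of typical size $\sum_e \sigma_e^3 |\partial^3_{eee}F_\calR|$, and this residual drift is precisely the obstruction that prevents that theorem from reaching beyond $\widetilde{\textrm W}(N)^{1/3}$.

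The correction $\eta_e(\frak p)$ is designed to absorb this residual drift. Setting $\widetilde{\mathbf p}_e:=\mathbf p_e+\eta_e$, one wants the linearised effect $\sum_e\partial_e F_\calR\cdot\eta_e$ to cancel $R_\calR$ for every rectangle $\calR$ simultaneously. Since the differentials of $F_\calR$ are governed by the pivotal/mixing rates $\Delta_{p_c}$, inverting the resulting (approximately local) linear system over $\bbT_N$ yields a correction whose typical size on edge $e$ is $\Xi(N)^{-1/2}\sigma_e^3$; here the $L^2$ normalisation factor $\Xi(N)^{-1/2}=\big(\sum_{r\le N}r\Delta_{p_c}(r)^2\big)^{1/2}$ is exactly what is needed to balance the sensitivities of the $F_\calR$'s. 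The first bullet of the theorem follows from the exponential tail assumption on $\mathbf p_e-p_c(q)$, since the cube of a variable with exponential tail of scale $\sigma_e$ has a stretched-exponential tail $\exp(-c\alpha^{1/3})$ in units of $\sigma_e^3$.

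With $\widetilde{\frak p}$ so defined, the argument of Theorem \ref{thm:extension_scaling_window_drift} is rerun with vanishing expected bias: only the \emph{fluctuations} of $F_\calR(\widetilde{\frak p})$ around its mean remain to be bounded. These fluctuations are a near-sum of local increments of variance $\sigma_e^2(\partial_e F_\calR)^2$, and the mixing-rate technology of \cite{FK_scaling_relations} gives $\sum_e(\partial_e F_\calR)^2\lesssim\Xi(N)^{-1}$, so the total variance is of order $\Sigma_{\frak p}^2\cdot\Xi(N)^{-1}$. A Bernstein/McDiarmid-type inequality, combined with a union bound over the polynomially many rectangles $\calR$, yields the announced probability bound whenever $\Sigma_{\frak p}\le c_1\Xi(N)^{1/2}\log(N)^{-2}$; the log factor absorbs the union bound and the crossover between the Gaussian and the stretched-exponential regimes. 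The main obstacle will be this last step: since the $\widetilde{\mathbf p}_e$ are no longer independent, one must verify that $\eta_e$ is sufficiently local, using the polynomial decay of $\Delta_{p_c}$, so that the concentration machinery designed for independent environments can be adapted with only mild losses, which the logarithmic safety factor in the hypothesis on $\Sigma_{\frak p}$ can absorb.
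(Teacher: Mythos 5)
The paper constructs $\widetilde{\frak p}$ by a fundamentally different mechanism than the one you describe. It deforms the environment in time, $\mathbf p_e(t)=p_c+B^{(e)}_t-\mu(q)t$, and then determines the correction $\eps^{(e)}(t)$ via a self-referential ODE (equation~\eqref{eq:eps_SDE}) chosen so that the finite-variation part of every semimartingale $\phi_{\bbT_N,\widetilde{\frak p}(t)}[\calS]$ vanishes \emph{identically}, for all events $\calS$ and all times, not merely to third order. This works because the drift of $\phi_{\bbT_N,\widetilde{\frak p}(t)}[\calS]$ factors as $\sum_e c_e(t)\Cov(\calS,\omega_e)$ with coefficients $c_e(t)$ independent of $\calS$, so one per-edge correction kills the drift for all events simultaneously. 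You also aim to cancel the systematic bias, but through a single, static, third-order Taylor correction; this is genuinely weaker and the difference matters.

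There are two real gaps. First, you require $\sum_e\partial_e F_\calR\cdot\eta_e=-R_\calR$ to hold for all $O(N^3)$ rectangles $\calR$ with only $O(N^2)$ unknowns $\eta_e$, an overdetermined system. You never establish the factorisation $R_\calR=\sum_e\alpha_e\,\partial_e F_\calR$ with $\alpha_e$ independent of $\calR$ that would make it consistently solvable. In the paper this consistency is automatic because the cancellation is done edge by edge.

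Second, and more fundamentally, cancelling only the third-order drift cannot reach the announced window $\Xi(N)^{1/2}$. After your cancellation a fourth-order residual of size $\sim\Sigma_{\frak p}^4\sum_e|\partial^4_{eeee}F_\calR|$ survives, and there is no reason this is negligible at $\Sigma_{\frak p}\sim\Xi(N)^{1/2}$; in particular for $1<q\le 2$ the conjectured $\Xi(N)\asymp 1$ would make the window of constant order, which no finite-order Taylor scheme can achieve. The paper's exact, dynamic cancellation is precisely what removes the drift at \emph{every} order and reduces everything to controlling quadratic variations of order $\Xi(N)^{-1}t$. Your concentration sketch (right order of variance, but vague about the dependence of the $\widetilde{\mathbf p}_e$ on one another) and the claimed tail bound for $\eta_e$ (which is not a cube of a single variable, since $\eta_e$ is obtained by inverting a system involving the whole environment) would also require substantial justification, but the drift-order issue is the decisive one.
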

In words, the \emph{random} correction to $\frak{p}$ is typically of order $\Sigma_\frak{p}^3 \ll \Sigma_\frak{p} $. This slightly interacting model becomes of particular interest when $1<q\leq 2$ where $\Xi(N)$ is expected to be constant (or logarithmic when $q=2$), providing an already logarithmic near-critical window in the random environment, similarly to the result we derive below for percolation.

\bigskip

Let us now turn to the specific case of Bernoulli bond percolation, corresponding to the random cluster model with parameter $q=1$. In this setting, we prove that the critical phase of the model in the random environment is preserved \emph{at every scale} for random bond parameters centred at the critical point, even when allowing variable whose standard deviation is \emph{logarithmic} in the size of the box. Moreover, if one only focuses on \emph{on the large scale crossing probabilities}, we show, using a short noise sensitivity argument, that the critical phase of the model is preserved even with \emph{macroscopic deformation} from the critical point, \emph{not} requiring the variances to decay to $0$ as $N\to \infty $. Before presenting precise results, let us define the natural moment condition on the random variables $\mathfrak{p}$ in the case $q=1$, where the dependencies are linear with respect to each edge bond. This leads to the definition of the condition $(\star)_{N}^{q=1}$, given by
\begin{enumerate}
\item (Independence) The variables $(\mathbf{p}_e)_{\in E_N}$ are mutually independent.
\item (Exact self-dual mean) They all have mean $\mathbb{E}_{\mathbf{P}_N}[\mathbf p_e]=p_c(1)=\frac{1}{2}$.
\item (Exponential tails) For each $e\in E_N$, the tails of $\mathbf p_e$ satisfy
    \begin{equation*}
        \forall s>0,\quad \bbP_N\Big[\big|\mathbf p_e-\frac{1}{2}\big|>s\Big]\lesssim \exp\left(-c\cdot\left(\frac s{\sigma_e}\right)\right)
    \end{equation*}
    for some fixed $c>0$ independent from $N$. 
    \end{enumerate}
We still denote $\sigma_e := \mathbb{E}[(p_e - \tfrac{1}{2})^2]^{1/2}$ and $\Sigma_{\mathfrak{p}} := \max_{e \in \mathbb{T}_N} \sigma_e$. In contrast to the general FK-percolation case with $1<q\leq 4$, one requires that the expectation of each $p_e$ be \emph{exactly} $\tfrac{1}{2}$, and not merely close to it. Indeed, the self-duality condition for the random environment can be expressed via its first moment only. Also note that when defining $\mathrm{RSW}(\delta, N)$ the boundary conditions ($0$ or $1$) on the rectangle $\calR'$ used in Definition~\ref{def:RSW-Torus} are irrelevant. This allows us to state the following theorem, which establishes that the permissible deviations from criticality can be of order $O(\log(N)^{-O(1)})$ in the torus $\bbT_N$ while remaining in the critical phase at every scale.
\begin{theorem}\label{thm:percolation-square}
There exists $\delta=\delta(q)>0$ and $c=c(q)$, such that for any $N\geq 1 $ and any random environment $\frak p=(\mathbf{p}_e)_{e\in E_N}$ satisfying $(\star)_N^{q=1}$ with $\Sigma_{\frak p}\le c\cdot \log(N)^{-2}$, one has
\begin{equation}
	\mathbf{P}_N\Bigg[ \phi_{\bbT_N,\frak p,1}\in \textrm{RSW}(\delta,N)\Bigg] >1- O\Bigg( \exp\Big[ - c\cdot (\Sigma_{\frak p})^{-1/2}  \Big] \Bigg).
\end{equation}
\end{theorem}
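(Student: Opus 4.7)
The plan rests on two features specific to $q=1$: the conditional independence of the edges given $\frak p$, and the multilinearity of the crossing probabilities in each $\mathbf p_e$. Because $\bbE_{\mathbf P_N}[\mathbf p_e]=\tfrac12$ for every $e$ and the $\mathbf p_e$ are $\mathbf P_N$-independent, the \emph{annealed} measure $\overline\phi:=\bbE_{\mathbf P_N}[\phi_{\bbT_N,\frak p,1}]$ coincides with homogeneous Bernoulli percolation at $p_c=\tfrac12$ on $\bbT_N$. Classical Russo--Seymour--Welsh theory then provides $\overline\phi[\scrC(\calR)]\in[\delta_0,1-\delta_0]$ for every $2\times 1$ rectangle $\calR\subset\bbT_N$ and some absolute $\delta_0>0$, with the analogous bound for dual crossings following from the self-duality of $\overline\phi$. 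It therefore suffices to establish quantitative concentration of the quenched crossing probabilities around these annealed values.

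Fix such a rectangle $\calR$, order the edges of $\calR'$ as $e_1,\dots,e_m$, and form the Doob martingale $F_i:=\bbE_{\mathbf P_N}[f_\calR(\mathbf p)\mid\mathbf p_{e_1},\dots,\mathbf p_{e_i}]$ with $f_\calR(\mathbf p):=\phi_{\mathbf p,1}[\scrC(\calR)]$. Multilinearity of $f_\calR$ in $\mathbf p_{e_i}$, combined with the $\mathbf P_N$-independence of $\mathbf p_{e_i}$ from the later variables, produces the clean identity
\[
F_i-F_{i-1}=(\mathbf p_{e_i}-\tfrac12)\,B_i,\qquad B_i:=\bbE_{\mathbf P_N}\!\big[\phi_{\mathbf p}[e_i\text{ pivotal for }\scrC(\calR)]\,\big|\,\mathbf p_{e_1},\dots,\mathbf p_{e_{i-1}}\big]\in[0,1],
\]
where $B_i$ is measurable with respect to $(\mathbf p_{e_1},\dots,\mathbf p_{e_{i-1}})$ and the increment is sub-exponential at scale $\sigma_{e_i}\le\Sigma_{\frak p}$ by the tail assumption. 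Bernstein's inequality for martingales with sub-exponential increments then yields exponential concentration of $f_\calR(\mathbf p)$ around $\overline\phi[\scrC(\calR)]$, \emph{provided} the conditional variance $V:=\sum_i\sigma_{e_i}^2 B_i^2$ can be controlled on a high-$\mathbf P_N$-probability event.

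The key input is the pointwise bound $B_i\lesssim\pi_4(R)$, which holds on every realization of $\frak p$ for which $\phi_{\mathbf p}$ already satisfies RSW up to scale $R$, by quasi-multiplicativity of critical arm events. I would produce this bound via an induction over dyadic scales $r=2^k\le N/4$, in the spirit of the continuous-deformation bootstrap of \cite{Mah25}: the base case $r=O(1)$ follows from the exponential tail (ensuring each $\mathbf p_e$ lies in $[\delta,1-\delta]$ with very high $\mathbf P_N$-probability), and in the inductive step the RSW property at scales $\le r$ pins the quenched pivotals for crossings at scale $\lesssim r$, so that the martingale concentration of the preceding paragraph propagates RSW to scale $2r$ on an event of $\mathbf P_N$-probability at least $1-\exp[-c\Sigma_{\frak p}^{-1/2}]$. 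With the universal Bernoulli bound $\pi_4(R)\lesssim R^{-1-\alpha}$, one obtains $V\lesssim \Sigma_{\frak p}^2 R^{-2\alpha}$ on this event, which is more than enough to close the induction. Accumulating the logarithmic losses across the $O(\log N)$ dyadic scales is what forces the final threshold $\Sigma_{\frak p}\le c\log(N)^{-2}$.

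Putting these pieces together, one has $|f_\calR(\mathbf p)-\overline\phi[\scrC(\calR)]|\le \delta_0/2$ outside a $\mathbf P_N$-event of probability $\exp[-c\Sigma_{\frak p}^{-1/2}]$, which places $\phi_{\mathbf p}[\scrC(\calR)]$ in $[\delta_0/2,1-\delta_0/2]$. A union bound over the $\lesssim N^2\log N$ rectangles of $\bbT_N$ in both orientations then yields $\phi_{\bbT_N,\frak p,1}\in\textrm{RSW}(\delta,N)$ with the claimed failure probability. The main obstacle is the third step: executing the scale-by-scale bootstrap with constants sharp enough to avoid a polynomial loss across all $O(\log N)$ dyadic scales requires quantitative use of arm-event quasi-multiplicativity and a careful book-keeping of the errors accumulating in the induction.
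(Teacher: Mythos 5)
Your central observations are sound and genuinely different in technique from the paper's: you exploit the exact multilinearity of $\mathbf p\mapsto\phi_{\mathbf p,1}[\scrC(\calR)]$ to show that the annealed measure is critical Bernoulli, and you replace the paper's continuous Brownian deformation and Itô calculus by a discrete Doob martingale in the variables $\mathbf p_{e_1},\dots,\mathbf p_{e_m}$. The starting points are in fact the same: the paper's mechanism for $q=1$ (Lemma \ref{lem:stability-RSW-perco}) is that the finite-variation part $A^\calR_t$ of the deformed crossing probability is identically zero because $\phi_{\bbT_N,\frak p(t)}[\omega_e]=\mathbf p_e(t)$, and its local-martingale bracket is controlled by $\sum_e\Cov_{\frak p(t)}(\scrC(\calR),\omega_e)^2\lesssim 1$; your $\sum_i\sigma_{e_i}^2B_i^2$ is the discrete avatar of the same quantity. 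Your route is more elementary (no stochastic calculus, no Skorokhod embedding), and if it closed it would likely give a \emph{better} threshold than $\Sigma_{\frak p}\lesssim\log(N)^{-2}$, since the paper's extra square root is an artefact of converting a Brownian embedding time with stretched-exponential tails back to the original $\Sigma_{\frak p}$.

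The gap is in the control of $B_i$. Your $B_i$ is the pivotal probability for the \emph{partially revealed} product measure in which $e_1,\dots,e_{i-1}$ are at their random values and the remaining edges are at $\tfrac12$; it is not a quantity attached to the single measure $\phi_{\frak p}$. Asserting ``$B_i\lesssim\pi_4(R)$ on every realization for which $\phi_{\mathbf p}$ satisfies RSW'' therefore does not follow: the partially revealed measure is not stochastically comparable to $\phi_{\frak p}$ (some parameters are moved up and some down when set to $\tfrac12$), it lies outside the family $\phi_{\frak p(s)}$, and it depends on the arbitrary ordering of the edges. Your dyadic induction does not obviously repair this, because the pivotality of an edge for a crossing at scale $2r$ involves arms running up to scale $2r$ under the \emph{intermediate} measure, which is precisely the object the induction hypothesis fails to address. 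The paper sidesteps this circularity with the stopping time $T_b$ and the almost-sure continuity of the Brownian trajectory: below $T_b$ every intermediate measure along the deformation is RSW by definition, and continuity turns ``strict inequality at $T_b$'' into ``survival slightly past $T_b$''. To make your discrete argument work you would need the analogous statement for the edge-by-edge revelation — a stopping index $I_b$, uniformly over orderings and all $O(N^3)$ rectangles — and this is a genuine additional piece of work that the phrase ``careful book-keeping of errors'' understates. (As a smaller point, ``$B_i\lesssim\pi_4(R)$'' needs to be read as $\pi_4(1,R)$ for bulk edges and suitable half-plane/corner arm events near $\partial\calR$; this affects constants but not the shape of $\sum_iB_i^2\lesssim 1$.)
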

As one can see within the proof of this theorem, there are a priori \emph{no} conceptual reason to only use our technique to near-critical random environments. In the simplified Gaussian case, one can continuously move the random environments from the critical one to the Gaussian one using i.i.d.\ Brownian motions. Along the deformation, the crossing probability in a rectangle, whose initial value is bounded away from $0$ and $1$, is a local martingale with some uniformly bounded (in the size of the rectangle and also in $N$) quadratic variation, leaving some hope that one can control it up to some small but fixed positive time. Still, to carry out the main steps of our reasoning, it is essential to preserve criticality \emph{at every scale}, meaning that we have to control crossing probabilities for \emph{all} the $O(N^3)$ $2$ by $1$ rectangles of $\bbT_N$. Restricting to random variables with standard deviation  
 $O(\log(N)^{-O(1)})$ is sufficient to control crossing probabilities in all the rectangles of $\bbT_N$, via some crude large deviation principle. To go beyond this regime, one would like to develop some renormalisation argument, informally stating that if only a small fraction of the rectangles inside $\mathbb{T}_N$ become off-critical, then large-scale critical behaviour might persist. A more careful analysis shows that, using the same Brownian deformation process, the quadratic variation of the local martingales encoding crossing probabilities decays to $0$ with the size of the box (improving the uniform positive bound mentioned few lines ago). This hints that one can try to shortcut such renormalisation argument using \emph{noise sensitivity} of Bernoulli percolation \cite{benjamini1999noise}, which turns out to be sufficient.

Let us precise the previous discussion for site percolation on the triangular lattice, whose conformal invariance has been proved in \cite{Smi01} (see also \cite{WerSmi} for the rigorous  derivation of the critical exponents). Consider the infinite lattice $\mathcal{T}$ made of equilateral triangles of edge length $1$, where the segment $[0,1]\subset \bbC $ is the side of one the triangles. Define the triangle $\mathcal{T}_N$ whose three extremal vertices are located at $A_N=(-\frac{N}{2};0)$, $B_N=(\frac{N}{2};0)$ and $C_N=(0;\frac{\sqrt{3}}{2}N)$. The (homogeneous) critical site percolation on the triangular lattice is a random colouring of the vertices of $ \mathcal{T}_N$, where each site is coloured in blue or yellow with probability $\frac{1}{2}$ and independently from the others. Fix some positive number $x\in [0;1]$ and consider the point $x_N=(\lfloor(\frac{1}{2}-x)N\rfloor,0)$. In this context, denote $\phi_{\calT,\frac12,1}$ the fair site percolation measure on the infinite lattice $\calT$. One can define the crossing event
\begin{equation}
	\scrC([C_N;A_N],[x_N;B_N]):= \Big\{ [C_N;A_N] \textrm{ is connected to }[x_N;B_N] \textrm{ by a blue path in } \calT_N\Big\}.
\end{equation}
\begin{figure}
    \centering
    \includegraphics[width=0.5\textwidth]{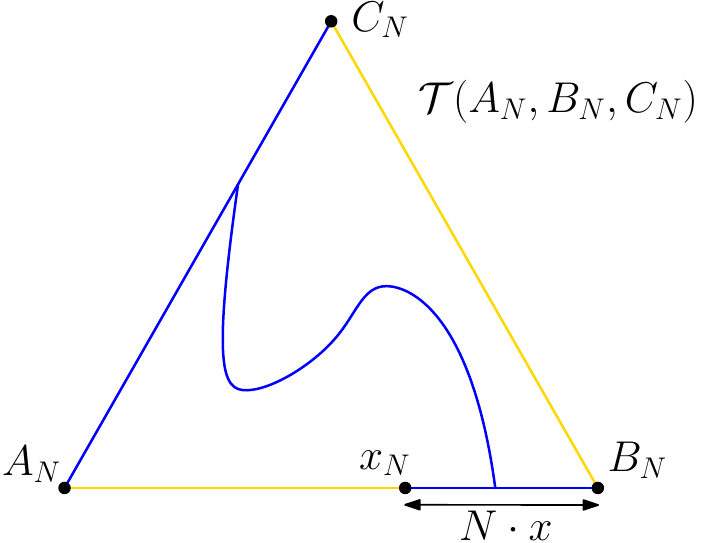}
    \caption{An illustration of the event $\scrC([C_N;A_N];[x_N;B_N])$, the blue curve depicts a path of blue hexagons.}
    \label{fig:cardy}
\end{figure}
The following beautiful theorem is due to Smirnov \cite{Smi01} settling Cardy's conjecture for crossing probabilities of percolation. One simplified version reads as 
\begin{theorem}[Cardy-Smirnov formula \cite{Smi01}]\label{thm:cardy_formula}
    In the previous context
    \begin{equation}
    	\lim\limits_{N\to \infty} \phi_{\calT,\frac12,1} \Bigg[ \scrC([C_N;A_N],[x_N;B_N]) \Bigg] = x.
    \end{equation}
\end{theorem}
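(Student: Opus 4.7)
The plan is to reproduce Smirnov's original argument, as the theorem is essentially a specialization of his general Cardy--Smirnov formula to the configuration in Figure~\ref{fig:cardy}. The starting point is to introduce, at each site $z$ of $\calT_N$, three probabilities $H_A^N(z),H_B^N(z),H_C^N(z)$, where $H_A^N(z)$ is the probability that there exists a simple blue path in $\calT_N$ from the side $[B_N,C_N]$ to the side $[C_N,A_N]$ which separates $z$ from the side $[A_N,B_N]$, and similarly with the roles of $A,B,C$ cyclically permuted. With $\tau:=e^{2\pi i/3}$, set
\begin{equation*}
F^N(z):=H_A^N(z)+\tau\, H_B^N(z)+\tau^2\, H_C^N(z).
\end{equation*}
A direct complementation/uniqueness-of-interface argument shows $H_A^N+H_B^N+H_C^N\equiv 1$.

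The core combinatorial input is Smirnov's \emph{color-switching lemma}: around each hexagonal face $f$ of $\calT_N$, the existence of a blue path relevant to $H_A^N$ at one vertex of $f$ can be paired bijectively with a configuration relevant to $H_B^N$ at an adjacent vertex (and similarly cyclically), by flipping the color of the pivotal site in $f$. This identity, available only because each site carries an unbiased $\pm$ at $p=\tfrac12$ on $\calT$, gives
\begin{equation*}
H_A^N(z_1)-H_A^N(z_2)=H_B^N(z_2')-H_B^N(z_1')
\end{equation*}
around triangles, which after summation around each hexagonal face yields a discrete Cauchy--Riemann equation for $F^N$.

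Next I would use the full RSW theory for critical site percolation on $\calT$ to deduce uniform Hölder continuity of the $H_\bullet^N$ in $N$, exactly as in the classical proof: two well-separated points $z,z'$ at distance $r\ll N$ cannot contribute differently to $H_A^N$ without a pivotal arm-type configuration in the annulus between them, the probability of which is polynomially small in $r/N$ thanks to the known universal multi-arm estimates. By Arzelà--Ascoli, any subsequence has a further subsequence converging uniformly on compact subsets of the continuum triangle $\calT_\infty$ (with vertices $A=(-\tfrac12,0),B=(\tfrac12,0),C=(0,\tfrac{\sqrt3}{2})$, after scaling by $1/N$) to continuous functions $H_A,H_B,H_C$; the discrete holomorphicity passes to the limit, so $F=H_A+\tau H_B+\tau^2 H_C$ is holomorphic on the interior of $\calT_\infty$.

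Finally I identify $F$. RSW gives the boundary values: $H_A\equiv 0$ on $[A,B]$, $H_A(A)=1$, and $H_A$ is monotone along the other two sides, with analogous statements after cyclic rotation. These Dirichlet-type boundary data uniquely determine a holomorphic function on the equilateral triangle, and explicit verification shows it is the affine map $F(z)=z$ (suitably normalized), i.e.\ $H_A(z)$ equals the barycentric coordinate of $z$ relative to $A$. The event in the statement is exactly $\{H_A^N \text{ is realised with separation point }x_N\}$, and the barycentric coordinate of a boundary point on $[A_N,B_N]$ at normalized distance $x$ from $B_N$ equals $x$; this yields the claimed limit.

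The main obstacle, which is also the most delicate step in Smirnov's proof, is making the passage from discrete to continuous holomorphicity completely rigorous: one needs the Hölder regularity from RSW to be strong enough to both control boundary behavior and to ensure that the contour integrals of $F^N$ around macroscopic domains, which vanish up to $o(1)$ by the color-switching identity, genuinely force the limit to satisfy $\oint F\,dz=0$. Identifying the boundary values along $[A_N,B_N]$ (in particular the monotonicity and the correct normalization at the corners) also requires a careful use of RSW-plus-FKG at the boundary of the triangle.
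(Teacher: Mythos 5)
The paper does not prove Theorem~\ref{thm:cardy_formula}: it is imported verbatim from \cite{Smi01} as external input, and the authors never re-derive it. So there is no ``paper's own proof'' to compare against; what you have written is a sketch of Smirnov's original argument (in Carleson's equilateral-triangle normalisation, which is indeed why the limit is the affine function $x$ rather than the hypergeometric expression one sees for a generic quadrilateral). As a recapitulation of Smirnov, your outline captures the right architecture: the three observables $H_A^N,H_B^N,H_C^N$, the complex combination $F^N=H_A^N+\tau H_B^N+\tau^2 H_C^N$, the color-switching lemma giving discrete contour integrals that vanish, RSW/arm estimates for equicontinuity and boundary behaviour, Arzel\`a--Ascoli plus Morera to pass to a holomorphic limit, and identification via boundary values.

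One step is stated incorrectly. You assert that $H_A^N+H_B^N+H_C^N\equiv 1$ exactly at finite $N$ ``by a direct complementation/uniqueness-of-interface argument.'' No such pointwise identity holds: the three separation events are neither disjoint nor exhaustive at a given mesh, so the sum is not identically $1$ on the discrete triangle. In Smirnov's proof the quantity $G^N:=H_A^N+H_B^N+H_C^N$ is treated exactly like $F^N$---the same color-switching lemma shows it satisfies the discrete Cauchy--Riemann relations up to error terms controlled by arm exponents, so $G^N$ converges (along with $F^N$) to a harmonic function; its boundary values force the limit to be the constant $1$. The identity you want is a scaling-limit statement, not a combinatorial one, and it is precisely one of the places where the discrete-holomorphicity machinery and RSW estimates are doing genuine work. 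With that correction the sketch is faithful to the standard proof, modulo the usual caveat (which you do flag) that passing from approximate discrete analyticity to true holomorphicity of the limit, and controlling the corner/boundary normalisations, is where the real effort lies.
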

In our framework, we take advantage of the stability of crossing events to see that the Cardy formula is preserved when working with in a random environments, even with macroscopic deviation. Fix $0< \varepsilon \leq \frac{1}{2}$ small enough. Similarly to the notations used for the square lattice, set $\frak{p}=(\mathbf{p}_{v})_{v\in \mathcal{T}}$ a random environment on the sites of $\calT$ under some probability $\mathbf{P}$. We say that the random environment $\frak p$ satisfies the condition $(\star)_{\mathcal{T}_N}^{q=1,\varepsilon}$ if the $\mathbf p_v,v\in \calT$ are independent and if they all almost surely belong to $[\eps,1-\eps]$. It turns out that these rather weak requirements are sufficient to preserve the Cardy formula in a random environment. 
\begin{theorem}[Cardy's formula in random environment]\label{thm:Cardy-random-environment}For any $\eps>0$ and any random environment $\frak p$ satisfying $(\star)_{\mathcal{T}_N}^{q=1,\varepsilon}$ one has the following convergence in probability
    \begin{equation}
    	\sup\limits_{x\in [0,1]}\Big| \phi_{\calT,\frak{p},1}\big[ \scrC([C_N;A_N];[x_N;B_N]) \big] - x \Big|\xlongrightarrow[N\to\infty]{(\mathbb{P})}0.
    \end{equation}
\end{theorem}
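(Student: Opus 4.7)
The plan is to prove the theorem by combining the classical Cardy--Smirnov formula (applied to the \emph{annealed} measure) with a noise-sensitivity argument that controls the \emph{quenched} fluctuations. Throughout I assume the natural centring $\bbE_{\mathbf P}[\mathbf p_v]=\tfrac12$, which is implicit in the statement (otherwise the annealed measure is not critical and the limit $x$ would not appear).

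\textbf{Step 1: Annealed identification.} Build the coupled law by first sampling $\frak p\sim\mathbf P$ and then, conditionally on $\frak p$, sampling $\omega$ as a vector of independent $\mathrm{Bernoulli}(\mathbf p_v)$ bits. Since the $\mathbf p_v$ are mutually independent under $\mathbf P$, the unconditional law of $\omega$ is again a product measure with marginals $\mathrm{Bernoulli}(\bbE_{\mathbf P}[\mathbf p_v])=\mathrm{Bernoulli}(\tfrac12)$. Hence the annealed law coincides exactly with fair site percolation on $\calT$, and Theorem~\ref{thm:cardy_formula} gives
\[
\bbE_{\mathbf P}\bigl[\phi_{\calT,\frak p,1}[\scrC([C_N;A_N];[x_N;B_N])]\bigr]\xrightarrow[N\to\infty]{} x
\]
for each $x\in[0,1]$.

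\textbf{Step 2: Two-sample coupling and noise sensitivity.} Draw, conditionally on $\frak p$, two independent copies $\omega^{(1)},\omega^{(2)}$ of the quenched configuration. Under the joint measure each $\omega^{(i)}$ is marginally critical Bernoulli percolation as in Step~1, and the pairs $(\omega^{(1)}_v,\omega^{(2)}_v)$ are mutually independent across $v$. A direct computation in the $\pm1$ variables $X_v^{(i)}:=2\omega_v^{(i)}-1$ gives the per-site correlation
\[
\rho_v:=\bbE\bigl[X^{(1)}_vX^{(2)}_v\bigr]=4\,\mathrm{Var}_{\mathbf P}(\mathbf p_v)\le 1-4\eps(1-\eps)<1,
\]
uniformly in $v$, since $\mathbf p_v\in[\eps,1-\eps]$. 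Equivalently $\omega^{(2)}$ can be realised from $\omega^{(1)}$ by independently resampling each bit with probability $\eta_v:=1-\rho_v\ge 4\eps(1-\eps)>0$. The Fourier--Walsh decomposition on $\{-1,1\}^{V(\calT_N)}$ then yields
\[
\mathrm{Var}_{\mathbf P}\bigl(\phi_{\calT,\frak p,1}[\scrC]\bigr)=\bbE\bigl[\ind_{\scrC}(\omega^{(1)})\ind_{\scrC}(\omega^{(2)})\bigr]-\bbE\bigl[\ind_{\scrC}\bigr]^2=\sum_{S\neq \emptyset}\widehat{\ind_{\scrC}}(S)^2\prod_{v\in S}\rho_v\le \sum_{S\neq \emptyset}\widehat{\ind_{\scrC}}(S)^2\bigl(1-4\eps(1-\eps)\bigr)^{|S|},
\]
and the right-hand side tends to $0$ by Benjamini--Kalai--Schramm noise sensitivity \cite{benjamini1999noise} (or the quantitative spectral bounds of \cite{garban2013pivotal}) applied to the Cardy crossing event, whose $4$-arm pivotals are abundant enough for noise sensitivity at any fixed positive noise level.

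\textbf{Step 3: Uniformity in $x$ and main obstacle.} Combining Steps~1 and~2 shows $\phi_{\calT,\frak p,1}[\scrC([C_N;A_N];[x_N;B_N])]\to x$ in $L^2(\mathbf P)$, hence in $\mathbf P$-probability, for each fixed $x$. The quenched crossing probability is monotone nondecreasing in $x$ (enlarging the target segment $[x_N;B_N]$ can only favour the event) while the deterministic limit $x\mapsto x$ is continuous, so a Dini-type argument on a finite $\delta$-grid $0=x_0<\dots<x_K=1$, combined with a union bound over the grid, upgrades pointwise convergence in probability into the desired uniform convergence $\sup_{x\in[0,1]}|\phi_{\calT,\frak p,1}[\scrC]-x|\to 0$ in probability. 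The main obstacle is the noise-sensitivity step, where the effective noise level $\eta_v$ is \emph{non-uniform} across sites; this is handled either by observing that the spectral identity above is insensitive to varying $\rho_v<1$ (only the product structure matters), or by a monotone coupling that first applies uniform noise $\eta_0=4\eps(1-\eps)$ to $\omega^{(1)}$ and then adds further site-dependent noise, reducing the problem to the standard BKS framework.
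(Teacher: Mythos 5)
Your proof is correct and establishes the same result, but via a genuinely different route than the paper. The paper first proves the result for the dichotomous environment $\frak p^\eps$ with $\mathbf p_v\in\{\eps,1-\eps\}$ by encoding the environment as a critical configuration $\omega_{env}$ and sampling $\omega$ as an $\eps$-noised version of $\omega_{env}$, then invokes the equivalence between noise sensitivity and the vanishing of $\mathbf P[\omega\in\scrC|\omega_{env}]-\mathbf P[\scrC]$ in probability (the cited exercise in \cite{garban2012noise}); it then reduces the general environment to this dichotomous one by a conditional coupling $\frak q\in\{\eps,1-\eps\}^V$ with $\bbE[\mathbf q_v\mid\frak p]=\mathbf p_v$ and a Markov-inequality argument. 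Your approach instead bounds the quenched variance directly: the two-replica identity
$\mathrm{Var}_{\mathbf P}(\phi_{\frak p}[\scrC])=\sum_{S\neq\emptyset}\widehat{\ind_{\scrC}}(S)^2\prod_{v\in S}\rho_v$ with $\rho_v=\bbE[(2\mathbf p_v-1)^2]\le(1-2\eps)^2$ reduces everything in one step to the spectral characterization of noise sensitivity, with no detour through dichotomous environments. This is arguably cleaner, gives $L^2$ rather than just in-probability convergence, and has the virtue of absorbing the non-uniform correlations $\rho_v$ automatically through the product structure. Both arguments rest on the same external input — Benjamini--Kalai--Schramm noise sensitivity of the Cardy crossing event, which requires the pivotal-square-summability condition $\sum_v\mathbf P[\mathrm{Piv}_v]^2\to0$; you treat this as a black box whereas the paper sketches the arm-event verification (bulk $4$-arm, boundary half-plane $3$-arm, corner $2$-arm), so your exposition would need to borrow that sentence to be self-contained. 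Finally, you are right to flag the centering condition $\bbE[\mathbf p_v]=\tfrac12$: it is needed (both for the annealed identification and so that the Walsh cross-terms vanish), it appears in the hypotheses of Proposition~\ref{prop:random-bond-perco} but was inadvertently dropped from the definition of $(\star)^{q=1,\eps}_{\calT_N}$ preceding Theorem~\ref{thm:Cardy-random-environment}.
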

In principle, one can try to use the quantitative version of noise sensitivity \cite{garban2012noise} to obtain explicit speeds of convergence in the previous result. Adapting almost verbatim the proofs of Theorem \ref{thm:percolation-square} ensures that Theorem \ref{thm:Cardy-random-environment} already holds in the weakly random environment when chosing $\varepsilon=\varepsilon(N)=\frac{1}{2}-O(\log(N)^{-2})$ with polynomial rates of convergence. This Cardy formula in random environment is in sharp contrast with the work of Nolin and Werner \cite{nolin2009asymmetry}, which focuses on the deterministic near-critical percolation regime. In that setting, the critical window is given by $\textrm{W}(N) = N^{-3/4 + o(1)}$, and there exists a non-trivial scaling limit where the interface is neither degenerate nor trivial, exhibiting the same Hausdorff dimension $7/4$ as $\mathrm{SLE}(6)$, yet is \emph{not} absolutely continuous with respect to $\mathrm{SLE}(6)$. In particular, in the deterministic near-critical setup, the Cardy formula is not true anymore, while the additional layer of randomness in sorting the environment allows to preserve it.

\textit{Disclaimer: To the best of our knowledge, we did not find any prior reference to an argument relating the behaviour of the critical random bond percolation and noise sensitivity. In particular, we discovered this argument independently from any external inputs.}

\textit{Acknowledgemnts} First of all, we would like to warmly thank Hugo Duminil-Copin for encouraging us to work together and for sharing with us many insights. We would also like to thank Tiancheng He, Ioan Manolescu, and Stanislav Smirnov for inspiring discussions. Rémy Mahfouf would like to thank Dmitry Chelkak for discussions while working on \cite{Mah25}. This project has received funding from the Swiss National Science
Foundation and the NCCR SwissMAP. 
\section{A crash introduction into scaling relations}

\subsection{Recalling scaling relations in the homogenous setup}\label{sub:recalling_scaling_relations}
In this section, we recall in a concise fashion the so-called scaling relations that link the behaviour at large scale of the critical model to properties of the near-critical models. We present here some of the main results of \cite{FK_scaling_relations} and their counterparts when the bonds of the FK-percolation models are not uniform and translation anymore. We keep the notations of \cite{FK_scaling_relations}. Before going into precise statements, we recall standard properties of the FK-random cluster models when $1\leq q \leq 4 $. Let $\leq$ be the partial ordering on edges of $\{0,1 \}^{E}$ given by $\omega\le \omega'$ if and only if for each $e\in E$, $\omega(e)\le \omega'(e)$. An event $A$ is called \emph{increasing} if for any $\omega\leq \omega' $, the event $\omega \in A $ implies that $\omega'\in A$. Increasing events are those which are preserved when adding open edges to the clusters. Similarly, we say that the boundary condition $\xi'$ \emph{dominates} the boundary condition $\xi$ if the partition $\xi$ is finer than the partition $\xi'$ (that is to say any wired vertices in $\xi$ are automatically wired in $\xi'$). This is denoted $\xi'\geq \xi $. The FK-percolation models display some fairly simple properties (that do not require any kind of translation invariance) that we summarise here\begin{itemize}
	\item \textbf{Positive association} For $A,B$ increasing events and $\xi $ a boundary condition,
\begin{equation}
	\phi_{G,\underline p,q}^{\xi}[A\cap B]\geq \phi_{G,\underline p,q}^{\xi}[A] \cdot \phi_{G,\underline p,q}^{\xi}[B].
\end{equation}
	\item \textbf{Boundary monotonicity} For $A$ an increasing event and  $\xi'\geq \xi $,
	\begin{equation}
	\phi_{G,\underline p,q}^{\xi'}[A]\geq \phi_{G,\underline p,q}^{\xi}[A].
	\end{equation}
	\item \textbf{Domain Markov property} For $H$ a subgraph of $G$ and $\xi$ a boundary condition on $G$,
	\begin{equation}
		\phi_{G,\underline p,q}^{\xi}[\omega \textrm{ on } H| \omega \textrm{ on } G\backslash H]=\phi_{H,\underline p,q}^{\zeta}[\omega \textrm{ on } H],
	\end{equation}
	where $\zeta$ is the boundary condition on $\partial H$ induced by wiring vertices together if they are connected in $G\backslash H$ through $\omega$, potentially using connections through $\xi$. 
\end{itemize}

One on the key tools to understand the role of boundary conditions on events far from the boundary is the so-called \emph{mixing rate}, which replaces the $4$-arm probabilities in Kesten's original proof of scaling relation for bond percolation \cite{kesten1987scaling}. In most notations used until the end of the paper, the parameter $1\leq q \leq 4$ is not written in the probabilistic statements to lighten the reading.
\begin{definition}[\textbf{Mixing rate}- Definition 1.5 in \cite{FK_scaling_relations}]
	For $1< q \leq 4$, $1\leq r\leq R $, $p\in (0,1)$ and an edge $e_0$ incident to the origin, write
	\begin{equation}
		\Delta_{p}(R):=\phi_{\Lambda_R,p}^{1}[\omega_{e_0}]-\phi_{\Lambda_R,p}^{0}[\omega_{e_0}]
	\end{equation}
		\begin{equation}
		\Delta_{p}(r,R):=\phi_{\Lambda_R,p}^{1}[\scrC(\Lambda_r)]-\phi_{\Lambda_R,p}^{0}[\scrC(\Lambda_r)]
	\end{equation}
\end{definition}
We use the convention that $\Lambda_1=\{e_0\}$ so that for all $R$, one has $\Delta_{p}(R)=\Delta_p(1,R)$. In the uniform deterministic setup, the above quantities are invariant by translation thus defining them for some specific edge around the origin is costless. The following properties of the mixing rate are at the heart of \cite{FK_scaling_relations}. Recall that for $p\in (0,1)$, $L(p)$ is the scale under which the model with parameter $p$ satisfies RSW. In what follows, $d(e,f)$ denotes the distance between two edges $e$ and $f$.

\begin{theorem}[Theorem 1.4 and 1.6 in \cite{FK_scaling_relations}]\label{thm:scaling-relations}
Fix $1<q\leq4$.
\begin{enumerate}
	\item For every $\eta <1$,  $p\in (0,1)$, $R\leq L(p)$ and every $2$ by $1$ rectangle $\calR$ containing $\Lambda_{\eta R}(e) $, there exist constants $\asymp $ depending on $\eta$ such that
	\begin{equation}
		\textrm{Cov}_{p}[\omega_e,\scrC(\calR)] \asymp \Delta_p(R).
	\end{equation}
	\item (Quasi-multiplicativity) For every $p\in (0,1)$ and $1\leq r \leq R \leq L(p)$, 
\begin{equation}
	\Delta_p(r)\Delta_{p}(r,R) \asymp \Delta_p(R)
\end{equation}
	\item For any two edges $e,f$ such that $d(e,f)= R\le L(p) $, 
\begin{equation}
	\textrm{Cov}_{p}(\omega_e,\omega_f) \asymp \Delta_p(R)^2.
\end{equation}
\end{enumerate}
\end{theorem}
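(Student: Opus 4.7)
The plan is to reduce all three assertions to a single principle: boundary conditions influence local observables only through the mixing rate $\Delta_p$, with constants controlled by the strong RSW property. Throughout, the domain Markov property converts covariances into differences of conditional probabilities, and these differences are bounded using monotone couplings and gluing estimates coming from rectangle crossings.

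For part (1), I would begin with the identity
\[
\Cov_p[\omega_e, \scrC(\calR)] = \phi_p[\omega_e=1]\,\phi_p[\omega_e=0]\cdot\Big(\phi_p[\scrC(\calR)\mid \omega_e=1] - \phi_p[\scrC(\calR)\mid \omega_e=0]\Big).
\]
The prefactor is bounded above and below by positive constants thanks to RSW. By the domain Markov property, the two conditional probabilities correspond to the same measure with two different modifications of the boundary condition, localised at $e$. The upper bound $\lesssim \Delta_p(R)$ then follows from a monotone coupling: any localised boundary perturbation inside $\Lambda_R$ is dominated by the extremal perturbation defining $\Delta_p(R)$. For the matching lower bound, the strategy is to build, via RSW, an event of positive probability on which $\omega_e$ is pivotal for $\scrC(\calR)$; the conditional crossing probabilities then differ by at least the probability of pivotality, which can be related to $\Delta_p(R)$ using arm-type estimates at criticality.

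Part (2) would be handled by a gluing argument at the intermediate scale $r$. I would decompose the influence of boundary conditions on $\partial \Lambda_R$ on the state of $e_0$ into two stages: propagation from $\partial \Lambda_R$ down to $\partial \Lambda_r$, giving the factor $\Delta_p(r, R)$, followed by propagation from $\partial \Lambda_r$ to $e_0$, contributing $\Delta_p(r)$. The gluing at scale $r$ is carried out using RSW circuits in the annulus $\Lambda_R \setminus \Lambda_r$, which, combined with the FKG inequality, ensure that the two stages concatenate with only a multiplicative constant cost. For part (3), I would apply an analogous double-propagation argument: conditioning on $\omega_e$ creates a local boundary perturbation near $e$, which propagates to scale $R$ at cost $\Delta_p(R)$ (essentially by part (1) applied at scale $R$), and a symmetric propagation from scale $R$ down to the edge $f$ contributes another $\Delta_p(R)$, yielding the overall order $\Delta_p(R)^2$.

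The main obstacle throughout is obtaining the matching lower bounds on covariances: the upper bounds follow essentially from monotonicity and the definition of $\Delta_p$, but the lower bounds require carefully constructing pivotal or arm-type configurations at criticality. This is precisely where the strong RSW property for the FK model in the regime $1<q\le 4$ and the gluing techniques developed in \cite{FK_scaling_relations} are essential, since edge-independence, which simplifies such constructions in the Bernoulli case, is no longer available.
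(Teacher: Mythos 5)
The paper itself does not prove this theorem: it is cited directly from \cite{FK_scaling_relations}, and the closest analog in the text is the sketch of Proposition~\ref{prop:generalisation-thm-scaling-relations}, which generalises these relations to non-translation-invariant environments. At the level of overall architecture your proposal is close to that sketch: a two-stage propagation of boundary-condition influence, with upper bounds coming from monotonicity and lower bounds from a constructive estimate at each scale.

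The genuine gap is in the mechanism you propose for the lower bound in part~(2). Gluing two stages of influence using RSW circuits in $\Lambda_R\setminus\Lambda_r$ plus FKG is the classical Kesten argument for quasi-multiplicativity of \emph{arm events}, and it works there because arm events are geometric existence events that concatenate along a common circuit. The mixing rate $\Delta_p(r,R)$ is not such an object: it is the difference $\phi^1_{\Lambda_R}[\scrC(\Lambda_r)]-\phi^0_{\Lambda_R}[\scrC(\Lambda_r)]$, and when you reveal the annulus in a monotone coupling, the induced boundary conditions $\zeta\le\zeta'$ on $\partial\Lambda_r$ are arbitrary random partitions, not the extremal ones. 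To have the second stage contribute a full factor $\Delta_p(r)$, one must show that conditionally on $\zeta\ne\zeta'$, with conditional probability bounded below the pair $(\zeta,\zeta')$ forms a \emph{boosting pair} --- $\zeta'$ is ``much more wired'' than $\zeta$, in the terminology of \cite{FK_scaling_relations}, Section~3, recalled in \eqref{eq:much-more-wired}. This is the central technical innovation of \cite{FK_scaling_relations}, established by the coupling-via-decision-trees machinery, and it is not replaced by circuits and FKG. The same remark applies to your double-propagation sketch for part~(3), and the upper bound in part~(1) is also more involved than a single monotone domination: it proceeds through a decomposition of the covariance over pivotal boxes at every intermediate scale (Lemma~5.3 of \cite{FK_scaling_relations}, used in this paper in the proof of Lemma~\ref{lem:Cov-crossing-edge}), not by dominating a local perturbation at $e$ by the extremal boundary perturbation defining $\Delta_p(R)$.
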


We note that all of the results above rely purely on RSW-type reasoning and estimates, and notably make very limited use of the model’s translation invariance, which we will discuss in more detail in the following section. We now provide a brief explanation of the scaling relation stating that the scaling window for FK-percolation satisfies $ \textrm W(N) \asymp \left(N^2 \Delta_{p_c}(N)\right)^{-1}$.
To formulate this in a way more suited to the random bond setting, we work on the torus $\mathbb{T}_N$. In this context, one may still define the mixing rates $\Delta_p(R)$ for $R \leq N$, which remain of the same order as in the full-plane case. Indeed, the mixing rates are defined based on measures with free boundary conditions on boxes, which does not depend on whether we are working in the plane or in the torus.
In the next theorem and its proof, one sets 
\begin{equation*}
    \widehat{\textrm{W}}(N):= (N^2\Delta_{p_c}(N))^{-1}
\end{equation*}
to be the ``expected value'' of the critical window.
The following theorem reads as the inequality $\textrm{W}(N)\gtrsim \widehat{\textrm{W}}(N)$.
\begin{theorem}[Theorem 1.4 and 1.6 in \cite{FK_scaling_relations}]\label{thm:scaling-relations_second}
Fix $1<q\leq4$, for any $N\geq 1 $,
\begin{enumerate}
	\item For every $2$ by $1$ rectangle $\calR$ in $\bbT_N$ of width $R\le N/2$ and for every $p\in (0,1)$ one has
\begin{equation}
	\Big| \phi_{\bbT_N,p}[\scrC(\calR)]-\phi_{\bbT_N,p_c}[\scrC(\calR)] \Big| \lesssim  \frac{|p-p_c|}{\widehat{\textrm W}(N)}.
\end{equation}
\item (Mixing rates are stable below the characteristic length) There is a small constant $c>0$ such that $|p-p_c |\leq c \cdot \widehat{\textrm{W}}(N) $ and any $R\le N$,
	\begin{equation}
		\Delta_p(R)\asymp \Delta_{p_c}(R),
	\end{equation}
where the mixing rates are defined for the homogeneous full-plane measure \eqref{eq:def-mixing-rate}.
\end{enumerate}
\end{theorem}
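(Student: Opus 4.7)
The two items are intertwined and I would establish them simultaneously via a continuity argument. Let $p^\star>p_c$ be the largest value such that the mixing rates satisfy $\Delta_p(r)\asymp\Delta_{p_c}(r)$ uniformly in $r\le N$ and in $p\in[p_c,p^\star]$. The goal is to show $p^\star-p_c\gtrsim\widehat{\textrm W}(N)$ while, in parallel, extracting the crossing estimate of item 1 on the entire range $[p_c,p^\star]$; the symmetric claim for $p<p_c$ follows analogously.

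For item 1, I would apply a Russo-type differential formula, so that
\begin{equation*}
\Big|\tfrac{d}{dp}\phi_{\bbT_N,p}[\scrC(\calR)]\Big|\ \lesssim\ \sum_{e\in E_N}\big|\Cov_p(\omega_e,\ind_{\scrC(\calR)})\big|,
\end{equation*}
up to factors $1/(p(1-p))$, which remain bounded under the bootstrap assumption. By Theorem \ref{thm:scaling-relations} (item 1), the covariance is $\asymp\Delta_p(R)$ for edges $e$ deep inside $\calR$, and a layering argument over concentric sub-rectangles of $\calR$ (using the analogous bound for edges closer to the boundary) controls the total sum by $O(R^2\Delta_p(R))$. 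Using the bootstrap hypothesis to replace $\Delta_p(R)$ by $\Delta_{p_c}(R)$ and integrating from $p_c$ to $p$, I obtain
\begin{equation*}
|\phi_{\bbT_N,p}[\scrC(\calR)]-\phi_{\bbT_N,p_c}[\scrC(\calR)]|\ \lesssim\ |p-p_c|\cdot R^2\Delta_{p_c}(R).
\end{equation*}
The conclusion then reduces to the polynomial growth $R^2\Delta_{p_c}(R)\lesssim N^2\Delta_{p_c}(N)=1/\widehat{\textrm W}(N)$ for $R\le N$, which follows from quasi-multiplicativity together with a lower bound $\Delta_{p_c}(r,R)\gtrsim(r/R)^{2-\epsilon}$ for some $\epsilon>0$, itself an arm-event consequence of the RSW property at $p_c$.

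For item 2, I would close the bootstrap by differentiating the mixing rate itself. Writing $\Delta_p(R)=\phi^1_{\Lambda_R,p}[\omega_{e_0}]-\phi^0_{\Lambda_R,p}[\omega_{e_0}]$ and applying Russo's formula to both terms, the derivative is dominated by a sum of two-point covariances $\Cov_p^\xi(\omega_e,\omega_{e_0})$, each of order $\Delta_p(d(e,e_0))^2$ by Theorem \ref{thm:scaling-relations} (item 3). Summing and re-using quasi-multiplicativity yields $|\tfrac{d}{dp}\Delta_p(R)|\lesssim R^2\Delta_p(R)^2$. A standard Gronwall comparison then shows that $\Delta_p(R)/\Delta_{p_c}(R)$ stays within a bounded multiplicative factor as long as $|p-p_c|\cdot R^2\Delta_p(R)\lesssim 1$, and the polynomial growth of $r^2\Delta_{p_c}(r)$ allows me to replace this condition by $|p-p_c|\lesssim\widehat{\textrm W}(N)$, uniformly in $R\le N$.

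The main obstacle is the self-consistent nature of the argument: item 1 invokes the stability of mixing rates, while item 2 relies on covariance estimates that themselves assume the RSW regularity item 1 encodes. The continuity argument must therefore open both properties simultaneously, which is what the Gronwall step combined with the definition of $p^\star$ achieves. A secondary technical subtlety is reconciling the torus measure, with its $\sqrt{q}^{s(\omega)}$ correction, with the full-plane measure that defines $\Delta_{p_c}$; for scales $R\ll N$ the domain Markov property together with finite-energy bounds makes them interchangeable, but this has to be controlled uniformly in the rectangle $\calR$.
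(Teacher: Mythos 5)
Your overall architecture -- a continuity/bootstrap argument on a breaking threshold $p^\star$, differential inequalities via Russo's formula, and a Gronwall-type closure -- matches the paper's strategy. However, two of your intermediate derivative bounds have real gaps, and one of them corresponds to a genuinely different technical route than the paper's.

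For item~1, your ``layering argument over concentric sub-rectangles of $\calR$'' only accounts for edges \emph{inside} $\calR$. For FK percolation with $q>1$ the covariance $\Cov_p(\scrC(\calR),\omega_e)$ does not vanish for $e$ outside $\calR$; as Lemma~\ref{lem:Cov-crossing-edge} and Remark~\ref{rem:sum-Cov-crossing-edge} show, the far-field contribution $\sum_{R<\ell\le N}\ell\,\Delta_{p_c}(R,\ell)\Delta_{p_c}(\ell)$ can dominate $R^2\Delta_{p_c}(R)$ when $R\ll N$ (for instance, at $q=4$ the far-field term is $\asymp N R^{1/2}$ against $R^{3/2}$). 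You eventually land on the correct final estimate $\lesssim N^2\Delta_{p_c}(N)=\widehat{\textrm W}(N)^{-1}$, but the intermediate claim that the derivative is $O(R^2\Delta_p(R))$ does not follow from the argument you describe, and the torus size $N$ must enter from the covariance sum itself, not from a monotonicity afterthought.

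For item~2, you differentiate the mixing rate $\Delta_p(R)=\phi^1_{\Lambda_R,p}[\omega_{e_0}]-\phi^0_{\Lambda_R,p}[\omega_{e_0}]$ directly. This is a genuinely different route from the paper, which differentiates the edge covariance $\Cov_p(\omega_e,\omega_f)$ on the torus, bounds its derivative via the third cumulant $\kappa_3^p(e,f,g)$ (Proposition~\ref{prop:differentiate-bonds} and~\eqref{eq:sum_kappa3}), and only afterwards reconstructs $\Delta_p(R)\asymp\sqrt{\Cov_p(\omega_e,\omega_f)}$. Your bound $|\tfrac{d}{dp}\Delta_p(R)|\lesssim R^2\Delta_p(R)^2$ is asserted but not justified by the argument given: bounding each difference $|\Cov^1_p(\omega_e,\omega_{e_0})-\Cov^0_p(\omega_e,\omega_{e_0})|$ by the single-measure estimate $\Delta_p(d(e,e_0))^2$ and summing yields $\sum_{r\le R}r\Delta_p(r)^2$, which is \emph{not} $\asymp R^2\Delta_p(R)^2$ once $\Delta_p$ decays at least as fast as $r^{-1}$. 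For instance at $q=2$ one has $\Delta_{p_c}(r)\asymp r^{-1}$, so $\sum_{r\le R}r\Delta_{p_c}(r)^2\asymp\log R$, whereas $R^2\Delta_{p_c}(R)^2\asymp 1$; quasi-multiplicativity alone cannot close this gap. To obtain a usable bound you must exploit the cancellation between the wired and free covariances -- i.e.\ that the \emph{difference} $\Cov^1-\Cov^0$ carries an extra mixing-rate factor encoding how far the boundary influence penetrates -- and this is precisely what the paper's cumulant route captures through the factor $\Delta_p(d(e,f))$ in $\kappa_3$. Without that input your Gronwall step does not close with the correct threshold $\widehat{\textrm W}(N)$, in particular when $R$ is comparable to $N$.

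A secondary remark: you flag the need to reconcile the torus measure with the full-plane quantities defining $\Delta_{p_c}$; this is real but benign, and the paper handles it by noting that mixing rates are defined through free-boundary box measures, which are insensitive to whether the ambient graph is $\bbZ^2$ or $\bbT_N$ so long as $R\le N/4$.
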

It turns out that this observation is the starting point of many reasonings appearing in the proofs in random environments, as our analysis often boils down to controlling mixing rates there. In this sketch of proof we only focus on the planar supercritical phase $p>p_c$, the subcritical phase can be treated similarly. Let us explicitly mention that all the ingredients in missing to this sketch are carefully performed in the proofs of Lemmas \ref{lem:stability-RSW} and \ref{lem:stability-StabDelta}.

\begin{proof}[Sketch of proof of Theorem \ref{thm:scaling-relations_second} when $p>p_c$] Let us start by proving the first item, by controlling the rate of growth of its derivative in $p$, which reads as (see e.g. \cite[(1.11)]{FK_scaling_relations}
\begin{equation}
\label{eq:crossing_derivative}
    \frac{\dd}{\dd p}\phi_{\bbT_N,p}[\scrC(\calR)]=\frac 1{p(1-p)}\sum_{e\in E_N}\Cov_p(\scrC(\calR),\omega_e)
\end{equation}
The analysis in \cite[(i) inTheorem 1.6]{FK_scaling_relations} estimates $\Cov_{p}(\scrC(\calR),\omega_e)$ using the mixing rates $\Delta_{p}$, via the measure with parameter $p$. Here, one wants to make a similar control, but \emph{only involving critical mixing rates} $\Delta_{p_c}$. This will involve a circular argument where mixing rates control the rate of growth (with $p$) of covariances between edges which control in return the rate of growth (still with $p$) of mixing rates. In order to derive some control via $\Delta_{p_c}$ quantities, one possible route is to show that $\Delta_p(R)$ remains comparable (up to constant) to $\Delta_{p_c}(R)$ when $R\le N$ and $p$ is not too far from $p_c$. To carry this analysis, one can first apply the third item of Theorem \ref{thm:scaling-relations} which reads as $\Delta_p(R)\asymp \sqrt{\Cov_p(\omega_e,\omega_f)}$, where $d(e,f)=R$. Differentiating this time the edge-covariances one gets
\begin{equation}
\label{eq:covariance_derivative}
    \frac{\dd}{\dd p}\Cov_p(\omega_e,\omega_f)=\frac 1{p(1-p)}\sum_{g\in E_N}\kappa_3^p(e,f,g),
\end{equation}
where the definition of the cumulant $\kappa_3^p(e,f,g)$ is recalled at the end of Proposition \ref{prop:differentiate-bonds}. Provided the FK measure $\phi_{\bbT_N,p}$ satisfies some $\textrm{RSW}(\delta,N)$ property, one can control each cumulant $\kappa_3^p(e,f,g)$ using associated mixing rates for the \emph{same} measure $\Delta_p(\cdot)$ as in \cite[Proposition 5.6]{FK_scaling_relations}. Let us now be more concrete about the aforementioned circular reasoning on controlling the whole FK measure when moving it within the critical window. Define the sets of measures on $\bbT_N$ whose mixing rates are comparable to the critical ones i.e.\ 
\begin{equation*}
    \textrm{Stab}_{\Delta}(\delta,N):=\left\{\phi_{\bbT_N,p}\Big| \forall R\le N,\quad \frac{\Delta_p(R)}{\Delta_{p_c}(R)}\in[\delta,1/\delta]\right\}.
\end{equation*}
This allows us to define the \emph{breaking probability} 
\begin{equation*}
    p_b(\delta,N):=\inf\left\{p>p_c\Big|\phi_{\bbT_N,p}\not\in \textrm{RSW}(\delta,N)\cap\textrm{Stab}_{\Delta}(\delta,N)\right\}.
\end{equation*}
In principle, one should be careful with the choice of a joint $\delta$ parameter in $\textrm{RSW}(\delta,N)$ and  $\textrm{Stab}_{\Delta}(\delta,N)$, as asking for a smaller $\delta$ in $\textrm{RSW}(\delta,N)$ means that one worsens constants in the estimate $\Delta_p(R)\asymp \sqrt{\Cov_p(\omega_e,\omega_f)}$ and hence one may not be able to show that the stability event $\textrm{Stab}_{\Delta}(\delta,N)$ holds with the same $\delta$. Assume now that one can bypass this difficulty (we refer to the proof of Lemma \ref{lem:stability-StabDelta} where this is addressed).  For any $p_c\leq p \leq p_{b}(\delta,N)$, one can replace, in all the above  formulae, the mixing rates for the measure $\phi_{\bbT_N,p}$ by the critical ones. Using some geometric estimates linking covariances and mixing rates (which are recalled in Section \ref{sub:geometric-estimates}), one gets, for a $2$ by $1$ rectangle $\calR$ of width $R$ and any two edges $e,f$ such that $d(e,f)=R$,
\begin{align*}
	\frac{\dd}{\dd p}\phi_{\bbT_N,p}[\scrC(\calR)]&\lesssim R^2\Delta_{p_c}(R)+\sum_{R<\ell\le N}\ell \Delta_{p_c}(\ell)\Delta_{p_c}(\ell,R)\\
	 \frac{\dd}{\dd p}\Cov_p(\omega_e,\omega_f)&\lesssim \Delta_{p_c}(R)^2\Bigg( R^2\Delta_{p_c}(R)+\sum_{R<\ell\le N}\ell \Delta_{p_c}(\ell)\Delta_{p_c}(\ell,R)\Bigg).
\end{align*}
We will see in Remark \ref{rem:arm-estimates-transation-invariant-environment} that there exists $c=c(q,\delta)>0$ such that $\Delta_{p_c}(R)\gtrsim R^{2-c}$. Therefore one gets for $p_c\leq p \leq p_{b}(\delta,N)$
\begin{align}
	\frac{\dd}{\dd p}\phi_{p}[\scrC(\calR)]&\lesssim N^2\Delta_{p_c}(N)=\widehat{\textrm{W}}(N)^{-1}\label{eq:crossing_derivative_estimate},\\
	\frac{\dd}{\dd p}\Cov_p(\omega_e,\omega_f)&\lesssim \Delta_{p_c}(R)^2\cdot N^2\Delta_{p_c}(N)= \Delta_{p_c}(R)^2\widehat{\textrm{W}}(N)^{-1}\label{eq:covariance_derivative_estimate}.
\end{align}
To conclude, one should show that $p_b(\delta,N)-p_c \gtrsim \widehat{\textrm{W}}(N)$. Assuming this fails, there exist a subsequence of $N$ such that $|p_b(\delta,N)-p_c|=o(\widehat{\textrm{W}}(N))$. Integrating \eqref{eq:crossing_derivative_estimate} and \eqref{eq:covariance_derivative_estimate} between $p_c$ and $p_b(\delta,N)$, one would have $|\phi_{p_b(\delta,N)}[\scrC(\calR)]-\phi_{p_c}[\scrC(\calR)]|=o(1)$ while $\frac{\dd}{\dd p}\Cov_p(\omega_e,\omega_f)=o(\Delta_{p_c}(R)^2)=o(\Cov_{p_c}(\omega_e,\omega_f))$. Using the fact that crossing probabilities of $2$ by $1$ rectangles in $\bbT_N$ are bounded away from $0$ and $1$ at $p_c$, we get that both crossing probabilities of $2$ by $1$ rectangles and covariances between edges change by some multiplicative $1+o(1)$ factor when going from $p_c$ to $p_b(\delta,N)$. For some small enough $\delta$, this will imply that the the measure $\phi_{\bbT_N,p_b(\delta,N),q}$ satisfies $\textrm{RSW}(2\delta,N)$ and $\textrm{Stab}_{\Delta}(2\delta,N)$ (note that for RSW, one actually requires estimates on crossing probabilities \emph{with boundary conditions}, but we ignore this difficulty here). By continuity in $p$ all the $\phi_{p}[\scrC(\calR)]$ and $\Delta_p(R)$ for all the $2$ by $1$ rectangles $\calR\subset \bbT_N$, this contradicts the definition of the breaking bond probability $p_b(\delta,N)$. 

Once we have proven the inequality $p_b(\delta,N)-p_c \gtrsim \widehat{\textrm{W}}(N)$, it is straightforward to conclude the proof of Theorem \ref{thm:scaling-relations_second}. For the first point, if $p>p_b(\delta,N)$, one crudely bounds $\Big| \phi_{\bbT_N,p}[\scrC(\calR)]-\phi_{\bbT_N,p_c}[\scrC(\calR)] \Big| $ by $1$ and uses the fact that $p-p_c\gtrsim\widehat{\textrm W}(N)$. If ${p\in [p_c,p_b(\delta,N)]}$, one simply integrates (\ref{eq:covariance_derivative_estimate}) between $p_c$ and $p$. For the second point, one may choose $c$ so that $c\widehat{\textrm W}(N)\le p_b(\delta,N)-p_c$, and then the result comes from integrating equation (\ref{eq:covariance_derivative_estimate}) between $p_c$ and $p_b(\delta,N)$, and using the relation between mixing rates and covariances $\Cov_p(\omega_e,\omega_f)\asymp \Delta_p(d(e,f))^2$.\end{proof}

\subsection{Scaling relations in non-translation invariant environments}\label{sub:scaling_relations_random_environment}

When working with a generic environment $\underline p = (p_e)_{e\in E_N}$ on $\mathbb{T}_N$, the obtained probability measures are no longer translation invariant, which creates additional (but in fact limited) complications within the proofs. In our approach focused in keeping some control of the stochastic derivatives of crossing probabilities, mixing rates and arm events, it is necessary to generalise the properties of Theorem \ref{thm:scaling-relations} around each edge $e$ provided the environment $\underline p$ still satisfies $\textrm{RSW}(\delta,N)$. More precisely, for some bond environement $\underline p=(p_e)_{e\in E_N}$ on $\bbT_N$, one can define a local notion of mixing rates around each edge. In what follows, set $\Lambda_R(e):=e+\Lambda_R$ the translated box of center $e$. For $e\in \mathbb{T}_N$ and $1\le r\le R\le N/4$, define
\begin{equation}
    \Delta^{(e)}_{\underline p}(R):=\phi_{\Lambda_R(e),\underline p,q}^1[\omega_e]-\phi_{\Lambda_R(e),\underline p,q}^0[\omega_e]
\end{equation}
\begin{equation}
    \Delta^{(e)}_{\underline p}(r,R):=\phi_{\Lambda_R(e),\underline p,q}^1\big[\scrC[\Lambda_r(e)]\big]-\phi_{\Lambda_R(e),\underline p,q}^0\big[\scrC[\Lambda_r(e)]\big]
\end{equation}
The following proposition generalises Theorem \ref{thm:scaling-relations} to non-translation invariant measures that still satisfy some strong box crossing property. 
\begin{proposition}\label{prop:generalisation-thm-scaling-relations}
Assume that the edge weights $\underline p=(p_e)_{e\in \mathbb{T}_N}$ satisfy $\textrm{RSW}(\delta,N)$. In what follows all the constants $\asymp $ depend a priori on the choice of $\delta$. 
\begin{enumerate}
   \item (Mixing-rate/covariance) For any two edges $e,f\in \mathbb{T}_N$ such that $d(e,f)=R < N/4$,    
  \begin{equation}
        \Cov_{\underline p}(\omega_e,\omega_f)\asymp \Delta^{(e)}_{\underline p}(R)\Delta^{(f)}_{\underline p}(R).
    \end{equation}
    \item (Quasimultiplicativity) For any edge $e\in \mathbb{T}_N$ any $1\leq r<R\le N/4$  one has
    \begin{equation}
        \Delta^{(e)}_{\underline p}(r)\Delta^{(e)}_{\underline p}(r,R)\asymp \Delta^{(e)}_{\underline p}(R).
    \end{equation}
    \item (Comparability at small scales) For any two edges $e,f\in \mathbb{T}_N$ such that $d(e,f)\leq r$ and any $1\leq r<R\le N/4$, one has
    \begin{equation}
        \Delta^{(e)}_{\underline p}(r,R)\asymp \Delta^{(f)}_{\underline p}(r,R).
    \end{equation}
\end{enumerate}
\end{proposition}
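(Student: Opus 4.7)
The plan is to adapt the proofs of the translation-invariant version in Theorem \ref{thm:scaling-relations} (i.e.\ the arguments of \cite[Section 5]{FK_scaling_relations}) to the non-translation invariant setting. The crucial point, already emphasised in Section \ref{sub:recalling_scaling_relations}, is that every step in \cite{FK_scaling_relations} uses only the Domain Markov property, positive association, boundary monotonicity and $\textrm{RSW}$-type estimates; translation invariance is invoked only to state the conclusions. Since by assumption $\phi_{\bbT_N,\underline p,q}\in \textrm{RSW}(\delta,N)$, all standard consequences of the strong RSW property (primal and dual circuits in annuli with probability bounded from below, RSW-gluing, comparability of free and wired measures on bounded-aspect-ratio domains) are available uniformly around every edge $e\in \bbT_N$ at every scale $R\le N/4$.

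For item (2), I would condition $\phi^{0/1}_{\Lambda_R(e),\underline p,q}$ on the configuration in the annulus $\Lambda_R(e)\setminus\Lambda_r(e)$; by the Domain Markov property this reduces the analysis inside $\Lambda_r(e)$ to an FK-measure with some random boundary condition $\xi$ on $\partial\Lambda_r(e)$ induced by the annulus. Boundary monotonicity gives
\begin{equation*}
\Delta^{(e)}_{\underline p}(R)=\phi^{1}_{\Lambda_R(e),\underline p,q}\big[\phi^{\xi}_{\Lambda_r(e),\underline p,q}[\omega_e]\big]-\phi^{0}_{\Lambda_R(e),\underline p,q}\big[\phi^{\xi}_{\Lambda_r(e),\underline p,q}[\omega_e]\big].
\end{equation*}
$\textrm{RSW}(\delta,N)$ ensures that, with probability bounded from below, the induced $\xi$ is sufficiently ``wired'' (resp.\ sufficiently ``free'') for the inner influence to be comparable to $\Delta^{(e)}_{\underline p}(r)$, while the outer difference between wired and free boundary conditions on $\Lambda_R(e)$ contributes a factor $\asymp \Delta^{(e)}_{\underline p}(r,R)$; combining yields the quasimultiplicativity. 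Item (3) is then essentially immediate: when $d(e,f)\le r$, the boxes $\Lambda_r(e),\Lambda_r(f),\Lambda_R(e),\Lambda_R(f)$ pairwise differ only by a bounded translation, and RSW-gluing identifies crossings of $\Lambda_r(e)$ with crossings of $\Lambda_r(f)$ at uniformly bounded cost, giving $\Delta^{(e)}_{\underline p}(r,R)\asymp\Delta^{(f)}_{\underline p}(r,R)$.

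For item (1), I would write
\begin{equation*}
\Cov_{\underline p}(\omega_e,\omega_f)=\phi_{\underline p}\big[\phi_{\underline p}[\omega_e|\calF]\cdot\phi_{\underline p}[\omega_f|\calF]\big]-\phi_{\underline p}[\omega_e]\,\phi_{\underline p}[\omega_f],
\end{equation*}
where $\calF$ is the $\sigma$-algebra generated by the configuration outside $\Lambda_{R/4}(e)\sqcup\Lambda_{R/4}(f)$. Since $d(e,f)=R$, these two boxes are disjoint, and by the Domain Markov property $\phi_{\underline p}[\omega_e|\calF]$ depends only on the trace induced on $\partial\Lambda_{R/4}(e)$ (and similarly for $f$). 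The upper bound follows from boundary monotonicity combined with item (2) and its quasimultiplicativity consequence: the total variation of $\phi_{\underline p}[\omega_e|\calF]$ as a function of the induced trace is at most $\Delta^{(e)}_{\underline p}(R/4)\asymp \Delta^{(e)}_{\underline p}(R)$, and similarly at $f$. The main obstacle is the matching \emph{lower bound}: one must exhibit, with positive probability under $\phi_{\underline p}$, a joint boundary-trace event along which the two conditional expectations move in a correlated direction by amounts $\asymp \Delta^{(e)}_{\underline p}(R)$ and $\Delta^{(f)}_{\underline p}(R)$ respectively. In the translation-invariant case this step exploits symmetry between $e$ and $f$; in our non-translation invariant setup, I would instead construct the correlated event by forcing a primal path in a ``corridor'' joining the half-annuli $\Lambda_{R/2}(e)\setminus\Lambda_{R/4}(e)$ and $\Lambda_{R/2}(f)\setminus\Lambda_{R/4}(f)$, glued to open arms around each of the two edges via RSW. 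The joint event has $\phi_{\underline p}$-probability bounded below by a constant depending only on $\delta$, it produces extremal traces on both $\partial\Lambda_{R/4}(e)$ and $\partial\Lambda_{R/4}(f)$ simultaneously, and yields the desired lower bound $\gtrsim \Delta^{(e)}_{\underline p}(R)\Delta^{(f)}_{\underline p}(R)$; the careful bookkeeping of precisely which induced traces are produced by this corridor construction is where most of the technical work lies.
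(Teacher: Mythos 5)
For item (2), your decomposition via the Domain Markov property on the annulus $\Lambda_R(e)\setminus\Lambda_r(e)$ together with boundary monotonicity is essentially the paper's route. However, your phrase ``the outer difference between wired and free boundary conditions on $\Lambda_R(e)$ contributes a factor $\asymp \Delta^{(e)}_{\underline p}(r,R)$'' asserts rather than derives the key quantitative input: the paper's \eqref{eq:much-more-wired}, namely that $\Delta^{(e)}_{\underline p}(r,R)$ is comparable to the probability that the two boundary conditions induced on $\partial\Lambda_r(e)$ under the monotone coupling of $\phi^0_{\Lambda_R(e)}$ and $\phi^1_{\Lambda_R(e)}$ actually disagree. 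That coupling-by-decision-trees estimate, imported from \cite{FK_scaling_relations}, is the engine of the whole factorisation; without naming it, the quasimultiplicativity is not yet established.

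The genuine gap is in item (3). The mixing rate $\Delta^{(e)}_{\underline p}(r,R)=\phi^1_{\Lambda_R(e),\underline p,q}[\scrC(\Lambda_r(e))]-\phi^0_{\Lambda_R(e),\underline p,q}[\scrC(\Lambda_r(e))]$ is a \emph{difference} of crossing probabilities under two boundary conditions, not a crossing probability, and RSW-gluing does not act on such differences. Patching an open circuit between $\partial\Lambda_r(e)$ and $\partial\Lambda_r(f)$ relates the two crossing events at $O(1)$ multiplicative cost, but it gives no control over how a change of boundary condition on $\partial\Lambda_R(e)$ propagates inward relative to the same question around $f$. The paper instead reveals the annulus $\Lambda_R(e)\setminus\Lambda_r(e)$ in three nested stages under the monotone coupling — first $\Lambda_R(e)\setminus\Lambda_{R/2}(f)$, then $\Lambda_{R/2}(f)\setminus\Lambda_{2r}(f)$, then $\Lambda_{2r}(f)\setminus\Lambda_r(e)$ — observes that agreement of the induced boundary conditions on $\partial\Lambda_{2r}(f)$ forces agreement on $\partial\Lambda_r(e)$, deduces $\Delta^{(e)}_{\underline p}(r,R)\lesssim\Delta^{(f)}_{\underline p}(2r,R/2)$, and then closes via the already-proved quasimultiplicativity together with the RSW fact $\Delta^{(g)}_{\underline p}(\ell,2\ell)\asymp 1$. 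Your ``essentially immediate'' elides precisely this structure, and I do not see how to recover it from gluing alone.

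Item (1) is where you genuinely diverge from the paper. You condition on the $\sigma$-algebra outside two disjoint boxes rather than exploring annuli from the inside out under the monotone coupling, as the paper indicates. Your upper bound via the range of $\phi_{\underline p}[\omega_e\,|\,\calF]$ is sound once items (2)–(3) are in place. For the lower bound, the corridor construction is a plausible strategy, but as you yourself note, the hard part is showing the joint traces produced by the corridor actually move both conditional expectations by the full amounts $\asymp\Delta^{(e)}_{\underline p}(R)$ and $\asymp\Delta^{(f)}_{\underline p}(R)$; FKG and RSW alone give positivity and a positive-probability wired-trace event, but not a quantitative match to the mixing rates, which again seems to require the same coupling-by-decision-trees input as item (2). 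So this item is a different route, but one you have not closed.
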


Providing a complete proof of this proposition would require introducing a lot of additional side tools involved in \cite{FK_scaling_relations}, but ending up doing the same proof, almost verbatim. As those statements go beyond the general scope and philosophy of the present paper, we rather provide some intuition on why those results still hold, and then sketch the proof of the second and third item. Fix $e\in E_N$, center of the boxes $\Lambda_r(e)\subset\Lambda_R(e)$ for $r\le R\le N/4$. Assume that on $\partial \Lambda_R(e)$ one starts with a pair of boundary conditions $\xi\le \xi'$, such that $\xi'$ is ``much more wired" than $\xi$ (this can be formalised rigorously with the notion of \emph{boosting pair of boundary conditions} in \cite[Section 3]{FK_scaling_relations}). The monotonicity properties of FK-percolation models and the coupling via decision trees allow to couple two configurations $\omega,\omega'$ distributed according to $\phi_{\Lambda_R,\underline p,q}^{\xi}$ and $\phi_{\Lambda_R,\underline p,q}^{\xi'}$ in such a way that $\omega\le \omega'$. Assume we are given such a coupling, one can start by revealing all of the edges in the annulus $\Lambda_R(e)\backslash \Lambda_r(e)$ in both $\omega$ and $\omega'$, and read the boundary conditions $\zeta,\zeta'$ imposed respectively by $\omega_{|\Lambda_R(e)\backslash \Lambda_r(e)},\omega'_{|\Lambda_R(e)\backslash \Lambda_r(e)}$ to $\partial \Lambda_r(e)$. The key feature of the coupling via decision trees studied in \cite{FK_scaling_relations} reads in the non-translation invariant setup as
\begin{equation}\label{eq:much-more-wired}
    \Delta^{(e)}_{\underline p}(r,R)\asymp \bbP[\zeta\ne\zeta']\asymp \bbP[\zeta'\text{ is ``much more wired than" } \zeta].
\end{equation}
The proof of such a statement only requires $\textrm{RSW}(\delta,N)$ to transfer information through different scales. Notably, no translation invariance of the model appears within the proofs.
\begin{proof}[Sketch of the proof of Proposition \ref{prop:generalisation-thm-scaling-relations} ]
Let us start by explaining how the second item follows from \eqref{eq:much-more-wired}. We do not claim originality here, repeating in a concise way the arguments of \cite{FK_scaling_relations} providing some intuition to the reader less familiar with the use of mixing rates. The proof of the first item is skipped as it follows the same spirit as the one for the second item, but exploring instead the annuli from inside to outside.

\paragraph{Proof of the second item.} 
We keep the above setup with $\xi=0$ and $\xi'=1$. In that case, $\Delta^{(e)}_{\underline p}(R)$ is \emph{exactly} the probability, when coupled in a monotone way (i.e. so that $\omega'\ge \omega$), that the configurations $\omega$ and $\omega'$ \emph{do not agree at the edge} $e$. On the other hand, one can start by revealing all the edges in the annulus $\Lambda_R(e)\backslash\Lambda_r(e)$ and obtain boundary conditions $\zeta,\zeta'$ on $\partial\Lambda_r(e)$. The Domain Markov Property implies that (using here that $\omega'\ge \omega$)
\begin{equation*}
    \bbP[\omega_e\ne\omega'_e|\zeta,\zeta'] = \phi_{\Lambda_r(e),\underline p,q}^{\zeta'}[\omega_e]-\phi_{\Lambda_r(e),\underline p,q}^{\zeta}[\omega_e].
\end{equation*}
The RHS of the above equation vanishes when $\zeta=\zeta'$. Assuming that $\zeta\ne \zeta'$, one may use the crude stochastic dominations for $0\le \zeta$ and $\zeta'\le 1$ to see that for any $\zeta\le\zeta'$,
\begin{equation*}
    \bbP[\omega_e\ne\omega'_e|\zeta,\zeta'] \le \phi_{\Lambda_r(e),\underline p,q}^{1}[\omega_e]-\phi_{\Lambda_r(e),\underline p,q}^{0}[\omega_e]=\Lambda^{(e)}_{\underline p}(r).
\end{equation*}
Using that $\bbP[\zeta\ne \zeta']\asymp \Delta^{(e)}_{\underline p}(r,R)$, one gets a first inequality
\begin{equation*}
    \Delta^{(e)}_{\underline p}(R)=\bbP[\omega_e\ne\omega'_e] \le \bbP[\zeta\ne\zeta']\Delta^{(e)}_{\underline p}(r)\asymp \Delta^{(e)}_{\underline p}(r,R)\Delta^{(e)}_{\underline p}(r).
\end{equation*}
Let us prove the converse (up to constant) inequality. Assuming that $\zeta'$ is ``much more wired" than $\zeta$, one works for the simplest case where $\Lambda_1(e):=\{e\}$ and reveal the configurations $\omega,\omega'$ in the annulus $\Lambda_R(e)\backslash\Lambda_r(e)$. The boundary conditions obtained for $\omega$ and $\omega'$ on $\{e\}$ are denoted $\widehat\zeta,\widehat\zeta'$. For the single edge $\{e\}$, only the wired and the free boundary conditions are possible. This enforces on the event  $\widehat\zeta\ne\widehat\zeta'$ that $\widehat\zeta=0$ and $\widehat\zeta'=1$. As a consequence 
\begin{align*}
    \bbP\big[\omega_e\ne\omega'_e\big|\widehat\zeta,\widehat\zeta'\big]&=\phi_{\{e\},\underline p,q}^1[\omega_e]-\phi_{\{e\},\underline p,q}^0[\omega_e]\\
    &=p_e-\frac{p_e}{p_e+q(1-p_e)}\gtrsim 1,
\end{align*}
where in the inequality, we use $q>1$ and the fact that the bond parameter $p_e$ is bounded away from $0$ and $1$ due to the $\textrm{RSW}(\delta,N)$ property. Furthermore, on the event $E$ corresponding to $\zeta'$ being ``much more wired" that $\zeta$, one can apply the reasoning above this proof to the annulus $\Lambda_r(e)\backslash\Lambda_1(e)$ which implies that $\bbP\big[\widehat\zeta\ne\widehat\zeta'\big|\zeta,\zeta'\big]\asymp \Delta_{\underline p}^{(e)}(r)$. All together, this gives 
\begin{align*}
    \Delta_{\underline p}^{(e)}(R)=\bbP[\omega_e\ne \omega_e']&\ge\bbP\big[\omega_e\ne\omega_e',\widehat\zeta\ne\widehat\zeta',E\big]\\
    &=\bbP\big[\omega_e\ne\omega_e'\big|\widehat\zeta\ne\widehat\zeta',E\big]\cdot\bbP\big[\widehat\zeta\ne\widehat\zeta'\big|E\big]\cdot\bbP[E]\\
    &\gtrsim 1\cdot\Delta_{\underline p}^{(e)}(r)\cdot\Delta_{\underline p}^{(e)}(r,R).
\end{align*}
\paragraph{Proof of the third item.}
The goal here is to prove $\Delta^{(f)}_{\underline p}(r,R)\gtrsim \Delta^{(e)}_{\underline p}(r,R)$, the other inequality follows by symmetry. Notice that one can always assume that $R>16r$, otherwise both terms are uniformly bounded away from $0$ and $1$ by the $\textrm{RSW}(\delta,N)$ property. Consider the annulus $A=\Lambda_{R}(e)\backslash\Lambda_{r}(e)$, which contains the annulus $A'=\Lambda_{R/2}(f)\backslash\Lambda_{2r}(f)$ (as $d(e,f)\le r\le R/2$). Consider $\omega\le\omega'$ obtained monotone coupling of the measures $\phi_{\Lambda_R(e),\underline p,q}^1$ and $\phi_{\Lambda_R(e),\underline p,q}^0$. We reveal successively the edges of $A$ in both $\omega$ and $\omega'$ as follows:
\begin{enumerate}
    \item First reveal all the edges of $\Lambda_R(e)\backslash \Lambda_{R/2}(f)$, and let $\zeta_1,\zeta_1'$ be the boundary conditions respectively enforced by $\omega$ and $\omega'$ on $\partial\Lambda_{R/2}(f)$.
    \item Second, reveal all the edges in $A'$, and let $\zeta_2,\zeta_2'$ be the boundary conditions respectively enforced by $\omega$ and $\omega'$ on $\partial\Lambda_{2r}(f)$.
    \item Finally, reveal all edges in $\Lambda_{2r}(f)\backslash\Lambda_r(e)$, and let $\zeta_3,\zeta_3'$ be the boundary conditions respectively enforced by $\omega$ and $\omega'$ on $\partial\Lambda_r(e)$.
\end{enumerate}
The connection between mixing rates and exploration by decision trees \eqref{eq:much-more-wired} ensures that $\Lambda^{(e)}_{\underline p}(r,R)\asymp \bbP[\zeta_3\ne\zeta_3']$. On the other hand, for any $\zeta_1,\zeta_1'$, monotonicity with respect to boundary conditions gives that
\begin{equation*}
    \bbP[\zeta_2\ne \zeta_2'|\zeta_1,\zeta_1']\le \bbP[\zeta_2\ne \zeta_2'|\zeta_1=0,\zeta_1'=1]\asymp \Delta_{\underline p}^{(f)}(2r,R/2).
\end{equation*}
Notice that if $\zeta_2=\zeta_2'$, the configurations $\omega$ and $\omega'$ must agree inside $\Lambda_{2r}(f)$, imposing that $\zeta_3=\zeta_3'$. Therefore, un-conditioning the previous equation on $\zeta_1,\zeta_1'$ gives that
\begin{align*}
    \Delta_{\underline p}^{(e)}(r,R)\asymp \bbP[\zeta_3\ne \zeta_3']\le \bbP[\zeta_2\ne\zeta_2']\lesssim \Delta_{\underline p}^{(f)}(2r,R/2).
\end{align*}
Now by quasimultiplicativity which we have just proved above, one may write
\begin{equation*}
    \Delta_{\underline p}^{(f)}(2r,R/2)\asymp\frac{\Delta_{\underline p}^{(f)}(r,R)}{\Delta_{\underline p}^{(f)}(r,2r)\Delta_{\underline p}^{(f)}(R/2,R)}
\end{equation*}
But now some classical RSW estimates allow us to show that for any edge $g\in E_N$ and $\ell\le N/4$, one has $\Delta^{(g)}_{\underline p}(\ell,2\ell)\asymp 1$, so that putting everything together gives $\Delta^{(e)}(r,R)\lesssim \Delta^{(f)}(r,R)$ as desired.
\end{proof}

\subsection{Geometric estimates on covariances with crossings and arm events}\label{sub:geometric-estimates}
In this section we introduce the notion of arm events, which encode the probability of alternating primal and dual clusters meeting around a point. This notion, which is one of the central objects in the analysis of Kesten's original proof of scaling relations for Bernoulli percolation, provides some meaningful control on the derivatives of crossing events. Proofs of all of the results in this section can be found in Appendix \ref{app:geometric_estimates}.
\begin{definition}
Let $e\in E_N$ be an edge and fix $1\leq r\le R\le N/2$. One can identify $\bbT_N $ with $(\bbZ/N\bbZ)^2$, assuming that the image of $e$ is $(0,0)\in (\bbZ/N\bbZ)^2$. We define the \emph{upper half-plane} in $\bbT_N$ centred at $e$ as $\mathbb{H}^{(e)}_{N}:=(\mathbb{Z}/N\mathbb{Z})\times [0,(N-1)/2]$. One can define similarly the \emph{lower, left and right half-planes}. This allows us to define (see also Figure \ref{fig:arm-events}):
\begin{enumerate}
    \item The $4$-arm event $\calA_4^{(e)}(r,R)$ around $e$, which consists of the existence of $4$ alternating parity (primal and dual) arms linking $\partial\Lambda_r(e)$ to $\partial\Lambda_R(e)$ in the annulus $\Lambda_R(e)\backslash\Lambda_r(e)$. We denote by $\pi_{4,\underline p}^{(e)}(r,R)$ the probability of this event under $\phi_{\bbT_N,\underline p}$.
    \item The upper half-plane three-arm event $\calA_{3^+}^{(e)}(r,R)$ around $e$, which consists of to the existence of $3$ alternating parity ($2$ primal and $1$ dual) arms linking $\partial\Lambda_r(e)$ to $\partial\Lambda_R(e)$ inside the half- annulus $\Lambda_R(e)\backslash\Lambda_r(e) \cap \mathbb{H}^{(e)}_{N}$. We denote by $\pi_{3^+,\underline p}^{(e)}(r,R)$ the probability of this event under $\phi_{\bbT_N,\underline p}$. One can define similarly the lower, right and left half-plane three arm events denoted respectively $\calA_{3^-}^{(e)}(r,R),\calA_{3^{\textrm{rg}}}^{(e)}(r,R)$ and $\calA_{3^{\textrm{lf}}}^{(e)}(r,R)$.
\end{enumerate}
\end{definition}

\begin{figure}
    \centering
    \includegraphics[width=0.40\textwidth]{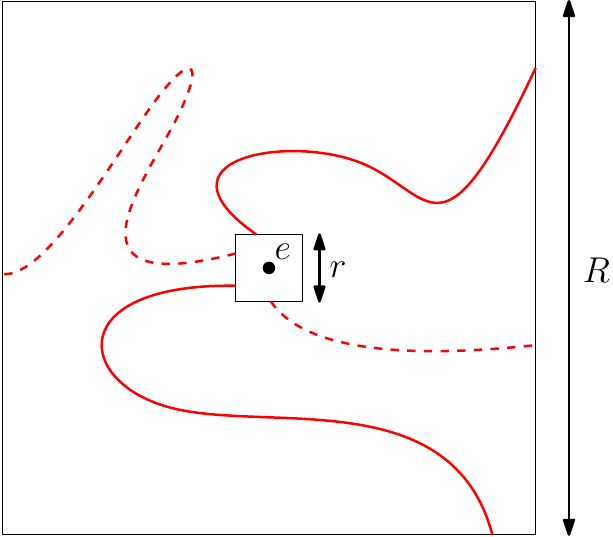}
    \hspace{0.08\textwidth}
    \includegraphics[width=0.40\textwidth]{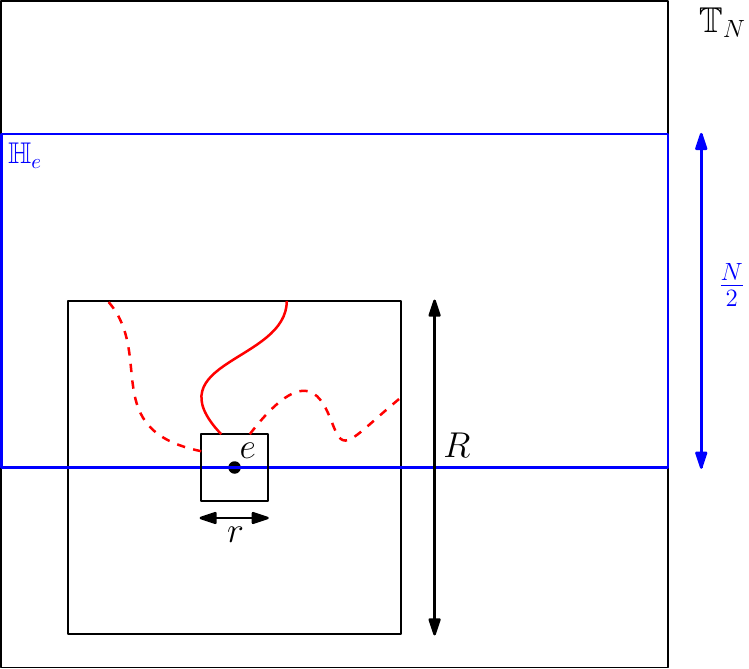}
    \caption{An illustration of the events $\calA^{(e)}_4(r,R)$ (left) and $\calA^{(e)}_{3^+}(r,R)$ (right). Primal paths are represented by solid  red curves and dual paths are represented by dashed red curves.}
    \label{fig:arm-events}
\end{figure}

\begin{remarks}\label{rem:arm-estimates-transation-invariant-environment}
At criticality, the probabilities of the arm events no longer depend on the edge $e$ so we remove it from the notation, and one can estimate the aforementioned arm events uniformly in $r\le R\le N/2$. There exist constants $\lesssim,\asymp$ and $c>0$, only depending on $\delta$ such that
\begin{enumerate}
    \item[(i)] $\pi_{4,p_c}(r,R)\lesssim (r/R)^c\Delta_{p_c}(r,R)$,
    \item[(ii)] $\pi_{3^+,p_c}(r,R)\asymp (r/R)^2$,
    \item[(iii)] $(r/R)^c\gtrsim \Delta_{p_c}(r,R) \gtrsim (r/R)^{2-c} $,
    \item[(iv)] For $q=1$, $(r/R)^{1+c}\gtrsim\pi_{4,p_c}(r,R) \gtrsim (r/R)^{2-c}$.
\end{enumerate}
The first and third items are proved in \cite[Theorem 1.6 (iv), Proposition 1.8]{FK_scaling_relations} and the second item is proved in \cite[Proposition 6.6 (6.7)]{duminil2021planar} for $q<4$, but the proof adapts verbatim for $q=4$. The fourth item reads as the fact that the four-arm exponent $\alpha_4$ is strictly bigger than $1$ and strictly smaller than $2$ for percolation.
The upper bound has been proven by different means (see e.g.\ \cite{van2020four} for an extensive exposition) while the upper bound is proven in \cite[Proposition 6.8]{duminil2021planar} for example.
\end{remarks}
In this section we present, in the simplified critical setup at $p_c$, one additional technicality that appears in the computations in non-translation invariant random environments. Namely, it will be crucial to  estimate the sum of \emph{squares} of covariances between an edge and a crossing (this is in contrast with \cite{FK_scaling_relations} where it is enough to control the sum of covariances to run the proofs). Let us start by presenting more precise estimates regarding the covariances between an edge some crossing events.
\begin{lemma}
\label{lem:Cov-crossing-edge}
Let $R\le N/2$, $\calR$ be a $2$ by $1$ rectangle of width $R$ and $e \in E_N$. Then one can estimate $\Cov_{p_c}(\scrC_{\bullet}(\calR),\omega_e)$ by:
\begin{itemize}
    \item If $ \textrm{dist}(e,\calR)=\ell \ge R$ one has
    \begin{equation*}
        \Cov_{p_c}(\scrC_{\bullet}(\calR),\omega_e)\asymp \Delta_{p_c}(R,\ell)\Delta_{p_c}(\ell)
    \end{equation*}
    \item If the distance from $e$ to of the four corners of $\calR$ is $n\lesssim R$, one has
    \begin{equation*}
        \Cov_{p_c}(\scrC_{\bullet}(\calR),\omega_e)\lesssim \Delta_{p_c}(n).
    \end{equation*}
\end{itemize}
A similar estimate holds for arbitrary boundary conditions as well as dual crossings.
\end{lemma}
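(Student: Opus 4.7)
The plan is to reduce the covariance estimate to a comparison of FK measures differing only at the edge $e$ and then invoke the mixing-rate machinery of Proposition \ref{prop:generalisation-thm-scaling-relations}. Writing
\[
\Cov_{p_c}\bigl(\ind_A,\omega_e\bigr)=\phi_{p_c}[\omega_e]\bigl(1-\phi_{p_c}[\omega_e]\bigr)\cdot\bigl(\phi_{p_c}^{\omega_e=1}[A]-\phi_{p_c}^{\omega_e=0}[A]\bigr),
\]
and noting that the prefactor is $\asymp 1$ under $\textrm{RSW}(\delta,N)$, I would work with the difference on the right and interpret it via a monotone coupling $(\omega,\omega')$ with $\omega\le\omega'$, $\omega_e=0$, $\omega'_e=1$. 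Conditioning on $\omega_e$ amounts to imposing free versus wired boundary conditions on the single-edge domain $\{e\}$, so the ``much-more-wired'' interpretation of mixing rates captured by \eqref{eq:much-more-wired} applies directly.

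For the case $\ell\ge R$, I would run a double exploration. First, reveal all edges inside $\Lambda_{\ell/2}(e)$ and let $\zeta\le\zeta'$ be the boundary conditions on $\partial\Lambda_{\ell/2}(e)$ induced respectively by $\omega$ and $\omega'$. Applied outward from $e$, \eqref{eq:much-more-wired} gives $\bbP[\zeta\ne\zeta']\asymp \Delta_{p_c}^{(e)}(1,\ell/2)\asymp \Delta_{p_c}(\ell)$. Because $\ell\ge R$, the rectangle $\calR$ lies outside $\Lambda_{\ell/2}(e)$ and sits at distance $\asymp\ell$ from $\partial\Lambda_{\ell/2}(e)$, so conditionally on $\zeta,\zeta'$ the discrepancy $(\phi^{\zeta'}-\phi^{\zeta})[\scrC_{\bullet}(\calR)]$ is controlled by a mixing-rate quantity around $\calR$ at scales $R$ versus $\ell$. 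A second exploration, now in an annulus of inner radius $\asymp R$ and outer radius $\asymp\ell$ centred around $\calR$, combined with translation invariance of critical mixing rates, gives that this second factor is $\asymp \Delta_{p_c}(R,\ell)$; multiplying yields the upper bound. For the matching lower bound, one restricts to the positive-probability event that $\zeta'$ is ``much more wired than'' $\zeta$ in the sense of \cite{FK_scaling_relations}; positive association then ensures that both contributions are realised simultaneously with probability $\asymp \Delta_{p_c}(\ell)\cdot\Delta_{p_c}(R,\ell)$.

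For the case $n\lesssim R$, only an upper bound is required. The same conditional formulation gives
\[
|\Cov_{p_c}(\scrC_{\bullet}(\calR),\omega_e)|\lesssim \bbP_{p_c}\bigl[e\text{ is pivotal for }\scrC_{\bullet}(\calR)\bigr],
\]
and pivotality of $e$ for a crossing of $\calR$ forces a $4$-arm configuration around $e$ up at least to scale $n$, the distance to the nearest corner of $\calR$ (beyond the corner the geometry degenerates into a three-arm event in a half-plane, but no additional constraint is needed for an upper bound). Applying Remark \ref{rem:arm-estimates-transation-invariant-environment}(i) yields $\pi_{4,p_c}(1,n)\lesssim n^{-c}\Delta_{p_c}(n)\lesssim\Delta_{p_c}(n)$, from which the claim follows. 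The same arguments transfer to dual crossings by passing to $\omega^{\star}$, and to arbitrary boundary conditions by using the Domain Markov property to reduce to the torus setup.

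The main obstacle is the lower bound when $\ell\ge R$, where one must check that the two disjoint effects — the generation of a boosting pair $\zeta\le\zeta'$ of boundary conditions on $\partial\Lambda_{\ell/2}(e)$ by flipping $\omega_e$, and its propagation down to the scale of $\calR$ — combine multiplicatively rather than cancelling. This is handled by working with boosting pairs along the lines of \cite[Section 3]{FK_scaling_relations}, for which Proposition \ref{prop:generalisation-thm-scaling-relations} together with positive association produces a uniform lower bound on the probability that the two effects co-occur. The remaining arguments amount to careful bookkeeping of constants using quasi-multiplicativity and the small-scale comparability of Proposition \ref{prop:generalisation-thm-scaling-relations}, combined with the arm estimates of Remark \ref{rem:arm-estimates-transation-invariant-environment}.
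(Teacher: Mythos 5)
Your treatment of the first item is a plausible heuristic re-derivation of a result the paper simply cites from \cite{FK_scaling_relations}; that part is fine. The genuine gap is in the second item. The inequality
\[
|\Cov_{p_c}(\scrC_{\bullet}(\calR),\omega_e)|\lesssim \bbP_{p_c}\bigl[e\text{ is pivotal for }\scrC_{\bullet}(\calR)\bigr]
\]
is a $q=1$ identity and fails for $1<q\le4$. After conditioning on $\omega_e$ and passing to a monotone coupling $(\omega,\omega')$ with $\omega\le\omega'$, $\omega_e=0$, $\omega'_e=1$, the difference $\phi_{p_c}^{\omega_e=1}[\scrC_{\bullet}(\calR)]-\phi_{p_c}^{\omega_e=0}[\scrC_{\bullet}(\calR)]$ equals $\bbP[\omega'\in\scrC_{\bullet}(\calR),\ \omega\notin\scrC_{\bullet}(\calR)]$, and for $q>1$ the disagreement set between $\omega$ and $\omega'$ is a whole cluster of edges around $e$, not just $e$ itself, because flipping $\omega_e$ changes the boundary condition at the endpoints of $e$ and this boosting effect propagates at the rate $\Delta_{p_c}$. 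The covariance therefore captures both pivotality and this long-range influence, and it cannot be collapsed to single-edge pivotality. The proof in the paper instead starts from the multi-scale decomposition of \cite[Lemma 5.3]{FK_scaling_relations},
\[
\Cov_{p_c}(\scrC_{\bullet}(\calR),\omega_e)\lesssim \sum_{r=1}^{3R}\frac{\Delta_{p_c}(r)}{r}\,\phi_{p_c}\bigl[\textrm{Piv}_{r,e}(\scrC_{\bullet}(\calR))\bigr],
\]
and then estimates $\phi_{p_c}[\textrm{Piv}_{r,e}]$ by four-arm, half-plane three-arm, and corner two-arm probabilities according to whether $r$ lies below the distance $\ell$ to the nearest side of $\calR$, between $\ell$ and the distance $n$ to the nearest corner, or above $n$; the geometric series then sums to $\Delta_{p_c}(n)$. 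This weighted-sum reduction is precisely the ingredient your argument is missing.

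A secondary issue: your geometric claim that pivotality of $e$ forces a four-arm event around $e$ out to scale $n$ is incorrect when $\ell\ll n$. Beyond scale $\ell$ the four arms are truncated by the side of $\calR$, so what is actually forced is a four-arm event only to scale $\ell$, then a half-plane three-arm event from $\ell$ to $n$, and a corner two-arm event from $n$ to $R$. The bound $\pi_{4,p_c}(1,\ell)\,\pi_{3^+,p_c}(\ell,n)\lesssim \pi_{4,p_c}(1,n)$ does hold, but only via the lower bound $\pi_{4,p_c}(\ell,n)\gtrsim(\ell/n)^{2-c}$ from Remark \ref{rem:arm-estimates-transation-invariant-environment}(iv); it is not a consequence of the four-arm event extending to scale $n$. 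In short, your approach could be repaired for $q=1$ (where the paper proves the analogous statement in Lemma \ref{lem:geometric-estimates_perco}), but not for the $q>1$ case that Lemma \ref{lem:Cov-crossing-edge} actually addresses.
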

The proof of this lemma, which is key to deduce stability of arm events in non-translation invariant environments, is delayed to Appendix \ref{app:geometric_estimates}. Let us still mention that in the second case, the correct upper bound should be of order $\Delta_{p_c}(R)$, but proving this requires better estimates on certain arm events than one could get from RSW alone.
\begin{remarks}
\label{rem:sum-Cov-crossing-edge}
Under the previous assumptions, one has
\begin{align}
    \sum_{e\in E_N}\Cov_{p_c}(\scrC_{\bullet}(\calR),\omega_e)&\lesssim \sum_{R< r\le N}r \Delta_{p_c}(R,r)\Delta_{p_c}(r)+\sum_{r\le R}r\Delta_{p_c}(r)\\
    &\lesssim \sum_{r \le N}r\Delta_{p_c}(r)\asymp \textrm W(N)^{-1}\label{eq:sum_cov_crossing_edge}
\end{align}
and
\begin{align}
    \sum_{e\in E_N}\Cov_{p_c}(\scrC_{\bullet}(\calR),\omega_e)^2&\lesssim \sum_{R<r\le N}r\Delta_{p_c}(R,r)^2\Delta_{p_c}(r)^2 + \sum_{r\le R} r\Delta_{p_c}(r)^2 \\
    &\lesssim \sum_{r\le N}r\Delta_{p_c}(r)^2=\Xi(N)^{-1}, \label{eq:sum_cov_crossing_edge_squared}
\end{align}
where we recall the definition $\Xi(N)=\big(\sum_{r\le N}r\Delta_{p_c}(r)^2\big)^{-1}$.
\end{remarks}

We now some analogous statement for covariances between edges and arm events.
\begin{lemma}
\label{lem:Cov-arms-edge}
    Let $e,f\in \bbT_N$ and $R\le N/2$. Then for $\#\in\{4,3^+,3^{-},3^{\textrm{rg}},3^{\textrm{lf}}\}$, one has
 \begin{equation*}
        \Cov_{p_c}(\calA_{\#}^{(e)}(R),\omega_f)\lesssim \pi_{\#}(R)\Delta_{p_c}(n),
    \end{equation*}
where $n$ is the minimal distance from $f$ to either $e$ or one of the bottom corners of $\Lambda_R(e)\cap \mathbb{H}^{(e)}_N$.
\end{lemma}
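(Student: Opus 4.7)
The plan is to mirror the approach behind Lemma \ref{lem:Cov-crossing-edge}, showing that the covariance of a local edge with a macroscopic arm event is controlled by the product of the event's probability $\pi_\#(R)$ and the mixing rate at the distance from $f$ to the ``anchor points'' of the event (the point $e$ and, in the half-plane cases, the two bottom corners of $\Lambda_R(e)\cap \mathbb{H}^{(e)}_N$).

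First, since $A=\calA_{\#}^{(e)}(R)$ is not monotone, I would start from the identity
\[
\Cov_{p_c}(A,\omega_f)=\phi_{p_c}[\omega_f=1]\,\phi_{p_c}[\omega_f=0]\,\big(\phi_{p_c}[A\mid\omega_f=1]-\phi_{p_c}[A\mid\omega_f=0]\big),
\]
and use $\phi_{p_c}[\omega_f\in\{0,1\}]\asymp 1$, coming from $\textrm{RSW}(\delta,N)$, to reduce the problem to bounding the difference of conditional probabilities by $\pi_\#(R)\Delta_{p_c}(n)$. Then I would fix a mesoscopic box $B:=\Lambda_{n/3}(f)$. By the very definition of $n$, $B$ avoids $e$ and the bottom corners of $\Lambda_R(e)\cap\mathbb{H}^{(e)}_N$. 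Revealing the configuration outside $B$ and conditioning on $\omega_f\in\{0,1\}$, I would couple monotonously the two conditional measures via the decision-tree exploration used in the proof of Proposition \ref{prop:generalisation-thm-scaling-relations}, so that the discrepancy is confined to $B$ and transfers to $\partial B$ with probability at most $\Delta_{p_c}^{(f)}(n/3)\asymp \Delta_{p_c}(n)$, via the identity \eqref{eq:much-more-wired}.

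Second, writing the restriction of $A$ to $B^c$ as an ``outside'' arm event joining $\{e\}$ to $\partial B$ and $\partial B$ to $\partial\Lambda_R(e)$ in the appropriate annulus (or half-annulus), I would argue that the probability of an outside configuration compatible with $A$ is of order $\pi_\#(R)$. Indeed, by the quasimultiplicativity of arm probabilities (analogous to the second item of Proposition \ref{prop:generalisation-thm-scaling-relations}), one has $\pi_\#(R)\asymp \pi_\#(n)\,\pi_\#(n,R)$, and the standard RSW gluing across an annulus surrounding $B$ allows one to complete any compatible pair of ``inner'' and ``outer'' arm systems at constant multiplicative cost, irrespective of the boundary data $\zeta$ on $\partial B$. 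Combining this with the boundary-influence bound $\Delta_{p_c}(n)$ from the previous paragraph yields the desired inequality.

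The main obstacle is a rigorous implementation of the outside decomposition in the half-plane cases $\#\in\{3^+,3^-,3^{\textrm{lf}},3^{\textrm{rg}}\}$: one must check that the half-plane constraint forcing all arms to lie in $\Lambda_R(e)\cap \mathbb{H}^{(e)}_N$ survives the excision of $B$ and the RSW gluing. The definition of $n$ as the \emph{minimum} distance to $e$ \emph{and} to the bottom corners is precisely what makes this work, since it guarantees that $B$ stays at distance of order $n$ from the straight half-plane boundary where the three-arm constraint is most rigid. A separate, simpler argument covers the regime $f\notin \Lambda_R(e)$: there one directly reveals $\omega_{\Lambda_R(e)^c}$, and transfers the influence of $\omega_f$ to $\partial \Lambda_R(e)$ via the mixing rate at scale $n$ using item (i) of Proposition \ref{prop:generalisation-thm-scaling-relations}, which again yields a factor $\Delta_{p_c}(n)$.
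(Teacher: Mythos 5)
Your route is genuinely different from the paper's. The paper invokes \cite[Lemma 5.3]{FK_scaling_relations} to write the covariance as a \emph{multi-scale} sum $\Cov_{p_c}(\calA_{\#}^{(e)}(R),\omega_f)\lesssim\sum_{r}\frac{\Delta_{p_c}(r)}{r}\,\phi_{p_c}[\textrm{Piv}_{r,f}(\calA_{\#}^{(e)}(R))]$, then estimates each pivotality probability through a four-case dichotomy on $r$ relative to $\ell$ (distance from $f$ to the half-plane boundary) and $n$, and finally sums using quasi-multiplicativity. You instead work at a single scale $B=\Lambda_{n/3}(f)$: after the (correct) identity reducing the covariance to $\phi[A\mid\omega_f=1]-\phi[A\mid\omega_f=0]$, you couple the two conditional laws so that the discrepancy crosses $\partial B$ with probability $\lesssim\Delta_{p_c}(n)$, and multiply by the probability $\lesssim\pi_\#(R)$ that the outside of $B$ is compatible with $\calA$.

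This single-scale factorization has a gap. The coupling argument only bounds the event that the discrepancy reaches $\partial B$. On the complementary event, which carries essentially all the mass, the configurations $\omega$ and $\omega'$ agree outside $B$ but may still disagree inside $B$ (they carry different conditioning at $f$), and if the outside leaves $\calA$ undetermined, this inside disagreement can flip $\ind_{\calA}$. Controlling that ``confined'' contribution amounts to bounding $\Cov_{\phi_B^\zeta}(\calA_\eta,\omega_f)$ for arbitrary outside data $\eta$—which is a quantity of exactly the same type as the one you started with, at scale $n/3$, so you would have to recurse. The multi-scale sum of the paper handles this from the start: the scales $r\le n$ in the dichotomy (your ``confined'' case) contribute $\sum_{r\le n}\frac{\Delta(r)}{r}\pi_\#(r,n)\pi_\#(n)\pi_\#(n,R)\lesssim\pi_\#(R)\Delta(n)$ only after balancing the growth of $\Delta(r)/r$ against the decay of the pivotality at scale $r$; a two-factor product cannot see this. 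A secondary issue is that revealing $B^c$ first under the $\omega_f$-conditioned laws gives a reweighted (not identical) distribution on the outside, so even your factor $\pi_\#(R)$ would need a short extra argument. Both issues disappear if you instead start from \cite[Lemma 5.3]{FK_scaling_relations}, as the paper does.
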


\begin{remarks}
\label{rem:sum-Cov-arms-edge}
Summing Lemma \ref{lem:Cov-arms-edge} over edges in $\bbT_N$ one gets for any $e\in E_N$ and $1\leq R \leq N/2$,
\begin{align}
	    \sum_{f\in E}\Cov_{p_c}(\calA_{\#}^{(e)}(R),\omega_f)&\lesssim \pi_{\#,p_c}(R)\sum_{r\le N}r\Delta_{p_c}(r)\asymp \pi_{\#,p_c}(R)\textrm W(N)^{-1} \label{eq:sum_cov_arm}\\
	   \sum_{f\in E}\Cov_{p_c}(\calA_{\#}^{(e)}(R),\omega_f)^2 &\lesssim \pi_{\#,p_c}(R)^2\sum_{r\le N}r\Delta_{p_c}(r)^2=\pi_{\#,p_c}(R)^2\Xi(N)^{-1}. \label{eq:sum_cov_arm_squared}
\end{align}
\end{remarks}
All of the previous results are valid for $1<q\le 4$. For $q=1$, we may also get similar estimates by replacing mixing rates by four-arm events. We note that in this case, the fourth item of Remark \ref{rem:arm-estimates-transation-invariant-environment} allows us to show that $\Xi(N)$ is of constant order.
\begin{lemma}
\label{lem:geometric-estimates_perco}
    Assume that $q=1$, and that we are at $p_c(1)=\frac12$. Then all of the results of Lemmas \ref{lem:Cov-crossing-edge} and \ref{lem:Cov-arms-edge}, as well as the computations that follow in Remarks \ref{rem:sum-Cov-crossing-edge} and \ref{rem:sum-Cov-arms-edge} still hold, when replacing all mixing rates $\Delta_{p_c}(r,R)$ by four-arm events $\pi_{4,p_c}(r,R)$. Furthermore, in the computations in the remarks, one may replace $\Xi(N)$ by $1$.
\end{lemma}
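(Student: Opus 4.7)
The plan is to exploit the edge independence at $q=1$ to rewrite every covariance as a pivotal probability via Russo's formula: for any increasing event $A$ and any edge $e$,
\[
\Cov_{1/2}(\ind_A,\omega_e) = \tfrac{1}{4}\bbP_{1/2}\big[e \text{ is pivotal for } A\big].
\]
This reduces both Lemmas \ref{lem:Cov-crossing-edge} and \ref{lem:Cov-arms-edge} in the case $q=1$ to the estimation of pivotal probabilities, and the natural geometric quantity controlling these is precisely the four-arm probability $\pi_{4,p_c}$, not the (identically vanishing) mixing rate $\Delta_{p_c}$. The strategy is therefore to redo the case analysis of the two lemmas in this pivotal language, verify the substitution $\Delta_{p_c}\rightsquigarrow\pi_{4,p_c}$ pointwise, and then recycle the counting arguments of the two remarks.

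First I would handle the analogue of Lemma \ref{lem:Cov-crossing-edge}: for a crossing event $\scrC_\bullet(\calR)$, an edge $e$ can only be pivotal if it lies in $\calR$. Hence for $e$ at distance $\ell\geq R$ from $\calR$, the covariance vanishes and the bound $\pi_{4,p_c}(R,\ell)\pi_{4,p_c}(\ell)$ holds trivially. For $e$ at distance $n$ from the nearest corner of $\calR$, pivotality forces a four-arm configuration around $e$ up to radius of order $n$ (which stays inside $\calR$), so $\bbP_{1/2}[e \text{ pivotal}]\lesssim \pi_{4,p_c}(n)$, yielding $\Cov_{1/2}(\scrC_\bullet(\calR),\omega_e)\lesssim \pi_{4,p_c}(n)$. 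For the analogue of Lemma \ref{lem:Cov-arms-edge}, pivotality of an edge $f$ for $\calA_\#^{(e)}(R)$ requires $f$ to lie on one of the arms realising the event; the BK inequality then factorises this event into an occurrence of $\calA_\#^{(e)}(R)$ disjointly from $f$, together with a four-arm event from $f$ up to the distance $n$ at which it meets the arms. This gives $\Cov_{1/2}(\calA_\#^{(e)}(R),\omega_f)\lesssim \pi_{\#,p_c}(R)\pi_{4,p_c}(n)$, exactly the advertised substitution.

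For the summed estimates in Remarks \ref{rem:sum-Cov-crossing-edge} and \ref{rem:sum-Cov-arms-edge}, the same annular decomposition by distance to a reference point carries over. The sum $\sum_{r\le N} r\pi_{4,p_c}(r)$ can be evaluated using the two-sided polynomial bounds in item (iv) of Remark \ref{rem:arm-estimates-transation-invariant-environment} together with quasimultiplicativity of $\pi_{4,p_c}$, giving
\[
\sum_{r\le N}r\pi_{4,p_c}(r)\asymp N^2\pi_{4,p_c}(N)\asymp \textrm W(N)^{-1},
\]
the last identity being Kesten's scaling relation for Bernoulli percolation recalled in the introduction. For the squared sums, the same item (iv) yields $\pi_{4,p_c}(r)\lesssim r^{-1-c}$ for some $c>0$, hence
\[
\sum_{r\le N}r\pi_{4,p_c}(r)^2\lesssim \sum_{r\le N}r^{-1-2c}\lesssim 1,
\]
which is exactly the claimed replacement of $\Xi(N)^{-1}$ by $1$: it reflects the fact that the four-arm exponent for percolation is \emph{strictly} larger than $1$.

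The main technical point is the careful application of the BK inequality to factorise the pivotal probabilities for arm events, since the four-arm factor at $f$ has to meet the $\#$-arm configuration at exactly the right scale $n$; but this is classical in the Bernoulli setting (compare \cite{kesten1987scaling,nolin2008near}) and only bookkeeping is needed to adapt it to the (non-translation-invariant) torus geometry used here. Everything else in the proof is routine given the estimates already collected in Section \ref{sub:geometric-estimates}.
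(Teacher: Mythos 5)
Your overall route is the same as the paper's: at $q=1$ the Margulis--Russo formula gives $\Cov_{1/2}(\scrC,\omega_e)\asymp\bbP_{1/2}[e\text{ pivotal}]$, so everything reduces to bounding pivotal probabilities by four-arm probabilities, and the two summed estimates in the remarks then follow from quasimultiplicativity and the polynomial bounds of Remark~\ref{rem:arm-estimates-transation-invariant-environment}~(iv) — in particular the strict inequality $\alpha_4>1$ is what gives $\sum_{r\le N}r\pi_{4,p_c}(r)^2\lesssim 1$, i.e.\ $\Xi(N)\asymp 1$. This matches the paper's proof, which is carried out in Appendix~A.2 for the crossing event.

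There is, however, one imprecise step in your sketch of the corner case of the analogue of Lemma~\ref{lem:Cov-crossing-edge}. When $e$ is at distance $\ell$ from $\partial\calR$ and at distance $n\ge\ell$ from the nearest corner, pivotality does \emph{not} force a full four-arm event around $e$ out to radius of order $n$: the box $\Lambda_n(e)$ leaves $\calR$. The correct decomposition (which is the one the paper actually performs) is a four-arm event from scale $1$ to $\ell$, then a half-plane three-arm event from $\ell$ to $n$ near the straight boundary of $\calR$, then a quarter-plane two-arm event from $n$ to $R$ near the corner; one then uses $\pi_{3^+,p_c}(\ell,n)\asymp(\ell/n)^2$ together with the lower bound $\pi_{4,p_c}(\ell,n)\gtrsim(\ell/n)^{2-c}$ to conclude that the product is still $\lesssim\pi_{4,p_c}(n)$. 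So your stated bound is correct, but the argument needs the boundary arm events; as written, ``a four-arm configuration around $e$ up to radius of order $n$ which stays inside $\calR$'' is contradictory when $\ell<n$. The BK-inequality route you outline for the arm-event analogue of Lemma~\ref{lem:Cov-arms-edge} is fine and fills in what the paper leaves implicit (it only works out the crossing case explicitly), but the same care with half-plane arm events near the boundary of the half-plane is needed there too.
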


The quantity $\Xi(N)$ will appear numerous times throughout the paper, so we would like to relate it to the critical window $\textrm W(N)$. We show a slightly stronger result which will be useful to us later. Recall that $\widetilde{\textrm W}(N)$ is the modified critical window, given by
\begin{equation*}
    \widetilde{\textrm W}(N) = \textrm W(N)\left(\sum_{r\le N}r\Delta_{p_c}(r)^4\right)^{-1/2}.
\end{equation*}
The following lemma is a simple computation, whose proof is postponed to the Appendix.
\begin{lemma}
\label{lem:window-inequalities}
    There is some constant $c>0$ such that one has the following inequalities

    \begin{equation*}
        \textrm W(N)N^c\lesssim\widetilde{\textrm W}(N)^{2/3}\lesssim \Xi(N).
    \end{equation*}
\end{lemma}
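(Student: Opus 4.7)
The plan is to reduce both inequalities to elementary analytic manipulations on the sums defining $\Xi(N)$ and $S(N):=\sum_{r\le N}r\Delta_{p_c}(r)^4$, relying only on the polynomial upper and lower bounds on $\Delta_{p_c}(r)$ from Remark \ref{rem:arm-estimates-transation-invariant-environment} together with the quasi-multiplicativity of mixing rates (Theorem \ref{thm:scaling-relations}). Throughout the sketch, $c$ denotes the fixed exponent appearing in Remark \ref{rem:arm-estimates-transation-invariant-environment}.

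For the second inequality $\widetilde{\textrm W}(N)^{2/3}\lesssim \Xi(N)$, the key observation is that it amounts to a H\"older-type interpolation. Viewing $\Delta_{p_c}$ as a function on $\{1,\dots,N\}$ equipped with the weighted counting measure $\mu(\{r\})=r$, the log-convexity of $L^p(\mu)$-norms with the exponent identity $\tfrac12=\tfrac13\cdot 1+\tfrac23\cdot\tfrac14$ gives $\|\Delta_{p_c}\|_{L^2(\mu)}\le \|\Delta_{p_c}\|_{L^1(\mu)}^{1/3}\|\Delta_{p_c}\|_{L^4(\mu)}^{2/3}$, which after raising to the sixth power reads
\[
\Big(\sum_{r\le N}r\Delta_{p_c}(r)^2\Big)^{3}\le \Big(\sum_{r\le N}r\Delta_{p_c}(r)\Big)^{2}\cdot \sum_{r\le N}r\Delta_{p_c}(r)^4.
\]
Substituting the identity $\sum_{r\le N}r\Delta_{p_c}(r)\asymp \textrm W(N)^{-1}$ from \eqref{eq:sum_cov_crossing_edge} in Remark \ref{rem:sum-Cov-crossing-edge}, and using the definitions of $\Xi(N)$ and $\widetilde{\textrm W}(N)$, this rearranges to $\Xi(N)^{-3}\lesssim \widetilde{\textrm W}(N)^{-2}$, i.e.\ $\widetilde{\textrm W}(N)^{2/3}\lesssim \Xi(N)$.

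The first inequality $\textrm W(N)N^{c'}\lesssim \widetilde{\textrm W}(N)^{2/3}$ (for some $c'>0$ to be chosen) is equivalent, after rearrangement, to $\textrm W(N)\cdot S(N)\lesssim N^{-3c'}$. Combining the lower bound $\Delta_{p_c}(r,R)\gtrsim (r/R)^{2-c}$ from Remark \ref{rem:arm-estimates-transation-invariant-environment} with quasi-multiplicativity yields $\Delta_{p_c}(r)\lesssim \Delta_{p_c}(N)(N/r)^{2-c}$ for every $r\le N$, while the trivial bound $\Delta_{p_c}(r)\le 1$ is sharper for small $r$. The plan is to split $S(N)$ at the natural threshold $r^\star:=N\Delta_{p_c}(N)^{1/(2-c)}$ where the two pointwise bounds coincide (and which belongs to $[1,N]$ by Remark \ref{rem:arm-estimates-transation-invariant-environment}). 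The small-$r$ contribution is bounded by $\sum_{r\le r^\star}r\asymp (r^\star)^2$, and the large-$r$ contribution by $\Delta_{p_c}(N)^4N^{8-4c}\sum_{r>r^\star}r^{4c-7}\lesssim\Delta_{p_c}(N)^4N^{8-4c}(r^\star)^{4c-6}$, with the tail sum controlled since $4c-7<-1$. Both contributions collapse to $N^2\Delta_{p_c}(N)^{2/(2-c)}$, hence $S(N)\lesssim N^2\Delta_{p_c}(N)^{2/(2-c)}$. Multiplying by $\textrm W(N)\asymp N^{-2}\Delta_{p_c}(N)^{-1}$ and invoking $\Delta_{p_c}(N)\lesssim N^{-c}$ yields
\[
\textrm W(N)\cdot S(N)\lesssim \Delta_{p_c}(N)^{c/(2-c)}\lesssim N^{-c^2/(2-c)},
\]
and the first inequality follows with any $c'\le c^2/(3(2-c))$.

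The only real subtlety lies in the balance at $r^\star$: applying either of the two pointwise bounds on $\Delta_{p_c}(r)$ alone over the full range $[1,N]$ produces a power of $N$ whose sign depends on the (a priori unknown) effective decay exponent of $\Delta_{p_c}$, and only the split argument recovers a polynomial gain uniform in that exponent.
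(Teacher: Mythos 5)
Your proof is correct, and the two halves deserve separate comments. For the right-hand inequality $\widetilde{\textrm W}(N)^{2/3}\lesssim\Xi(N)$, your Hölder/log-convexity argument is exactly what the paper does (the paper phrases it as Hölder applied to $r\Delta_{p_c}(r)^2=(r\Delta_{p_c}(r))^{2/3}(r\Delta_{p_c}(r)^4)^{1/3}$; identical content). For the left-hand inequality you take a genuinely different, though closely related, route. The paper splits the sum $S(N)=\sum_{r\le N}r\Delta_{p_c}(r)^4$ at the fixed threshold $\sqrt N$, bounding the small-$r$ part via $\Delta_{p_c}(r)^4\le\Delta_{p_c}(r)$ and the large-$r$ part via $\Delta_{p_c}(r)^3\lesssim\Delta_{p_c}(\sqrt N)^3\lesssim N^{-3c/2}$, obtaining $S(N)\lesssim N^{-c/2}\textrm W(N)^{-1}$ and thus the exponent $c/6$. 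You instead split at the \emph{balance point} $r^\star=N\Delta_{p_c}(N)^{1/(2-c)}$ where the bound $\Delta_{p_c}(r)\lesssim\Delta_{p_c}(N)(N/r)^{2-c}$ meets the trivial bound $\Delta_{p_c}(r)\le1$, and verify directly that both halves collapse to $N^2\Delta_{p_c}(N)^{2/(2-c)}$; multiplying by $\textrm W(N)\asymp N^{-2}\Delta_{p_c}(N)^{-1}$ and using $\Delta_{p_c}(N)\lesssim N^{-c}$ gives the exponent $c^2/(3(2-c))$. Both arguments are correct; yours identifies the optimal threshold and makes the cancellation transparent (your point that a fixed-exponent bound over the full range would not give a sign-definite gain is well taken), while the paper's $\sqrt N$ split is computationally lighter. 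The exponents differ but both are positive and neither statement claims optimality. One small thing worth flagging if you were to expand the sketch: the convergence of the tail $\sum_{r>r^\star}r^{4c-7}$ needs $4c-7<-1$, i.e.\ $c<3/2$; this is automatic since the Remark \ref{rem:arm-estimates-transation-invariant-environment} bounds $(r/R)^c\gtrsim\Delta_{p_c}(r,R)\gtrsim(r/R)^{2-c}$ force $c\le1$.
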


\subsection{Derivatives of events for non-translation invariant environments}
\label{sub:derivative-events}
The overall approach that we take in the present paper is to deform continuously, via a continuous family of bonds $\frak{p}(s)$, all the edge weights from the critical homogeneous setup at $p_c$ towards the set of weights $\frak{p}$. In the deterministic setup, at criticality and below the characteristic length, the strong box-crossing property allows us to deduce all relations recalled inside Theorems \ref{thm:scaling-relations} and \ref{thm:scaling-relations_second}. As time evolves along the deformation process, the fact that some $\textrm{RSW}(\delta,N)$ property remains true (with high probability) for the set of weights $\frak{p}(s)$ ensures that 
Proposition \ref{prop:generalisation-thm-scaling-relations} remains true for the weights $\frak{p}(s)$ and with all involved quantities remaining the same as the homogeneous ones up to multiplicative constant. Therefore, one can run some infinitesimal deformation from the weights $\frak{p}(s)$ to  $\frak{p}(s+ds)$, which implies in return that $\textrm{RSW}(\delta,N)$ holds at time $s+ds$, implying in return Proposition \ref{prop:generalisation-thm-scaling-relations} for the weights $\frak{p}(s+ds)$. 

One can now compute explicitly the derivatives of different quantities with respect to each edge bond $p_e$. The proof of the following proposition is given in Appendix \ref{app:derivative_formulas}.
\begin{proposition}\label{prop:differentiate-bonds}
Given an edge $e$ in $E_N$, under the measure $\phi_{\underline{p}}=\phi_{\bbT_N,\underline{p},q}$, the derivatives of the probability of an event $A$ occurring in $\bbT_N$ are given by 
\begin{align*}
	\frac{\partial}{\partial p_e}\phi_{\underline{p}}[A]&= \frac1{p_e(1-p_e)}\textrm{Cov}_{\underline{p}}(A,\omega_e), \\
	\frac {\partial^2}{\partial p_e^2 }\phi_{\underline{p}}[A]& = \frac 2{p_e(1-p_e)^2} \left(1-\frac1{p_e}\phi_{\underline p}[\omega_e]\right) \textrm{Cov}_{\underline{p}}(A,\omega_e).
\end{align*}
Furthermore, if $e,f,g$ are three edges in $E_N$, the derivatives of $\Cov_{\underline p}(\omega_e,\omega_f)$ are given by
\begin{align*}
\frac{\partial }{\partial p_g}\Cov_{\underline{p}}(\omega_e,\omega_f)	  &=\frac{1}{p_g(1-p_g)}\kappa_{3}^{\underline{p}}(e,f,g),\\
	\frac{\partial ^2}{\partial p_g^2}\Cov_{\underline{p}}(\omega_e,\omega_f)	  &=\frac 2{p_g(1-p_g)^2}\left(1-\frac1{p_g}\phi_{\underline p}[\omega_g]\right)\kappa_{3}^{\underline{p}}(e,f,g) \\
	 &\quad -\frac 2{p_g^2(1-p_g)^2} \textrm{Cov}_{\underline{p}}(\omega_e,\omega_g)\textrm{Cov}_{\underline{p}}(\omega_f,\omega_g),
\end{align*}
where $\kappa_{3}^{\underline{p}}(e,f,g)$ is the third cumulant of the variables $\omega_e,\omega_f,\omega_g$ and is given by
\begin{multline}
	\kappa_{3}^{\underline{p}}(e,f,g):=\phi_{\underline p}[\omega_e\omega_f\omega_g]-\phi_{\underline p}[\omega_e\omega_f]\phi_{\underline p}[\omega_g]-\phi_{\underline p}[\omega_e\omega_g]\phi_{\underline p}[\omega_f]-\phi_{\underline p}[\omega_f\omega_g]\phi_{\underline p}[\omega_e]\\+2\phi_{\underline p}[\omega_e]\phi_{\underline p}[\omega_f]\phi_{\underline p}[\omega_g]
\end{multline}
\end{proposition}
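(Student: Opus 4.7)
The plan is to exploit the explicit factorisation of the FK weight with respect to each individual edge bond, and then iterate. Fix an edge $e\in E_N$ and write
\begin{equation*}
    \phi_{\underline p}[\omega]=\frac{1}{Z_{\underline p}}\,p_e^{\omega_e}(1-p_e)^{1-\omega_e}\,W_e(\omega),
\end{equation*}
where $W_e(\omega)$ collects all factors independent of $p_e$ (the other bond weights, the cluster factor $q^{k(\omega)}$, and on the torus the term $\sqrt{q}^{s(\omega)}$). A direct computation gives $\frac{\partial}{\partial p_e}\log\bigl[p_e^{\omega_e}(1-p_e)^{1-\omega_e}\bigr]=\frac{\omega_e-p_e}{p_e(1-p_e)}$, from which one reads off
\begin{equation*}
    \frac{\partial}{\partial p_e}\log Z_{\underline p}=\frac{\phi_{\underline p}[\omega_e]-p_e}{p_e(1-p_e)}.
\end{equation*}
Summing the logarithmic derivative of $\phi_{\underline p}[\omega]$ over $\omega\in A$ and subtracting the partition-function contribution yields the first claimed formula $\frac{\partial}{\partial p_e}\phi_{\underline p}[A]=\frac{1}{p_e(1-p_e)}\Cov_{\underline p}(\mathbbm 1_A,\omega_e)$.

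For the second derivative of $\phi_{\underline p}[A]$, I would simply differentiate the first formula with respect to $p_e$ again. The key simplification is that $\omega_e^2=\omega_e$, so $\omega_e\mathbbm 1_A$ is itself the indicator of $A\cap\{\omega_e=1\}$, and applying the first formula to it gives
\begin{equation*}
    \frac{\partial}{\partial p_e}\phi_{\underline p}[\omega_e\mathbbm 1_A]=\frac{1-\phi_{\underline p}[\omega_e]}{p_e(1-p_e)}\,\phi_{\underline p}[\omega_e\mathbbm 1_A].
\end{equation*}
Combining this with the derivative of $\phi_{\underline p}[\omega_e]\phi_{\underline p}[A]$ and of the prefactor $1/(p_e(1-p_e))$, the $-(1-2p_e)$ contribution from the prefactor and the $+(1-2\phi_{\underline p}[\omega_e])$ contribution from the numerator combine into $2(p_e-\phi_{\underline p}[\omega_e])$, reproducing the factor $1-\phi_{\underline p}[\omega_e]/p_e$ in the stated formula.

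For the covariance derivatives, I would apply the first formula separately to each of the three quantities $\phi_{\underline p}[\omega_e\omega_f]$, $\phi_{\underline p}[\omega_e]$ and $\phi_{\underline p}[\omega_f]$, and then expand each of the covariances appearing in the result. A direct regrouping of the six resulting monomials produces exactly the third cumulant $\kappa_3^{\underline p}(e,f,g)$ (the identification $\phi[\omega_e\omega_f\omega_g]-\sum_{\mathrm{pairs}}\phi[\cdot]\phi[\cdot]+2\phi[\omega_e]\phi[\omega_f]\phi[\omega_g]$ drops out naturally). For the second derivative, I iterate this procedure on each of the five mixed moments defining $\kappa_3$ together with the derivative of the prefactor $1/(p_g(1-p_g))$. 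The quadratic correction $-2\Cov(\omega_e,\omega_g)\Cov(\omega_f,\omega_g)$ in the final formula then emerges precisely from differentiating the marginals $\phi_{\underline p}[\omega_e]$ and $\phi_{\underline p}[\omega_f]$ inside the three terms $\phi[\omega_e\omega_g]\phi[\omega_f]$, $\phi[\omega_f\omega_g]\phi[\omega_e]$ and $\phi[\omega_e]\phi[\omega_f]\phi[\omega_g]$, which are the only places where $p_g$ appears implicitly through a first-moment factor.

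The main obstacle I anticipate is the bookkeeping for the second derivative of the covariance: once all of the roughly ten contributing terms are written down, one has to reshuffle them carefully so that a multiple of $\kappa_3^{\underline p}(e,f,g)$ factors out in front of $(1-2\phi_{\underline p}[\omega_g])$, while the remaining quadratic contribution collapses exactly into $-2\Cov_{\underline p}(\omega_e,\omega_g)\Cov_{\underline p}(\omega_f,\omega_g)$. No probabilistic input is required beyond the factorisation of $\phi_{\underline p}$, but the algebra is delicate enough that I would organise the computation by systematically labelling the moments $\phi[\omega_S]$ for $S\subseteq\{e,f,g\}$ and using the identity $D_g\phi[\omega_S]=\phi[\omega_{S\cup\{g\}}]-\phi[\omega_g]\phi[\omega_S]$ (with $D_g=p_g(1-p_g)\partial/\partial p_g$) to keep the arithmetic transparent.
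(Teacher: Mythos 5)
Your proposal is correct and is essentially the same direct-differentiation argument as the paper's, organised slightly differently. Where the paper differentiates the raw weight $w_{\underline p}(\omega)$ and works with $Z_{\underline p}(\calS)/Z_{\underline p}$ via the quotient rule, you use logarithmic derivatives of the per-edge factor $p_e^{\omega_e}(1-p_e)^{1-\omega_e}$ and of $Z_{\underline p}$; this is a cosmetic variation and your intermediate identities ($\partial_{p_e}\log Z_{\underline p}=(\phi[\omega_e]-p_e)/(p_e(1-p_e))$ and $\partial_{p_e}\phi[\omega_e\ind_A]=(1-\phi[\omega_e])\phi[\omega_e\ind_A]/(p_e(1-p_e))$, the latter via $\omega_e^2=\omega_e$) are correct, as is the cancellation $-(1-2p_e)+(1-2\phi[\omega_e])=2(p_e-\phi[\omega_e])$ that produces the factor $1-\phi[\omega_e]/p_e$. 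For the second derivative of the covariance, however, you choose to differentiate the expanded $\kappa_3^{\underline p}(e,f,g)$ term by term, whereas the paper applies the Leibniz rule to $\phi[\omega_e\omega_f]-\phi[\omega_e]\phi[\omega_f]$ and plugs in the already-established formula for $\partial^2_{p_g}\phi[\omega_S]$, which makes the $-2\Cov(\omega_e,\omega_g)\Cov(\omega_f,\omega_g)$ correction appear \emph{at once} as the cross term $-2(\partial_{p_g}\phi[\omega_e])(\partial_{p_g}\phi[\omega_f])$. Your route gives the same answer (one can check that with $D_g=p_g(1-p_g)\partial_{p_g}$, the identity $D_g\kappa_3=(1-2\phi[\omega_g])\kappa_3-2\Cov(\omega_e,\omega_g)\Cov(\omega_f,\omega_g)$ holds), but the bookkeeping you warn about is genuinely heavier, and your stated accounting of where the covariance product originates is imprecise: it does not come solely from differentiating $\phi[\omega_e]$ and $\phi[\omega_f]$ in the three products you list, but also involves compensating contributions from $\phi[\omega_e\omega_g]\phi[\omega_{fg}]$-type terms; the full product $-2\Cov\cdot\Cov$ only assembles after nontrivial cancellations across all five monomials. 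The paper's Leibniz decomposition sidesteps this entirely and is the cleaner way to see the structure.
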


\subsection{Geometric estimates in near-critical environments}
\label{sub:near-critical_geometric_estimates}
In this section, we gather estimates on the averaged sums over $\bbT_N$ of $\Cov_{\underline p}(\scrC(\calR),\omega_e)$, $\kappa_3^{\underline p}(e,f,g)$ and $\Cov_{\underline p}(\omega_e,\omega_g)\Cov_{\underline p}(\omega_f,\omega_g)$ for some environment $\underline p$ which is \emph{near-critical} (in a sense made precise below). The first natural near-criticality condition on the environment is that $\underline p$ is that the measure $\phi_{\bbT_N,\underline p}$ satisfies $\textrm{RSW}(\delta,N)$ for some $\delta>0$. Still, we require slightly more here, namely that arm events and mixing rates do not degenerate too much from the critical measure. Hence, we define the following for $\delta>0$ and $\#\in\{4,3^+, 3^{-}, 3^{\textrm{rg}}, 3^{\textrm{lf}} \}$,
\begin{align*}
    \textrm{Stab}_{\#}(\delta,N)&:=\left\{\phi_{\bbT_N,\underline p,q}\ \Bigg|\ \forall e\in E,\ \forall ~1\le r\le R\le N/4,\quad \frac{\pi^{(e)}_{\#,\underline p}(r,R)}{\pi_{\#,p_c}(r,R)}\in[\delta,1/\delta]\right\},\\
    \textrm{Stab}_{\Delta}(\delta,N) &:= \left\{\phi_{\bbT_N,\underline p,q}\ \Bigg| \ \forall e\in E,\  \forall ~ 1\le r\le R\le N/4,\quad \frac{\Delta_{\underline p}^{(e)}(r,R)}{\Delta_{p_c}(r,R)}\in[\delta,1/\delta]\right\}.
\end{align*}
In the rest of this section, one assumes that
\begin{equation}
\label{eq:all_stabilities}
    \phi_{\bbT_N,\underline p,q}\in \textrm{RSW}(\delta,N)\cap\left(\bigcap_{\#\in\{4,3^+,3^{-},3^{\textrm{rg}},3^{\textrm{lf}}\}}\textrm{Stab}_{\#}(\delta,N)\right)\cap\textrm{Stab}_{\Delta}(\delta,N).
\end{equation}
If the measure $\phi_{\bbT_N,\underline p,q}$ indeed satisfies \eqref{eq:all_stabilities}, then it is straightforward to see that all the bounds in Remark \ref{rem:arm-estimates-transation-invariant-environment} remain valid, with constants only depending on $\delta$, as well as the proofs resulting from them. This reads in the following lemma, which states that all estimates of Section \ref{sub:geometric-estimates} remain valid at $\underline p$.
\begin{lemma}
\label{lem:near-critical_estimates}
Assume $1<q\le 4$, and $\underline p$ satisfies \eqref{eq:all_stabilities}. Then the results of Lemmas \ref{lem:Cov-crossing-edge} and \ref{lem:Cov-arms-edge} still hold when replacing the left hand sides with the respective quantities at $\underline p$, with constants additionally depending on $\delta$. As a corollary, the results of Remarks \ref{rem:sum-Cov-crossing-edge} and \ref{rem:sum-Cov-arms-edge} also hold with the same modifications. Furthermore, if $q=1$ and $\underline p$ satisfies \eqref{eq:all_stabilities}, then the results of Lemma \ref{lem:geometric-estimates_perco} also hold at $\underline p$.
\end{lemma}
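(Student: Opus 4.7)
The plan is to rerun the proofs of Lemmas~\ref{lem:Cov-crossing-edge} and~\ref{lem:Cov-arms-edge} verbatim at the measure $\phi_{\bbT_N,\underline p,q}$, observing that these proofs (detailed in Appendix~\ref{app:geometric_estimates}) are purely geometric: they rely only on the Domain Markov property, the monotone coupling of FK-measures with different boundary conditions, the quasimultiplicativity and small-scale comparability of Proposition~\ref{prop:generalisation-thm-scaling-relations}, and RSW-type gluing of arms through annuli. None of these ingredients uses translation invariance or the specific value $p=p_c$; under assumption \eqref{eq:all_stabilities} they are all available at $\underline p$ with constants depending only on $\delta$.

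Running these proofs at $\underline p$ produces inequalities identical to those of Lemmas~\ref{lem:Cov-crossing-edge} and~\ref{lem:Cov-arms-edge}, but in which the right-hand sides naturally involve $\Delta^{(e)}_{\underline p}$ and $\pi^{(e)}_{\#,\underline p}$ in place of $\Delta_{p_c}$ and $\pi_{\#,p_c}$. The assumptions $\textrm{Stab}_{\Delta}(\delta,N)$ and $\textrm{Stab}_{\#}(\delta,N)$ give, uniformly in $e\in E_N$ and $1\le r\le R\le N/4$, the comparisons $\Delta^{(e)}_{\underline p}(r,R)\asymp\Delta_{p_c}(r,R)$ and $\pi^{(e)}_{\#,\underline p}(r,R)\asymp\pi_{\#,p_c}(r,R)$, so that one may freely substitute the critical quantities on the right-hand side at the cost of a multiplicative constant in $\delta$, and recover the two lemmas exactly as stated. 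The summation estimates of Remarks~\ref{rem:sum-Cov-crossing-edge} and~\ref{rem:sum-Cov-arms-edge} then follow immediately by summing the pointwise bounds over $e\in E_N$ and invoking items (i)--(iii) of Remark~\ref{rem:arm-estimates-transation-invariant-environment}, which are statements about the critical quantities only and are therefore unaffected.

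The percolation case $q=1$ is handled in the same way: the proof of Lemma~\ref{lem:geometric-estimates_perco} runs the same geometric reasoning with four-arm events in place of mixing rates, using item (iv) of Remark~\ref{rem:arm-estimates-transation-invariant-environment} (a bound at $p_c(1)=\tfrac12$) and the stability $\pi^{(e)}_{4,\underline p}(r,R)\asymp\pi_{4,p_c}(r,R)$ granted by $\textrm{Stab}_{4}(\delta,N)$. The identification $\Xi(N)\asymp 1$ persists, as $\Xi(N)$ is defined entirely in terms of $\Delta_{p_c}$.

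The only delicate point is that some steps in the proofs of Lemmas~\ref{lem:Cov-crossing-edge} and~\ref{lem:Cov-arms-edge} compare mixing rates and arm probabilities centred at different (possibly nearby) edges $e,f$. These are controlled via the small-scale comparability of Proposition~\ref{prop:generalisation-thm-scaling-relations}(3) combined with $\textrm{Stab}_{\Delta}(\delta,N)$ and $\textrm{Stab}_{\#}(\delta,N)$ applied at both edges, so that the resulting estimates can be uniformly reformulated in terms of the critical quantities. This is precisely where the \emph{uniform-in-$e$} nature of the stability assumptions is essential, and it creates no genuine obstacle — the main content of the lemma is really a bookkeeping statement that the stability assumptions were chosen exactly to make this substitution transparent.
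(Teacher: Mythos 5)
Your proposal is correct and captures exactly the argument the paper leaves implicit. The paper only remarks, just before the lemma, that under \eqref{eq:all_stabilities} "it is straightforward to see that all the bounds in Remark \ref{rem:arm-estimates-transation-invariant-environment} remain valid, with constants only depending on $\delta$, as well as the proofs resulting from them," and does not spell out a proof — what you have written is precisely the bookkeeping that this sentence is asking the reader to fill in. You correctly identify that the proofs of Lemmas \ref{lem:Cov-crossing-edge} and \ref{lem:Cov-arms-edge} in the appendix are driven by the pivotality/arm-event decomposition (the analogue of \cite[Lemma 5.3]{FK_scaling_relations}) and by Proposition~\ref{prop:generalisation-thm-scaling-relations}, all of which are available at $\underline p$; that the resulting bounds are naturally expressed in the local quantities $\Delta^{(e)}_{\underline p},\pi^{(e)}_{\#,\underline p}$ and can be converted to $\Delta_{p_c},\pi_{\#,p_c}$ via $\textrm{Stab}_\Delta,\textrm{Stab}_\#$; that the summation estimates of Remarks~\ref{rem:sum-Cov-crossing-edge}--\ref{rem:sum-Cov-arms-edge} then follow by the same algebra since the right-hand sides involve only critical quantities; and that the $q=1$ case and the identity for $\Xi(N)$ go through unchanged because $\Xi(N)$ is defined purely from $\Delta_{p_c}$. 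This is the paper's approach.
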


Now we move to estimating, when fixing two edges $e,f$,  the sum of cumulants $\kappa_3^{\underline p}(e,f,g)$ and covariances of the form $\Cov_{\underline p}(\omega_e,\omega_g)\Cov_{\underline p}(\omega_f,\omega_g)$ over edges $g\in E_N$ . At criticality, this is done in the proof of \cite[Proposition 5.6]{FK_scaling_relations}. The proof can directly be adapted to the case of a general environment $\underline p$ satisfying $\textrm{RSW}(\delta,N)$ using Proposition \ref{prop:generalisation-thm-scaling-relations} to get that \emph{if $e,f,g$ are ordered such that $d(e,f)$ is the smallest distance between any two of the edges}, then
\begin{align*}
	   |\kappa_3^{\underline p}(e,f,g)|&\lesssim \Delta_{\underline p}^{(e)}(d(e,f))\Delta_{\underline p}^{(e)}(d(e,g))^2\\
	   &\lesssim \Delta_{p_c}(d(e,f))\Delta_{p_c}(d(e,g))^2,
\end{align*}
where passing to the second line we used that $\underline p$ that satisfies \eqref{eq:all_stabilities}. One can sum this bound over edges $g\in E_N$. If $d(e,f)=R$ one has
\begin{align*}
    \sum_{g\in E_N}|\kappa_3^{\underline p}(e,f,g)|&\lesssim\sum_{r\le R}r\Delta_{p_c}(r)\Delta_{p_c}(R)^2+\sum_{R<r\le N}r\Delta_{p_c}(R)\Delta_{p_c}(r)^2\\
    &\lesssim \Delta_{p_c}(R)^2\sum_{r\le N}r\Delta_{p_c}(r)\\
    &\asymp \Delta_{p_c}(R)^2\cdot N^2\Delta_{p_c}(N).
\end{align*}
In the above, we parametrised the position of $g\in E_N$ by its distance $r$ to $\{e,f\}$, which can be rewritten as
\begin{equation}\label{eq:sum_kappa3}
    \sum_{g\in E_N}|\kappa_3^{\underline p}(e,f,g)|\lesssim \Delta_{p_c}(R)^2\textrm W(N)^{-1}.
\end{equation}
An analogous computation gives
\begin{align*}
    \sum_{g\in E_N}|\kappa_3^{\underline p}(e,f,g)|^2&\lesssim\sum_{r\le R}r\Delta_{p_c}(r)^2\Delta_{p_c}(R)^4+\sum_{R<r\le N}r\Delta_{p_c}(R)^2\Delta_{p_c}(r)^4\\
    &\lesssim \Delta_{p_c}(R)^4\sum_{r\le N}r\Delta_{p_c}(r)^2,
\end{align*}
meaning that
\begin{equation}
    \sum_{g\in E_N}|\kappa_3^{\underline p}(e,f,g)|^2\lesssim \Delta_{p_c}(R)^4\cdot\Xi(N)^{-1}.\label{eq:sum_kappa3_squared}
\end{equation}
We now evaluate another sum which is useful in our proofs. Recall that for any edges $e,f$, one has, under the assumption \eqref{eq:all_stabilities}, that $\Cov_{\underline p}(\omega_e,\omega_f)\asymp \Delta_{p_c}(d(e,f))^2$. Then one has
\begin{align*}
	\sum_{g\in E_N}\big|\Cov_{\underline p}(\omega_e,\omega_g)\Cov_{\underline p}(\omega_f,\omega_g)\big|&=\sum_{ d(g,e) \textrm{ or } d(g,f) \leq R }\big|\Cov_{\underline p}(\omega_e,\omega_g)\Cov_{\underline p}(\omega_f,\omega_g)\big| \\
&\quad + \sum_{ d(g,e) \textrm{ and } d(g,f) > R }\big|\Cov_{\underline p}(\omega_e,\omega_g)\Cov_{\underline p}(\omega_f,\omega_g)\big| \\
	&  \lesssim   \sum_{\ell\le R}\ell\Delta_{p_c}(\ell)^2\Delta_{p_c}(R)^2  +  \sum_{R<\ell\le N}\ell \Delta_{p_c}(\ell)^4 \\
    &\lesssim \Delta_{p_c}(R)^2\sum_{r\le N}r\Delta_{p_c}(r)^2.
\end{align*}
This can be summarized as
\begin{equation}\label{eq:sum_cov_g}
    \sum_{g\in E_N}\Cov_{\underline p}(\omega_e,\omega_g)\Cov_{\underline p}(\omega_f,\omega_g)|\lesssim  \Delta_{p_c}(R)^2\cdot \Xi(N)^{-1}.
\end{equation}

\section{Near-critical random bond FK-percolation for naively centered random variables}\label{sec:naive-random-bonds}

\subsection{Gaussian variables centred at $p_c(q)$}\label{sub:centred-gaussian}

\subsubsection{Statement of the result and setup of the proof}
Throughout this section, we fix some $1<q\le 4$, and drop the dependencies in $q$ from our notations. We start by proving Theorem \ref{thm:extension_scaling_window} in the special case of a sequence of i.i.d.\ Gaussian variables whose law is given by 
\begin{equation}\label{eq:Gaussian-case}
    \mathbf{p}_e \overset{(d)}{=} \calN(p_c(q),\sigma_N^2),
\end{equation}
where the parameter $\sigma_N$ goes to $0$ as the size of $\bbT_N$ goes to infinity. Note that the probability that a given $\mathbf p_e$ stays in the interval $[0,1]$ is at most $\lesssim \exp(-c \cdot \sigma_N^{-2} )$ for some universal constant $c>0$. In practice, $\sigma_N$ will be chosen to decay polynomially fast in $N$, implying, by a straightforward union bound, that all the bond parameters $\frak{p}=(\mathbf{p}_e)_{e\in E_N}$ belong to $[0,1]$ with high probability, making the FK measures $\phi_{\bbT_N,\frak{p}}$ well defined. The special role given in our analysis of the case of i.i.d.\ Gaussian variables is related to the so-called Skorokhod embedding theorem, which informally states that any centred random variable with regular enough integrability properties and tails can indeed be realised as some Brownian motion stopped at some well chosen stopping time. As a toy example in our analysis, we present a simplified proof of Theorem \ref{thm:extension_scaling_window} when the random environment $\mathbf{P}_N$ is given by \eqref{eq:Gaussian-case}, as it will be a first incremental step to introduce the core arguments necessary to work both for random variables satisfying either $(\star)^{\text A}_N$ or $(\star)^{\text B}_N$. This reads in the following proposition.
\begin{proposition}\label{prop:L2-case}
    Assume that the random environment $\frak p=(\mathbf{p}_e)_{e\in E_N}$ under $\mathbf{P}_N$ is given by i.i.d\ Gaussians satisfying \eqref{eq:Gaussian-case} for some uniform parameter $\sigma_N$. Then there exists $\delta=\delta(q)>0$ and some positive constants $c=c(q,\delta)$, $O=O(q,\delta)$ such that for any $N\ge 1$ and any $\sigma_N\le c\cdot \textrm W(N)^{1/2}$, one has
    \begin{equation*}
        \mathbf{P}_N\Bigg[ \phi_{\bbT_N,\frak p}\in \textrm{RSW}(\delta,N)\Bigg] \geq 1-O\Bigg(\exp\Bigg[-cN^c\Bigg(\frac{\textrm W(N)^{1/2}}{\sigma_N}\Bigg)^2\Bigg]\Bigg).
    \end{equation*}
\end{proposition}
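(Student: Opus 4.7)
The plan is to embed the Gaussian environment $\frak{p}$ as the endpoint at time $s=1$ of a continuous deformation driven by independent Brownian motions starting from the critical environment, and to track the key observables via It\^o's formula combined with the sum estimates of Section \ref{sub:near-critical_geometric_estimates}. Under $\mathbf{P}_N$, let $(B^{(e)}_s)_{e\in E_N,\,s\ge 0}$ be independent standard Brownian motions and set $\frak{p}(s):=(p_c+\sigma_N B^{(e)}_s)_{e\in E_N}$, so that $\frak{p}(0)\equiv p_c$ while $\frak{p}(1)\stackrel{(d)}{=}\frak{p}$. Reflection principle and $\sigma_N\le c\,\textrm{W}(N)^{1/2}$ (polynomial in $N$) ensure $\sup_{e,\,s\le 1}|\sigma_N B^{(e)}_s|\le 1/2$ outside an event of probability at most $O(N^2\exp(-c/\sigma_N^2))$, which is absorbed in the final error. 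Fix $\delta=\delta(q)>0$ small enough that $\phi_{\bbT_N,p_c}$ satisfies \eqref{eq:all_stabilities} at level $2\delta$ (guaranteed by Theorem \ref{thm:scaling-relations}, Remark \ref{rem:arm-estimates-transation-invariant-environment}), and introduce the bootstrap stopping time
\begin{equation*}
\tau:=\inf\Big\{s\ge 0:\phi_{\bbT_N,\frak{p}(s)}\notin \textrm{RSW}(\delta,N)\cap \bigcap_{\#}\textrm{Stab}_{\#}(\delta,N)\cap\textrm{Stab}_{\Delta}(\delta,N)\Big\}.
\end{equation*}
On $\{s<\tau\}$, Lemma \ref{lem:near-critical_estimates} and the sum estimates of Section \ref{sub:near-critical_geometric_estimates} apply with constants depending only on $(q,\delta)$, and the proposition reduces to bounding $\mathbf P_N[\tau<1]$.

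For an observable $F$ among the crossings $\scrC_{\bullet}(\calR)$, the arm events $\calA_{\#}^{(e)}(r,R)$, and the boundary-conditioned crossings defining $\Delta^{(e)}_{\underline p}(r,R)$, Proposition \ref{prop:differentiate-bonds} and It\^o's formula yield, on $[0,\tau]$,
\begin{equation*}
d\phi_{\frak{p}(s)}[F]=\sigma_N\sum_{e\in E_N}\frac{\Cov_{\frak{p}(s)}(F,\omega_e)}{p_e(s)(1-p_e(s))}\,dB^{(e)}_s+\frac{\sigma_N^2}{2}\sum_{e\in E_N}\frac{\partial^2\phi_{\frak{p}(s)}[F]}{\partial p_e^2}\,ds.
\end{equation*}
Using the formula for the second derivative in Proposition \ref{prop:differentiate-bonds} and Remark \ref{rem:sum-Cov-crossing-edge}, for $F=\scrC_{\bullet}(\calR)$ the drift is $O(\sigma_N^2/\textrm{W}(N))=O(c^2)$ while the total quadratic variation on $[0,1]$ is $O(\sigma_N^2/\Xi(N))\le O(c^2 N^{-c})$ by Lemma \ref{lem:window-inequalities}. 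Dambis--Dubins--Schwarz and the sub-Gaussian tail for the supremum of a time-changed Brownian motion then give, for any $\eta>0$,
\begin{equation*}
\mathbf{P}_N\Big[\sup_{s\le\tau\wedge 1}\big|\phi_{\frak{p}(s)}[\scrC_{\bullet}(\calR)]-\phi_{p_c}[\scrC_{\bullet}(\calR)]\big|\ge\eta\Big]\lesssim\exp\Big(-c\eta^2 N^c\big(\textrm{W}(N)^{1/2}/\sigma_N\big)^2\Big).
\end{equation*}
Analogous estimates hold for the rescaled observables $\calA^{(e)}_{\#}(r,R)/\pi_{\#,p_c}(R)$ and $\Delta^{(e)}_{\frak{p}(s)}(r,R)/\Delta_{p_c}(r,R)$, now invoking Remark \ref{rem:sum-Cov-arms-edge} and the cumulant bounds \eqref{eq:sum_kappa3}--\eqref{eq:sum_kappa3_squared} together with \eqref{eq:sum_cov_g} (which control the second derivative of covariances). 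Choosing $\eta$ so that $\eta$-perturbations turn the $2\delta$-level inequalities at $s=0$ into the $\delta$-level ones, and performing a polynomial union bound over the $O(N^C)$ many $2$ by $1$ rectangles, edges, and scale pairs $(r,R)$, gives $\mathbf{P}_N[\tau<1]$ of the target form, the polynomial factor being absorbed by the $N^c$ in the exponent.

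The main obstacle is the circular, bootstrap nature of the argument: the It\^o estimates are available only on $\{s<\tau\}$, where stability is by definition satisfied, yet one wants to conclude that stability never fails. This is resolved by continuity of $s\mapsto\phi_{\frak{p}(s)}[F]$ combined with the sub-Gaussian concentration: no observable can move by more than $\eta$ along $[0,\tau\wedge 1]$, so the $2\delta$-level inequalities at $s=0$ persist as $\delta$-level ones at $\tau\wedge 1$, forcing $\tau>1$ on the good event. A secondary subtlety is that arm events and mixing rates must be controlled \emph{multiplicatively}: this is precisely the reason one needs the squared-covariance bounds \eqref{eq:sum_cov_crossing_edge_squared}, \eqref{eq:sum_cov_arm_squared}, \eqref{eq:sum_kappa3_squared} involving the improved quantity $\Xi(N)$ (rather than $\textrm{W}(N)$), as these produce the extra $N^c$ factor in the exponent that makes the polynomial union bound go through.
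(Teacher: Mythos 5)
Your overall architecture matches the paper's: run i.i.d.\ Brownian motions at each edge from the critical environment, introduce a bootstrap stopping time for the stability classes, decompose the It\^o derivative of each observable into drift and martingale, use the summed covariance/cumulant estimates to bound both, and close via a martingale large deviation plus a polynomial union bound. The drift of order $\sigma_N^2/\textrm W(N)$ and quadratic variation of order $\sigma_N^2/\Xi(N)\lesssim N^{-c}\sigma_N^2/\textrm W(N)$ are exactly the paper's bounds, and the identification of $\Xi(N)$ (rather than $\textrm W(N)$) as the source of the crucial polynomial gain for the union bound is correct.

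However, there is a genuine gap in how you propose to stabilise the mixing rates. You list $\Delta^{(e)}_{\frak p(s)}(r,R)/\Delta_{p_c}(r,R)$ as an observable to which It\^o's formula is applied and claim the cumulant and covariance-product bounds (\ref{eq:sum_kappa3})--(\ref{eq:sum_kappa3_squared}), (\ref{eq:sum_cov_g}) control its drift and bracket. That does not work directly: $\Delta^{(e)}_{\underline p}(r,R)$ is a \emph{difference of boundary-conditioned probabilities on the finite box $\Lambda_R(e)$}, not a functional of the torus measure $\phi_{\bbT_N,\underline p}$, and Proposition~\ref{prop:differentiate-bonds} (and hence the summed estimates in Section~\ref{sub:near-critical_geometric_estimates}) are formulated for torus observables, whose derivatives are torus covariances and cumulants. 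There is no stated formula identifying $\partial_{p_g}\Delta^{(e)}_{\underline p}(r,R)$ with $\kappa_3^{\underline p}(e,f,g)$. The paper's route is to apply It\^o to the torus covariances $\Cov_{\frak p(t)}(\omega_e,\omega_f)$ (whose derivatives are genuinely the cumulants and covariance products you quote), prove multiplicative stability of these, and only then reconstruct the mixing rates via the identity $\Delta^{(e)}_{\underline p}(R)\asymp\sqrt{\Cov(\omega_e,\omega_f)\Cov(\omega_e,\omega_g)/\Cov(\omega_f,\omega_g)}$, which uses Proposition~\ref{prop:generalisation-thm-scaling-relations}(1) and holds under RSW alone. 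Your argument silently skips this reconstruction step.

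A second, subtler issue: you use a single parameter $\delta$ for both the RSW-type stability events and the mixing-rate stability in the stopping time $\tau$. As the paper points out (and handles by the separate parameter $\rho=\rho(\delta)$ in $\textrm{Stab}_\Delta$), shrinking $\delta$ to gain at the RSW level \emph{worsens} the constants in the relation $\Delta^{(e)}\asymp_\delta\sqrt{\Cov/\Cov}$, so the multiplicative stability you can actually prove for mixing rates is at a level $\rho(\delta)$ determined \emph{after} $\delta$, not at $\delta$ itself. Without this two-parameter bookkeeping the bootstrap does not visibly close: the $2\delta$-to-$\delta$ contraction you invoke for crossings cannot simply be replicated for the mixing-rate ratios with the same $\delta$. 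Finally, a smaller omission: the RSW criterion in Definition~\ref{def:RSW-Torus} involves $\phi^0_{\calR',\underline p}$ and $\phi^1_{\calR',\underline p}$, which are again boundary-conditioned subdomain measures; the paper introduces the torus events $\scrC^0_\bullet(\calR)$ and $\scrC^{\star 1}_\bullet(\calR)$ precisely to translate these into observables to which Proposition~\ref{prop:differentiate-bonds} applies. You apply It\^o to the plain torus event $\scrC_\bullet(\calR)$, so you still need the monotonicity argument $\phi_{\bbT_N,\underline p}[\scrC^0_\bullet(\calR)]\le\phi^0_{\calR',\underline p}[\scrC_\bullet(\calR)]$ (and its dual) to convert your bound into one on the RSW-defining quantities.
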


Before diving into the proof, let us introduce additional notations, and highlight pieces of the strategy we use. As aforementioned, rather than directly looking at the FK measure with bonds parameters given by $(\mathbf{p}_e)_{e\in E_N}$, we introduce a time indexed continuous family of FK measures, starting at the homogeneous critical setup and ending at $(\mathbf{p}_e)_{e\in E_N}$. More precisely, for any $t\geq 0$, define the process $\frak p(t):=(\mathbf{p}_e(t))_{e\in E_N}$, whose components are mutually independent and given by
\begin{equation*}
    \mathbf{p}_e(t) := p_c(q)+B^{(e)}_t,
\end{equation*}
where for each $e\in E_N$, $t\mapsto B^{(e)}_t$ is a standard Brownian motion attached to the edge $e$. One defines the natural (completed) filtration $\mathcal{F}^{N}_t:=\sigma((B^{(e)}_{s})_{0\leq s\leq t}, e \in E_N) $, and by abuse of notation, also write $\mathbf P_N$ for the probability measure associated to the Brownian motions. It is straightforward to see that $\frak p(0)=(p_c)_{e\in E_N}$ and that $t\mapsto \frak p(t)$ is $\mathbf{P}_N$-almost surely continuous with $\frak p(\sigma_N^2)\stackrel{(d)}=\frak p$. To prove Proposition \ref{prop:L2-case}, we will show that for $t\lesssim \textrm W(N)$, all the measures $s\mapsto (\phi_{\bbT_N,\frak p(s),q})_{s\leq t}$ remain within the $\textrm{RSW}(\delta,N)$ class, at least  with high $\mathbf{P}_N$-probability. For the sake of notations, we will couple the evolving environment $\frak p(t)$ with $\frak p$ so that $\frak p(\sigma_N^2)=\frak p$, that is to say for each $e$, one has $\mathbf p_e(\sigma_N^2)=\mathbf p_e$. Our strategy enhances the ideas present in the original proof of Theorem \ref{thm:scaling-relations_second} to our random context. More practically, this means keeping track along the time deformation of the stability of the RSW box crossing property, the mixing rates and the arm exponents \emph{everywhere in} $E_N$. Some circular argument appears, as the crossing probabilities in rectangles and the arm exponents control the time dependent stochastic derivatives of the mixing rates, which control in return the time derivatives of the crossing probabilities and arm exponents. The overall interplay between those quantities will allow the FK measures to stay resembling the critical one.

Set two positive parameters $\delta,\rho$ whose value will be fixed through the proof, and will in fact only depend on $1<q\leq 4$. In particular, all the constants $\lesssim,\asymp$ will depend depend on $\delta$ and $\rho$ (thus on $q$), except within the proof of Lemma \ref{lem:stability-StabDelta} where the dependency will be explicited. In order to quantify the fact that the FK model with parameters $\frak{p}(t) $ deviates too much from the critical one, we introduce some \emph{breaking time} $T_b(\delta,\rho)$, where the model no longer exhibits critical behavior, in the spirit of the deterministic breaking point $p_b$ introduced in the proof of Theorem \ref{thm:scaling-relations_second}. More precisely, the random variable $T_b(\delta,\rho)$ is the stopping time for the filtration $\mathcal{F}^{N}_t $ corresponding to the first instant that $\frak p(t)$ no longer satisfies \emph{at least one} of the stability conditions discussed in Section \ref{sub:near-critical_geometric_estimates}, i.e.\
\begin{equation*}
    T_b=T_b(\delta,\rho):=\inf\left\{t\ge 0 \Bigg| \phi_{\bbT_N,\frak p(t)}\not\in \textrm{RSW}(\delta,N)\cap\left(\bigcap_{\#}\textrm{Stab}_{\#}(\delta,N)\right)\cap\textrm{Stab}_{\Delta}(\rho,N)\right\}
\end{equation*}
where the indices  $\#$ run through $\{4,3^+,3^-,3^{\textrm{rg}},3^{\textrm{lf}}\}$. Note that the $\Delta$ (mixing rates) stability involves some constant $\rho$ instead of $\delta$, which is a priori different, and will be fixed during the proofs. Requiring the strong box-crossing property at each scale $\textrm{RSW}(\delta,N)$ enforces (for a box of size $1$) that all the edges in $\bbT_N$ are open with probability between $\delta$ and $1-\delta$. As a consequence, for any $0\leq t\leq T_b(\delta,\rho)$, all of the bond parameters  $\mathbf{p}_e(t) $ remain bounded away from $0$ and $1$, making sense of the measure $\phi_{\bbT_N,\frak p(t)}$ up to time $T_b(\delta,\rho)$. Using the very definition of $T_b(\delta,\rho)$, one can see Proposition \ref{prop:L2-case} as a consequence of the fact that, with high $\mathbf{P}_N$ probability, the breaking time $T_b(\delta,\rho)$ is not too small. In fact, it will suffice to show that there exist constants $\delta,\rho,c,\lesssim$ such that for all $\sigma_N\le c\cdot \textrm W(N)^{1/2}$,
\begin{equation}
\label{eq:rewrite_L2-case}
    \mathbf P\Big[T_b(\delta,\rho)<\sigma_N^2\Big]\lesssim \exp\Bigg[-cN^c\left(\frac{\textrm W(N)^{1/2}}{\sigma_N}\right)^2\Bigg].
\end{equation}
Once Proposition \ref{prop:L2-case} is related to the probability that $T_b$ is not too small, the proof goes via proving that probabilities of crossing events of the form $\phi^0_{\calR',\frak p(t),q}[\scrC_{\bullet}(\calR)]$ (similar to those introduced in Definition \ref{def:RSW-Torus}), mixing rates, and probabilities of arm events, do not deviate too much from their original value critical value $\frak{p}(0)$. One (small) subtlety when applying standard FK percolation arguments relating different scales is the influence of boundary conditions on mixing (notably those on $\calR'$ when considering the crossings of $\calR$). In order to sidestep this problem, one can consider some slightly more evolved crossing events, in the spirit of the properties involved in Definition $\textrm{RSW}(\delta,N)$
\begin{definition}
\label{def:modified_RSW}
Let $\calR$ be a $2$ by $1$ rectangle of width $R\le N/8$ in $\bbT_N$, and let $\calR'$ be the rectangle with the same center as $\calR$ but twice larger side lengths. Define $\scrC(\calR'\backslash \calR)$ (respectively $\scrC^{\star}(\calR'\backslash \calR)$) to be the event that there exists a primal (respectively a dual) circuit inside the annulus $\calR'\backslash \calR$ and separating the outer boundaries of $\calR$ and $\calR'$. We can then define the modified crossing events by setting, for $\bullet\in\{h,v\}$ (see Figure \ref{fig:modified_crossing} for an illustration)
\begin{equation*}
\begin{cases}
    \scrC_{\bullet}^0(\calR)&:=\scrC_{\bullet}(\calR)\cap \scrC^{\star}(\calR'\backslash \calR), \\
    \scrC_{\bullet}^{\star1}(\calR)&:=\scrC_{\bullet}^{\star}(\calR)\cap \scrC(\calR'\backslash \calR). \\
\end{cases}
\end{equation*}
\end{definition}

\begin{figure}
    \centering
    \includegraphics{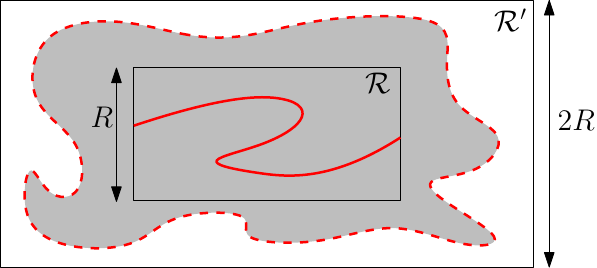}
    \caption{Illustration of the event $\scrC^0_{h}(\calR)$. Primal paths are depicted in full red lines and dual paths are in dashed red lines. The greyed out region is the domain $\calD$ introduced after Definition \ref{def:modified_RSW}.}
    \label{fig:modified_crossing}
\end{figure}
Standard conditioning and monotonicity arguments ensure that, for any $\frak p$ and any $\bullet \in \{ h,v \}$ one has
\begin{equation*}
    \phi_{\bbT_N,\frak p}[\scrC^0_{\bullet}(\calR)]\le\phi_{\bbT_N,\frak p}[\scrC_{\bullet}(\calR)|\scrC^{\star}(\calR'\backslash \calR)].
\end{equation*}
Conditionally on the event $\scrC^{\star}(\calR'\backslash \calR)$, one can explore the FK clusters from $\partial \calR'$ inwards, until finding a dual path circulating the annulus $\calR'\backslash \calR$. A piece of this path can be seen as the boundary of some domain simply connected domain $\calD$, which hasn't been explored and contains $\calR$ as a sub-domain (see Figure \ref{fig:modified_crossing}). The dual-connected boundary corresponds to \emph{free} boundary conditions on $\calD$, which implies via the domain Markov property that
\begin{equation*}
    \phi_{\bbT_N,\frak p}[\scrC_{\bullet}(\calR)|\scrC^{\star}(\calR'\backslash \calR)]=\phi_{\bbT_N,\frak p}\Big[\phi_{\calD,\frak p}^0[\scrC_{\bullet}(\calR)]\Big|\scrC^{\star}(\calR'\backslash \calR)\Big].
\end{equation*}
As the event $\scrC_{\bullet}(\calR)$ is increasing, domain monotonicity ensures that for all domains $\calD$ contained in $\calR'$, one has $\phi_{\calD,\frak p}^0[\scrC_{\bullet}(\calR)]\le \phi^0_{\calR',\frak p}[\scrC_{\bullet}(\calR)]$. Putting everything together gives
\begin{equation*}
    \phi_{\bbT_N,\frak p}[\scrC_{\bullet}^0(\calR)]\le \phi^0_{\calR',\frak p}[\scrC_{\bullet}(\calR)].
\end{equation*}
A similar analysis for the dual model ensures that for any $\frak p$ and $\bullet \in \{ h,v\}$ one has
\begin{equation*}
    \phi_{\bbT_N,\frak p}[\scrC_{\bullet}^{\star 1}(\calR)]\le \phi^1_{\calR',\frak p}[\scrC^{\star}_{\bullet}(\calR)].
\end{equation*}
Therefore, if one wants to check the $\textrm{RSW}(\delta,N)$ property on $\bbT_N$, it is enough to check that the both  probabilities of the events $\scrC_{\bullet}^0(\calR)$ and $\scrC_{\bullet}^{\star 1}(\calR)$ remain are larger (or equal to) than $\delta$. In particular, one can restrict the analysis to events for the full torus measure $\phi_{\bbT_N,\frak p}$, not involving boundary conditions in sub-domains. Let us add that, conversely, standard RSW theory ensures that provided the measure $\phi_{\bbT_N,\frak p}$ satisfies $\textrm{RSW}(\delta,N)$, then both lower bounds $\phi_{\bbT_N,\frak p}[\scrC_{\bullet}^0(\calR)] \geq \delta'$ and $\phi_{\bbT_N,\frak p}[\scrC_{\bullet}^{\star 1}(\calR)] \geq \delta'$ hold for some $\delta'>0$ only depending on $\delta$.

\subsubsection{Proving stability of the FK measures below $T_b $}

One can now pass to the core of the proof, based on the following two lemmas, that evaluate how crossing probabilities, arm events and mixing rates evolve when $\frak{p}(t)$ moves from the uniform measure $\frak{p}(0)$. All the crossing event derivatives with respect to some specific edge-weight $p_e$ are recalled in Section \ref{sub:derivative-events}. Hence, it remains to apply the Itô's formula and estimate the order of magnitude of each term involved. In order to obtain $\delta$-dependent absolute constants, one should enforce in the reasonings presented below the possibility to chose constants only depending on $\delta$ (and in particular $\rho=\rho(\delta)$) provided $\delta$ is small enough. In the proof of lemma \ref{lem:stability-StabDelta}, we explicitly adopt the $\asymp_{\rho,\delta},\lesssim_{\rho,\delta}$ convention (and similarly when constants only depend on $\delta$ or $\rho$) to make the logic more transparent.
\begin{lemma}
\label{lem:stability-RSW}
    Assume that $\delta$ is chosen small enough, only depending on $1< q\leq 4$. Then for all $\rho>0$, there are some positive constants $c=c(\delta,\rho,q)>0$ and $O_{\delta,\rho}(\cdot)$ such that if $\sigma_N^2\le c\cdot \textrm W(N)$, one has
\begin{equation*}
    \mathbf P_N \Bigg[ \forall \textrm{ } 0\leq  s\leq \Big( \sigma_N^2\wedge T_b(\delta,\rho)\Big), \textrm{ } \phi_{\bbT_N,\frak p(s)}\in \textrm{RSW}(2\delta,N) \Bigg]\geq 1-O_{\delta,\rho}\Bigg( \exp\Bigg[-c N^{c}\frac{\textrm W(N)}{\sigma_N^2} \Bigg] \Bigg).
\end{equation*}
Moreover, the same statement holds when replacing $\textrm{RSW}(2\delta,N)$ by $\textrm{Stab}_{\#}(2\delta,N)$ for any $\#\in\{4,3^+,3^-,3^{\textrm{rg}},3^{\textrm{lf}}\}$.
\end{lemma}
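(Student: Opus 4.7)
The plan is to control uniformly, over both the time interval $[0,\sigma_N^2\wedge T_b(\delta,\rho)]$ and over all $2$ by $1$ rectangles $\calR\subset \bbT_N$, the fluctuations of the modified crossing probabilities $s\mapsto \phi_{\bbT_N,\frak p(s)}[A]$ for $A\in\{\scrC_h^0(\calR),\scrC_v^0(\calR),\scrC_h^{\star 1}(\calR),\scrC_v^{\star 1}(\calR)\}$ from Definition \ref{def:modified_RSW}. As the discussion preceding the lemma explains, uniformly keeping each of these probabilities above $2\delta$ is sufficient to conclude $\phi_{\bbT_N,\frak p(s)}\in \textrm{RSW}(2\delta,N)$. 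First I would fix $\delta>0$ small enough, depending only on $q$, so that the critical homogeneous starting point $\frak p(0)=(p_c)_{e\in E_N}$ satisfies $\phi_{\bbT_N,\frak p(0)}[A]\geq 3\delta$ for all such $A$ (which is a consequence of the standard critical RSW).

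The main step is to apply Itô's formula to $s\mapsto M_s^A:=\phi_{\bbT_N,\frak p(s)}[A]$. Since $\mathbf p_e(s)=p_c+B^{(e)}_s$ with independent Brownian motions $B^{(e)}$, Proposition \ref{prop:differentiate-bonds} gives
\begin{equation*}
M^A_s=M^A_0+\sum_{e\in E_N}\int_0^s\frac{\Cov_{\frak p(u)}(A,\omega_e)}{\mathbf p_e(u)(1-\mathbf p_e(u))}\,dB^{(e)}_u+\int_0^s D_u^A\,du,
\end{equation*}
where $D_u^A$ is a linear combination of the $\Cov_{\frak p(u)}(A,\omega_e)$ whose coefficients are bounded by a constant depending only on $\delta$, since the weights $\mathbf p_e(u)$ stay in a compact subset of $(0,1)$ as long as $u<T_b(\delta,\rho)$. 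Stopping at $\sigma_N^2\wedge T_b(\delta,\rho)$ ensures that Lemma \ref{lem:near-critical_estimates} applies throughout, so \eqref{eq:sum_cov_crossing_edge} and \eqref{eq:sum_cov_crossing_edge_squared} yield, up to constants depending on $\delta$,
\begin{equation*}
|D_u^A|\lesssim \textrm W(N)^{-1}\qquad\text{and}\qquad \sum_{e\in E_N}\Cov_{\frak p(u)}(A,\omega_e)^2\lesssim \Xi(N)^{-1}.
\end{equation*}
Integrating, the total drift accumulated up to time $\sigma_N^2\wedge T_b(\delta,\rho)$ is bounded by $C(\delta)\sigma_N^2/\textrm W(N)$, which is forced below $\delta/2$ by choosing the constant $c$ in the hypothesis $\sigma_N^2\leq c\cdot\textrm W(N)$ small enough in a $\delta$-dependent way.

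The martingale part of $M^A$ has quadratic variation bounded by $C(\delta)\sigma_N^2/\Xi(N)$ throughout $[0,\sigma_N^2\wedge T_b(\delta,\rho)]$. Applying a Bernstein-type exponential inequality for continuous martingales with bounded quadratic variation (to the stopped martingale) yields
\begin{equation*}
\mathbf P_N\Big[\sup_{s\leq \sigma_N^2\wedge T_b(\delta,\rho)}\big|M_s^A-M_0^A-\textstyle\int_0^s D_u^A du\big|\geq\delta/2\Big]\leq 2\exp\Big(-c(\delta)\cdot\frac{\Xi(N)}{\sigma_N^2}\Big).
\end{equation*}
By Lemma \ref{lem:window-inequalities}, $\Xi(N)\gtrsim \textrm W(N)\cdot N^c$ for a universal $c>0$, so the right-hand side is bounded by $\exp(-c(\delta)N^c\textrm W(N)/\sigma_N^2)$. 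Union bounding over the $O(N^3)$ pairs $(\calR,A)$ absorbs the polynomial prefactor into the exponential (at the price of a smaller $c$), which delivers the $\textrm{RSW}(2\delta,N)$ statement.

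For $\textrm{Stab}_{\#}(2\delta,N)$ with $\#\in\{4,3^+,3^-,3^{\textrm{rg}},3^{\textrm{lf}}\}$, I would repeat the same scheme replacing crossings by the arm events $\calA_\#^{(e)}(R)$. The corresponding geometric input is now \eqref{eq:sum_cov_arm} and \eqref{eq:sum_cov_arm_squared}, which give drift and quadratic variation bounds rescaled by $\pi_{\#,p_c}(R)$ and $\pi_{\#,p_c}(R)^2$ respectively. Dividing through by $\pi_{\#,p_c}(R)$ and applying Bernstein to the normalised semimartingale gives, per triple $(e,\#,R)$, a concentration bound of the same form $\exp(-c(\delta)N^c\textrm W(N)/\sigma_N^2)$ for the ratio $\phi_{\bbT_N,\frak p(s)}[\calA_\#^{(e)}(R)]/\pi_{\#,p_c}(R)$ to stay within a factor $2$ of $1$, which is equivalent (up to adjusting $\delta$) to the ratio staying in $[2\delta,1/(2\delta)]$. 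A union bound over the $O(N^4)$ relevant triples $(e,\#,R)$ concludes.

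The main obstacle I anticipate is a calibration issue: one must fix $\delta$ small enough so that (i) the homogeneous critical crossing and arm probabilities exceed their threshold $3\delta$, (ii) the $\delta$-dependent prefactors hidden in Lemma \ref{lem:near-critical_estimates} do not degrade the drift estimate faster than the $c\cdot \textrm W(N)$ slack in $\sigma_N^2$ can absorb, and (iii) the polynomial union bound over rectangles and arm triples is still swallowed by the $N^c$ gain produced by Lemma \ref{lem:window-inequalities}. The technical engine really is the $L^2$ bound \eqref{eq:sum_cov_crossing_edge_squared}/\eqref{eq:sum_cov_arm_squared}: replacing the $L^1$ sum $\textrm W(N)^{-1}$ by the $L^2$ sum $\Xi(N)^{-1}$ is exactly what delivers the $N^c$ improvement in the concentration exponent needed to beat the $O(N^3)$ (or $O(N^4)$) union bounds uniformly.
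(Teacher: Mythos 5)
Your proposal follows the paper's proof essentially line for line: decompose $\phi_{\bbT_N,\frak p(s)}[A]$ into martingale plus finite-variation part via Itô and Proposition \ref{prop:differentiate-bonds}, bound the drift by $\sigma_N^2/\textrm{W}(N)$ using \eqref{eq:sum_cov_crossing_edge}, bound the bracket by $\sigma_N^2/\Xi(N)$ using \eqref{eq:sum_cov_crossing_edge_squared}, apply exponential martingale concentration together with Lemma \ref{lem:window-inequalities} to swallow the $O(N^3)$ union bound, and repeat with the $\pi_{\#,p_c}(R)$ normalisation for arm events. You also correctly identify the central technical point, namely that the $L^2$ covariance bound $\Xi(N)^{-1}$ (rather than the $L^1$ bound $\textrm W(N)^{-1}$) is what produces the $N^c$ gain needed to beat the polynomial union bound; this matches the paper's argument.
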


\begin{lemma}
\label{lem:stability-StabDelta}
Assume that $\delta>0$ is arbitrary. Then there exists a constant $\rho=\rho(\delta)>0$ small enough and constants $c=c(\delta,q)>0$ and $O_{\delta}(\cdot)$ such that if $\sigma_N^2\leq c\cdot \textrm W(N)$, one has
 \begin{equation*}
     \mathbf P_N \Bigg[ \forall ~~ 0\leq  s\leq \Big( \sigma_N^2\wedge T_b(\delta,\rho)\Big), \phi_{\bbT_N,\frak p(s)}\in \textrm{Stab}_{\Delta}(2\rho,N) \Bigg] \geq 1-O_{\delta}\Bigg( \exp\Bigg[-c N^{c}\frac{\textrm W(N)}{\sigma_N^2} \Bigg] \Bigg).
 \end{equation*}
\end{lemma}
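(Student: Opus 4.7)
The strategy mirrors the sketch of proof of Theorem \ref{thm:scaling-relations_second}, but now in a stochastic setting: for each pair of edges $e,f\in E_N$ with $R = d(e,f)\leq N/4$, I would apply Itô's formula to the rescaled covariance
\begin{equation*}
    Z_t^{(e,f)} := \frac{\Cov_{\frak p(t)}(\omega_e,\omega_f)}{\Delta_{p_c}(R)^2},
\end{equation*}
and then translate the resulting stability of covariances into stability of mixing rates via the Cov--$\Delta$ correspondence of Proposition \ref{prop:generalisation-thm-scaling-relations}. Below $T_b(\delta,\rho)$, the full stability \eqref{eq:all_stabilities} holds by definition, so the geometric estimates of Section \ref{sub:near-critical_geometric_estimates} are all available.

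Using the formulas of Proposition \ref{prop:differentiate-bonds}, the martingale part of $Z_t^{(e,f)}$ has quadratic variation $\lesssim \Delta_{p_c}(R)^{-4}\sum_g(\kappa_3^{\frak p(t)}(e,f,g))^2\lesssim \Xi(N)^{-1}$ per unit time (by \eqref{eq:sum_kappa3_squared}), and the drift rate is bounded, using the $\textrm{RSW}(\delta,N)$ uniform lower bound on $\mathbf p_g(t)(1-\mathbf p_g(t))$, by $\Delta_{p_c}(R)^{-2}\bigl(\sum_g|\kappa_3|+\sum_g|\Cov(\omega_e,\omega_g)\Cov(\omega_f,\omega_g)|\bigr)\lesssim \textrm W(N)^{-1}$ (by \eqref{eq:sum_kappa3} and \eqref{eq:sum_cov_g}). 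Over $[0,\sigma_N^2\wedge T_b]$ with $\sigma_N^2\leq c\textrm W(N)$, the integrated drift is $O_\delta(c)$ (arbitrarily small) while the integrated quadratic variation is $\sigma_N^2/\Xi(N)\lesssim cN^{-c}$ by Lemma \ref{lem:window-inequalities}. Freedman's martingale concentration inequality then gives, for any $\eta>0$ fixed,
\begin{equation*}
    \mathbf P_N\Big[\sup_{s\leq \sigma_N^2\wedge T_b}|Z_s^{(e,f)}-Z_0^{(e,f)}|\geq \eta\Big] \lesssim \exp\Big(-c_\delta\eta^2\Xi(N)/\sigma_N^2\Big)\lesssim \exp\Big(-cN^c\textrm W(N)/\sigma_N^2\Big),
\end{equation*}
and a union bound over the $O(N^5)$ triples $(e,f,R)$ is absorbed into the $N^c$ factor.

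On the resulting high-probability event, $\Cov_{\frak p(s)}(\omega_e,\omega_f)=\Cov_{p_c}(\omega_e,\omega_f)(1+O_\delta(\eta))$ uniformly in $e,f,s$. Combining this with item 1 of Proposition \ref{prop:generalisation-thm-scaling-relations} and the translation-invariant estimate $\Cov_{p_c}(\omega_e,\omega_f)\asymp \Delta_{p_c}(R)^2$ yields $\Delta^{(e)}_{\frak p(s)}(R)\Delta^{(f)}_{\frak p(s)}(R)\asymp_\delta \Delta_{p_c}(R)^2$ for every pair. To isolate individual mixing rates from these pairwise products, I would fix each edge $e$ and select two further edges $f,g$ at pairwise distances all comparable to $R$ (always possible on $\bbT_N$ for $R\leq N/4$), and use the algebraic identity
\begin{equation*}
    (\Delta^{(e)}_{\frak p(s)}(R))^2 = \frac{(\Delta^{(e)}_{\frak p(s)}(R)\Delta^{(f)}_{\frak p(s)}(R))(\Delta^{(e)}_{\frak p(s)}(R)\Delta^{(g)}_{\frak p(s)}(R))}{\Delta^{(f)}_{\frak p(s)}(R)\Delta^{(g)}_{\frak p(s)}(R)}
\end{equation*}
to recover $\Delta^{(e)}_{\frak p(s)}(R)\asymp_\delta \Delta_{p_c}(R)$ with multiplicative constants arbitrarily close to 1 by choosing $\eta$ small. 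Quasimultiplicativity (item 2 of Proposition \ref{prop:generalisation-thm-scaling-relations}) extends this control from the single-scale rates $\Delta^{(e)}(R)$ to the two-scale rates $\Delta^{(e)}(r,R)$ for all $1\leq r\leq R\leq N/4$.

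The main obstacle is the careful interdependence of constants: the Cov--$\Delta$ asymptotic in Proposition \ref{prop:generalisation-thm-scaling-relations} has constants depending on $\delta$ and $\rho$, which propagate through the three-edge identity before reaching the final bounds on $\Delta^{(e)}/\Delta_{p_c}$. The logic will therefore be to fix $\delta$ first, extract the $\delta$-dependent Cov--$\Delta$ constants, choose $\eta=\eta(\delta)$ small enough that the covariance concentration keeps the products close to $\Delta_{p_c}(R)^2$, and only then pick $\rho=\rho(\delta)$ small enough that the resulting individual bounds on $\Delta^{(e)}/\Delta_{p_c}$ fit strictly inside $[2\rho,1/(2\rho)]$. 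This non-circular ordering closes the argument and yields $\phi_{\bbT_N,\frak p(s)}\in \textrm{Stab}_\Delta(2\rho,N)$ throughout $[0,\sigma_N^2\wedge T_b]$ with the announced probability.
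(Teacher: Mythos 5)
Your proposal is correct and follows essentially the same route as the paper's proof: Itô decomposition of the edge covariance process, control of the drift via \eqref{eq:sum_kappa3} and \eqref{eq:sum_cov_g}, control of the bracket via \eqref{eq:sum_kappa3_squared} together with Lemma \ref{lem:window-inequalities}, a martingale concentration bound and union bound, and finally a reconstruction of $\Delta^{(e)}_{\frak p(s)}(R)$ from a ratio of three covariances. The only differences are cosmetic: you normalize by $\Delta_{p_c}(R)^2$ before applying Itô, and you choose $e,f,g$ pairwise at distance $\asymp R$ rather than the paper's configuration ($g$ at distance $R$, $f$ near the midpoint); both give $\Delta^{(e)}(R)^2$ after using that $\Delta^{(\cdot)}(\ell,2\ell)\asymp 1$. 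One small inaccuracy in your closing paragraph: the Cov--$\Delta$ comparison of Proposition \ref{prop:generalisation-thm-scaling-relations} requires only $\textrm{RSW}(\delta,N)$, so its constants depend on $\delta$ alone and \emph{not} on $\rho$; this is exactly what prevents circularity, and it is the observation the paper makes explicit in its Step 4 (``This last estimate doesn't depend on $\rho$''). Your proposed ordering of choices (fix $\delta$, then $\eta$, then $\rho$) is nevertheless correct once that clarification is in place.
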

Let us comment on the asymmetry of the statements in these two lemmas. The stability event $\textrm{Stab}_{\Delta}(\rho,N)$ estimates the mixing rates $\Delta_{\frak p(t)}^{(e)}(R)$, while those mixing rates are only used in computations up to multiplicative constants coming from the $\textrm{RSW}(\delta,N)$ bounds. Therefore, picking a smaller $\delta$ in the $\textrm{RSW}(\delta,N)$ bounds worsens the bounds the mixing rates, which implies that one should in return change $\rho$ in order show that the FK measures still belong to the $\textrm{Stab}_{\Delta}(\rho,N)$ class. To conclude the proof of Proposition \ref{prop:L2-case}, one needs to make a joint choice of $\delta$ and $\rho$. One first picks $\delta$ according to Lemma \ref{lem:stability-RSW}, and then selects $\rho$ according to Lemma \ref{lem:stability-StabDelta} according to $\delta$. We now pass to the proof of the  aforementioned lemmas, starting with Lemma \ref{lem:stability-RSW}, using the Itô formula to bound stochastic derivatives in the random environment.

\begin{proof}[Proof of Lemma \ref{lem:stability-RSW}]
Let $\mathcal{S}$ be any event defined on $\bbT_N$. Since the standard Brownian motions $\big( (B^{(e)}_t)_{t\geq 0}\big)_{e\in E_N}$ are i.i.d\,, one can apply the Itô lemma which reads as 
\begin{equation*}
    \dd\phi_{\bbT_N,\frak p(t)}[\mathcal{S}]= \sum_{e\in E_N}\left(\frac{\partial}{\partial p_e}\bigg\rvert_{\underline p = \frak p(t)}\phi_{\bbT_N,\underline p}[\mathcal{S}]\right)\dd B^{(e)}_t + \frac12\Bigg(\sum_{e\in E_N}\left(\frac{\partial^2}{\partial p_e^2}\bigg\rvert_{\underline p = \frak p(t)}\phi_{\bbT_N,\underline p}[\mathcal{S}]\right)\Bigg)\dd t,
\end{equation*}
where the $\dd$ operator corresponds to the standard stochastic differentiation. Plugging the explicit formulae obtained in Proposition \ref{prop:differentiate-bonds} one gets
\begin{align*}
	 \dd\phi_{\bbT_N,\frak p(t)}[\mathcal{S}]&=\sum_{e\in E_N}\frac 1{\mathbf{p}_e(t)(1-\mathbf{p}_e(t))}\Cov_{\frak p(t)}(\mathcal{S},\omega_e)\dd B^{(e)}_t \\
	 &\quad + \sum_{e\in E_N}\frac1{\mathbf{p}_e(t)(1-\mathbf{p}_e(t))^2}\left(1-\frac 1{\mathbf{p}_e(t)}\phi_{\bbT_N,\frak p(t)}[\omega_e]\right)\Cov_{\frak p(t)}(\mathcal{S},\omega_e)\dd t.
\end{align*}
We are going to evaluate the order of magnitude of the two previous lines for times $0\leq t \leq T_b(\delta,\rho)$ for the special case of the events 
$\mathcal{S}=\scrC^0_{\bullet}(\calR)$ or $\mathcal{S}=\scrC^{\star 1}_{\bullet}(\calR)$. One of the takeaway points of \cite{FK_scaling_relations} is the possibility, at and near criticality, to express event covariances and edge-influences only using mixing rates $\Delta$. Below the breaking time $T_b(\delta,\rho)$, all the mixing rates remain comparable up to constant to the critical ones $\Delta_{p_c}$, whose analysis in \cite{FK_scaling_relations} was recalled in Section \ref{sub:near-critical_geometric_estimates}. We only work out the details when $\mathcal{S}=\scrC^0_{\bullet}(\calR)$, the case $\mathcal{S}=\scrC^{\star 1}_{\bullet}(\calR)$ can be treated similarly. The cases $\bullet=h$ and $\bullet=v$ are exactly identical so we just write $\bullet$ for the proof.
\paragraph{Step 0: Canonical decomposition of the semimartingale.} Fix a $2$ by $1$ rectangle $\calR\in \bbT_N$ of width $R\leq 8$. Using the classical semimartingale decomposition, one can write $\phi_{\bbT_N,\frak p(t)}[\scrC^0_{\bullet}(\calR)]=M^\calR_t+A^\calR_t$ where $M^\calR$ is a local martingale started at $0$ and $A^\calR$ is an adapted finite variation process, whose exact expression are given by
\begin{align*}
	\dd M^{\calR}_t&=\sum_{e\in E_N}\frac 1{\mathbf{p}_e(t)(1-\mathbf{p}_e(t))}\Cov_{\frak p(t)}(\scrC^0_{\bullet}(\calR),\omega_e)\dd B^{(e)}_t, \\
	\dd A^\calR_t &= \sum_{e\in E_N}\frac1{\mathbf{p}_e(t)(1-\mathbf{p}_e(t))^2}\left(1-\frac 1{\mathbf{p}_e(t)}\phi_{\bbT_N,\frak p(t)}[\omega_e]\right)\Cov_{\frak p(t)}(\scrC^0_{\bullet}(\calR),\omega_e)\dd t.
\end{align*}

\paragraph{Step 1: Evaluating the order of magnitude of $A^\calR_t$.}
We are going to first evaluate $A^\calR_t$ using the critical mixing rates $\Delta_{p_c} $. Recall that provided $0\leq t \leq T_b(\delta,\rho)$, the $\textrm{RSW}(\delta,N)$ property applied to rectangles of size $2\times 1$ and $1\times 2$ ensures that coupling constants $\mathbf{p}_e(t)$ are bounded away from $0$ and $1$. This ensures in particular that there exists a constant $O(1)$, only depending on $\delta$ chosen small enough such that
\begin{equation}\label{eq:bound-away-0-1}
	\frac1{\mathbf{p}_e(t)(1-\mathbf{p}_e(t))^2} =O_\delta(1), \quad  \left(1-\frac 1{\mathbf{p}_e(t)}\phi_{\bbT_N,\frak p(t),q}[\omega_e]\right) =O_\delta(1).
\end{equation}
One can therefore write the crude estimate
\begin{align*}
	\Bigg|\frac{\dd A^\calR_t}{\dd t}\Bigg|&= \Bigg| \sum_{e\in E_N}\frac1{\mathbf{p}_e(t)(1-\mathbf{p}_e(t))^2}\left(1-\frac 1{\mathbf{p}_e(t)}\phi_{\bbT_N,\frak p(t),q}[\omega_e]\right)\Cov_{\frak p(t)}(\scrC^0_{\bullet}(\calR),\omega_e) \Bigg|\\
	&\lesssim_\delta \sum_{e\in E_N}\Cov_{\frak p(t)}(\scrC^0_{\bullet}(\calR),\omega_e).
\end{align*}
Still for $0\leq t \leq T_b(\delta,\rho)$, the measure $\phi_{\bbT_N,\frak p(t),q}$ belongs to all of the required stability events from Section \ref{sub:near-critical_geometric_estimates}. Therefore, one may use equation (\ref{eq:sum_cov_crossing_edge}), which holds due to Lemma \ref{lem:near-critical_estimates} to get
\begin{equation}\label{eq:covariance-growth-1}
  \sum_{e\in E_N}\Cov_{\frak p(t)}(\scrC^0_{\bullet}(\calR),\omega_e)\lesssim_{\delta,\rho} \textrm W(N)^{-1}.
\end{equation}
Note that \eqref{eq:sum_cov_crossing_edge} is in principle written for the event $\scrC_{\bullet}(\calR)$ and not $\scrC^0_{\bullet}(\calR)$. Still, the exact same proof holds for this modified crossing event, except one has to take care of both the corners of $\calR$ and $\calR'$. Integrating in time $t$ shows that $0\leq t\leq T_b(\delta,\rho)$ one has
\begin{equation}\label{eq:estimate-growth-A}
    |A^\calR_t-A^\calR_0|\lesssim_{\delta,\rho} \frac t{\textrm W(N)}.
\end{equation}

\paragraph{Step 2: Evaluating the order of magnitude of $M^\calR_t$.}
We now evaluate the the rate of growth of local martingale part $M^\calR_t$. Itô's formula gives the following expression for its bracket:
\begin{equation*}
    \dd \langle M^\calR\rangle_t = \sum_{e\in E_N}\left(\frac 1{\mathbf{p}_e(t)(1-\mathbf{p}_e(t))}\Cov_{\frak p(t)}(\scrC^0_{\bullet}(\calR),\omega_e)^2\right)\dd t.
\end{equation*}
Using again \eqref{eq:bound-away-0-1}, one deduces that
\begin{equation}\label{eq:bracket-M-estimate}
    \frac{\dd \langle M^\calR\rangle_t}{\dd t}\lesssim _\delta \sum_{e\in E_N}\Cov_{\frak p(t)}(\scrC^0_{\bullet}(\calR),\omega_e)^2.
\end{equation}
Once again, for $0\leq t\le T_b(\delta,\rho)$, there exist some $c=c(q)>0$ such that 
\begin{equation*}
     \frac{\dd \langle M^\calR\rangle_t}{\dd t}\lesssim_{\delta,\rho} \Xi(N)^{-1}\lesssim_{\delta,\rho} N^{-c}\textrm W(N)^{-1},
\end{equation*}
where the first inequality comes from \eqref{eq:sum_cov_crossing_edge_squared} and the second one from Lemma \ref{lem:window-inequalities}. Integrating this equation up to some time $0\leq t \leq T_b(\delta,\rho)$ gives
\begin{equation}\label{eq:rate-growth-M}
 \langle M^\calR\rangle_t\lesssim_{\delta,\rho} N^{-c}\textrm W(N)^{-1}\cdot t.
\end{equation}

\paragraph{Step 3: Concluding via large deviations for $M^\calR_t$.} 
We are now in position to conclude. Informally speaking, if there is an instant $t \ll \sigma_N^2$ such that ${\phi_{\bbT_N,\frak{p}(t)}\not\in \textrm{RSW}(2\delta,N)}$, then there is at least one $2$ by $1$ rectangle $\calR\subset \bbT_N$ such that the local martingale $M^{\calR}_t$ achieved a much greater value than what its bracket should allow. Let us clarify the reasoning, highlighting where it becomes crucial to have $\delta$ small enough.
\begin{enumerate}[label=(\arabic*)]
    \item First pick $\delta>0$ small enough and uniform in $N$ such that $\phi_{\bbT_N,p_c}[\scrC^0_{\bullet}(\calR)]\ge4\delta$ for all $2$ by $1$ rectangles $\calR\subset \bbT_N$. Crucially, this $\delta$ only depends on $q$.
     \item Using \eqref{eq:estimate-growth-A}, one gets that there is a constant $C=C(\delta,\rho)>0$ such that for any $2$ by $1$ rectangle $\calR\subset  \bbT_N$ and any $0\leq t\leq \sigma_N^2\wedge T_{b}(\delta,\rho)$ one has 
    \begin{equation*}
    	|A^\calR_{t}-A^\calR_0| \le \frac{C\cdot t}{\textrm W(N)} \le \frac{C\cdot \sigma_N^2}{\textrm W(N)}
    \end{equation*}
Fixing $c_1:=\sqrt{\frac{\delta}{C}}$, this implies that for $\sigma_N\le c_1\cdot \textrm W(N)^{1/2}$, one has (deterministically)  
\begin{equation*}
        |A^\calR_{t}-A^\calR_0|\le \delta
    \end{equation*}
    \item Assuming that the event $\Big\{ \exists \textrm{ } 0\leq  s\leq \Big( \sigma_N^2\wedge T_b(\delta,\rho)\Big), \textrm{ } \phi_{\bbT_N,\frak p(s)}\not\in  \textrm{RSW}(2\delta,N) \Big\} $ holds, there exists (if $\sigma_N\le c_1\cdot \textrm W(N)^{1/2}$) at least one $2$ by $1$ rectangle $\calR\subset \bbT_N$ such that 
\begin{equation}
	\sup_{0\leq s\le \sigma_N^2\wedge T_b(\delta,\rho)} \Big|\phi_{\bbT_N,\frak p(s)}[\scrC^0_{\bullet}(\calR)]-\phi_{\bbT_N,p_c}[\scrC^0_{\bullet}(\calR)]\Big|>2\delta.
\end{equation}
Combining with the previous inequalities for that specific rectangle $\calR$ implies that provided  $\sigma_N\le c_1\cdot \textrm W(N)^{1/2}$ one has
\begin{equation*}
    \sup_{0\leq s\le \sigma_N^2\wedge T_b(\delta,\rho)}\Big|M^\calR_s\Big|>\delta
\end{equation*}
\item One can now use standard large deviation estimates for local martingales. Using \eqref{eq:rate-growth-M}, there exists some constant $c=c(q,\delta)>0$ such that for any $2$ by $1$ rectangle $\calR \subset \bbT_N$ and any  $0\leq t\leq \sigma_N^2\wedge T_b(\delta,\rho)$ one has
\begin{equation}\label{eq:rate-growth-M-2}
    \langle M^\calR\rangle_t \leq c^{-1}N^{-c}\frac{t}{\textrm W(N)} \leq c^{-1}N^{-c}\frac{\sigma_N^2}{\textrm W(N)}.
\end{equation}
Using standard large deviation principles, one can control the supremum and the infimum of a local martingale started at $0$ via its bracket, which reads (choosing if needed $c$ smaller than its value in \eqref{eq:rate-growth-M-2}) as
\begin{equation*}
    \mathbf P_N\left[\sup_{0\leq s\le \sigma_N^2 \wedge T_b(\delta,\rho)} |M^\calR_s|>\delta\right]\lesssim_{\delta,\rho} \exp\Bigg[ -cN^{c} \frac{\textrm W(N)}{\sigma_N^2}  \Bigg].\end{equation*}
Applying the previous remark to each of the $O(N^{3})$ $2$ by $1$ rectangles inside $\bbT_N$, a union bound ensures that
\begin{equation*}
	\mathbf P_N\left[\inf_{\calR \subset \bbT_N}
    \inf_{\bullet=h,v}\inf_{0\leq s\leq \sigma_N^2\wedge T_b(\delta,\rho)} \phi_{\bbT_N,\frak p(s)}[\scrC^{
    0}_{\bullet}(\calR)]<2\delta\right]\lesssim_{\delta,\rho}N^3 \exp\Bigg[ -cN^{c} \frac{\textrm W(N)}{\sigma_N^2}  \Bigg].
\end{equation*}
The complement of the event inside $\mathbf{P}_N$ in the above equation is the event that for any $0\leq s\le \sigma_N^2\wedge T_b(\delta,\rho)$ and any $2$ by $1$ rectangle $\calR\subset \bbT_N$, the probability of any primal crossing $\phi_{\bbT_N,\frak p(s)}[\scrC^{0}_{\bullet}(\calR)]$ is at least $2\delta$. Doing the same reasoning for dual crossings and replacing $c$ by $\frac{c}{2}$ concludes the proof.
\end{enumerate}

\paragraph{Adapting the proof for arm events.}
When proving the stability of the arm events $\mathcal A^{(e)}_{\#}(R)$, one can run almost verbatim the proof made for the crossing event $\scrC^0_{\bullet}(\calR)$, replacing \eqref{eq:sum_cov_crossing_edge} and \eqref{eq:sum_cov_crossing_edge_squared} by respectively
\eqref{eq:sum_cov_arm} and \eqref{eq:sum_cov_arm_squared}. In this case, the probabilities of arm events $\phi_{\bbT_N,\frak p(t)}[\mathcal A^{(e)}_{\#}(R)]$ are not bounded away from $0$ and $1$ uniformly in $N$, so one needs to keep track of the additional pre-factors $\pi_{\#,p_c}(R)$ that appear in front of the stochastic derivatives, exactly as in Remark \ref{rem:sum-Cov-arms-edge}. 
\end{proof}
We prove now Lemma \ref{lem:stability-StabDelta} with very similar methods, using a slightly more delicate strategy due when controlling the covariances between multiple edges. 
\begin{proof}[Proof of lemma \ref{lem:stability-StabDelta}]
We proceed here similarly to the proof of Theorem \ref{thm:scaling-relations_second}, where the stability of the mixing rates $\Delta_{\frak p(t)}$ is derived from the stability of the edge-covariances of the form $\Cov_{\frak p(t)}(\omega_e,\omega_f)$. A priori, as long as $0\leq t\leq T_{b}(\delta,\rho)$, the covariances  $\Cov_{\frak p(t)}(\omega_e,\omega_f)$ are uniformly comparable with  $\Cov_{p_c}(\omega_e,\omega_f)$, \emph{but with some constants depending on $\rho$ and $\delta$}. The goal of this proof is to carefully underline the fact that $\rho$ is chosen only depending on $\delta$.
\paragraph{Step 0: Canonical decomposition of the semimartingale}
Fix two edges $e,f\in E_N$ at distance $R$ from each other. One can use the identities of Proposition \ref{prop:differentiate-bonds} together with the Ito formula to write $ \Cov_{\frak p(t)}(\omega_e,\omega_f)=M^{e,f}_t+A^{e,f}_t$ where 
 $M^{e,f}$ is a local martingale started at $0$ and $A^{e,f}$ is a bounded variation process adapted to filtration $\mathcal{F}^{N}_s$, whose respective expressions are given by
 \begin{align*}
 	\dd M_t^{e,f} &= \sum_{g\in E_N}\Bigg(\frac{\partial}{\partial p_g}\bigg\rvert_{\underline p = \frak p(t)}\Cov_{\underline p}(\omega_e,\omega_f) \Bigg)\dd B^{(g)}_t \\
 	&= \sum_{g\in E_N}\Bigg( \frac{1}{\textbf p_g(t)(1-\textbf p_g(t))}\kappa^{\frak p(t)}_3(e,f,g) \Bigg)\dd B^{(g)}_t, \\
 	\dd A_t^{e,f}&= \frac{1}{2} \sum_{g\in E_N}\Bigg( \frac{\partial^2}{\partial p_g^2}\bigg\rvert_{\underline p = \frak p(t)}\Cov_{\underline p}(\omega_e,\omega_f) \Bigg) \dd t\\
&=\sum_{g\in E_N}\frac{1}{\textbf p_g(t)(1-\textbf p_g(t))^2}\left(1-\frac 1{\textbf p_g(t)}\phi_{\bbT_N,\frak p(t)}[\omega_g]\right)\kappa_3^{\frak p(t)}(e,f,g) \dd t\\
&\quad - \sum_{g\in E_N}\frac 1{\textbf p_g(t)^2(1-\textbf p_g(t))^2}\Cov_{\frak p(t)}(\omega_e,\omega_g)\Cov_{\frak p(t)}(\omega_f,\omega_g) \dd t.
 \end{align*}

\paragraph{Step 1: Evaluating the order of magnitude of $A^{e,f}_t$.} Using again \eqref{eq:bound-away-0-1} (which states that all the bonds $\mathbf{p}_e(t) $ are bounded away from $0$ and $1$) one has for $0\leq t \leq T_{b}(\delta,\rho)$, the triangular inequality provides the upper bound
\begin{equation}\label{eq:bound-A-via-kappa-3}
    \left|\frac{\dd A^{e,f}_t}{\dd t}\right|\lesssim_{\delta
    } \sum_{g\in E_N}|\kappa_3^{\frak p(t)}(e,f,g)| + \sum_{g\in E_N}\Cov_{\frak p(t)}(\omega_e,\omega_g)\Cov_{\frak p(t)}(\omega_f,\omega_g).
\end{equation}

\paragraph{Step 1.1: Estimating the $\kappa_3^{\frak{p}(t)}$ terms in \eqref{eq:bound-A-via-kappa-3}.} As in the proof of Lemma \ref{lem:stability-RSW}, for $0\leq t\le T_b(\delta,\rho)$, we may use the results of Section \ref{sub:near-critical_geometric_estimates}, with constants depending on both $\delta$ and $\rho$. Using equation (\ref{eq:sum_kappa3}) and recalling that $d(e,f)=R $, one has 
\begin{equation*}
    \sum_{g\in E_N}|\kappa_3^{\frak p(t)}(e,f,g)|\lesssim_{\delta,\rho} \Delta_{p_c}(R)^2\cdot\textrm W(N)^{-1}.
\end{equation*}

\paragraph{Step 1.2: Estimating the $\textrm{Cov}_{\frak{p}(t)}$ terms in \eqref{eq:bound-A-via-kappa-3}.} Let us now estimate the terms of involved in the $ \Cov_{\frak p(t)}(\omega_e,\omega_g)\Cov_{\frak p(t)}(\omega_f,\omega_g)$ sum. This is simply the computation made in Section \ref{sub:near-critical_geometric_estimates} and equation \eqref{eq:sum_cov_g} gives
\begin{equation*}
	\sum_{g\in E_N}|\Cov_{\frak p(t)}(\omega_e,\omega_g)\Cov_{\frak p(t)}(\omega_f,\omega_g)|\lesssim_{\delta,\rho} \Delta_{p_c}(R)^2\cdot\Xi(N)^{-1}
\end{equation*}
\paragraph{Step 1.3: Concluding on the estimate of $\frac{\dd A^{e,f}_t}{\dd t}$.}
Due to Lemma \ref{lem:window-inequalities}, the term given by the sum of products of covariances is polynomially smaller than the estimate for the sum of the $\kappa_3^{\frak p(t)}$ terms. Therefore, one may write
\begin{equation*}
	 \left|\frac{\dd A^{e,f}_t}{\dd t}\right|\lesssim_{\delta,\rho
    } \Delta_{p_c}(R)^2\cdot\textrm W(N)^{-1}.
\end{equation*}

\paragraph{Step 2: Evaluating the order of magnitude of $M^{e,f}_t$.}
We can again evaluate the order of magnitude of the local martingale $M^{e,f}_t$ via its bracket $\langle M^{e,f} \rangle_{t} $. As long as $0\leq t\le \sigma_N^2 \wedge T_b(\delta,\rho)$, one can \eqref{eq:sum_kappa3_squared} from Section \ref{sub:near-critical_geometric_estimates} and deduce that
\begin{align*}
	\Bigg|\frac{\dd \langle M^{e,f}\rangle_t}{\dd t}\Bigg|&\lesssim_{\delta}\sum_{g\in E_N}\Big(\kappa_3^{\frak p(t)}(e,f,g)\Big)^2\\
	 & \lesssim_{\delta,\rho} \Delta_{p_c}(R)^4\cdot\Xi(N)^{-1}.
\end{align*}
Finally, Lemma \ref{lem:window-inequalities} ensures the existence of $c=c(q)>0$ such that
\begin{equation*}
    \forall \ 0\le t\le \sigma_N^2\wedge T_b(\delta,\rho),\quad \Bigg|\frac{\dd \langle M^{e,f}\rangle_t}{\dd t}\Bigg|\lesssim_{\delta,\rho} \Delta_{p_c}(R)^4N^{-c}\cdot\textrm W(N)^{-1}.
\end{equation*}

\paragraph{Step 3: Proving that covariances remain stable via large deviations of $M^{e,f}_t$.}
One can now repeat exactly the large deviation arguments within Step 3 the proof of Lemma \ref{lem:stability-RSW} (with additional $\Delta_{p_c}(R)$ pre-factors) and deduce that for any $\varepsilon>0 $, there exist some constants $c_1=c_1(\delta,\rho,\varepsilon,q)>0$ and $c_2=c_2(\delta,\rho,\varepsilon,q)>0$ such that as long $\sigma_N^2\le c_2\cdot\textrm W(N)$ one has
\begin{multline*}
	\mathbf{P}_{N}\Bigg[ \exists\ e,f\in E_N, \exists\ 0\leq s \leq \sigma_N^2 \wedge T_b(\delta,\rho), \frac{ \Big|\Cov_{\frak p(s)}(\omega_e,\omega_f)-\Cov_{p_c}(\omega_e,\omega_f)\Big|}{\Delta_{p_c}(d(e,f))^2}>\eps \Bigg]\\\lesssim_{\delta}\exp\Bigg[ -c_1 N^{c_1} \frac{\textrm W(N)}{\sigma_N^2}  \Bigg].
\end{multline*}
For the critical measure, one has $\Cov_{p_c}(\omega_e,\omega_f)\asymp \Delta_{p_c}(d(e,f))^2$ uniformly in $e,f\in E_N$. Therefore, there exists a constant $C=C(q)>0$ such that
\begin{multline*}
	\mathbf{P}_{N}\Bigg[ \exists\ e,f\in E_N, \exists\ 0\leq s \leq \sigma_N^2 \wedge T_b(\delta,\rho), \Bigg| \frac{ \Cov_{\frak p(s)}(\omega_e,\omega_f)}{\Cov_{p_c}(\omega_e,\omega_f)}-1\Bigg|>C\eps \Bigg]\\\lesssim_{\delta,\rho} \exp\Bigg[ -c_1 N^{c_1} \frac{\textrm W(N)}{\sigma_N^2}  \Bigg].
\end{multline*}
One can now fix $\eps=\eps(q)$ smaller than $\frac 1{2C}$ such that the event 
\begin{equation}\label{eq:event-proof-L2}
	\Bigg\{ \forall\ e,f\in E_N, \forall\ 0\leq s \leq \sigma_N^2 \wedge T_b(\delta,\rho), \Bigg| \frac{ \Cov_{\frak p(s)}(\omega_e,\omega_f)}{\Cov_{p_c}(\omega_e,\omega_f)}-1\Bigg|\leq \frac{1}{2}\Bigg\}
\end{equation}
happens with high probability assuming $\sigma_N^2\le c_2\cdot\textrm W(N)$. Under this event, one has $\Cov_{\frak p(s)}(\omega_e,\omega_f)\asymp \Cov_{p_c}(\omega_e,\omega_f)$ uniformly in $e,f\in E_N$, for some universal constant $\asymp$ (i.e.\ not depending on $\delta,\rho $). 

\paragraph{Step 4: Concluding that mixing rates remain stable.}
Fix an edge $e\in \bbT_N$ and $R\leq N$. Let $g$ be an edge at distance $R$ from $e$, and $f$ an edge approximating the midpoint of the shortest segment from $e$ to $g$. As long as $0\leq t \leq T_b(\delta,\rho)$, the measure $\phi_{\bbT_N,\frak{p}(t)}$ belongs to $\textrm{RSW}(\delta,N)$, therefore one can write
\begin{equation*}
    \frac{\Cov_{\frak{p}(t)}(\omega_e,\omega_f)\Cov_{\frak{p}(t)}(\omega_e,\omega_g)}{\Cov_{\frak{p}(t)}(\omega_f,\omega_g)}\asymp_\delta \frac{\Delta^{(e)}_{\frak{p}(t)}(R/2)\Delta^{(f)}_{\frak{p}(t)}(R/2)\Delta^{(e)}_{\frak{p}(t)}(R)\Delta^{(g)}_{\frak{p}(t)}(R)}{\Delta^{(f)}_{\frak{p}(t)}(R/2)\Delta^{(g)}_{\frak{p}(t)}(R/2)}\asymp_\delta \Delta^{(e)}_{\frak{p}(t)}(R)^2.
\end{equation*}
Therefore, on the event \eqref{eq:event-proof-L2}, which happens with a probability greater than\newline $1-O_{\delta,\rho} \Big(\exp\Big[ -c_1 N^{c_1} \frac{\textrm W(N)}{\sigma_N^2}  \Big] \Big)$), one can reconstruct (up to constants) the mixing rates via the the covariances using the formula 
\begin{equation*}
    \Delta^{(e)}_{\frak{p}(t)}(R)\asymp_\delta \sqrt{\frac{\Cov_{\frak{p}(t)}(\omega_e,\omega_f)\Cov_{\frak{p}(t)}(\omega_e,\omega_g)}{\Cov_{\frak{p}(t)}(\omega_f,\omega_g)}}.
\end{equation*}
This last estimate \emph{doesn't} depend on $\rho$, as it is only tied to the $\textrm{RSW}(\delta,N)$ property as recalled in the first item of Proposition \ref{prop:generalisation-thm-scaling-relations}. Still on the event \eqref{eq:event-proof-L2}, one has
\begin{equation*}
	   \Delta^{(e)}_{\frak{p}(t)}(R)\asymp_{\delta} \sqrt{\frac{\Cov_{p_c}(\omega_e,\omega_f)\Cov_{p_c}(\omega_e,\omega_g)}{\Cov_{p_c}(\omega_f,\omega_g)}}\asymp_{\delta} \Delta_{p_c}(R).
\end{equation*}
Taking $\rho=\rho(\delta)$ to be half of the implicit $\asymp_\delta$ constant in the previous equation (meaning that the ratio between critical and random mixing rates remains in $[2\rho,1/2\rho]$), one can conclude that with probability greater than $1-O_{\delta,\rho} \Big(\exp\Big[ -c_1 N^{c_1} \frac{\textrm W(N)}{\sigma_N^2}  \Big] \Big)$, all of the measures $(\phi_{\bbT_N,\frak{p}(s)})_{0\leq s \leq \sigma_N^2\wedge T_{b}(\delta,\rho)} $ belong to $\textrm{Stab}(2\rho(\delta),N)$. This concludes the proof.
\end{proof}
One can now combine the two previous lemmas to prove Proposition \ref{prop:L2-case}.

\begin{proof}[Proof of Proposition \ref{prop:L2-case}]
Choose first $\delta$ small enough according to Lemma \ref{lem:stability-RSW} and then ${\rho=\rho(\delta)>0}$ according to Lemma \ref{lem:stability-StabDelta}. There exist $c>0$ such that
\begin{align*}
	\mathbf P_N \Bigg[ \forall \, 0\leq s\leq \big( \sigma_N^2\wedge T_b(\delta,\rho)\big), \phi_{\bbT_N,\frak p(s)}\in \textrm{RSW}(2\delta,N)\cap\Big(\bigcap_{\#}\textrm{Stab}_{\#}(2\delta,N)\Big) \cap \textrm{Stab}_{\Delta}(2\rho,N) \Bigg]\\
	 \geq 1-O_{\delta}\Bigg( \exp\Bigg[-c N^{c}\frac{\textrm W(N)}{\sigma_N^2} \Bigg] \Bigg).
	 \end{align*}
The Brownian trajectories are $\mathbf{P}_N$-almost surely time continuous, which implies the same statement for crossing probabilities and mixing rates. For each realization of the event
\begin{equation*}
	\Bigg\{\forall~ 0 \leq  s \leq \sigma_N^2\wedge T_b(\delta,\rho),~\phi_{\bbT_N,\frak p(s)}\in \textrm{RSW}(2\delta,N) \cap \Big(\bigcap_{\#}\textrm{Stab}_{\#}(2\delta,N)\Big)\cap  \textrm{Stab}_{\Delta}(2\rho,N) \Bigg\},
\end{equation*}
there exists a (realization dependent) $\eps>0$ such that 
\begin{equation*}
    \forall ~ 0\leq  s\leq \big( \sigma_N^2\wedge T_b(\delta,\rho) \big)+\eps, \quad \phi_{\bbT_N,\frak p(s)}\in \textrm{RSW}(\delta,N)\cap\left(\bigcap_{\#}\textrm{Stab}_{\#}(\delta,N)\right) \cap \textrm{Stab}_{\Delta}(\rho,N).
\end{equation*}
On that event, one has $T_b(\delta,\rho)>(\sigma_N^2\wedge T_b(\delta,\rho))+\eps$, hence $\sigma_N^2<T_b(\delta,\rho)$. This reads as
\begin{equation*}
    \mathbf P_N\Big[T_b(\delta,\rho)<\sigma_N^2\Big] \lesssim_\delta  \exp\Bigg[-c N^{c}\frac{\textrm W(N)}{\sigma_N^2} \Bigg],
    \end{equation*}
which allows to conclude via the reformulation \eqref{eq:rewrite_L2-case} of Proposition \ref{prop:L2-case}.
\end{proof}

\subsection{General random variables satisfying $(\star)^{\textmd A}_N$}\label{sub:general-variables-L2}
In Section \ref{sub:centred-gaussian}, we proved Theorem \ref{thm:extension_scaling_window} in the specific case when the random bond parameters are independent Gaussians with a common variance. The role of the present section is to explain how one can extend the proof of this result to a sequence of variables $(\mathbf p_e)_{e\in  E_N}$ satisfying the condition $(\star)^{\text A}_N$. Recall that at each edge $e\in E_N$, the random variable $\mathbf p_e$ has standard deviation $\sigma_e:=\bbE[(\mathbf p_e-p_c)^2]^{1/2}$. Ideally, one would like to apply exactly the same reasoning as the Gaussian case, by continuously moving the bond parameters from the uniform  critical parameter $p_c(q)$ towards $\frak p=(\mathbf p_e)_e$ via some continuous process $\frak p(t)=(\mathbf p_e(t))_{e\in E_N}$. In fact, we would like this continuous process to resemble independent Brownian motions, allowing us to repeat the technology developed in Section \ref{sub:centred-gaussian}. This ideal setup can in fact be achieved by the famous Skorokhod embedding theorem, that allows to rewrite any reasonable centred random variable as some Brownian motion stopped appropriately. This reads in the following theorem.
\begin{theorem}[Skorokhod embedding theorem]
    Let $X$ be a centred real value random variable with a finite second moment. Let $(B_t)_{t\geq 0}$ a standard Brownian motion for some (completed) filtration $(\mathcal{F}_t)_{t\geq 0} $. Then there exist some stopping time $T$ for the filtration $(\mathcal{F}_t)_{t\geq 0} $ such that
 \[
\begin{array}{rcl}
B_T &\overset{(d)}{=}& X, \\
\bbE[T] &=& \bbE[X^2].
\end{array}
\]
\end{theorem}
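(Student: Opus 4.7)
The plan is to follow the classical randomised-stopping construction of Skorokhod: realise $X$ as the value $B_T$ at the first exit time $T$ of the Brownian motion from a random interval $(U,V)$, where $(U,V)$ is sampled independently of $B$ with a joint law engineered precisely so that $B_T \stackrel{(d)}{=} X$.

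First, I would dispose of the elementary case where $X$ is a two-point centred distribution on $\{-a,b\}$ with $a,b>0$. Setting $T_0:=\inf\{t\ge 0:B_t\notin(-a,b)\}$, optional stopping applied to the bounded martingale $B_{t\wedge T_0}$ forces $\bbE[B_{T_0}]=0$, which pins down the two-point distribution of $B_{T_0}$ as that of $X$. Applying it next to $B_{t\wedge T_0}^2-(t\wedge T_0)$, legitimate since exit times from bounded intervals have exponential tails, gives $\bbE[T_0]=\bbE[B_{T_0}^2]=ab=\bbE[X^2]$.

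For a general centred $X$ with $\bbE[X^2]<\infty$ and law $\mu$, I would decompose $\mu=\mu^{-}+\mu^{+}$ into its restrictions to $(-\infty,0)$ and $[0,\infty)$ (an atom at $0$ being handled by stopping at time $0$), set $m:=\int v\,\mu^{+}(dv)=-\int u\,\mu^{-}(du)$, positive and equal on both sides precisely by the centring, and then sample, independently of $B$ after a harmless enlargement of $(\mathcal F_t)_t$, a pair $(U,V)$ with joint law
\begin{equation*}
    \nu(du,dv):=\tfrac{v-u}{m}\,\mu^{-}(du)\,\mu^{+}(dv).
\end{equation*}
A short computation shows that $\nu$ is a probability measure, the normalisation being exactly the centring condition. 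Defining $T:=\inf\{t\ge 0:B_t\notin(U,V)\}$ and conditioning on $(U,V)$ reduces matters to the elementary case: for any bounded Borel $f$, Fubini yields $\bbE[f(B_T)]=\int f\,d\mu$, and the conditional formula $\bbE[T\mid U,V]=-UV$ integrated against $\nu$ unfolds to $\bbE[T]=\bbE[X^2]$.

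The main obstacle is the optional-stopping bookkeeping when the interval $(U,V)$ is unbounded, since $T$ need not be integrable \emph{a priori}. This is defeated by the $L^2$ hypothesis on $X$ itself: the identity above yields $\bbE[T]=\bbE[X^2]<\infty$, which one justifies rigorously by applying optional stopping at $T\wedge n$, dominating by the $\nu$-integrable variable $-UV$, and letting $n\to\infty$ by monotone convergence. The essentially unique creative input of the whole proof is the choice of the density $(v-u)/m$ defining $\nu$, which is forced by requiring both that $\nu$ be a probability measure and that the two-point disintegration reproduces $\mu$.
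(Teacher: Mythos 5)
Your proof is a correct rendition of the classical Hall--Chacon--Walsh \emph{randomised} embedding, and the computations are right: the two-point case via optional stopping, the choice of joint density $(v-u)/m$ on $\mu^-\otimes\mu^+$, the Fubini check that $B_T\sim\mu$, and the monotone-convergence justification of $\bbE[T]=\bbE[X^2]$ all go through. The paper, however, does not prove this theorem at all in the main text --- it cites it --- and when it needs an explicit construction (Lemma~\ref{lem:skorohod_tails} in Appendix~\ref{app:skorokhod_tails}) it uses the \emph{Vallois} embedding, which is non-randomised: the stopping time $T_V$ in~\eqref{eq:T_V} is built from the level-sets of the local time and the running maximum/minimum, hence is a stopping time of the natural (completed) Brownian filtration itself, with no auxiliary coin flips.

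That distinction is more than cosmetic for the statement as the paper writes it. The theorem asserts the existence of a stopping time \emph{for the given filtration} $(\mathcal F_t)_{t\ge 0}$; your construction requires sampling $(U,V)$ independently of $B$ and then stopping at exit from $(U,V)$, which is a stopping time only of the enlarged filtration $\sigma(\mathcal F_t,U,V)$. You flag this as ``a harmless enlargement'', and for the downstream use in the paper it indeed is harmless (the extra randomness is independent, so $B$ remains an $(\mathcal F_t\vee\sigma(U,V))$-Brownian motion and all the It\^o/local-martingale machinery survives), but strictly speaking this proves a weaker statement than the one written. Beyond this, the Vallois route buys the paper something your construction would need extra work to deliver: the explicit functional form of the boundary curves $a(\ell),b(\ell)$ in terms of the potential $U_\mu$ is what makes the stretched-exponential tail bound of Lemma~\ref{lem:skorohod_tails} tractable. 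Your randomised construction can be pushed to yield comparable tail estimates (the marginals of $(U,V)$ under $\nu$ inherit $\mu$'s tails up to a polynomial size-biasing factor, and the exit time of $(U,V)$ then has sub-exponential tails by scaling), but that is harder to make quantitative than the Vallois picture. So: correct argument, classical and more elementary, but a genuinely different construction from the one the paper actually relies on, and one that proves a filtration-enlarged variant of the displayed theorem.
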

We now explain how to implement the Skorokhod embedding theorem in the context of the random bond measure $\frak{p}$. There exist a sequence of independent Brownian motions $\big((B_t^{(e)})_{t\geq 0} \big)_{e\in E_N}$ for some probability measure $\mathbb{P}$ and a sequence of associated stopping times $\big(T^{(e)} \big)_{e\in E_N}$ for the Brownian motions $\big((B_t^{(e)})_{t\geq 0} \big)_{e\in E_N}$, such that for each edge $e\in E_N$, the variable $p_c+B^{(e)}_{T^{(e)}}$ has the law of $\mathbf p_e$. One can set
\begin{equation}\label{eq:def-continuous-process-Skorokhod}
    \mathbf p_e(t):= p_c(q)+B^{(e)}_{t\wedge T^{(e)}}.
\end{equation}
The stopping times $T^{(e)}$ coming from Skorokhod's embedding theorem are almost surely finite, which implies that almost surely (with respect to the randomness of the Brownian motions) one has $\mathbf p_e(\infty)\stackrel{(d)}=\mathbf p_e$.  To simplify readability, couple the process $\frak p(t)$ to the random environment $\frak p$ so that this equality in distribution as an equality. 

The stopping times given by the Skorokhod embedding theorem are a priori not unique, and we present here a construction that gives a stopping time with good tails estimates. The following technical lemma is proven in the Appendix via Lemma \ref{app:skorokhod_tails}.
\begin{lemma}\label{lem:skorohod_tails}
    Let $X$ be a random variable with stretched exponential tails
    \begin{equation*}
        \bbP[X>t]\lesssim \exp(-ct^{\gamma})
    \end{equation*}
    for some positive constants $\lesssim,c,\gamma$. Let $(B_t)_{t\geq 0}$ be a standard Brownian motion for the filtration $(\mathcal{F}_t)_{t\geq 0} $. Then there exists a stopping time $T$, for the filtration $(\mathcal{F}_t)_{t\geq 0}$, that satisfies the Skorokhod embedding Theorem and such that
    \begin{equation*}
        \bbP[T>t]\lesssim \exp\left(-c't^{\frac{\gamma}{\gamma+2}}\right)
    \end{equation*}
for some positive constants $\lesssim,c'$ only depending on $c$ and $\gamma$.
\end{lemma}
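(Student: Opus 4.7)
The plan is to exhibit a specific Skorokhod embedding in which the stopping time $T$ is bounded above by the Brownian exit time from a random interval whose size inherits stretched exponential tails from $X$, and then to balance this against the Gaussian tail of the exit time from a fixed interval.

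Concretely, I would use a two-point (Chacon--Walsh-type) construction, producing a stopping time of the form $T = \inf\{t \geq 0 : B_t \notin (-U, V)\}$, where $(U, V)$ is a pair of nonnegative random variables, independent of $(B_t)_{t\ge 0}$, chosen so that $B_T \overset{(d)}{=} X$. The joint law of $(U, V)$ is obtained by decomposing the target law $\mu_X$ of $X$ as a mixture of centered two-point distributions on $\{-u, v\}$ with weights $(v/(u+v), u/(u+v))$, the mixing measure being extracted from the tail functions $t \mapsto \bbP[X > t]$ and $t \mapsto \bbP[-X > t]$ (implicitly using that $X$ is centered, which is needed for Skorokhod to apply at all). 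Since under this coupling one has $|B_T| \in \{U, V\}$ almost surely, and since centering combined with the one-sided hypothesis yields a two-sided tail bound $\bbP[|X| > t] \lesssim \exp(-c t^\gamma)$, a direct inspection of the construction gives
\begin{equation*}
\bbP[M > s] \lesssim \exp(-c_1 s^\gamma), \qquad \text{where } M := \max(U, V),
\end{equation*}
for some $c_1 > 0$ depending only on $c$ and $\gamma$.

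Once this is in hand, $T$ is dominated by the exit time $\tau_M := \inf\{t \geq 0 : |B_t| \geq M\}$ of $(-M, M)$. Conditioning on $M$ and using the classical Gaussian exit-time bound $\bbP[\tau_s > t \mid M = s] \leq C \exp(-c_0 t / s^2)$, I would split
\begin{equation*}
\bbP[T > t] \leq \bbP[M > s] + \bbP[T > t,\, M \leq s] \lesssim \exp(-c_1 s^\gamma) + \exp(-c_0 t/s^2),
\end{equation*}
and then balance the two exponents by choosing $s := t^{1/(\gamma+2)}$, which makes both terms of order $\exp(-c' t^{\gamma/(\gamma+2)})$. This yields the announced stretched exponential tail with exponent $\gamma/(\gamma+2)$.

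The main point to verify carefully is the first step, namely that the chosen Skorokhod embedding produces a random interval $(-U, V)$ whose width has tails no worse than those of $|X|$: this requires some care in the Chacon--Walsh-type construction, but follows from the explicit parametrization of the mixing measure by the tail functions of $X$. An alternative, perhaps cleaner route is to use the Azéma--Yor embedding, for which $\sup_{s \leq T} B_s = \Psi_{\mu_X}(B_T)$ with $\Psi_{\mu_X}(x) = \bbE[X\mid X \geq x]$, and to observe that the stretched exponential decay of $\mu_X$ makes $\Psi_{\mu_X}(x)$ grow at most linearly in $|x|$ for large $|x|$, so that the maximum modulus of $B$ up to time $T$ is still controlled by $|X|$ (a symmetric argument handling the infimum). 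Either way, the rest of the argument is the routine optimization above, and the integrability $\bbE[T] = \bbE[X^2] < \infty$ required by Skorokhod is automatic under the stretched exponential hypothesis.
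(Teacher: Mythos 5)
Your approach (a randomized two-point embedding $T=\inf\{t:B_t\notin(-U,V)\}$ with $(U,V)$ independent of $B$) is genuinely different from the paper's, which uses the Vallois embedding with a local-time barrier $T_V=\inf\{t:B_t\notin(b(L_t),a(L_t))\}$ and then balances a small-ball probability against the probability that $L_{t/2}$ is small. However, your key intermediate claim that $\mathbb P[\max(U,V)>s]\lesssim\exp(-c_1s^\gamma)$ is false, and this is a genuine gap, not a detail to be filled in. To see this, take $\mu_\varepsilon=(1-\varepsilon)\delta_{-a}+\varepsilon\delta_b$ with $b=(\log(1/\varepsilon)/c)^{1/\gamma}$ and $a=\varepsilon b/(1-\varepsilon)$. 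This measure is centered and satisfies $\mathbb P[|X|>t]\le C\exp(-ct^\gamma)$ with constants uniform in $\varepsilon$ (for $\varepsilon$ small). Since $\mu_\varepsilon$ is already a centered two-point measure, the only decomposition is $\nu=\delta_{(a,b)}$, so $M=b$ almost surely; thus $\mathbb P[M>b/2]=1$, whereas $C'\exp(-c_1(b/2)^\gamma)=C'\varepsilon^{c_1/(c2^\gamma)}\to 0$ as $\varepsilon\to 0$. The reason the \emph{conclusion} of the lemma is nonetheless correct in this example is that the exit time of $(-a,b)$ carries a small prefactor $\asymp a/(a+b)\asymp\varepsilon$ in its tail (the first eigenfunction of the interval is small at the starting point $0$); your bound $\mathbb P[T>t,\,M\le s]\le\mathbb P[\tau_s>t]$ discards exactly this prefactor, which is what makes the split on $\mathbb P[M>s]$ fail.

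Two smaller points. First, the assertion that centering plus the one-sided bound $\mathbb P[X>t]\lesssim\exp(-ct^\gamma)$ yields a two-sided bound is incorrect (centering only gives the Markov bound $\mathbb P[X<-t]\le\mathbb E[X^+]/t$); the paper's proof, and its application, implicitly use a two-sided hypothesis, so this is a shared imprecision rather than a defect of your argument, but it should not be presented as a consequence. Second, the Azéma--Yor alternative you sketch is more promising: controlling $\sup_{s\le T}B_s=\Psi_{\mu}(B_T)$ via $\Psi_\mu(x)\approx x$ for large $x$, and the pre-stopping drawdown $m-\Psi_\mu^{-1}(m)\asymp m^{1-\gamma}$, would give a confinement of $B$ up to $T$ analogous to the paper's $a(\ell),-b(\ell)\lesssim\log(1/\ell)^{1/\gamma}$ bound. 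But there is no ``symmetric argument handling the infimum'' for Azéma--Yor, which is inherently one-sided; the infimum before $T_{AY}$ must be controlled by the drawdown estimate itself, and this needs to be carried out explicitly. As written, the proposal does not constitute a proof.
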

Applying this result to the random variables $\frac{\mathbf p_e-p_c(q)}{\sigma_e}$ with $\gamma=1$ (using the third condition in the definition of $(\star)^{\text A}_N$ and the Brownian scaling), there exists a stopping time $T^{(e)}$ such that
\begin{equation*}
    \bbP[T^{(e)}>t]\lesssim \exp\left[-c'\left(\frac t{\sigma_e^2}\right)^{1/3}\right],
\end{equation*}
where all constants involved are uniform in $N$ and $e\in E_N $. Recall that we denoted $\Sigma_{\frak p}:=\max_{e\in E_N} \sigma_e$. Therefore, as for any $e\in E_N$ one has $\sigma_e\le \Sigma_{\frak p}$, and so uniformly in $e\in E_N$ and $t>0$,
\begin{equation*}
    \bbP\big[T^{(e)}/\Sigma_{\frak p}^2>t\big]\lesssim \exp\left[-c't^{1/3}\right].
\end{equation*}
We now adapt the proof of Section \ref{sub:centred-gaussian}, keeping the same notations, replacing the Brownian motions $\frak p(t)$ by the Brownian motions stopped at $T^{(e)}$ defined in \eqref{eq:def-continuous-process-Skorokhod}. In this new setting, $\frak p=\frak p(\infty)$, and we want to show that the event $\big\{ T_b = \infty \big\} $ happens with high $\mathbf P_N$-probability. We start by recalling the following elementary lemma, whose proof is given in the Appendix \ref{app:large-deviation-sum}.
\begin{lemma}
\label{lem:tail_sum_stretched_exp}
   Fix a sequence of positive numbers $(a_i)_{i\le M}$ with $ m=\max_{i\leq M} a_i$ and  $S=\sum_{i\leq M}a_i $. Let $(T_i)_{i\le M}$ be a collection of positive random variables with uniform stretched exponential tails for some $0<\gamma<1$
 \begin{equation*}
    \bbP[T_i>t]\lesssim \exp\Big[-ct^{\gamma}\Big].
  \end{equation*}
Then there exist positive constants $C=C(c,\gamma)$ and $\tilde c=\tilde c(c,\gamma)>0$ such that uniformly in $t>C\cdot S$, one has
    \begin{equation*}
        \bbP\Bigg[\sum_{i\leq M}a_iT_i>t\Bigg]\lesssim \exp\Big[-\tilde{c} \cdot  \Big(\frac{t}{m}\Big)^{\gamma} \Big].
    \end{equation*}
\end{lemma}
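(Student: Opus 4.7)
The plan is to reduce the problem to a sum of genuinely sub-exponential variables via the elementary subadditivity $(x_1+\cdots+x_M)^\gamma \le x_1^\gamma+\cdots+x_M^\gamma$, valid for any $x_i\ge 0$ and $\gamma\in(0,1]$ (immediate by concavity and induction on $M$). This yields
\begin{equation*}
    \bbP\!\left[\sum_{i\le M}a_i T_i > t\right]\le \bbP\!\left[\sum_{i\le M}a_i^\gamma T_i^\gamma > t^\gamma\right],
\end{equation*}
and the transformed variables $Y_i:=T_i^\gamma$ satisfy $\bbP[Y_i>s]\lesssim e^{-cs}$ uniformly in $i$. An integration by parts then gives $\bbE[e^{\lambda Y_i}]\le e^{K\lambda}$ for all $\lambda\in(0,c/2]$, with $K$ depending only on $c$.

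Next, assuming the $T_i$ are independent (which is automatic in the intended application to Skorokhod stopping times for independent Brownian motions), I would apply a Chernoff bound to the sub-exponential sum. For any $\lambda\in(0,c/(2m^\gamma)]$, the bound on individual moment-generating functions together with independence gives
\begin{equation*}
    \bbP\!\left[\sum_{i\le M}a_i^\gamma Y_i > t^\gamma\right]\le \exp\!\left[-\lambda\!\left(t^\gamma-K\sum_{i\le M}a_i^\gamma\right)\right].
\end{equation*}
Choosing $\lambda:=c/(4m^\gamma)$ (so that $\lambda a_i^\gamma\le c/4$ everywhere) produces a candidate decay $\exp[-(c/8)(t/m)^\gamma]$, provided the correction term $K\sum_i a_i^\gamma$ can be absorbed into $t^\gamma/2$.

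The main obstacle is precisely this absorption step: the hypothesis $t>C\cdot S$ controls $S^\gamma$, while in general $\sum_i a_i^\gamma$ may exceed $S^\gamma$ by a factor up to $M^{1-\gamma}$ (by H\"older), so a single global choice of $\lambda$ is too crude. To overcome this, I would dyadically split the indices by the size of the weights: setting $I_k:=\{i:a_i\in(2^{-k-1}m,2^{-k}m]\}$ for $k\ge 0$, the indices decompose into scales of comparable weights, and the Chernoff machinery above can be applied separately on each scale with a parameter $\lambda_k\asymp (2^{-k}m)^{-\gamma}$ adapted to that scale. Because $\sum_{i\in I_k}a_i\asymp 2^{-k}m\cdot|I_k|$ and $\sum_{k\ge 0}2^{-k}m\,|I_k|\le S$, the scale-wise contributions telescope geometrically, and summing them under the hypothesis $t>CS$ (with $C$ large, depending only on $c$ and $\gamma$) yields the announced bound $\lesssim\exp[-\tilde c(t/m)^\gamma]$.
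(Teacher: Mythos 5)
Your reduction via $(x_1+\cdots+x_M)^\gamma\le\sum_i x_i^\gamma$ is logically valid but too lossy, and the dyadic splitting does not rescue it. After passing to $Y_i=T_i^\gamma$, the Chernoff bound is vacuous unless $t^\gamma$ beats the mean of the transformed sum: by Jensen, $\bbE[e^{\lambda a_i^\gamma Y_i}]\ge e^{\lambda a_i^\gamma\bbE[Y_i]}$, so every choice of $\lambda$ yields an upper bound $\ge\exp\big[-\lambda(t^\gamma-K\sum_i a_i^\gamma)\big]$ with $K=\bbE[Y_i]$, which is only useful once $t^\gamma\gtrsim\sum_i a_i^\gamma$. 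The hypothesis $t>CS$ controls $\sum_i a_i$, not $\sum_i a_i^\gamma$, and for $0<\gamma<1$ these are genuinely incomparable. Take $a_i\equiv 1$ for $i\le M$: then $m=1$, $S=M$, $\sum_i a_i^\gamma=M$, and the lemma only assumes $t>CM$, so $t^\gamma\asymp M^\gamma\ll M$. All weights live at the single dyadic scale $k=0$, so the splitting you propose is vacuous precisely in the case where the obstruction bites; the gap between $\sum a_i^\gamma$ and $S^\gamma$ cannot be patched by reorganizing indices because it is already maximal when all $a_i$ coincide.

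The paper's proof keeps the sum $\sum_i a_iT_i$ intact and instead uses a \emph{truncated} exponential Markov inequality with a $t$-dependent tilt. After a union bound removes $\{\sup_i a_iT_i>t\}$ (this step uses $t\gtrsim S$ to absorb the prefactor $M$), one sets $\lambda=\tfrac{ct^{\gamma-1}}{2m^\gamma}$. On the truncation range $a_is\le t$, the elementary inequality $\lambda a_is\le\tfrac c2 s^\gamma$ holds because $s^{1-\gamma}\le(t/a_i)^{1-\gamma}$ and $a_i\le m$, so the truncated exponential moment $\bbE\big[(e^{\lambda a_iT_i}-1)\ind(a_iT_i\le t)\big]\lesssim\lambda a_i$ has a cost that scales with $a_i$, not $a_i^\gamma$. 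The correction then sums to $\lesssim\lambda S$, which is exactly what the assumption $t>CS$ can absorb. Calibrating $\lambda$ to the threshold $t$ and truncating before taking moments is the step your subadditivity reduction throws away, and it is the crux of the proof.
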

In the spirit of the previous section, we will prove a pair of lemmas that show that, with high probability, the main features of the critical measure remain stable for a large enough amount of time.

\begin{lemma}
\label{lem:stability-RSW_general}
    Assume that $\delta>0$ is chosen small enough, only depending on $1< q\leq 4$. Then for all $\rho>0$, there are some positive constants $c=c(\delta,\rho,q)>0$ and $O_{\delta,\rho}(\cdot)$ such that if $\Sigma_{\frak p}^2\le c\cdot \textrm W(N)$, one has
\begin{equation*}
    \mathbf P_N \Bigg[ \forall \textrm{ } 0\leq  s\leq T_b(\delta,\rho), \textrm{ } \phi_{\bbT_N,\frak p(s)}\in \textrm{RSW}(2\delta,N) \Bigg]\geq 1-O_{\delta,\rho}\left( \exp\Bigg[-c N^c\left(\frac{\textrm W(N)^{1/2}}{\Sigma_{\frak p}}\right)^{1/2}\Bigg] \right).
\end{equation*}
Moreover, the same statement holds when replacing $\textrm{RSW}(\delta,N)$ by $\textrm{Stab}_{\#}(2\delta,N)$ for any $\#\in\{4,3^+,3^-,3^{\text{rg}},3^{\text{lf}}\}$.
\end{lemma}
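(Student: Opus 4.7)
The plan is to adapt the proof of Lemma \ref{lem:stability-RSW} to the continuous process $\frak p(t)=(p_c(q)+B^{(e)}_{t\wedge T^{(e)}})_{e\in E_N}$ built from Skorokhod's embedding \eqref{eq:def-continuous-process-Skorokhod}. The main novelty compared to the Gaussian case is that the total ``Brownian time'' injected into the system is no longer the deterministic $\sigma_N^2$ but the random quantity $\max_{e\in E_N}T^{(e)}$. I would therefore pick a deterministic threshold $\tau_N$ such that, on the one hand, all of the $T^{(e)}$ lie below $\tau_N$ with very high $\mathbf P_N$-probability, and, on the other hand, the drift and bracket estimates from the proof of Lemma \ref{lem:stability-RSW} remain small up to $\tau_N$.

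First, I would apply Lemma \ref{lem:skorohod_tails} to $(\mathbf p_e-p_c(q))/\sigma_e$ (which has exponential tails by the third item of $(\star)^{\text A}_N$, so $\gamma=1$), and combine it with Brownian scaling to get, uniformly in $e\in E_N$, the estimate $\mathbf P_N[T^{(e)}>t]\lesssim \exp[-c(t/\sigma_e^2)^{1/3}]\le \exp[-c(t/\Sigma_{\frak p}^2)^{1/3}]$. A union bound over the $O(N^2)$ edges of $\bbT_N$ yields
\begin{equation*}
    \mathbf P_N\Big[\max_{e\in E_N}T^{(e)}>\tau_N\Big]\lesssim N^2\exp\Big[-c(\tau_N/\Sigma_{\frak p}^2)^{1/3}\Big].
\end{equation*}
On the complementary event the process $\frak p(t)$ is constant for $t\ge\tau_N$, hence $\frak p(\tau_N)=\frak p$, and it is enough to control the measures $\phi_{\bbT_N,\frak p(s)}$ on the interval $s\in[0,\tau_N\wedge T_b(\delta,\rho)]$.

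Second, I would repeat Steps 0--3 of the proof of Lemma \ref{lem:stability-RSW} with only one formal change: each Brownian driver $\dd B^{(e)}_t$ becomes $\mathbf 1_{t<T^{(e)}}\dd B^{(e)}_t$, so the sums defining $A^{\calR}_t$ and $\langle M^{\calR}\rangle_t$ are restricted to edges still moving at time $t$, which can only improve the pointwise bounds. Consequently, for $s\le\tau_N\wedge T_b(\delta,\rho)$,
\begin{equation*}
    |A^{\calR}_s-A^{\calR}_0|\lesssim_{\delta,\rho}\tau_N/\textrm W(N),\qquad \langle M^{\calR}\rangle_s\lesssim_{\delta,\rho}N^{-c}\tau_N/\textrm W(N).
\end{equation*}
The local-martingale large deviation argument from Step 3 of Lemma \ref{lem:stability-RSW}, combined with a union bound over the $O(N^3)$ $2$ by $1$ rectangles of $\bbT_N$, then gives
\begin{equation*}
    \mathbf P_N\Big[\exists\,0\le s\le\tau_N\wedge T_b(\delta,\rho),\;\phi_{\bbT_N,\frak p(s)}\notin\textrm{RSW}(2\delta,N)\Big]\lesssim_{\delta,\rho}N^3\exp\Big[-cN^c\textrm W(N)/\tau_N\Big].
\end{equation*}

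Finally, the crucial step is to pick $\tau_N:=c'\cdot\Sigma_{\frak p}^{1/2}\textrm W(N)^{3/4}$ for a small enough constant $c'=c'(\delta,\rho)>0$. The assumption $\Sigma_{\frak p}\le c\,\textrm W(N)^{1/2}$ then forces $\tau_N\le c''\textrm W(N)$, so the deterministic drift bound can be made smaller than $\delta$ uniformly in $\calR$, while both exponents $(\tau_N/\Sigma_{\frak p}^2)^{1/3}$ and $\textrm W(N)/\tau_N$ are of common order $(\textrm W(N)^{1/2}/\Sigma_{\frak p})^{1/2}$. Combining the two bounds gives the announced $1-O_{\delta,\rho}(\exp[-cN^c(\textrm W(N)^{1/2}/\Sigma_{\frak p})^{1/2}])$ (the polynomial prefactors $N^2,N^3$ being absorbed in the exponent in the relevant regime $\Sigma_{\frak p}\ll \textrm W(N)^{1/2}$). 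The $\textrm{Stab}_{\#}(2\delta,N)$ variants are treated identically, keeping track of the extra prefactor $\pi_{\#,p_c}(R)$ exactly as in the last paragraph of the proof of Lemma \ref{lem:stability-RSW}. The main technical subtlety is precisely this exponent-matching: the Skorokhod tail exponent $1/3$ from Lemma \ref{lem:skorohod_tails} forces the specific scaling $\tau_N\propto\Sigma_{\frak p}^{1/2}\textrm W(N)^{3/4}$, and any weaker tail assumption on the $\mathbf p_e$ would propagate into a (still polynomial in $\textrm W(N)^{1/2}/\Sigma_{\frak p}$) weaker rate.
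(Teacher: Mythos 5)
Your plan of truncating all the Skorokhod stopping times at a single deterministic threshold $\tau_N$ and union-bounding over the $O(N^2)$ edges is strictly weaker than what the lemma asserts, and it actually breaks down in the regime that the hypothesis allows. The hypothesis is $\Sigma_{\frak p}^2\le c\,\textrm W(N)$, so $\Sigma_{\frak p}$ can be of the same order as $\textrm W(N)^{1/2}$. Your choice $\tau_N\asymp\Sigma_{\frak p}^{1/2}\textrm W(N)^{3/4}$ is forced on you by the deterministic drift bound $|A^\calR_t-A^\calR_0|\lesssim\tau_N/\textrm W(N)\le\delta$, which requires $\tau_N\lesssim\textrm W(N)$. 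With this $\tau_N$ the tail exponent $(\tau_N/\Sigma_{\frak p}^2)^{1/3}$ is of order $(\textrm W(N)^{1/2}/\Sigma_{\frak p})^{1/2}$ \emph{without} the $N^c$ factor that appears in the statement, and when $\Sigma_{\frak p}\asymp\textrm W(N)^{1/2}$ it is merely a constant, so $N^2\exp[-c(\tau_N/\Sigma_{\frak p}^2)^{1/3}]$ diverges rather than tends to $0$. There is no room to increase $\tau_N$ to recover the $N^c$ (say $\tau_N\asymp N^{3c/4}\Sigma_{\frak p}^{1/2}\textrm W(N)^{3/4}$) because then the deterministic drift bound $\tau_N/\textrm W(N)$ can exceed $\delta$. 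In short, the ``worst case'' truncation $t\wedge T^{(e)}\le\tau_N$ is too crude: it replaces the typical value $\sigma_e^2$ of each $T^{(e)}$ by a uniform $\tau_N\gg\sigma_e^2$.

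The paper's proof of this lemma circumvents both problems by never truncating uniformly. It keeps the random sums $\sum_e\alpha(\calR,e)T^{(e)}$ (drift) and $\sum_e\alpha(\calR,e)^2 T^{(e)}$ (bracket) in their exact form and applies the concentration estimate of Lemma \ref{lem:tail_sum_stretched_exp} for weighted sums of stretched-exponential variables. Because the weights $\alpha(\calR,e)$ decay away from $\calR$ and $\sum_e\alpha(\calR,e)\lesssim\textrm W(N)^{-1}$, $\sum_e\alpha(\calR,e)^2\lesssim\Xi(N)^{-1}$, the typical size of the drift sum is $\Sigma_{\frak p}^2\textrm W(N)^{-1}\lesssim\delta$ under the hypothesis, and the bracket sum is compared against $\Sigma_{\frak p}^{1/2}\Xi(N)^{-1/4}$; the polynomial gain $\Xi(N)\gtrsim N^c\textrm W(N)$ from Lemma \ref{lem:window-inequalities} is precisely what produces the $N^c$ in the final exponent. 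A crude bound on $\sup_e T^{(e)}$ of the type you propose is indeed used later in the paper (Sections 4.2 and 4.4), but only under the much stronger hypothesis $\Sigma_{\frak p}\le c\,\widetilde{\textrm W}(N)^{1/3}\log(N)^{-2}$, where the extra $\log(N)^{-2}$ factor guarantees the $N^2$ union-bound prefactor can always be absorbed.
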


\begin{lemma}
\label{lem:stability-StabDelta_general}
Assume that $\delta>0$ is arbitrary. Then there $\rho=\rho(\delta)>0$ small enough and constants $c=c(\delta,q)>0$ and $O_{\delta}(\cdot)$ such that if $\Sigma_{\frak p}^2\leq c\cdot \textrm W(N)$, one has
 \begin{equation*}
     \mathbf P_N \Bigg[ \forall \ 0\leq  s\leq T_b(\delta,\rho),\ \phi_{\bbT_N,\frak p(s)}\in \textrm{Stab}_{\Delta}(2\rho,N) \Bigg] \geq 1-O_{\delta}\left( \exp\Bigg[-c N^c\left(\frac{\textrm W(N)^{1/2}}{\Sigma_{\frak p}}\right)^{1/2}\Bigg] \right).
\end{equation*}
\end{lemma}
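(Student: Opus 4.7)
The plan is to follow closely the strategy of Lemma \ref{lem:stability-StabDelta}, replacing the unrestricted Brownian motions by the stopped ones $B^{(e)}_{t\wedge T^{(e)}}$ from \eqref{eq:def-continuous-process-Skorokhod}. For any two edges $e,f\in E_N$ at distance $R$, the semimartingale decomposition $\Cov_{\frak p(t)}(\omega_e,\omega_f)=M^{e,f}_t+A^{e,f}_t$ furnished by Proposition \ref{prop:differentiate-bonds} still holds, the only change being that each summand indexed by an edge $g$ is effectively truncated past time $T^{(g)}$, since the quadratic variation of $B^{(g)}_{\cdot\wedge T^{(g)}}$ is $t\wedge T^{(g)}$. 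In particular, after sending $t\to\infty$ and exchanging sum and integral, the drift is bounded on $\{s\le T_b(\delta,\rho)\}$ by
\[
|A^{e,f}_\infty-A^{e,f}_0|\lesssim_{\delta,\rho}\sum_{g\in E_N}T^{(g)}\cdot\alpha_g,
\]
where $\alpha_g:=\sup_{s\le T_b}\bigl(|\kappa_3^{\frak p(s)}(e,f,g)|+\Cov_{\frak p(s)}(\omega_e,\omega_g)\Cov_{\frak p(s)}(\omega_f,\omega_g)\bigr)$ satisfies $\sum_g\alpha_g\lesssim \Delta_{p_c}(R)^2\textrm W(N)^{-1}$ by \eqref{eq:sum_kappa3} and \eqref{eq:sum_cov_g}.

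Setting $\tilde T^{(g)}:=T^{(g)}/\sigma_g^2$, whose tails are $\mathbf P_N[\tilde T^{(g)}>t]\lesssim\exp(-c't^{1/3})$ by the scaling after Lemma \ref{lem:skorohod_tails}, and $\tilde\alpha_g:=\sigma_g^2\alpha_g$, Lemma \ref{lem:tail_sum_stretched_exp} with $\gamma=1/3$ applied at the level $t=\eta\Delta_{p_c}(R)^2$ (the assumption $\Sigma_{\frak p}^2\le c\textrm W(N)$ ensuring the condition $t>C\sum_g\tilde\alpha_g$) yields the target tail
\[
\mathbf P_N\Big[|A^{e,f}_\infty-A^{e,f}_0|>\eta\Delta_{p_c}(R)^2\Big]\lesssim_{\delta,\rho}\exp\left[-cN^c\left(\frac{\textrm W(N)^{1/2}}{\Sigma_{\frak p}}\right)^{1/2}\right].
\]
A parallel argument applied to $\langle M^{e,f}\rangle_\infty\le\sum_g T^{(g)}\beta_g$, with $\sum_g\beta_g\lesssim\Delta_{p_c}(R)^4\Xi(N)^{-1}$ from \eqref{eq:sum_kappa3_squared}, combined with $\Xi(N)\gtrsim N^c\textrm W(N)$ from Lemma \ref{lem:window-inequalities}, shows that $\langle M^{e,f}\rangle_\infty\lesssim_{\delta,\rho} N^{-c}\Delta_{p_c}(R)^4$ with a tail of the same form.

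Conditionally on this bracket bound, the exponential concentration for continuous local martingales (used in Step 3 of the proof of Lemma \ref{lem:stability-RSW}) yields $\sup_{t\ge 0}|M^{e,f}_t|\le\eta\Delta_{p_c}(R)^2$ with an analogous probability. Combining the drift and martingale controls, a union bound over the $O(N^4)$ pairs of edges (absorbed by the $N^c$ factor in the exponent) shows that the ratio $\Cov_{\frak p(t)}(\omega_e,\omega_f)/\Cov_{p_c}(\omega_e,\omega_f)$ stays in $[1/2,2]$ uniformly in $e,f\in E_N$ and $0\le t\le T_b$. Converting this covariance stability into the desired mixing rate stability by following verbatim Step 4 of the proof of Lemma \ref{lem:stability-StabDelta}, with $\rho=\rho(\delta)$ chosen so that ratios remain in $[2\rho,1/(2\rho)]$, concludes the argument. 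The main obstacle is that the stretched exponential ($\gamma=1/3$) tails of the Skorokhod times propagate through Lemma \ref{lem:tail_sum_stretched_exp} with an unavoidable loss, forcing the weaker rate $(\textrm W(N)^{1/2}/\Sigma_{\frak p})^{1/2}$ in place of the Gaussian rate $\textrm W(N)/\sigma_N^2$; checking that this degraded exponent is still strong enough to survive the $N^4$ union bound, thanks to the polynomial $N^c$ prefactor, is the delicate point.
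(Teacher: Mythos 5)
Your plan is structurally sound and does follow the same route as the paper's sketch for Lemma~\ref{lem:stability-RSW_general}, but the choice of threshold for the bracket is where the argument breaks. You propose to prove that $\langle M^{e,f}\rangle_{T_b}\lesssim N^{-c}\Delta_{p_c}(R)^4$ holds ``with a tail of the same form,'' and then condition on that bracket bound to run the local-martingale concentration estimate. With this threshold, however, the large deviation step only yields
\begin{equation*}
    \mathbf P_N\Big[\sup_{s\le T_b}|M^{e,f}_s|>\eta\Delta_{p_c}(R)^2\ \Big|\ \langle M^{e,f}\rangle_{T_b}\le N^{-c}\Delta_{p_c}(R)^4\Big]\lesssim \exp\big(-\tfrac12\eta^2N^{c}\big),
\end{equation*}
an exponent that is independent of $\Sigma_{\frak p}$. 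This cannot dominate the target rate $\exp\big[-cN^c(\textrm W(N)^{1/2}/\Sigma_{\frak p})^{1/2}\big]$, which blows up as $\Sigma_{\frak p}\to 0$. The paper avoids this by choosing the threshold to scale with $\Sigma_{\frak p}$: for crossing events it uses the level $\Sigma_{\frak p}^{1/2}\Xi(N)^{-1/4}$, and for the covariance martingales one should analogously set the level to $\eta'\Sigma_{\frak p}^{1/2}\Xi(N)^{-1/4}\Delta_{p_c}(R)^4$. This is exactly the balance point at which the Lemma~\ref{lem:tail_sum_stretched_exp} tail $(t^*/m)^{1/3}$ (with $m\lesssim \Sigma_{\frak p}^2\Xi(N)^{-1}\Delta_{p_c}(R)^4$, or your tighter $\Sigma_{\frak p}^2\Delta_{p_c}(R)^4$) and the martingale-LD exponent $\eta^2\Delta_{p_c}(R)^4/t^*$ are both of order $\Sigma_{\frak p}^{-1/2}\Xi(N)^{1/4}\gtrsim N^{c}(\textrm W(N)^{1/2}/\Sigma_{\frak p})^{1/2}$; using a $\Sigma_{\frak p}$-independent level destroys that balance.

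Apart from that, the rest of the argument is correct and matches the paper's strategy: the decomposition of $\Cov_{\frak p(t)}(\omega_e,\omega_f)$ using the stopped Brownian motions, the bound $\sup_{s\le T_b}|A^{e,f}_s-A^{e,f}_0|\lesssim\sum_g\alpha_g T^{(g)}$, the application of Lemma~\ref{lem:tail_sum_stretched_exp} at level $\eta\Delta_{p_c}(R)^2$ (which gives $\exp(-c\Sigma_{\frak p}^{-2/3})$, strong enough after checking $\Sigma_{\frak p}^{-2/3}\gtrsim N^c(\textrm W(N)^{1/2}/\Sigma_{\frak p})^{1/2}$ as the paper does), the union bound over $O(N^4)$ pairs, and the verbatim Step~4 conversion into $\textrm{Stab}_\Delta(2\rho,N)$. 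Two minor points worth fixing: since the $\alpha_g$ you define are random (they depend on the trajectory), you should replace them by the deterministic upper bounds from \eqref{eq:sum_kappa3} and \eqref{eq:sum_cov_g} before applying Lemma~\ref{lem:tail_sum_stretched_exp}, which also uses independence of the $T^{(g)}$; and you should work with $\langle M^{e,f}\rangle_{T_b}$ rather than $\langle M^{e,f}\rangle_\infty$, since the near-critical covariance estimates underlying $\beta_g$ are only available up to the breaking time.
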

The proofs of these lemmas and how they allow to deduce Theorem \ref{thm:extension_scaling_window} are very similar to the previous section. Therefore, we only sketch the proof of Lemma \ref{lem:stability-RSW_general} for crossing events to illustrate how the control of the tails for the spotting times recalled in Lemma \ref{lem:tail_sum_stretched_exp} allows to control the probability of crossing events.

\begin{proof}[Sketch of the proof of Lemma \ref{lem:stability-RSW_general} for crossing events]
Let $\calR\subset \bbT_N$ be a $2$ by $1$ rectangle of width $R\leq N/8$, and apply Itô's formula to the process $t\mapsto \phi_{\bbT_N,\frak p(t)}[\scrC^0_{\bullet}(\calR)]$.

\paragraph{Step 0: Canonical decomposition of the semimartingale} One can make the decomposition $\phi_{\bbT_N,\frak p(t)}[\scrC^0_{\bullet}(\calR)]=M^\calR_t+A^\calR_t$ where $M^\calR$ is a local martingale started at $0$ and $A^\calR$ is an adapted finite variation process, given by
\begin{align*}
    \dd M^\calR_t &= \sum_{e\in E_N}\frac 1{\mathbf p_e(t)(1-\mathbf p_e(t))}\Cov_{\frak p(t)}(\scrC^0_{\bullet}(\calR),\omega_e)\ind(t\le T^{(e)})\dd B^{(e)}_t,\\
    \dd A^\calR_t &= \sum_{e\in E_N}\frac1{\mathbf{p}_e(t)(1-\mathbf{p}_e(t))^2}\left(1-\frac 1{\mathbf{p}_e(t)}\phi_{\bbT_N,\frak p(t)}[\omega_e]\right)\ind(t\le T^{(e)})\Cov_{\frak p(t)}(\scrC^0_{\bullet}(\calR),\omega_e)\dd t.
\end{align*}

\paragraph{Step 1: Evaluating the order of magnitude of $A^\calR_t$} Similarly to the previous section one can write for all $0\leq t \leq T_b(\delta,\rho)$,
\begin{equation*}
     \left|\frac{\dd A^\calR_t}{\dd t}\right|\lesssim \sum_{e\in E_N}\Cov_{\frak p(t)}(\scrC^0_{\bullet}(\calR),\omega_e)\ind(t\le T^{(e)}).
\end{equation*}
Still assuming that $0\leq t\le T_b(\delta,\rho)$, one can apply Lemma \ref{lem:near-critical_estimates} which implies that each $\Cov_{\frak p(t)}(\scrC^0_{\bullet}(\calR),\omega_e)$ term can be estimated via Lemma \ref{lem:Cov-crossing-edge} (more precisely working for crossing events of the form $\scrC^0_{\bullet}(\calR)$ as at the end of the proof of Step 1 of Lemma \ref{lem:stability-RSW}). Set 
\[
\alpha(\calR,e) :=
\begin{cases}
\Delta_{p_c}(R,\ell)\Delta_{p_c}(\ell)
 & \text{if } \textrm{dist}(e,\calR)=\ell \ge R \\
\Delta_{p_c}(n) & \text{if the distance from } e \text{ to the corners of } \calR \text{ is } n\lesssim R.
\end{cases}
\]
One can then apply Lemma \ref{lem:Cov-crossing-edge}, which implies that for  $0\leq t\le T_b$, one has the upper bound $\Cov_{\frak p(t)}(\scrC^0_{\bullet}(\calR),\omega_e)\lesssim \alpha(\calR,e)$. Integrating this information for $0\leq t\le T_b$ gives
\begin{equation*}
    \left|A^\calR_t-A^\calR_0\right|\lesssim \sum_{e\in E_N}\alpha(\calR,e)(t\wedge T^{(e)})\le \sum_{e\in E_N}\alpha(\calR,e)T^{(e)}
\end{equation*}
One can also notice that (\ref{eq:sum_cov_crossing_edge}) ensures that  $\sum_{e\in E_N}\alpha(\calR,e)\lesssim \textrm W(N)^{-1}$, while one can crudely bound $\alpha(\calR,e)\lesssim 1$. We are in position to apply Lemma \ref{lem:tail_sum_stretched_exp} to the family of variables $\big(T^{(e)}/\Sigma_{\frak p}^2\big)_{e\in E_N}$, which shows the existence of universal constants $C,\tilde c,c_1$ such that if $c_1\delta/\Sigma_{\frak p}^2\ge C\cdot\textrm W(N)^{-1}$, one has
\begin{align*}
    \mathbf P_N\Big[\exists \ 0\leq  t\le T_b,\quad \left|A^\calR_t-A^\calR_0\right|>\delta\Big]&\le \mathbf P_N\left[\sum_{e\in E_N}\alpha(\calR,e)T^{(e)}\ge c_1\delta\right]\\
    &\lesssim \exp\left[-\tilde c\left(\frac{c_1\delta}{\Sigma_{\frak p}^2}\right)^{1/3}\right].
\end{align*}
Fix $c_2:=c_1\delta/C$ and $c_3:=\frac12\tilde cc_1^{1/3}\delta^{1/3}$. Taking the union bound over the $O(N^3)$ $2$ by $1$ rectangles $\calR \subset \bbT_N$, one sees that provided $\Sigma_{\frak p}^2\le c_2\textrm W(N)$, one has
\begin{align*}
    \mathbf P_N\Big[\exists\ \calR\subset \bbT_N, \exists \ 0 \leq  t\le T_b,\quad\left|A^\calR_t-A^\calR_0\right|>\delta\Big]&\lesssim N^3\exp\left[-2c_3\Sigma_{\frak p}^{-2/3}\right]\\
    &\lesssim \exp\left[-c_3\Sigma_{\frak p}^{-2/3}\right].
\end{align*}
where in the last line we used that $\Sigma_{\frak p}^2\lesssim \textrm W(N)$ decays polynomially fast in $N$. Note that if $c>0$ is small enough, this probability is much smaller than the one in the statement of the lemma.
In the present context, 
$\left|A^\calR_t-A^\calR_0\right|$ is not deterministically bounded from above. Instead, the present reasoning shows that, only with very low probability, the quantity $\left|A^\calR_t-A^\calR_0\right|$ is large, allowing to easily adapt the previous arguments. 
\paragraph{Step 2: Evaluating the order of magnitude of $M_t^\calR$.} One has for $ 0\leq t \leq T_b$
\begin{align*}
     \left|\frac{\dd \langle M^\calR\rangle_t}{\dd t}\right|&\lesssim \sum_{e\in E_N}\Cov_{\frak p(t)}(\scrC^0_{\bullet}(\calR),\omega_e)^2\ind(t\le T^{(e)})\\
    &\lesssim \sum_{e\in E_N}\alpha(\calR,e)^2\ind(t\le T^{(e)}).
\end{align*}
Integrating the above one gets for $ 0\leq t \leq T_b$
\begin{equation*}
    \langle M^\calR\rangle_t\lesssim \sum_{e\in E_N}\alpha(\calR,e)^2(t\wedge T^{(e)})\le \sum_{e\in E_N}\alpha(\calR,e)^2T^{(e)}.
\end{equation*}
Using (\ref{eq:sum_cov_crossing_edge_squared}) gives
\begin{equation*}
\sum_{e\in E_N}\alpha(\calR,e)^2\lesssim \Xi(N)^{-1}.
\end{equation*}
One can now apply Lemma \ref{lem:tail_sum_stretched_exp} to the random variables $T^{(e)}/\Sigma_{\frak p}^2$ with the trivial bound that each $\alpha(\calR,e)^2$ is smaller than $\sum_{e\in E_N}\alpha(\calR,e)^2$, which provides the existence of positive constants $C,\tilde c,c_1$ such that if $\Sigma_{\frak p}^{1/2}\Xi(N)^{-1/4}\Big/\Sigma_{\frak p}^2\ge C\cdot\Xi(N)^{-1}$ one has 
\begin{align*}
    \mathbf P_N\left[\langle M^\calR\rangle_{T_b}>\Sigma_{\frak p}^{1/2}\Xi(N)^{-1/4}\right]&\le \mathbf P_N\left[\sum_{e\in E_N}\alpha(\calR,e)^2T^{(e)}\ge c_1\Sigma_{\frak p}^{1/2}\Xi(N)^{-1/4}\right]\\
    &\lesssim \exp\left[-2\tilde c\left(\frac{\Sigma_{\frak p}^{1/2}\Xi(N)^{-1/4}}{\Sigma_{\frak p}^2\Xi(N)^{-1}}\right)^{1/3}\right] \\
    &\lesssim \exp\left[-2\tilde c N^{\tilde c}\left(\frac{\textrm W(N)^{1/2}}{\Sigma_{\frak p}}\right)^{1/2}\right],
\end{align*}
where in the last line, we used Lemma \ref{lem:window-inequalities}, potentially changing the constant $\tilde c$.
Note that the condition on $\Sigma_{\frak p}$, required to apply Lemma \ref{lem:tail_sum_stretched_exp}, is equivalent to $\Sigma_{\frak p}\le C^{-2/3}\cdot\Xi(N)^{1/2}$, which, by Lemma \ref{lem:window-inequalities}, is satisfied for $N$ large enough as long as $\Sigma_{\frak p}^2\lesssim \textrm W(N)$. The union bound on $\calR\subset \bbT_N$ ensures that if $\Sigma_{\frak p}^2\lesssim\textrm W(N)$, one has
\begin{align*}
    \mathbf P_N\left[\exists\ \calR\subset\bbT_N,\quad \langle M^\calR\rangle_{T_b}>\Sigma_{\frak p}^{1/2}\Xi(N)^{-1/4}\right]&\lesssim N^3\exp\left[-2\tilde c N^{\tilde c}\left(\frac{\textrm W(N)^{1/2}}{\Sigma_{\frak p}}\right)^{1/2}\right]\\
    &\lesssim \exp\left[-\tilde c N^{\tilde c}\left(\frac{\textrm W(N)^{1/2}}{\Sigma_{\frak p}}\right)^{1/2}\right].
\end{align*}

\paragraph{Concluding via large deviation estimates}
Define the event 
\begin{equation*}
	G:=\Big\{ \forall \, \calR \subset \bbT_N, \, \forall\, 0\leq t \leq T_b,  \;   |A^\calR_t-A^\calR_0|\le \delta \Big\} \bigcap \left\{ \forall \, \calR \subset \bbT_N,  \;   \langle M^\calR\rangle_{T_b}\le \Sigma_{\frak p}^{1/2}\Xi(N)^{-1/4} \right\},
\end{equation*}
which happens with probability at least $1-O\left(\exp\left[-\tilde c N^{\tilde c}\left(\frac{\textrm W(N)^{1/2}}{\Sigma_{\frak p}}\right)^{1/2}\right]\right)$. On this event, one can use large deviation estimates for local martingales and Lemma \ref{lem:window-inequalities}, which ensure the existence of some $c>0$ small enough such that
\begin{align*}
    \mathbf P_N\left[G \bigcap \Big\{ \sup_{\calR\subset \bbT_N}\sup_{0\leq s\le T_b}|M^\calR_s|>\delta \Big\} \right]&\lesssim N^3\exp\left[-2c\Sigma_{\frak p}^{-1/2}\Xi(N)^{1/4}\right]\\
    &\lesssim \exp\left[-c N^{c}\left(\frac{\textrm W(N)^{1/2}}{\Sigma_{\frak p}}\right)^{1/2}\right].
\end{align*}
One may now conclude as in the proof of Proposition \ref{prop:L2-case}.
\end{proof}

\section{Random variables naturally centred near $p_c(q)$}\label{sec:non-centred-gaussian}
In the previous section we perturbed the environment from the critical one, using variables which were naively centred around the critical point, requiring that the expected value of the random bond parameter was $p_c(q)$. The toy example for the analysis was the use of some Brownian motions up to some small time $t\ll 1$, whose typical order of magnitude is $\sqrt{t}$. Looking more carefully,  this approach might look too naive, as one wants a near-critical random environment which remains in the critical phase, therefore neither favouring too much much the primal or the dual model. As our random bond parametrisation is \emph{not} naturally centred around $p_c$ from the perspective of self-duality, it is tempting to add a drift correction to the Brownian motions in order to preserve self-duality of the expected random environment up to its second moment. Note that in the natural isoradial parametrisation of the FK model (see e.g.\ \cite{duminilmanolesculi}), this drift would vanish. To each of the Brownian motions, we add a drift $-\mu(q)\cdot t$, as a natural second order expansion for a random process near $0$. In the present section, the critical window in random environment will be expressed via the quantity $\widetilde{\textrm{W}}(N)=\textrm{W}(N)(\sum_{r\le N}r\Delta_{p_c}(r)^4)^{-1/2}$. As CLE computations assert that the mixing rate $\iota$ is larger than $\frac{1}{2}$ (with equality only at $q=4$), the multiplicative correction to $\textrm{W}(N)$ is conjectured to be of constant order for $1<q<4$ and sub-polynomial for $q=4$, and so the results should be interpreted as involving the normal critical window $\textrm W(N)$ instead of $\widetilde{\textrm W}(N)$. 
\subsection{The special case of Gaussian variables}\label{sub:Gaussian-case-L3}

\subsubsection{Statement of the result and setup of the proof}

In this section, we prove an analog of Theorem \ref{thm:extension_scaling_window_drift} in the special case of a sequence of i.i.d.\ Gaussian variables whose law is given by 
\begin{equation}\label{eq:Gaussian-case-drift}
    \mathbf{p}_e \overset{(d)}{=} \calN(p_c(q)-\mu(q)\sigma_N,\sigma_N^2),
\end{equation}
where the parameter $\sigma_N$ goes to $0$ as the size of $\bbT_N$ goes to infinity, which ensures once again that with high probability (i.e.\ not worsening all the other estimates involved), all the edge weights $\frak{p}=(\mathbf{p}_e)_{e\in E_N}$ remain in $(0,1)$. We use once again i.i.d.\ Gaussian variables aiming to generalise afterwards by applying Skorohod's embedding theorem. We modify the i.i.d.\ Brownian motions from the previous section by adding a linear drift at each edge. Compared to the proof of Lemmas \ref{lem:stability-RSW} and \ref{lem:stability-StabDelta}, adding some well chosen drift allows us to lower (with high probability) the order of magnitude of the finite variation processes of the form $A^{\calR}_t$ and $A^{e,f}_t$, which were identified as being the dominant terms in the respective Itô derivatives. As it will be discussed in Section \ref{sub:optimality-random-window}, the conceptual reason to subtract that exact drift $t\mapsto \mu(q)t $ is to construct a random environment under $\mathbf{P}_N$ which is on average almost self-dual for its first two moments as in scenario $(\star)_N^{\text B}$. We keep the notations of Section \ref{sub:Gaussian-case-L3} and skip most details in the proofs that exactly match those of the previous section. This time, we prove a simplified version of Theorem \ref{thm:extension_scaling_window_drift} when $\mathbf{P}_N$ is given \eqref{eq:Gaussian-case-drift}. Note that Lemma \ref{lem:window-inequalities} ensures that the following result represents a strict improvement of the results obtained in Section \ref{sec:naive-random-bonds}, as there exists $c>0$, depending on $q$, such that 
 $\widetilde{\textrm{W}}(N)^{1/3}\gtrsim \textrm{W}(N)^{1/2}N^{c}$.
\begin{proposition}\label{prop:L3-case}
    Fix $1<q\leq 4$. Assume that the random environment $(\mathbf{p}_e)_{e\in E_N}$ under $\mathbf{P}_N$ is given by i.i.d\ Gaussian satisfying \eqref{eq:Gaussian-case-drift} for some uniform parameter $\sigma_N$. Then there exists some $\delta=\delta(q)>0$ and some constant $c=c(\delta,q)>0$ such that for any $N\geq 1 $ and $ \sigma_N \leq c\cdot \widetilde{\textrm{W}}(N)^{1/3} \log(N)^{-\frac12} $  
  \begin{equation*}
        \mathbf{P}_N\Bigg[ \phi_{\bbT_N,\frak p}\in \textrm{RSW}(\delta,N)\Bigg] >1-O\left(\exp\Bigg[-c\cdot \Bigg(\frac{\widetilde{\textrm{W}}(N)^{1/3}}{\sigma_N}\Bigg)^2\Bigg]\right).
    \end{equation*}
\end{proposition}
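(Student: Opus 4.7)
The approach is to adapt the argument of Section \ref{sub:centred-gaussian}, modifying the time-evolving environment by a drift designed to cancel, to leading order, the Itô drift that limited the window to $\textrm W(N)^{1/2}$ there. Set
\begin{equation*}
    \mathbf p_e(t) := p_c(q) + B_t^{(e)} - \mu(q)\,t,
\end{equation*}
where $(B^{(e)})_{e\in E_N}$ are i.i.d.\ standard Brownian motions under $\mathbf P_N$; at $t=\sigma_N^2$ the vector $\frak p(\sigma_N^2)$ then has the target law \eqref{eq:Gaussian-case-drift}, matching the natural centering of $(\star)_N^{\text B}$ up to order $\sigma_N^3$. Everything else in the setup from Section \ref{sub:centred-gaussian}---the completed filtration $(\calF^N_t)$, the breaking time $T_b(\delta,\rho)$, the modified crossing events $\scrC^0_\bullet(\calR),\scrC^{\star 1}_\bullet(\calR)$, and the overall strategy of bounding $\mathbf P_N[T_b<\sigma_N^2]$---is kept unchanged.

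The algebraic content of the argument lies in the following observation. Combining Proposition \ref{prop:differentiate-bonds} with Itô's formula and the added drift $-\mu(q)\,dt$, the finite-variation part of the semimartingale $\phi_{\bbT_N,\frak p(t)}[\scrC^0_\bullet(\calR)]$ takes the form $dA^\calR_t=\sum_e c_e(t)\,\Cov_{\frak p(t)}(\scrC^0_\bullet(\calR),\omega_e)\,dt$ with
\begin{equation*}
    c_e(t) = \frac{1}{\mathbf p_e(t)(1-\mathbf p_e(t))^2}\left(1-\frac{\phi_{\bbT_N,\frak p(t)}[\omega_e]}{\mathbf p_e(t)}\right)-\frac{\mu(q)}{\mathbf p_e(t)(1-\mathbf p_e(t))}.
\end{equation*}
At $t=0$, the self-duality of $\phi_{\bbT_N,p_c,q}$ (guaranteed by the $\sqrt q^{s(\omega)}$ factor in the torus definition) forces $\phi_{p_c}[\omega_e]=\tfrac12$; injecting this together with $p_c(q)=\sqrt q/(1+\sqrt q)$ and $\mu(q)=(q-1)/(2\sqrt q)$ in the expression above, a direct algebraic computation yields $c_e(0)=0$ for every edge $e$. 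This cancellation is the raison d'\^etre of the drift, and it replaces the linear-in-$t$ growth of $A^\calR_t$ from Section \ref{sub:centred-gaussian} by a process that vanishes to first order in $t$.

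With this in hand, the drift term is no longer the binding constraint. I would Taylor-expand $c_e(t)$ around $(p_c,\tfrac12)$ as $c_e(t)=\alpha_1(\mathbf p_e(t)-p_c)+\alpha_2(\phi_{\frak p(t)}[\omega_e]-\tfrac12)+O(\sigma_N^2)$ for explicit constants $\alpha_1,\alpha_2$ depending only on $q$; below $T_b$, both displacements are controlled by the Brownian fluctuations of the environment (the second one via the further expansion $\phi_{\frak p(t)}[\omega_e]-\tfrac12\approx \sum_f \partial_{p_f}\phi[\omega_e]\cdot(\mathbf p_f(t)-p_c)$ from Proposition \ref{prop:differentiate-bonds}). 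Tracking the fluctuation and deterministic-drift contributions carefully and using Remark \ref{rem:sum-Cov-crossing-edge} together with Lemma \ref{lem:window-inequalities}, the plan is to show that
\begin{equation*}
    \mathbf P_N\Big[\sup_{s\le\sigma_N^2\wedge T_b}|A^\calR_s|>\delta\Big]\lesssim \exp\left[-c\left(\frac{\widetilde{\textrm W}(N)^{1/3}}{\sigma_N}\right)^{2}\right],
\end{equation*}
which is better than the corresponding Gaussian-tail bound on the martingale part $M^\calR_t$. The latter is unchanged from Section \ref{sub:centred-gaussian}: its bracket satisfies $\langle M^\calR\rangle_t\lesssim t/\Xi(N)\lesssim t\,\widetilde{\textrm W}(N)^{-2/3}$ by \eqref{eq:sum_cov_crossing_edge_squared} and Lemma \ref{lem:window-inequalities}, so at $t=\sigma_N^2$ the martingale becomes the \emph{binding} constraint and standard Gaussian large deviations yield the same exponent $(\widetilde{\textrm W}(N)^{1/3}/\sigma_N)^2$.

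The remainder follows the pattern of Lemmas \ref{lem:stability-RSW}--\ref{lem:stability-StabDelta} and of Proposition \ref{prop:L2-case}. The stability of arm events and mixing rates is obtained via the same Itô decomposition applied to $\phi_{\bbT_N,\frak p(t)}[\mathcal A^{(e)}_{\#}(R)]$ and to edge-covariances $\Cov_{\frak p(t)}(\omega_e,\omega_f)$, with the analogous vanishing at $t=0$ of the relevant drift coefficient. Taking the union bound over the $O(N^3)$ $2$ by $1$ rectangles (and similarly over pairs of edges and scales for the $\textrm{Stab}_{\#}$ and $\textrm{Stab}_\Delta$ events) absorbs a polynomial factor $N^{O(1)}$, which in exchange forces the logarithmic correction $\sigma_N\le c\,\widetilde{\textrm W}(N)^{1/3}\log(N)^{-1/2}$ in the hypothesis. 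The continuity/fixed-point argument on $T_b$ then closes the proof exactly as in Proposition \ref{prop:L2-case}. The main technical obstacle is the first-order expansion of $c_e(t)$: because $\phi_{\frak p(t)}[\omega_e]$ is a \emph{global} function of the environment, its linearisation produces a double sum of covariances mixing all edges, which must be re-organised using the geometric estimates of Section \ref{sub:near-critical_geometric_estimates} to produce the refined window $\widetilde{\textrm W}(N)$ instead of $\textrm W(N)$.
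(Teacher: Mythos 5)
Your proposal is correct and takes essentially the same approach as the paper. The paper introduces the same drifted process $\mathbf p_e(t)=p_c+B_t^{(e)}-\mu(q)t$, isolates the drift coefficient in the form of the quantity $\mathbf r_e(t)$ in \eqref{eq:def-r_e} (which equals $\mathbf p_e(t)(1-\mathbf p_e(t))$ times your $c_e(t)$, and vanishes at $t=0$ for exactly the self-duality reason you give), and proves the same dichotomy: the finite-variation part becomes negligible on a high-probability event while the martingale bracket $\lesssim \sigma_N^2\Xi(N)^{-1}$ supplies the final $\exp[-c(\widetilde{\textrm W}(N)^{1/3}/\sigma_N)^2]$ tail. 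The only stylistic divergence is in how the smallness of the drift coefficient is closed: you propose to linearise $\phi_{\frak p(t)}[\omega_e]-\tfrac12\approx\sum_f\partial_{p_f}\phi[\omega_e](\mathbf p_f(t)-p_c)$ and track fluctuations, whereas the paper (Lemma \ref{lem:error-term-estimate}) applies Itô directly to $\phi_{\frak p(t)}[\omega_e]$, bounds its martingale and finite-variation parts, and closes a bootstrap inequality $\sup_{t,e}|\mathbf r_e(t)|\lesssim \eps\cdot(\cdots)+\Xi(N)^{-1}\sigma_N^2\sup_{t,e}|\mathbf r_e(t)|$ by absorption. These are two ways of organising the same estimate, and both correctly identify that the global dependence of $\phi_{\frak p(t)}[\omega_e]$ on the environment is the technical crux. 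Your explanation of the origin of the $\log(N)^{-1/2}$ factor (absorbing the $N^{O(1)}$ union-bound prefactor) is also the one used in the paper.
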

In this section, for $t\geq 0$, one can define the stochastic process $\frak p(t):=(\mathbf{p}_e(t))_{e\in E_N}$, formed of independent components given by
\begin{equation*}
    \mathbf{p}_e(t) := p_c(q)+B^{(e)}_t-\mu(q)t,
\end{equation*}
where $t\mapsto B^{(e)}_t$ is a standard Brownian motion attached to the edge $e$ and we recall the expression $\mu(q)=\frac{q-1}{2\sqrt q}$. Again $\frak p(0)=(p_c)_{e\in E_N}$ while $t\mapsto \frak p(t)$ is almost surely continuous and evolves towards the random process $\frak p(\sigma_N^2)\overset{(d)}{=}(\mathbf{p}_e)_{e\in \bbT_N}$. In that case, Proposition \ref{prop:L3-case} reads that if $t\lesssim \widetilde{\textrm{W}}(N)^{2/3}\log(N)^{-1}$, then, with high probability, all the measures $s\mapsto (\phi_{\bbT_N,\frak p(s)})_{s\leq t}$ remain within the $\textrm{RSW}(\delta,N)$ class. We keep the $\textrm{Stab}_{\#}(\delta,N),\textrm{Stab}_{\Delta}(\rho,N)$ and $T_b(\delta,\rho)$ notations of the previous sections. Once again, the building block to prove Proposition \ref{prop:L3-case} is a generalisation of Lemmas \ref{lem:stability-RSW} and \ref{lem:stability-StabDelta}. 

Consider an event $\calS$ and apply the Itô formula to $\phi_{\bbT_N,\frak p(t)}[\calS]$ with this new process $\frak p(t)$. A straightforward computation shows that
\begin{align*}
	\dd\phi_{\bbT_N,\frak p(t)}[\calS]&=\sum_{e\in E_N}\frac1{\mathbf p_e(t)(1-\mathbf p_e(t))}\Cov_{\frak p(t)}(\calS,\omega_e)\dd B^{(e)}_t \\
	& \quad + \sum_{e\in E_N}\frac1{\mathbf p_e(t)(1-\mathbf p_e(t))}\mathbf r_e(t)\Cov_{\frak p(t)}(\calS,\omega_e)\dd t
\end{align*}
where the $\mathbf r_e(t)$ is some process which can be computed using the expressions for $\mu(q)$ and $p_c(q)$, and is given explicitely by
\begin{equation}\label{eq:def-r_e}
    \mathbf r_e(t):=\frac{2p_c-1}{2p_c(1-p_c)}-\frac{2\mathbf{p}_e(t)-1}{2\mathbf{p}_e(t)(1-\mathbf{p}_e(t))}\left(1+\frac{1-2\phi_{\bbT_N,\frak{p}(t)}[\omega_e]}{2\mathbf{p}_e(t)}\right).
\end{equation}
If one wants to apply verbatim the reasoning of the previous section, one can brutally bound $\mathbf{r}_e(t)$ by a constant to obtain the same result. However, the drift is chosen specifically such that for any edge $e$, $\mathbf{r}_e(0)=0$. This hints that one can obtain some better bound than constant for $\mathbf{r}_e(t)$ near $t=0$, which will in fact be of order $\sqrt{t} $. This \emph{lowers} the value of all the finite variation processes involved in the semi-martingale decompositions and allows us to run the stochastic processes (here with drift) for a longer amount of time while remaining within the critical phase (in the sense of Section \ref{sub:near-critical_geometric_estimates}).

\subsubsection{A preliminary estimate on $\mathbf r_e(t)$}\label{sub:estimate-r_e}
As mentioned in the previous lines, self-duality of the critical model on $\bbT_N$ ensures that $\phi_{\bbT_N,\frak p(0)}[\omega_e]=\phi_{\bbT_N,p_c}[\omega_e]=\frac12$ while $\mathbf p_e(0)=p_c$, which ensures that $\mathbf r_e(0)=0$. We now give the following estimate, which is the main improvement from the methods introduced in Section \ref{sec:naive-random-bonds}, which allows us to work with distributions $\frak p$ whose mean deviation is larger than square root of the deterministic one.
\begin{lemma}
\label{lem:error-term-estimate}
There exist a constant $c=c(\delta,\rho,q)>0$ such that for any $\sigma_N $ satisfying ${\sigma_N \leq c\cdot \widetilde{\textrm{W}}(N) \log(N)^{-\frac{1}{2}} }$, one has
    \begin{multline*}
        \mathbf P_N\left[ \forall \, 0 \leq t\le \sigma_N^2\wedge T_b, \forall\, e\in E_N, \: |\mathbf r_e(t)|\le \textrm{W}(N)^{1/3}\Bigg(\sum_{r\le N}r\Delta_{p_c}(r)^4\Bigg)^{1/3}\right]\\ \ge 1-O_{\delta,\rho}\left(\exp\left[-c\left(\frac{\widetilde{\textrm W}(N)^{1/3}}{\sigma_N}\right)^2\right]\right).
    \end{multline*}
\end{lemma}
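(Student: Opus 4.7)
The plan is to exploit the identity $\mathbf r_e(0)=0$, which holds because at $t=0$ one has $\mathbf p_e(0)=p_c(q)$ and, by self-duality of the critical homogeneous FK measure on the torus, $\phi_{\bbT_N,\frak p(0)}[\omega_e]=\tfrac{1}{2}$; plugging this into \eqref{eq:def-r_e} (together with the identity $\tfrac{2p_c-1}{2p_c(1-p_c)}=\mu(q)$) gives $\mathbf r_e(0)=0$. The expression \eqref{eq:def-r_e} defines $\mathbf r_e(t)$ as a smooth function of the two scalar quantities $\mathbf p_e(t)$ and $\phi_{\bbT_N,\frak p(t)}[\omega_e]$, both of which are confined below $T_b$ to a compact subset of $(0,1)$ by the $\textrm{RSW}(\delta,N)$ property. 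A first-order Taylor expansion around $(p_c,\tfrac{1}{2})$ therefore yields a constant $C_\delta>0$ such that
\begin{equation*}
    |\mathbf r_e(t)|\le C_\delta\Big(|\mathbf p_e(t)-p_c|+|\phi_{\bbT_N,\frak p(t)}[\omega_e]-\tfrac{1}{2}|\Big)\qquad\text{for all } 0\le t\le T_b.
\end{equation*}
Setting $S:=\sum_{r\le N}r\Delta_{p_c}(r)^4$ so that the target bound rewrites as $B(N):=\textrm W(N)^{1/3}S^{1/3}=\widetilde{\textrm W}(N)^{1/3}S^{1/2}$, it suffices to control each of the two terms on the right by $B(N)/(2C_\delta)$.

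The first term $\mathbf p_e(t)-p_c=B^{(e)}_t-\mu(q)t$ is controlled by the reflection principle for Brownian motion: for any $K>0$ one has $\mathbf P_N[\sup_{s\le\sigma_N^2}|B^{(e)}_s|>K\sigma_N\sqrt{\log N}]\lesssim N^{-K^2/2}$. A union bound over the $N^2$ edges, with $K$ chosen large enough, ensures that with probability $1-O(N^{-10})$ one has $\sup_{e,\,t\le\sigma_N^2}|\mathbf p_e(t)-p_c|\lesssim \sigma_N\sqrt{\log N}$. Under the hypothesis $\sigma_N\le c\widetilde{\textrm W}(N)\log(N)^{-1/2}$, this bound is at most $c'\widetilde{\textrm W}(N)\le c'\widetilde{\textrm W}(N)^{1/3}\le c'B(N)$, which for $c'$ small is below $B(N)/(2C_\delta)$, and the corresponding error probability is easily absorbed into the target estimate.

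The bulk of the analysis is devoted to $\calE_e(t):=\phi_{\bbT_N,\frak p(t)}[\omega_e]-\tfrac{1}{2}$, which I treat via Itô's formula and Proposition \ref{prop:differentiate-bonds}. Writing $\calE_e(t)=M^e_t+A^e_t$ with $M^e_t$ a continuous local martingale started at $0$, and using that below $T_b$ the mixing-rate stability of Lemma \ref{lem:near-critical_estimates} yields $\Cov_{\frak p(t)}(\omega_e,\omega_{e'})\asymp_\delta \Delta_{p_c}(d(e,e'))^2$, one obtains
\begin{equation*}
    \frac{\dd\langle M^e\rangle_t}{\dd t}=\sum_{e'\in E_N}\frac{\Cov_{\frak p(t)}(\omega_e,\omega_{e'})^2}{\mathbf p_{e'}(t)^2(1-\mathbf p_{e'}(t))^2}\lesssim_{\delta,\rho}\sum_{r\le N}r\Delta_{p_c}(r)^4=S.
\end{equation*}
Hence $\langle M^e\rangle_{\sigma_N^2\wedge T_b}\lesssim_{\delta,\rho}\sigma_N^2 S$, and the standard exponential concentration inequality for continuous local martingales gives
\begin{equation*}
    \mathbf P_N\Bigg[\sup_{0\le t\le \sigma_N^2\wedge T_b}|M^e_t|>\frac{B(N)}{4C_\delta}\Bigg]\lesssim_{\delta,\rho}\exp\!\left[-c\cdot\frac{B(N)^2}{\sigma_N^2 S}\right]=\exp\!\left[-c\left(\frac{\widetilde{\textrm W}(N)^{1/3}}{\sigma_N}\right)^2\right],
\end{equation*}
a bound which survives the union bound over the $N^2$ edges because the hypothesis on $\sigma_N$ forces $(\widetilde{\textrm W}(N)^{1/3}/\sigma_N)^2\gg \log N$. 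For the finite variation term, the crude pointwise bound $|\mathbf r_{e'}(t)|\lesssim_\delta 1$ (valid throughout $[0,T_b]$ since each $\mathbf p_{e'}(t)$ is bounded away from $\{0,1\}$), combined with $\sum_{e'}\Cov_{\frak p(t)}(\omega_e,\omega_{e'})\lesssim_{\delta,\rho}\Xi(N)^{-1}$ and Lemma \ref{lem:window-inequalities}, yields
\begin{equation*}
    |A^e_t|\lesssim_{\delta,\rho}\sigma_N^2\Xi(N)^{-1}\lesssim_{\delta,\rho}\sigma_N^2\widetilde{\textrm W}(N)^{-2/3}\lesssim_{\delta,\rho}\frac{\widetilde{\textrm W}(N)^{4/3}}{\log N}\ll B(N),
\end{equation*}
uniformly in $e$ and $t\le\sigma_N^2\wedge T_b$.

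Intersecting the three good events then yields $|\mathbf r_e(t)|\le B(N)$ for all $e\in E_N$ and all $0\le t\le \sigma_N^2\wedge T_b$, with the claimed probability. The delicate point is that the bracket of $\calE_e$ weighs edge--edge covariances \emph{quadratically}, and so is governed by the $L^2$ sum $S=\sum r\Delta_{p_c}(r)^4$ rather than the $L^1$ sum $\Xi(N)^{-1}=\sum r\Delta_{p_c}(r)^2$; it is precisely this quartic sum that compensates the scaling and upgrades the naive exponent $\textrm W(N)/\sigma_N^2$ to the sharper $\widetilde{\textrm W}(N)^{2/3}/\sigma_N^2$ appearing in the lemma. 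A fortunate feature of the computation is that the finite-variation part $A^e_t$ turns out to be sub-leading, so no bootstrap on $\mathbf r_e$ is needed: the trivial $O(1)$ bound inherited from the stopping time $T_b$ already suffices to close the estimate.
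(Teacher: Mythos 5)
Your overall strategy — Taylor expansion of $\mathbf r_e$ around $(p_c,\tfrac12)$, Brownian control of $|\mathbf p_e(t)-p_c|$, and Itô decomposition of $\phi_{\bbT_N,\frak p(t)}[\omega_e]$ with the bracket governed by the quartic sum $S=\sum_{r\le N}r\Delta_{p_c}(r)^4$ — is the same as the paper's, and the martingale part is handled correctly. Where you genuinely diverge is in the finite-variation part: the paper keeps $\sup_{t,f}|\mathbf r_f(t)|$ abstract, writes the self-referential bound $\sup|\mathbf r|\lesssim \varepsilon B(N)+\Xi(N)^{-1}\sigma_N^2\sup|\mathbf r|$, and closes it with the absorption inequality $\Xi(N)^{-1}\sigma_N^2\le \tfrac12$; you instead apply the crude bound $|\mathbf r_f(t)|\lesssim_\delta 1$ (valid below $T_b$) and observe that the hypothesis on $\sigma_N$ already forces $\sigma_N^2\Xi(N)^{-1}\ll B(N)$, so no bootstrap is needed. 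That simplification is correct under the lemma's hypothesis and is a legitimate shortcut; the paper's Gronwall-style step buys you robustness under a weaker hypothesis on $\sigma_N$ (it only needs $\sigma_N^2\Xi(N)^{-1}\le\tfrac12$, not $\ll B(N)$), but here the hypothesis is strong enough that your direct bound goes through.

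The one genuine gap is the probability estimate for the Brownian-supremum event. You control $\sup_{e,\,t\le\sigma_N^2}|\mathbf p_e(t)-p_c|\lesssim\sigma_N\sqrt{\log N}$ with a fixed-$K$ reflection-principle bound, yielding a failure probability $O(N^{-10})$, and then assert that this is ``easily absorbed into the target estimate.'' It is not: under the hypothesis $\sigma_N\le c\widetilde{\textrm W}(N)\log(N)^{-1/2}$, one has $(\widetilde{\textrm W}(N)^{1/3}/\sigma_N)^2\gtrsim \widetilde{\textrm W}(N)^{-4/3}\log N$, so the target failure probability $\exp[-c(\widetilde{\textrm W}(N)^{1/3}/\sigma_N)^2]$ is superpolynomially small in $N$ whenever $\widetilde{\textrm W}(N)$ decays polynomially — strictly smaller than $N^{-10}$, so the $O(N^{-10})$ error cannot be dominated by it for any fixed constant $c$ in the statement. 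The fix is simple and is what the paper does: take the threshold directly at $\varepsilon B(N)$ rather than at $K\sigma_N\sqrt{\log N}$. Since $B(N)\ge\widetilde{\textrm W}(N)^{1/3}$, the reflection principle then gives a per-edge failure probability $\lesssim\exp[-c(B(N)/\sigma_N)^2]\le\exp[-c(\widetilde{\textrm W}(N)^{1/3}/\sigma_N)^2]$, and the $N^2$ factor from the union bound is absorbed precisely because $(\widetilde{\textrm W}(N)^{1/3}/\sigma_N)^2\gg\log N$. With this substitution the proof is complete.
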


\begin{proof}
For $0\leq t\le T_b$, all of the bonds $\mathbf p_e$ are bounded away from $0$ and $1$. Therefore, taking the expression of $\mathbf r_e(t)$, one may apply the triangular inequality to get for any $0 \leq t \leq T_b$,
\begin{equation*}
 \mathbf r_e(t)\lesssim \Big|\mathbf p_e(t)-p_c\Big|+ \Big|2\phi_{\bbT_N,\frak p(t)}[\omega_e]-1\Big|.
\end{equation*}
Define the event
\begin{equation*}
	G_1:= \Bigg \{\forall \, 0 \leq  t\le \sigma_N^2\wedge T_b, \: \Big|\mathbf p_e(t)-p_c\Big|\le \eps\cdot\textrm{W}(N)^{1/3}\Bigg(\sum_{r\le N}r\Delta_{p_c}(r)^4\Bigg)^{1/3} \Bigg\}.
\end{equation*}
The term $|\mathbf p_e(t)-p_c|$ is the absolute value of a Brownian motion with  a bounded drift started at $0$. Therefore, standard Brownian estimates ensure that for any $\eps>0$, there exists $c_1=c_1(\eps)>0$ such that for any $e\in \bbT_N$ one has
\begin{equation*}
	\mathbf{P}_N\big[ G_1 \big] \\ \geq 1-O_{\delta,\rho,\eps}\Bigg(\exp\Bigg[ -2c_1 \cdot \Bigg(\frac{\textrm{W}(N)^{1/3}\left(\sum_{r\le N}r\Delta_{p_c}(r)^4\right)^{1/3}}{\sigma_N}\Bigg)^{\!2} \,\Bigg] \Bigg).
\end{equation*}
Recalling that $ \widetilde{\textrm W}(N)\leq \textrm{W}(N)$ and $\sum_{r\le N}r\Delta_{p_c}(r)^4\ge 1$, one can take the union bound over edges $e\in E_N$ and gets
\begin{equation*}
	\mathbf{P}_N\big[ G_1 \big] \geq  1-O_{\delta,\rho,\eps}\Bigg(N^2\exp\Bigg[ -2c_1 \cdot \left(\frac{\widetilde{\textrm W}(N)^{1/3}}{\sigma_N}\right)^2 \Bigg] \Bigg) \geq 1-O_{\delta,\rho,\eps}\Bigg(\exp\Bigg[ -c_1 \cdot \left(\frac{\widetilde{\textrm W}(N)^{1/3}}{\sigma_N}\right)^2 \Bigg] \Bigg),
\end{equation*}
where the second inequality holds as $\sigma_N\le c\cdot \widetilde{\textrm W}(N)^{1/3}\log(N)^{-1/2}$ with $c>0$ small enough.
To complete the reasoning, let us estimate $|2\phi_{\bbT_N,\frak p(t)}[\omega_e]-1|$, by computing the stochastic derivative of the process $t\mapsto \phi_{\bbT_N,\frak p(t)}[\omega_e]$. We proceed again in several steps.

\paragraph{Step 0: Canonical decomposition of the semimartingale.} One can first decompose $\phi_{\bbT_N,\frak p(t)}[\omega_e]:=M^{(e)}_t + A^{(e)}_t$ where $M^{(e)}_t$ is a local martingale started at $0$ and $A^{(e)}_t$ is a bounded variation process started at $\frac 12$. Itô's formula gives
\begin{align*}
	\dd M^{(e)}_t&=\sum_{f\in E_N}\frac 1{\mathbf{p}_f(t)(1-\mathbf{p}_f(t))}\Cov_{\frak p(t)}(\omega_e,\omega_f)\dd B^{(f)}_t, \\
	\dd A^{(e)}_t &=  \sum_{f\in E_N}\frac{1}{\mathbf{p}_f(t)(1-\mathbf{p}_f(t))}\mathbf r_f(t)\Cov_{\frak p(t)}(\omega_e,\omega_f)\dd t.
\end{align*}

\paragraph{Step 1: Estimating the local martingale $M^{(e)}_t$.}

For any $0\leq t \leq T_b$, one may use the stability of covariances and RSW to get that for $0\leq t \leq T_b$, one has
\begin{equation*}
   \langle M^{(e)}\rangle_t\asymp t\sum_{r\le N} r(\Delta_{p_c}(r)^2)^2=\left(\sum_{r\le N}r\Delta_{p_c}(r)^4\right)t.
\end{equation*}
Using once again large deviation estimates for local martingales, one gets that for any $\eps>0$, there exist $c_1>0$ such that for each edge $e\in E_N$,
\begin{multline*}
    \mathbf{P}_N\left[ \forall\, 0\le t\le \sigma_N^2\wedge T_b, \: |M^{(e)}_t|\le \eps\cdot \textrm{W}(N)^{1/3}\Bigg(\sum_{r\le N}r\Delta_{p_c}(r)^4\Bigg)^{1/3} \right]\\
    \geq 1-O_{\delta,\rho,\eps}\Bigg( \exp \Bigg[ -c_1\cdot \Bigg( \frac{\textrm W(N)^{1/3}\big(\sum_{r\le N}r\Delta_{p_c}(r)^4\big)^{1/3}}{\big(\sum_{r\le N}r\Delta_{p_c}(r)^4\big)^{1/2}\sigma_N} \Bigg)^2 \Bigg]  \Bigg)
\end{multline*}
Defining the event 
\begin{equation*}
	G_2:= \Bigg\{ \forall \,0 \leq   t\le \sigma_N^2\wedge T_b,\forall\, e\in E_N, \: |M^{(e)}_t|\le \eps\cdot\textrm{W}(N)^{1/3}\Bigg(\sum_{r\le N}r\Delta_{p_c}(r)^4\Bigg)^{1/3} \Bigg\},
\end{equation*}
recalling the definition of $\widetilde{\textrm{W}}(N)$ and taking union bound over edges $e\in E_N$ one gets
\begin{equation*}
	\mathbf{P}_N \big[ G_2 \big] \geq  1-O_{\delta,\rho,\eps}\Bigg(\exp\Bigg[ -c_1 \cdot \left(\frac{\widetilde{\textrm W}(N)^{1/3}}{\sigma_N}\right)^2 \Bigg] \Bigg).
\end{equation*}

\paragraph{Step 2: Concluding the rate of growth of $\mathbf r_e(t)$.}
One can now estimate the finite variation process $A^{(e)}_t$ by integrating Itô's formula. For any $0\leq t \leq  \sigma_N^2\wedge T_b$, using that $A^\calR_0=\frac 12$, one has
\begin{align*}
    \left|A^{(e)}_t-\frac 12\right|&\lesssim \left(\sup_{0\leq s\le t}\sup_{f\in E_N} |\mathbf r_f(t)|\right)\left(\sum_{r\le N}r\Delta_{p_c}(r)^2\right)t \\
    &\lesssim  \left(\sup_{0\leq s\le (\sigma_N^2\wedge T_b)}\sup_{f\in E_N} |\mathbf r_f(t)|\right)\Xi(N)^{-1}\sigma_N^2,
\end{align*}
where the first inequality uses \eqref{eq:bound-away-0-1}, the comparability of the edge covariances with their critical counterparts (as $0\leq t \leq T_b$) and their sum over $\bbT_N$ for the critical measure recalled in Remark \ref{rem:sum-Cov-arms-edge}. Going back to the bound on $\mathbf{r}_e(t)$, one gets
\begin{align*}
    \sup_{t,e}|\mathbf r_e(t)|&\lesssim \sup_{t,e}|\mathbf p_e(t)-p_c|+\sup_{t,e}\left|\phi_{\frak p(t)}[\omega_e]-\frac 12\right| \\
    &\lesssim \sup_{t,e}|\mathbf p_e(t)-p_c|+\sup_{t,e}|M^{(e)}_t|+\sup_{t,e}\left|A^{(e)}_t-\frac 12\right|
\end{align*}
where the above supremum $\sup_{t,e}$ is taken over times $0\le t\le (\sigma_N^2\wedge T_b(\delta,\rho))$ and edges $e\in E_N$. On the event $G_1 \cap G_2$ one has
\begin{equation*}
    \sup_{t,e}|\mathbf r_e(t)|\lesssim \eps \cdot \textrm{W}(N)^{1/3}\Big(\sum_{r\le N}r\Delta_{p_c}(r)^4\Big)^{1/3}+\Xi(N)^{-1}\sigma_N^2\Big(\sup_{t,e}|\mathbf r_e(t)|\Big).
\end{equation*}
One can then apply Lemma \ref{lem:window-inequalities} which states that
\begin{equation*}
	\Xi(N)^{-1}\sigma_N^2\lesssim \Xi(N)^{-1}\frac{\widetilde{\textrm{W}}(N)^{2/3}}{\log(N)^{\frac{1}{2}}}\lesssim \frac 1{\log(N)^{\frac12}}.
\end{equation*}
For $N$ large enough, one can conclude that
\begin{equation*}
	 \sup_{t,e}|\mathbf r_e(t)|\leq O\Big(\eps\cdot\textrm{W}(N)^{1/3}\Big(\sum_{r\le N}r\Delta_{p_c}(r)^4\Big)^{1/3}\Big)+\frac 12\sup_{t,e}|\mathbf r_e(t)|.
\end{equation*}
Choosing $\eps>0$ small enough such that $1/(2\eps)$ is larger than the implicit constant in the $O$, one gets
\begin{equation*}
	 \sup_{t,e}|\mathbf r_e(t)|\le \textrm{W}(N)^{1/3}\Big(\sum_{r\le N}r\Delta_{p_c}(r)^4\Big)^{1/3}.
\end{equation*}
The estimates on the respective probabilities of $G_1$ and $G_2$ allow to conclude the proof.
\end{proof}
\subsubsection{Completing the proof of Proposition \ref{prop:L3-case}}
We are in position to prove Proposition \ref{prop:L3-case} in the spirit of the proofs made in Section \ref{sub:centred-gaussian}, sorting out natural counterparts to Lemmas \ref{lem:stability-RSW} and \ref{lem:stability-StabDelta}. We will once again show that with high $\mathbf{P}_N$-probability, the breaking time $T_b(\delta,\rho)$ is large enough, finding some $\delta,\rho$ such that
\begin{equation}
    \mathbf P\Big[T_b(\delta,\rho)<\sigma_N^2\Big]\lesssim\exp\Bigg[-c\left(\frac{\widetilde{\textrm{W}}(N)^{1/3}}{\sigma_N}\right)^2\Bigg].
\end{equation}
This reads on the following lemmas.
\begin{lemma}
\label{lem:stability-RSW-L3} Assume that $\delta$ is chosen small enough, only depending on $1< q\leq 4$. Then for any $\rho>0$, there are some positive constants $c=c(\delta,\rho,q)$ and $O_{\delta,\rho}(\cdot)$ such that, provided  $\sigma_N^2\le c\cdot \widetilde{\textrm{W}}(N)^{1/3}\log(N)^{-1}$, one has
\begin{equation*}
    \mathbf P_N \Bigg[ \forall \, 0\leq  s\leq \Big( \sigma_N^2\wedge T_b(\delta,\rho)\Big), \, \phi_{\bbT_N,\frak p(s)}\in \textrm{RSW}(2\delta,N) \Bigg]\geq 1-O_{\delta,\rho}\left(\exp\Bigg[-c\cdot \left(\frac{\widetilde{\textrm{W}}(N)^{1/3}}{\sigma_N}\right)^2\Bigg]\right).
\end{equation*}
The same result holds when replacing $\textrm{RSW}(2\delta,N)$ by the stability events $\textrm{Stab}_{\#}(2\delta,N)$ for $\#\in\{4,3^+,3^{-},3^{\textrm{rg}},3^{\textrm{lf}}\}$.
\end{lemma}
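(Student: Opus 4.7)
The plan is to repeat the four-step strategy of Lemma \ref{lem:stability-RSW} (semimartingale decomposition, control of the finite variation part, control of the martingale bracket, and large deviations), with the crucial improvement coming from Lemma \ref{lem:error-term-estimate}, which will drastically shrink the size of the finite variation part. Fix a $2$ by $1$ rectangle $\calR \subset \bbT_N$ of width $R \leq N/8$ and take $\calS = \scrC^0_\bullet(\calR)$; the dual crossing $\scrC^{\star 1}_\bullet(\calR)$ is treated identically, and the analogous statements for $\textrm{Stab}_{\#}(2\delta,N)$ follow by replacing $\calS$ by the corresponding arm events and tracking the extra pre-factor $\pi_{\#,p_c}(R)$ exactly as in the end of the proof of Lemma \ref{lem:stability-RSW}. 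Applying Itô's formula together with the derivative identities of Proposition \ref{prop:differentiate-bonds}, and absorbing the new $-\mu(q)t$ drift into the bounded variation part, one writes $\phi_{\bbT_N,\frak p(t)}[\calS] = M^\calR_t + A^\calR_t$ where
\begin{align*}
\dd M^\calR_t &= \sum_{e\in E_N}\frac{1}{\mathbf p_e(t)(1-\mathbf p_e(t))}\Cov_{\frak p(t)}(\calS,\omega_e)\,\dd B^{(e)}_t, \\
\dd A^\calR_t &= \sum_{e\in E_N}\frac{1}{\mathbf p_e(t)(1-\mathbf p_e(t))}\,\mathbf r_e(t)\,\Cov_{\frak p(t)}(\calS,\omega_e)\,\dd t,
\end{align*}
with $\mathbf r_e(t)$ given by \eqref{eq:def-r_e}.

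The decisive gain with respect to Section \ref{sec:naive-random-bonds} is that, on the good event $G$ of Lemma \ref{lem:error-term-estimate}, the pointwise size of $\mathbf r_e(t)$ is now bounded by $\textrm W(N)^{1/3}(\sum_{r\le N} r\Delta_{p_c}(r)^4)^{1/3}$ instead of the trivial $O(1)$ bound used before. On $G$ and for $0\le t \le \sigma_N^2 \wedge T_b(\delta,\rho)$, combining this with \eqref{eq:bound-away-0-1} and the critical sum estimate \eqref{eq:sum_cov_crossing_edge} (valid under $\textrm{RSW}(\delta,N)\cap \textrm{Stab}_\Delta(\rho,N)$ by Lemma \ref{lem:near-critical_estimates}) yields
\begin{equation*}
\left|\frac{\dd A^\calR_t}{\dd t}\right| \lesssim_{\delta,\rho} \sup_{e}|\mathbf r_e(t)| \cdot \textrm W(N)^{-1} \lesssim_{\delta,\rho} \textrm W(N)^{-2/3}\Big(\textstyle\sum_{r\le N} r\Delta_{p_c}(r)^4\Big)^{1/3} = \widetilde{\textrm W}(N)^{-2/3}.
\end{equation*}
Integrating in time gives $|A^\calR_t - A^\calR_0| \lesssim_{\delta,\rho} \sigma_N^2\, \widetilde{\textrm W}(N)^{-2/3}$, which, for $c$ small enough in the assumed upper bound on $\sigma_N^2$, is deterministically bounded by $\delta$ on the good event $G$.

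For the martingale part, the bracket is controlled exactly as in Step 2 of Lemma \ref{lem:stability-RSW}: invoking \eqref{eq:sum_cov_crossing_edge_squared} under the same stability assumptions, one obtains
\begin{equation*}
\langle M^\calR\rangle_t \lesssim_{\delta,\rho} \Xi(N)^{-1}\, t \qquad \text{for all } 0\le t \le \sigma_N^2 \wedge T_b(\delta,\rho).
\end{equation*}
Standard large deviation estimates for continuous local martingales, together with Lemma \ref{lem:window-inequalities} (which guarantees $\Xi(N) \gtrsim \widetilde{\textrm W}(N)^{2/3}$), then yield
\begin{equation*}
\mathbf P_N\!\left[\sup_{0\le t \le \sigma_N^2 \wedge T_b(\delta,\rho)} |M^\calR_t| > \delta\right] \lesssim_{\delta,\rho} \exp\!\left[-c\cdot \frac{\Xi(N)}{\sigma_N^2}\right] \lesssim_{\delta,\rho} \exp\!\left[-c\left(\frac{\widetilde{\textrm W}(N)^{1/3}}{\sigma_N}\right)^{\!2}\right].
\end{equation*}

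Taking the union bound over the $O(N^3)$ rectangles $\calR\subset \bbT_N$, both directions $\bullet\in\{h,v\}$, and primal/dual versions of the crossing event, then intersecting with the good event $G$ of Lemma \ref{lem:error-term-estimate}, one finds that outside a set of probability $O(\exp[-c(\widetilde{\textrm W}(N)^{1/3}/\sigma_N)^2])$, every $\phi_{\bbT_N,\frak p(s)}[\scrC^0_\bullet(\calR)]$ (and dual) stays within $2\delta$ of its critical value uniformly in $0\le s \le \sigma_N^2\wedge T_b(\delta,\rho)$. The polynomial factor $N^3$ from the union bound is absorbed by the exponent thanks to the $\log(N)^{-1}$ factor in the hypothesis on $\sigma_N^2$, exactly because $\widetilde{\textrm W}(N)^{2/3}/\sigma_N^2 \gtrsim \log N$. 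Provided $\delta$ is initially chosen small enough that $\phi_{\bbT_N,p_c}[\scrC^0_\bullet(\calR)] \ge 4\delta$ for every such $\calR$ (as in Step 3 of Lemma \ref{lem:stability-RSW}), this gives the claim for $\textrm{RSW}(2\delta,N)$. The stability of the arm events $\textrm{Stab}_\#(2\delta,N)$ is obtained by running the same argument with $\calS$ replaced by $\calA^{(e)}_\#(R)$ and replacing \eqref{eq:sum_cov_crossing_edge}--\eqref{eq:sum_cov_crossing_edge_squared} by \eqref{eq:sum_cov_arm}--\eqref{eq:sum_cov_arm_squared}, the additional $\pi_{\#,p_c}(R)$ prefactor cancelling against the normalisation of the arm event.

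The main obstacle, which is handled by Lemma \ref{lem:error-term-estimate}, is to prove that the drift correction $-\mu(q)t$ really cancels the leading constant part of $\mathbf r_e(t)$ \emph{uniformly over all edges and up to time $\sigma_N^2\wedge T_b$}. This is what allows the condition on $\sigma_N^2$ to be pushed from $\textrm W(N)$ to $\widetilde{\textrm W}(N)^{2/3}$ (up to logarithmic corrections); without it, $\mathbf r_e(t)$ would only be $O(1)$ and one would recover the weaker Lemma \ref{lem:stability-RSW}.
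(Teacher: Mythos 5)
Your proof follows the paper's own argument almost step for step: the same Itô semimartingale decomposition with the $-\mu(q)t$ drift folded into the bounded-variation part, the same use of Lemma~\ref{lem:error-term-estimate} to replace the trivial $O(1)$ bound on $\mathbf r_e(t)$ by the improved $\textrm W(N)^{1/3}\bigl(\sum_{r\le N}r\Delta_{p_c}(r)^4\bigr)^{1/3}$ bound, the same bracket estimate via \eqref{eq:sum_cov_crossing_edge_squared} together with Lemma~\ref{lem:window-inequalities}, and the same large-deviation plus union-bound conclusion, with the arm-event statements obtained by swapping in \eqref{eq:sum_cov_arm}--\eqref{eq:sum_cov_arm_squared}. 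This is exactly the paper's route.

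One point deserves a flag. You claim that $|A^\calR_t - A^\calR_0| \lesssim \sigma_N^2 \widetilde{\textrm W}(N)^{-2/3}$ is ``deterministically bounded by $\delta$'' for $c$ small enough under the stated hypothesis $\sigma_N^2 \le c\,\widetilde{\textrm W}(N)^{1/3}\log(N)^{-1}$. Plugging that in gives $\sigma_N^2\widetilde{\textrm W}(N)^{-2/3} \le c\,\widetilde{\textrm W}(N)^{-1/3}\log(N)^{-1}$, and since $\widetilde{\textrm W}(N)^{-1/3}$ grows polynomially while $\log(N)^{-1}$ only decays logarithmically, this quantity actually blows up in $N$ and cannot be made $\le\delta$ by shrinking $c$. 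The condition actually used inside the paper's proof of this lemma (and stated in Proposition~\ref{prop:L3-case}) is $\sigma_N \lesssim \widetilde{\textrm W}(N)^{1/3}\log(N)^{-1/2}$, i.e.\ $\sigma_N^2 \lesssim \widetilde{\textrm W}(N)^{2/3}\log(N)^{-1}$, which gives $\sigma_N^2\widetilde{\textrm W}(N)^{-2/3}\lesssim\log(N)^{-1}$, a quantity that does go to zero. The exponent mismatch in the lemma's hypothesis is a typo in the paper, so you have inherited it rather than created it, but as written your step ``is deterministically bounded by $\delta$'' does not follow from the hypothesis you invoke; you should either use the square-root version of the condition or derive the $\lesssim\log(N)^{-1}$ bound as the paper does and argue that it eventually falls below $\delta$.
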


\begin{lemma}
\label{lem:stability-StabDelta-L3}
Assume that $\delta$ is chosen small enough, only depending on $1< q\leq 4$. 
Then there exists a constant $\rho=\rho(\delta)>0$ small enough and some positive constants $c=c(\delta,\rho(\delta),q)>0$ and $O_{\delta,\rho}(\cdot)$ such that provided $\sigma_N^2\le c\cdot \widetilde{\textrm{W}}(N)^{1/3}\log(N)^{-1}$,
 \begin{equation*}
     \mathbf P_N \Bigg[ \forall \, 0\leq  s\leq \Big( \sigma_N^2\wedge T_b(\delta,\rho)\Big),\, \phi_{\bbT_N,\frak p(s)}\in \textrm{Stab}_{\Delta}(2\rho,N) \Bigg] \geq 1-O_{\delta,\rho}\left(\exp\Bigg[-c\cdot \left(\frac{\widetilde{\textrm{W}}(N)^{1/3}}{\sigma_N}\right)^2\Bigg]\right).\end{equation*}
\end{lemma}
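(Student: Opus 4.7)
The plan is to follow the four-step scheme of Lemma \ref{lem:stability-StabDelta}, adapted to the drifted process $\frak p(t)=(p_c+B^{(e)}_t-\mu(q)t)_{e\in E_N}$, with the crucial improvement coming from the bound on the drift $\mathbf r_e(t)$ supplied by Lemma \ref{lem:error-term-estimate}. Fix two edges $e,f\in E_N$ with $d(e,f)=R$. Applying Itô's formula to $t\mapsto \Cov_{\frak p(t)}(\omega_e,\omega_f)$ using the derivatives of Proposition \ref{prop:differentiate-bonds}, and regrouping the $-\mu(q)\dd t$ terms as in the introduction of $\mathbf r_e(t)$ in Section \ref{sub:Gaussian-case-L3}, one obtains the canonical decomposition $\Cov_{\frak p(t)}(\omega_e,\omega_f)=\Cov_{p_c}(\omega_e,\omega_f)+M^{e,f}_t+A^{e,f}_t$, where
\begin{equation*}
\dd M^{e,f}_t=\sum_{g\in E_N}\frac{\kappa_3^{\frak p(t)}(e,f,g)}{\mathbf p_g(t)(1-\mathbf p_g(t))}\dd B^{(g)}_t,
\end{equation*}
and $\dd A^{e,f}_t$ consists of a sum involving $\kappa_3^{\frak p(t)}(e,f,g)\mathbf r_g(t)$ plus a lower order term built from $\Cov_{\frak p(t)}(\omega_e,\omega_g)\Cov_{\frak p(t)}(\omega_f,\omega_g)$.

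The key improvement over Lemma \ref{lem:stability-StabDelta} lies in estimating $A^{e,f}_t$. On the event supplied by Lemma \ref{lem:error-term-estimate} (which has the desired $\mathbf P_N$-probability), one has for $0\le t\le\sigma_N^2\wedge T_b(\delta,\rho)$ the uniform bound $|\mathbf r_g(t)|\lesssim \textrm W(N)^{1/3}(\sum_{r\le N}r\Delta_{p_c}(r)^4)^{1/3}$. Combining this with \eqref{eq:sum_kappa3} and the algebraic identity $\textrm W(N)^{-2/3}(\sum_{r\le N}r\Delta_{p_c}(r)^4)^{1/3}=\widetilde{\textrm W}(N)^{-2/3}$ yields
\begin{equation*}
\left|\sum_{g\in E_N}\frac{\kappa_3^{\frak p(t)}(e,f,g)\mathbf r_g(t)}{\mathbf p_g(t)(1-\mathbf p_g(t))}\right|\lesssim_{\delta,\rho}\Delta_{p_c}(R)^2\,\widetilde{\textrm W}(N)^{-2/3}.
\end{equation*}
By \eqref{eq:sum_cov_g} and Lemma \ref{lem:window-inequalities} (which forces $\Xi(N)^{-1}\lesssim\widetilde{\textrm W}(N)^{-2/3}$), the same bound controls the second contribution to $\dd A^{e,f}_t/\dd t$. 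Integrating in time then gives $|A^{e,f}_t|\lesssim_{\delta,\rho}\Delta_{p_c}(R)^2\,\widetilde{\textrm W}(N)^{-2/3}\sigma_N^2$, which is $O(\Delta_{p_c}(R)^2)$ with arbitrarily small implicit constant provided $\sigma_N^2\le c\cdot\widetilde{\textrm W}(N)^{2/3}\log(N)^{-1}$ with $c$ small enough.

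The control of the local martingale part is identical to Step 2 of Lemma \ref{lem:stability-StabDelta}: \eqref{eq:sum_kappa3_squared} gives $\langle M^{e,f}\rangle_t\lesssim_{\delta,\rho}\Delta_{p_c}(R)^4\Xi(N)^{-1}\sigma_N^2$, and Lemma \ref{lem:window-inequalities} upgrades this to $\Delta_{p_c}(R)^4 N^{-c}\widetilde{\textrm W}(N)^{-2/3}\sigma_N^2$ for some $c>0$. A standard large deviation bound for local martingales, together with a union bound over the $O(N^4)$ pairs of edges, then yields that $\sup_{t,e,f}|\Cov_{\frak p(t)}(\omega_e,\omega_f)/\Cov_{p_c}(\omega_e,\omega_f)-1|\le\eps$ with probability at least $1-O(\exp[-c(\widetilde{\textrm W}(N)^{1/3}/\sigma_N)^2])$, for any fixed $\eps>0$ one chooses (adjusting $c$). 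Choosing $\eps$ small enough depending only on $\delta$, one concludes exactly as in Step 4 of Lemma \ref{lem:stability-StabDelta}, by picking a third edge $g$ near the midpoint of $e,f$ and applying the RSW-based reconstruction formula $\Delta^{(e)}_{\frak p(t)}(R)\asymp_\delta\sqrt{\Cov(\omega_e,\omega_f)\Cov(\omega_e,\omega_g)/\Cov(\omega_f,\omega_g)}$; calibrating $\rho=\rho(\delta)$ as half the implicit constant in this $\asymp_\delta$ shows that $\phi_{\bbT_N,\frak p(t)}\in\textrm{Stab}_\Delta(2\rho,N)$ on the desired event.

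The main obstacle is bookkeeping the interplay between the RSW parameter $\delta$ and the mixing-rate stability parameter $\rho$: the reconstruction of $\Delta^{(e)}_{\frak p(t)}(R)$ from three covariances only uses $\textrm{RSW}(\delta,N)$ (not $\textrm{Stab}_\Delta$), so $\rho$ must be chosen \emph{after} $\delta$, and one must check that the probabilistic events controlling $A^{e,f}$ and $M^{e,f}$ remain of the claimed order when the implicit constants are allowed to depend on the already-fixed $\delta$. A secondary point is that Lemma \ref{lem:error-term-estimate} is invoked \emph{simultaneously} at all edges up to time $\sigma_N^2\wedge T_b(\delta,\rho)$, so one must absorb the union bound over $e\in E_N$ directly into its statement (which it indeed provides) rather than applying it edge by edge.
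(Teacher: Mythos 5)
Your proposal is correct and follows essentially the same route as the paper's proof: the same Itô decomposition, the same reliance on Lemma \ref{lem:error-term-estimate} to control the drift terms, the same use of \eqref{eq:sum_kappa3} and \eqref{eq:sum_kappa3_squared}, and the same Step-4 reconstruction of mixing rates from triples of covariances with $\rho=\rho(\delta)$ fixed after $\delta$. One small overclaim: Lemma \ref{lem:window-inequalities} gives $\Xi(N)^{-1}\lesssim\widetilde{\textrm W}(N)^{-2/3}$ but not $\Xi(N)^{-1}\lesssim N^{-c}\widetilde{\textrm W}(N)^{-2/3}$ (the polynomial gap in that lemma sits between $\textrm W(N)$ and $\widetilde{\textrm W}(N)^{2/3}$, not between $\widetilde{\textrm W}(N)^{2/3}$ and $\Xi(N)$), so the extra $N^{-c}$ in your bound $\langle M^{e,f}\rangle_t\lesssim\Delta_{p_c}(R)^4 N^{-c}\widetilde{\textrm W}(N)^{-2/3}\sigma_N^2$ is unjustified; it is also unnecessary, since the $\log(N)^{-1}$ factor in the hypothesis $\sigma_N^2\lesssim\widetilde{\textrm W}(N)^{2/3}\log(N)^{-1}$ already absorbs the $O(N^4)$ union bound.
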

We now pass to the proof of the those two lemmas, skipping the details regarding the potential degeneracies between $\delta$ and $\rho(\delta)$, as they can be tracked exactly as in the proof of Lemma \ref{lem:stability-StabDelta}. In particular, to lighten the reading, we also remove the $\delta,\rho$ dependencies in the constants $\lesssim $ and $\asymp $. We keep exactly the structure of the previous proofs, and mostly focus on evaluating how the introduction of the drift lowers the order of magnitude of the finite variation process.
\begin{proof}[Proof of Lemma \ref{lem:stability-RSW-L3}.]
Let us start by proving the announced statement for $\scrC^0_{\bullet}(\calR)$ when $\bullet\in\{h,v\}$. A similar reasoning allows to conclude once again for arm events, in the spirit of the arguments presented at the end of Lemma \ref{lem:stability-RSW}.

\paragraph{Step 0: Canonical decomposition of the semimartingale.}

Fix again a $2$ by $1$ rectangle $\calR\in \bbT_N$ of width smaller than $N/8$ and consider the event $\mathcal{S}=\scrC^0_{\bullet}(\calR)$. We may apply Itô's formula to the Brownian motions with drifts $\big( (B^{(e)}_t-\mu(q)t)_{t\geq 0}\big)_{e\in E_N}$ to get
\begin{align*}
    \dd\phi_{\bbT_N,\frak p(t)}[\mathcal{S}]&= \sum_{e\in E_N}\left(\frac{\partial}{\partial p_e}\bigg\rvert_{\underline p = \frak p(t)}\phi_{\bbT_N,\underline p}[\mathcal{S}]\right)\dd B^{(e)}_t -\mu(q) \sum_{e\in E_N}\left(\frac{\partial}{\partial p_e}\bigg\rvert_{\underline p = \frak p(t)}\phi_{\bbT_N,\underline p}[\mathcal{S}]\right)\dd t\\
    & \quad +\frac12\Bigg(\sum_{e\in E_N}\left(\frac{\partial^2}{\partial p_e^2}\bigg\rvert_{\underline p = \frak p(t)}\phi_{\bbT_N,\underline p}[\mathcal{S}]\right)\Bigg)\dd t,\\
	 &=\dd M_{t}^{\calR} + \dd A^\calR_t,
\end{align*}
where $M^\calR$ is a local martingale started at $0$ and $A^\calR$ is a finite variation process. Using Proposition \ref{prop:differentiate-bonds}, the associated explicit expressions read as
\begin{align*}
	\dd M^{\calR}_t&=\sum_{e\in E_N}\frac 1{\mathbf{p}_e(t)(1-\mathbf{p}_e(t))}\Cov_{\frak p(t)}(\scrC^0_{\bullet}(\calR),\omega_e)\dd B^{(e)}_t \\
	\dd A^\calR_t &=  \sum_{e\in E_N}\frac{1}{\mathbf{p}_e(t)(1-\mathbf{p}_e(t))}\mathbf r_e(t)\Cov_{\frak p(t)}(\scrC^0_{\bullet}(\calR),\omega_e)\dd t.
\end{align*}
We evaluate again the order of magnitude of $M^{\calR}_t $ and $A^{\calR}_t$ for times $0\leq t \leq T_b$ using the results of Section \ref{sub:near-critical_geometric_estimates}.

\paragraph{Step 1: Evaluating the order of magnitude of $A^\calR_t$.}

In this first step, we assume that the event $G_1\cap G_2$ of Lemma \ref{lem:error-term-estimate} holds, which happens with high probability. In that setup, for any $e \in E_N$ and any $0\leq t\le \sigma_N^2\wedge T_b$ one has $|\mathbf{r}_e(t)|\lesssim \textrm{W}(N)^{1/3}\left(\sum_{r\le N}r\Delta_{p_c}(r)^4\right)^{1/3}$. This ensures that for $0\leq t\le \sigma_N^2\wedge T_b$ one has
\begin{equation*}
	   \left|\frac{\dd A^\calR_t}{\dd t}\right|\lesssim \textrm{W}(N)^{1/3}\Big(\sum_{r\le N}r\Delta_{p_c}(r)^4\Big)^{1/3}\sum_{e\in E}\Cov_{\frak p(t)}(\scrC^0_{\bullet}(\calR),\omega_e)\lesssim \widetilde{\textrm W}(N)^{-2/3},
    \end{equation*}
    where we used Lemma \ref{lem:near-critical_estimates} to apply \eqref{eq:sum_cov_crossing_edge} and the exact expression of $\widetilde{\textrm W}(N)$. Integrating the previous equation, still intersecting with the event $G_1\cap G_2$ of Lemma \ref{lem:error-term-estimate}, one has for $0\leq t\le \sigma_N^2\wedge T_b$ and  $\sigma_N\lesssim \widetilde{\textrm W}(N)^{1/3}\log(N)^{-1/2}$ that
\begin{equation*}
	|A^\calR_t-A^\calR_0|\lesssim \widetilde{\textrm W}(N)^{-2/3}\cdot t \lesssim \log(N)^{-1}.
\end{equation*}

\paragraph{Step 2: Evaluating the order of magnitude of $M^\calR_t$.} The estimates here are exactly the same as in the proof of Lemma \ref{lem:stability-RSW}, and give that for all $0\leq t\le \sigma_N^2\wedge T_b$,
\begin{equation*}
	\frac{d}{\dd t}\langle M^\calR\rangle_t \lesssim \Xi(N)^{-1}.
\end{equation*}
Applying Lemma \ref{lem:window-inequalities} to get $\widetilde{\textrm W}(N)$ to appear and integrating gives that
\begin{equation*}
 \forall \, 0 \leq t\le \sigma_N^2\wedge T_b,\quad \langle M^\calR\rangle_t\lesssim \left(\frac{\sigma_N}{\widetilde{\textrm W}(N)^{1/3}}\right)^2.
\end{equation*}
Once the bounds of Step $1$ and $2$ on the two components of $\phi_{\bbT_N,\frak p(t)}[\scrC^0_{\bullet}(\calR)]$, one can apply verbatim the same large deviation principle, asserting that if $T_b$ is too small, then at least one of the local martingales associated to a crossing deviated from its initial value too fast compared to what its bracket typically allows, which happens with very small probability. Finally, as in the proof of Lemma \ref{lem:stability-RSW}, the previous proof adapts exactly to the case of stability of arm events.
\end{proof}
We now move to the proof of Lemma \ref{lem:stability-StabDelta-L3}, inspired by the one of Lemma \ref{lem:stability-StabDelta}.
\begin{proof}[Proof of Lemma \ref{lem:stability-StabDelta-L3}]
Once again, one should in principle specify all the $\delta,\rho$ dependencies in the constants of the proof, but one can replicate the reasoning at the end of the proof of Lemma \ref{lem:stability-StabDelta} to show that one can always choose $ \rho=\rho(\delta)>0$ small enough provided $\delta >0 $ and still run all the arguments.
\paragraph{Step 0: Canonical decomposition of the semimartingale.} Fix two edges $e,f\in E$ at distance $R$ from each other. Applying Itô's formula gives $\Cov_{\frak p(t)}(\omega_e,\omega_f)=M^{e,f}_t+A^{e,f}_t$ where $M^{e,f}$ is a local martingale started at $0$ and $A^{e,f}$ is an adapted process with bounded variation, whose stochastic derivatives are given by
\begin{align*}
    \dd M^{e,f}_t &= \sum_{g\in E_N}\frac1{\mathbf p_g(t)(1-\mathbf p_g(t))}\kappa^{\frak p(t)}_3(e,f,g)\dd B^{(g)}_t,\\
    \frac{\dd}{\dd t}A^{e,f}_t &=\sum_{g\in E_N}\frac1{\mathbf p_g(t)(1-\mathbf p_g(t))}\mathbf r_g(t)\kappa^{\frak p(t)}_3(e,f,g) \\
    &\quad -\sum_{g\in E_N}\frac1{\mathbf p_g(t)^2(1-\mathbf p_g(t))^2}\Cov_{\frak p(t)}(\omega_e,\omega_g)\Cov_{\frak p(t)}(\omega_f,\omega_g).
\end{align*}

\paragraph{Step 1: Evaluating the order of magnitude of $A^{e,f}_t$.}

Once again we assume that the high probability event $G_1\cap G_2$ of Lemma \ref{lem:error-term-estimate} holds, allowing us to assume that for any $g\in E_N $ one has $|\mathbf r_g(t)|\lesssim \textrm W(N)^{1/3}\left(\sum_{r\le N}r\Delta_{p_c}(r)^4\right)^{1/3}$. We can then run the computations as in the proof of Lemma \ref{lem:stability-StabDelta}, while adding the \emph{improved} bound on the $\mathbf r_g(t)$. One gets for $0\leq t\le \sigma_N^2\wedge T_b$,
\begin{align*}
	\left|\frac{\dd}{\dd t}A^{e,f}_t\right|&\lesssim \textrm W(N)^{1/3}\Big(\sum_{r\le N}r\Delta_{p_c}(r)^4\Big)^{1/3}\Bigg(\sum_{g\in E_N}\kappa^{\frak p(t)}_3(e,f,g)\Bigg)\\
	& \hspace{3cm} + \sum_{g\in E}\Cov_{\frak p(t)}(\omega_e,\omega_g)\Cov_{\frak p(t)}(\omega_f,\omega_g)\\
	& \lesssim \textrm W(N)^{1/3}\Big(\sum_{r\le N}r\Delta_{p_c}(r)^4\Big)^{1/3} \Delta_{p_c}(R)^2 \textrm{W}(N)^{-1}+\Delta_{p_c}(R)^2\cdot\Xi(N)^{-1}\\
	&\lesssim \Delta_{p_c}(R)^2\cdot\widetilde{\textrm W}(N)^{-2/3}.
\end{align*}
Passing to the last line one uses the definition of $\widetilde{\textrm W}(N)$ and Lemma \ref{lem:window-inequalities}. Integrating the previous bound on the event $G_1\cap G_2$ of Lemma \ref{lem:error-term-estimate} ensures that provided $0\leq t\le \sigma_N^2 \wedge T_b$, one has
\begin{equation*}
	|A^{e,f}_t-A^{e,f}_0|\lesssim \Delta_{p_c}(R)^2\log(N)^{-1}.
\end{equation*}

\paragraph{Step 2: Evaluating the order of magnitude of $M^{e,f}_t$.}
One can now compute the bracket of the local martingale $M^{e,f}$, which is given by
\begin{equation*}
    \frac{\dd}{\dd t}\langle M^{e,f}\rangle_t = \sum_{g\in E}\frac1{\mathbf p_g(t)^2(1-\mathbf p_g(t))^2}\kappa^{\frak p(t)}_3(e,f,g)^2.
\end{equation*}
Once again, we can use \eqref{eq:sum_kappa3_squared} to get
\begin{equation*}
	\forall \, 0 \leq  t\le \sigma_N^2\wedge T_b,\quad \frac{\dd}{\dd t}\langle M^{e,f}\rangle_t \lesssim \Delta_{p_c}(R)^4\cdot\Xi(N)^{-1}.
\end{equation*}
Using Lemma \ref{lem:window-inequalities} to make $\widetilde{\textrm W}(N)$ appear, and integrating in $t$ gives
\begin{equation*}
    \forall \, 0 \leq  t\le \sigma_N^2\wedge T_b,\quad\langle M^{e,f}\rangle_t \lesssim \Delta_{p_c}(R)^4\left(\frac{\sigma_N}{\widetilde{\textrm{W}}(N)^{1/3}}\right)^2
\end{equation*}
Once the bounds of Step $1$ and $2$ on the two components of $\Cov_{\frak p(t)}(\omega_e,\omega_f)$, one can apply again the same large deviation principle, asserting that if $T_b$ is too small, again some local martingale deviated too fast compared to what its bracket allows, which concludes the proof.
\end{proof}

\subsection{General random variables satisfying $(\star)^{\textmd B}_N$}\label{sub:general-variables-L3}

Once the special case of Gaussian variables with natural centring around $p_c(q)$ is settled, one can prove Theorem \ref{thm:extension_scaling_window_drift} in the general case, in the spirit of proof made in Section \ref{sub:general-variables-L2}. We start by fixing some random environment $\frak p$ over $\bbT_N$ that satisfies $(\star)^{\text B}_N$, where the standard deviations also satisfy $\Sigma_{\frak p}\le c\cdot \widetilde {\textrm W}(N)^{1/3}\log(N)^{-2}$ for some small enough constant $c>0$. The strategy of the proof can be summarised as follows. We still wish to construct a continuous family of random environment $\frak{p}(t)$, starting from the critical one and traveling towards $\frak{p}$. We will this time generalise \eqref{eq:def-continuous-process-Skorokhod}, setting 
\begin{equation*}
    \mathbf p_e(t)=p_c+B^{(e)}_{t\wedge T^{(e)}}-\mu^{(e)}\cdot(t\wedge T^{(e)}),
\end{equation*}
where $\big((B^{(e)}_t)_{t\geq 0}\big)_{e\in E_N}$ are i.i.d\ standard brownian motions, the variables $(T^{(e)})_{e\in E_N}$ are some almost-surely finite stopping times and the \emph{deterministic} constants $\mu^{(e)}$ will be cleverly chosen (via $\frak{p}$) to that $\mathbf p_e(\infty)=\mathbf{p}_e$. In this formalism, Section \ref{sub:Gaussian-case-L3} can be viewed as a special case where the spotting times $(T^{(e)})_{e\in E_N}$ are all equal to some deterministic $T>0$. Before diving into the construction of the variables $(T^{(e)})_{e\in E_N}$, le us run some preliminary computations. 
We claim that, for each edge $e\in E_N$, the exist $\mu^{(e)}>0$ such that
\begin{equation*}
    \bbE[e^{2\mu^{(e)}\cdot(\mathbf{p}_e-p_c(q))}]=1.
\end{equation*}
Let us first prove the existence of $\mu^{(e)}$ and derive its expansion. 
Fix $e\in E_N$ and consider the function $\mu\in \bbR_{\ge 0}\mapsto \bbE[e^{2\mu\cdot(\mathbf{p}_e-p_c(q))}]$. The function is equal to $1$ for $\mu=0$, has a derivative $2\bbE[\mathbf{p}_e-p_c(q)]<0$ at $0$, is convex, and tends to $+\infty$ as $\mu \to +\infty$. Therefore, there exist a unique $\mu^{(e)}>0$ such that $\bbE[e^{2\mu^{(e)}\cdot(\mathbf{p}_e-p_c(q))}]=1$. One can use the exponential tail bonds on $|\mathbf{p}_e-p_c(q)|$, which allows us to expand for $\mu\lesssim 1$,
\begin{equation*}
    \bbE[e^{2\mu(\mathbf{p}_e-p_c(q))}]=1+2\mu\bbE[\mathbf{p}_e-p_c(q)]+2\mu^2\bbE[(\mathbf{p}_e-p_c(q))^2]+O(\mu^3\sigma_e^3)
\end{equation*}
Notice that $2\mu\bbE[\mathbf{p}_e-p_c(q)]\asymp-\mu\sigma_e^2$ while $2\mu^2\bbE[(\mathbf{p}_e-p_c(q))^2]\asymp \mu^2\sigma_e^2$. Since $\sigma_e\to 0 $ uniformly in edges $e\in \bbT_N$ as $N\to \infty$, the asymptotics give that if $\mu$ is large enough (but still of constant order),then $\bbE[e^{2\mu\cdot(\mathbf{p}_e-p_c(q))}]>1$, and hence $\mu^{(e)}\lesssim 1$. Plugging the definition of $\mu^{(e)}$ in the above expansion one gets
\begin{equation*}
    1=\bbE[e^{2\mu^{(e)}\cdot(\mathbf{p}_e-p_c(q))}]=1+2\mu^{(e)}\bbE[\mathbf{p}_e-p_c(q)]+2(\mu^{(e)})^2\bbE[(\mathbf{p}_e-p_c(q))^2]+O((\mu^{(e)})^3\sigma_e^3)
\end{equation*}
which implies that
\begin{equation*}
    -\mu^{(e)}\mu(q)\sigma_e^2+(\mu^{(e)})^2\sigma_e^2=O((\mu^{(e)})^3\sigma_e^3)
\end{equation*}
and finally (dividing by $\mu^{(e)} \lesssim 1$) ensures that
\begin{equation}
\label{eq:mu-estimate}
    \mu^{(e)}=\mu(q)+O(\sigma_e).
\end{equation}
Define the time parametrised random environment 
\begin{equation*}
    \mathbf p_e(t):=p_c(q) + B_{t\wedge T^{(e)}}^{(e)}-\mu^{(e)}\cdot(t\wedge T^{(e)}),
\end{equation*}
where the stopping time $T^{(e)}$ will be constructed below to satisfy $\mathbf{p}_e=\mathbf p_e(\infty)$. One can now prove Theorem \ref{thm:extension_scaling_window_drift} exactly as we did for Proposition \ref{prop:L3-case}, modulo the ability to derive some extra estimates on the tails of the collection $(T^{(e)})_{e\in E_N}$. We construct explicitly the family $(T^{(e)})_{e\in E_N}$, which allows us to control their tails via the control of the tails of the variables $\frak{p} $, with the minor downside that our stopping times may now be infinite with positive probability. The result we obtain is the following.  
\begin{lemma}
\label{lem:skorokhod_drift_tails}
    Let $X$ be some real valued random variable such that $|X|\le 1$ and $\mu$ such that $\bbE[e^{2\mu X}]=1$. We also assume that $X$ has a tail
 \begin{equation*}
        \forall t>0,\quad\bbP[X>t]\lesssim \exp\Big[ -c\cdot(t/\sigma)^{\gamma}\Big],
 \end{equation*}
 for some positive constants $\sigma\le 1$ and $\gamma$. Let $(B_t)_{t\geq 0}$ be a standard Brownian motion for the filtration $\mathcal{F}_t$. Then there exists a stopping time $T$ for $(B_t)_{t\geq 0}$ such that:
    \begin{itemize}
        \item One may couple the pair $\big((B_t)_{t\geq 0},T\big)$ and $X$ such that, on the event $\{T<\infty\}$, one has 
         $B_T-\mu T \overset{(d)}{=} X$. 
        \item The exists positive constants $\lesssim,\tilde c$, only depending on $c,\gamma$ and the fact that $\mu$ is uniformly bounded away from $0$ and $\pm\infty$,  such that for all $t>0$
        \begin{equation*}
            \bbP\Big[T>t\Big]\lesssim \exp\Big[-\tilde c \cdot \sigma^{-\frac{\gamma}{\gamma+1}}\Big]+\exp\Big[-\tilde c \Big(\frac{t}{\sigma^2}\Big)^{\frac{\gamma}{\gamma+2}}\Big].
        \end{equation*}
    \end{itemize}
\end{lemma}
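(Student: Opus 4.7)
The plan is to reduce the embedding with drift to a standard (driftless) Skorokhod embedding via the natural scale function of the drifted Brownian motion $Y_t := B_t - \mu t$. Setting $s(y) := (e^{2\mu y}-1)/(2\mu)$, the process $M_t := s(Y_t)$ is a local martingale with $\dd\langle M\rangle_t = e^{4\mu Y_t}\dd t$, and the hypothesis $\bbE[e^{2\mu X}] = 1$ translates exactly to $\bbE[s(X)] = 0$. Since $|X|\le 1$ and $|\mu|\lesssim 1$, $s(X)$ is bounded and inherits the stretched exponential tail of $X$ with standard deviation comparable to $\sigma$.

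Next I would construct the stopping time. Assume without loss of generality that $\mu > 0$ (the case $\mu < 0$ is symmetric by exchanging $X\leftrightarrow -X$), so that $V := \langle M\rangle_\infty = \int_0^\infty e^{4\mu Y_u}\dd u$ is almost surely finite. By the Dambis--Dubins--Schwartz representation, enlarging the space if necessary, there exists a standard Brownian motion $\beta$, extended independently beyond $V$, such that $M_t = \beta_{\langle M\rangle_t}$ for every $t\ge 0$; in particular, $V$ coincides with the first hitting time of the absorbing level $-1/(2\mu)$ by $\beta$. Applying Lemma \ref{lem:skorohod_tails} combined with Brownian scaling to the centered bounded variable $s(X)$, and performing the underlying embedding in the Chacon--Walsh fashion using only intervals contained in the bounded support $[s(-1),s(1)]\subset (-1/(2\mu),\infty)$ of the target law, one obtains a stopping time $S$ for $\beta$ with $\beta_S\overset{(d)}{=}s(X)$ and $S\le V$ almost surely (since $\beta$ stays above $-1/(2\mu)$ up to time $S$), together with the tail estimate $\bbP[S>u]\lesssim \exp[-c\,(u/\sigma^2)^{\gamma/(\gamma+2)}]$. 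Setting $T := \inf\{t\ge 0 : \langle M\rangle_t \ge S\}$ then yields $B_T - \mu T = Y_T = s^{-1}(\beta_S)\overset{(d)}{=}X$ on $\{T<\infty\}$.

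For the tail bound, the key identity is $\{T > t\} = \{\langle M\rangle_t < S\}$, so that for any threshold $s_0$,
\[
\bbP[T>t]\le \bbP[S>s_0] + \bbP[\langle M\rangle_t < s_0].
\]
The first term is controlled directly by the tail of $S$. The second combines two inputs: Dufresne's identity $V\overset{(d)}{=}1/(8\mu^2 Z_{1/2})$ with $Z_{1/2}\sim\Gamma(1/2)$, which yields $\bbP[V<s_0]\lesssim \exp(-c/s_0)$, and the decomposition $V - \langle M\rangle_t = e^{4\mu Y_t}\widetilde V$ (where $\widetilde V$ is an independent copy of $V$ by the strong Markov property) together with Gaussian tails of $Y_t$, which quantifies how rapidly $\langle M\rangle_t$ approaches $V$. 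Balancing the two tails at $s_0 = \sigma^{\gamma/(\gamma+1)}$ produces the $t$-independent contribution $\exp[-\tilde c\,\sigma^{-\gamma/(\gamma+1)}]$, which in particular bounds $\bbP[T=\infty]$, while choosing $s_0$ proportional to $t$ (together with the small-$t$ concentration $\langle M\rangle_t \asymp t$) yields the second term $\exp[-\tilde c\,(t/\sigma^2)^{\gamma/(\gamma+2)}]$.

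The main obstacle will be controlling $\bbP[\langle M\rangle_t < s_0]$ uniformly across time scales: for $t\ll 1/\mu^2$ the Gaussian concentration of $\langle M\rangle_t$ around $t$ suffices, and for $t\gg 1/\mu^2$ the saturation $\langle M\rangle_t\to V$ combined with Dufresne's estimate does, but the intermediate regime $t\asymp 1/\mu^2$ requires a dyadic decomposition of $\langle M\rangle_t$ along successive time scales and an exploitation of the scaling properties of exponential functionals of Brownian motion in order to absorb both regimes into a single stretched exponential of the claimed form.
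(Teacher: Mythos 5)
Your overall architecture matches the paper's: both pass through the scale function $y\mapsto e^{2\mu y}-1$ (you add a normalizing factor $1/(2\mu)$, which is immaterial) to turn the drifted process into a local martingale, apply Dambis--Dubins--Schwarz, embed the transformed target into the time-changed Brownian motion, and pull the stopping time back. You also correctly identify the key identity $\{T>t\}=\{\langle M\rangle_t<S\}$ — this is in essence the same identity the paper exploits. However, two points in your proposal are genuine gaps rather than features.

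First, you invoke Lemma~\ref{lem:skorohod_tails} for the tail of $S$ but then say the embedding should be ``performed in the Chacon--Walsh fashion'' so that all balayage intervals sit inside $[s(-1),s(1)]$ and $S\le V$ almost surely. Lemma~\ref{lem:skorohod_tails} is proved \emph{for the Vallois embedding}; it is not a black box that transfers to Chacon--Walsh. If you want $S\le V$ by confining the construction, you must re-derive the tail estimate for whichever confined construction you use (alternatively, you must argue that the Vallois stopping time for a target supported in $[s(-1),s(1)]$ is itself bounded by the exit time of that interval — plausible, given the monotonicity of $a(\ell),b(\ell)$, but not stated). In any case, ensuring $S\le V$ is more than the lemma requires: the statement only demands the coupling identity on $\{T<\infty\}$, so the paper simply tolerates $\bbP[T=\infty]>0$ and accounts for it in the tail bound.

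Second, and more substantively, your split $\bbP[T>t]\le\bbP[S>s_0]+\bbP[\langle M\rangle_t<s_0]$ with a \emph{fixed} threshold $s_0$ forces you to estimate $\bbP[\langle M\rangle_t<s_0]$ directly, and you yourself flag the intermediate regime $t\asymp 1/\mu^2$ (which, since $\mu\asymp1$, is precisely the whole relevant regime) as an obstacle to be ``absorbed by dyadic decomposition and scaling of exponential functionals.'' That is a plan, not an argument. The paper's route avoids this entirely: it conditions on $T_V$ (your $S$) rather than fixing $s_0$, observes that $\{\langle Z\rangle_{T_V/\mu}<T_V\}$ together with the pointwise lower bound $\langle Z\rangle_{T_V/\mu}\gtrsim T_V\,e^{\,c\,\inf_{u\le T_V/\mu}Y_u}$ forces $\inf_{u\le T_V/\mu}Y_u$ below a fixed negative level, and then only needs the elementary Gaussian infimum estimate $\bbP[\inf_{u\le v}B_u<-a]\lesssim e^{-a^2/2v}$ integrated against the law of $T_V$. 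This sidesteps Dufresne's identity and all exponential-functional estimates. I recommend you re-examine the decomposition and let the conditioning variable be $S$ itself rather than a deterministic $s_0$.
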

In the setup of variables satisfying $(\star)_N^{\text B}$ and for $X^{(e)}$ a random variable of distribution $\mathbf{p}_e-p_c$, one can take $\gamma=1$, while the variables $\mu^{(e)}$ remain bounded away from $0$ and $\pm\infty$ (as a consequence of the preliminary computations). Denoting by $T^{(e)}$ the stopping time constructed for the variable $X^{(e)}$ and the Brownian motion $(B^{(e)}_t)_{t\geq 0}$, one has
\begin{equation*}
    \forall t>0,\quad \mathbf P_N\Bigg[T^{(e)}>t\Bigg]\lesssim \exp\left[-c'\cdot \left(\frac{t}{\sigma_e^2}\right)^{1/3}\right]+\exp\Bigg[-c'\sigma_e^{-\frac12}\Bigg].
\end{equation*}
In general, using Lemma \ref{lem:tail_sum_stretched_exp} to bound $\sum_{e\in E_N}\alpha_e T^{(e)}$ is very inefficient, as one single coefficient $\alpha_e$ could be of the order of magnitude of $\sum_{e\in E_N}\alpha_e$. Instead, we use the most naive union bound on the supremum of the $(T^{(e)})_{e\in E_N}$. This reads as
\begin{align*}
\mathbf P_N\Bigg[\sup_{e\in E_N} T^{(e)}>\Sigma_{\frak p}^{1/2}\widetilde{\textrm W}(N)^{1/2}\Bigg]&\lesssim N^2\left(\exp\left[-c'\cdot \left(\frac{\Sigma_{\frak p}^{1/2}\widetilde{\textrm W}(N)^{1/2}}{\Sigma_{\frak p}^2}\right)^{1/3}\right]+\exp\Bigg[-c'\cdot \Sigma_{\frak p}^{-1/2}\Bigg]\right) \\
&\lesssim  \exp\left[-\frac12c'\cdot \left(\frac{\widetilde{\textrm W}(N)^{1/3}}{\Sigma_{\frak p}}\right)^{1/2}\right]+\exp\Bigg(-\frac12c'\cdot \Sigma_{\frak p}^{-1/2}\Bigg)
\end{align*}
where the passage to the last line uses that $\Sigma_{\frak p}\le c\widetilde{\textrm W}(N)^{1/3}\log(N)^{-2}$. Conditioning on the event
\begin{equation*}
	\Bigg\{ \sup_{e\in E_N} T^{(e)}\le\Sigma_{\frak p}^{1/2}\widetilde{\textrm W}(N)^{1/2} \Bigg\},
\end{equation*}
one can reproduce almost verbatim the proof of Proposition \ref{prop:L3-case} using $\left(\Sigma_{\frak p}^{1/2}\widetilde{\textrm W}(N)^{1/2}\right)^{1/2}$ to replace $\sigma_N$. The only notable difference in the computation is the appearance of some additional term in the Itô derivatives, which we detail now. For an event $\mathcal{S}$, the presence of the drift gives that
\begin{align*}
	 \dd\phi_{\bbT_N,\frak p(t)}[\calS]&=\sum_{e\in E_N}\frac1{\mathbf p_e(t)(1-\mathbf p_e(t))}\Cov_{\frak p(t)}(\calS,\omega_e)\dd B^{(e)}_t \\
	 & \quad + \sum_{e \in E_N}\frac1{\mathbf p_e(t)(1-\mathbf p_e(t))}\mathbf r_e(t)\Cov_{\frak p(t)}(\calS,\omega_e)\dd t\\
	 &\quad +\sum_{e\in E_N}(\mu^{(e)}-\mu(q))\frac1{\mathbf p_e(t)(1-\mathbf p_e(t))}\Cov_{\frak p(t)}(\calS,\omega_e)\dd t
\end{align*}
Recalling that \eqref{eq:mu-estimate}, one can bound $|\mu^{(e)}-\mu(q)|=O(\sigma_e)\lesssim \Sigma_{\frak p}$, and this upper bound is smaller than the one obtained for $\mathbf r_e(t)$ in Section \ref{sub:estimate-r_e}, so one can apply the same proof. Therefore, there exists $c>0$ small enough such that such that provided $\Sigma_{\frak{p}}$ is chosen to satisfy 
$\big(\Sigma_{\frak p}^{1/2}\widetilde{\textrm W}(N)^{1/2}\big)^{1/2}\le c\cdot\widetilde{\textrm W}(N)^{1/3}\log(N)^{-1/2}$, or equivalently ${\Sigma_{\frak p}\le c^4\cdot\widetilde{\textrm W}(N)^{1/3}\log(N)^{-2}}$,
\begin{align*}
    \mathbf{P}_N\Bigg[ \phi_{\bbT_N,\frak p}\in \textrm{RSW}(\delta,N)\Bigg] &>1-O\left(\exp\left[-c\cdot \left(\frac{\widetilde{\textrm{W}}(N)^{1/3}}{\left(\Sigma_{\frak p}^{1/2}\widetilde{\textrm W}(N)^{1/2}\right)^{1/2}}\right)^{\!2}\,\right]\right)\\
    &\ge 1-O\left(\exp\left[-c\cdot \left(\frac{\widetilde{\textrm{W}}(N)^{1/3}}{\Sigma_{\frak p}}\right)^{\!1/2}\right]\right).
\end{align*}
Combining all observations made above allows us to conclude the proof of Theorem \ref{thm:extension_scaling_window_drift}.

\subsection{Optimality of the random bond near-critical window}\label{sub:optimality-random-window}

In the previous section, we showed that strengthening the condition $(\star)^{\text A}_N$ to $(\star)^{\text B}_N$ allows us to use a random a random environment where the bonds have a much larger standard deviation. We now explain that a critical window in a random environment of order $\textrm W(N)^{1/3}$ is ``essentially optimal'', in the sense that finding a condition on the (independent) random bond environment $\frak p$ to enlarge the critical window would require some much more refined understanding than simply the up to constant behaviour of crossing estimates, mixing rates and arm exponents.

Let us start by discussion how the $O(\sigma_e^3)$ correction term in condition $(\star)^{\text B}_N$ naturally appears. In the special case $q=1$, the results of 
Section \ref{sec:percolation} show that the natural condition on the independent random bond parameters is that $\bbE[\mathbf p_e]=\frac 12$, which allows us to preserve the critical phase of the model (at large scale) even with random variables whose standard variation does not decay to $0$ as $N \to \infty$. When remaining in the near-critical random setup, one may expect that this ``criticality condition" generalises to $q> 1$, in the sense that there should exist  $f=f_q$ some smooth function depending on $q$ such that the natural condition on the parameters $\mathbf p_e$ to preserve criticality at large scales is given by $\bbE[f(\mathbf p_e)]=0$. While $f_1$ is easy to guess as it corresponds to asking that the annealed model is critical Bernoulli percolation, this requires some more evolved reasoning when $q>1$. For $q>1$, the critical point $p_c(q)$ is determined by the fact that it is the self-dual point of the model. In this spirit, the condition we require on $f_q$ is that $\bbE[f_q(\mathbf p)]=0$ for any random variable $\mathbf p$ which is self-dual for the FK percolation model. By self-dual, we mean that $\mathbf p$ and $\mathbf p^*$ have the same law, where
\begin{equation*}
    \mathbf p^* := \frac{q(1-\mathbf p)}{\mathbf p+q(1-\mathbf p)}
\end{equation*}
is the dual parameter of $\mathbf p$. Taking $\mathbf p:=\frac12\delta_p+\frac12\delta_{p^*}$ for $p\in [0,1]$, this implies that $f_q$ must satisfy
\begin{equation}\label{eq:condition-dualite-f_q}
    \forall p\in[0,1],\quad f_q(p^*)=-f_q(p).
\end{equation}
In the near-critical setup $\varepsilon \to 0 $, one can expand near the critical point $p_c=p_c(q)$ the relation $ f_q((p_c+\eps)^*)=-f_q(p_c+\eps)$, which implies that 
\begin{equation*}
    f_q\left(p_c-\eps-\frac{q-1}{\sqrt q}\eps^2-\frac{(q-1)^2}{q}\eps^3-\ldots\right)=-f_q(p_c)-\eps f_q'(p_c)-\frac{\eps^2}2 f_q''(p_c)-\frac{\eps^3}6f^{(3)}_q(p_c)+\ldots,
\end{equation*}
which reads as
\begin{equation}\label{eq:condition-f_q}
    2f_q(p_c)+0\cdot\eps+\left(f_q''(p_c) - \frac{q-1}{\sqrt q}f_q'(p_c)\right)\eps^2+\left(\frac{q-1}{\sqrt q}f_q''(p_c)-\frac{(q-1)^2}q f_q'(p_c)\right)\eps^3+\ldots=0
\end{equation}
The above equation shows that if one wants to cancel all the coefficients in the expansion, although there are infinitely many conditions on the derivatives of  $f_q$ at $p_c$, these conditions are \emph{not} sufficient to uniquely determine $f_q$. For every $n\geq 1$, the condition \eqref{eq:condition-f_q} imposes an equation linking the first  $2n$ derivatives at $p_c$, but with \emph{infinitely} many degrees of freedom for $f_q$. Indeed, one can freely choose the \emph{odd degree}  derivatives of $f_q$ at $p_c$, which is enough to fix the derivatives of even degree. This can be seen by interpreting \eqref{eq:condition-dualite-f_q} as asking, up to conjugation by some smooth map, that $f_q$ is an \emph{odd function}, whose odd derivatives at zero (corresponding to $p_c$ under the conjugation) can be chosen arbitrarily, and even derivatives at zero are forced to cancel.

Let us now specify those conditions for the first derivatives of $f_q$. This clearly imposes that $f_q(p_c)=0$. One can always replace $f_q$ by a constant multiple and still satisfy \eqref{eq:condition-dualite-f_q}, which allws us to arbitrarily fix the value of $f_q'(p_c)$. Then, \eqref{eq:condition-f_q} imposes that $f_q''(p_c)=\frac{q-1}{\sqrt q}f_q'(p_c)$. Assuming that $f_q$ is at least three times differentiable at $p_c$, plugging in some variable $\mathbf p$ well concentrated around $p_c$ in the equation $\bbE[f_q(\mathbf p)]=0$, one gets
\begin{equation*}
    f_q'(p_c)\bbE[\mathbf p-p_c]+\frac{f_q''(p_c)}2\bbE[(\mathbf p-p_c)^2]+O(\bbE[(\mathbf p-p_c)^2]^{3/2})=0
\end{equation*}
The link between $f_q'(p_c)$ and $f_q''(p_c)$ precisely gives item $2\textrm B)$ in Definition \ref{def:proprietes_pe}. 

Now although we do not claim that there does not exist some function $f_q$ allowing us to have environments which stay critical in a window of size larger than $\textrm W(N)^{1/3}$, we explain why a generic choice of a function $f_q$ satisfying \eqref{eq:condition-dualite-f_q} cannot allow us to go further than $\textrm W(N)^{1/3}$. Pick a generic $f_q$ satisfying \eqref{eq:condition-dualite-f_q} and an environment $\frak p=(\mathbf p_e)_{e\in e_N}$ of i.i.d.\! bond parameters such that $\mathbb{E}[f_q(\mathbf p_e)]=0$, and assume by contradiction that the environment can be chosen such that $\sigma_e\gg \textrm W(N)^{1/3}$. Now consider the deterministic modification of the environment $\widetilde{\mathbf p}_e=\mathbf p_e+\sigma_e^3$. Since $\bbE[f_q(\mathbf p_e)]=0$, one clearly has $\bbE[f_q(\widetilde{\mathbf p}_e)]=O(\sigma_e^3)$. It is not hard to see that it is possible to construct a function $\widetilde{f}_q$, satisfying \eqref{eq:condition-dualite-f_q} as $\eps\to 0$ in the following way: under the conjugation map where \eqref{eq:condition-dualite-f_q} becomes the condition that $f_q$ is odd, one may pick $\widetilde f_q(x)=f_q(x)-\alpha x^3$ for some well chosen $\alpha$. In this construction, $\widetilde f_q$ obviously depends on the law of the $\mathbf p_e$ (and so on $N$ as well), but one may check that $\alpha$ is bounded uniformly in $\sigma_e$, so that even as $N\to\infty$ and $\sigma_e\to 0$, one may still consider $\widetilde f_q$ to be a generic function satisfying \eqref{eq:condition-dualite-f_q}. Now since $\widetilde f_q$ is a generic function satisfying \eqref{eq:condition-dualite-f_q}, under our assumption, the environment $\widetilde {\frak p}=(\widetilde{\mathbf p}_e)_{e\in E_N}$ should still have some $\textrm{RSW}(\delta,N)$ property on the torus $\bbT_N$. However, between the environments $\mathbf p_e$ and $\widetilde{\mathbf p}_e$, the coupling constant differ \emph{deterministically} by a positive constant $\sigma_e^3\gg \textrm W(N)$. In particular, assuming $\mathbf p_e$ remains in the critical phase (with high $\mathbf{P}_N$-probability), this deterministic modification, way above the critical window, imposes that the model with environment $\widetilde {\frak p}$ should be super-critical (with high $\mathbf{P}_N$-probability), by simply repeating all the arguments in the proof Theorem \ref{thm:scaling-relations_second} starting from $\frak p$ instead of $p_c$, and keeping track of the reverse inequalities (see also \cite[Section 5]{FK_scaling_relations}). This is a contradiction, and so a generic choice of $f_q$ could not lead to critical windows larger than $\textrm W(N)^{1/3}$ for the random environment.

\subsection{Non-independent bonds and extension of the critical window}
In this section, we explore the less restrictive setup of bond parameters at each edge that may not be independent. On $\bbT_N$, we work in a setup where the bonds can be seen as the sum of an i.i.d\ process at each edge and some (much smaller) correction drift involving the entire random environment in $\bbT_N$. Consider stochastic processes of the form $\phi_{\bbT_N,\frak{p}(t)}[\calS]$, similar to the one treated in Sections \ref{sub:centred-gaussian} and \ref{sub:Gaussian-case-L3}. One of the key observation when computing $\dd\phi_{\bbT_N,\frak{p}(t)}[\calS]$ is that, with high $\mathbf{P}_N$ probability, its dominant term is its finite variation part. Our specific choice of the drift is made here to \emph{cancel exactly that drift}, making of the processes of the form $\phi_{\bbT_N,\frak{p}(t)}[\scrC(S)]$ pure local martingale, whose stochastic derivatives (estimated via their brackets) are expected to be much smaller, allowing to run some controlled at each scale deformation for a much longer amount of time. In this section, recall the quantity $\Xi(N)=(\sum_{r\le N}r\Delta_{p_c}(r)^2)^{-1}$. Let us note that the inequality $\Xi(N)\gtrsim \widetilde {\textrm W}(N)^{2/3}$ is essentially sharp (up to subpolynomial factors) when $\Delta_{p_c}(R)\gtrsim R^{-1/2+o(1)}$, but assuming that $\Delta_{p_c}(R)\lesssim R^{-1/2-c}$ for some $c>0$ (which should be the case for $q<4$), the inequality should hold with an additional polynomial correction $\Xi(N)\gtrsim N^{c'}\cdot\widetilde {\textrm W}(N)^{2/3}$, and so the window reached in the following proposition is much larger than that of the previous section. Once again, we start by presenting a simplified version of the Theorem in the case where the random environment $\frak{p}$ is Gaussian.
\begin{proposition}\label{prop:non-independent-bonds_gaussian}
Fix $1<q\le 4$ and let $\frak p=(\mathbf p_e)_{e\in E}$ a random environment satisfying \eqref{eq:Gaussian-case-drift}. There exist some positive constants $\delta,O,c_{1,2}$ such that if $\sigma_N\le c_1\cdot \Xi(N)^{1/2}\log(N)^{-1/2}$, one can construct some random environment $\widetilde{\frak p}=(\widetilde{\mathbf p}_e)_{e\in E_N}$ such that
\begin{itemize}
	\item Under $\mathbf P_N$, for each $e\in E_N$, the difference $|\widetilde{\mathbf p}_e-\mathbf p_e|$ is of order $\Xi(N)^{-1/2}\sigma_N^3$, in the sense that
	\begin{equation*}
        \forall \alpha>0,\quad \mathbf P_N\Big[|\widetilde{\mathbf p}_e-\mathbf p_e|>\alpha\cdot\Xi(N)^{-1/2}\sigma_N^3\Big]\lesssim \exp(-c\alpha^2).
    \end{equation*}
	\item  One has \begin{equation*}
        \mathbf P_N\Big[\phi_{\bbT_N,\widetilde{\frak p}}\in \textrm{RSW}(\delta,N)\Big]>1- O\Bigg(   \exp \Bigg[ -c_2\cdot \left(\frac{\Xi(N)^{1/2}}{\Sigma_{\frak p}}\right)^{2} \Bigg] \Bigg).
    \end{equation*}
\end{itemize}
\end{proposition}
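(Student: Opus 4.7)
The plan is to extend the Brownian-deformation strategy of Sections \ref{sub:centred-gaussian}--\ref{sub:Gaussian-case-L3} by replacing the deterministic drift $-\mu(q)$ with an \emph{adaptive random drift} tailored so that every crossing probability evolves as a pure local martingale along the deformation. Concretely, using the same family of independent Brownian motions $(B^{(e)}_t)_{e\in E_N}$ as in Section \ref{sub:Gaussian-case-L3}, I would construct the desired environment as $\widetilde{\frak p}:=\widetilde{\frak p}(\sigma_N^2)$, where $\widetilde{\frak p}(t)$ solves the self-consistent SDE
\begin{equation*}
    \dd \widetilde{\mathbf p}_e(t) = \dd B^{(e)}_t - \mathbf g_e(t)\,\dd t,\qquad \widetilde{\mathbf p}_e(0)=p_c,\qquad \mathbf g_e(t):=\frac{\widetilde{\mathbf p}_e(t)-\phi_{\bbT_N,\widetilde{\frak p}(t)}[\omega_e]}{\widetilde{\mathbf p}_e(t)(1-\widetilde{\mathbf p}_e(t))}.
\end{equation*}
Self-duality at $p_c$ gives $\mathbf g_e(0)=\mu(q)$, so $\widetilde{\frak p}(t)$ is a small adaptive perturbation of the $-\mu(q)$-drift process from Section \ref{sub:Gaussian-case-L3}; well-posedness up to the breaking time follows from standard SDE theory since $\mathbf g_e$ is smooth on the region where all $\widetilde{\mathbf p}_e$ are bounded away from $\{0,1\}$, a region guaranteed by $\textrm{RSW}(\delta,N)$.

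The choice of drift is dictated by the algebraic identity $\tfrac12\partial_e^2\phi_{\bbT_N,\underline p}[\calS] = \mathbf g_e \cdot \partial_e \phi_{\bbT_N,\underline p}[\calS]$, which follows from the derivative formulae of Proposition \ref{prop:differentiate-bonds}. Combined with Ito's formula and the independence of the Brownian drivers, this yields for \emph{every} event $\calS\subset \{0,1\}^{E_N}$
\begin{equation*}
    \dd \phi_{\bbT_N,\widetilde{\frak p}(t)}[\calS] = \sum_{f\in E_N}\frac{\Cov_{\widetilde{\frak p}(t)}(\calS,\omega_f)}{\widetilde{\mathbf p}_f(t)(1-\widetilde{\mathbf p}_f(t))}\,\dd B^{(f)}_t,
\end{equation*}
a pure local martingale starting at $\phi_{\bbT_N,p_c}[\calS]$. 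Taking $\calS=\scrC(\calR)$ and evaluating at $t=\sigma_N^2$ gives the criticality-in-expectation property stated in the introduction to Theorem \ref{thm:non-independent-bonds}, and the vanishing of the finite-variation term removes the bottleneck that restricted us to the $\widetilde{\textrm W}(N)^{1/3}$ window in Proposition \ref{prop:L3-case}.

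I would then repeat the breaking-time analysis of Lemmas \ref{lem:stability-RSW} and \ref{lem:stability-StabDelta}, now considerably simplified by the vanishing drift. With $T_b:=T_b(\delta,\rho)$ defined as in Section \ref{sub:centred-gaussian}, the bracket of each crossing martingale is controlled by Lemma \ref{lem:near-critical_estimates} and \eqref{eq:sum_cov_crossing_edge_squared} as $\langle \phi_{\bbT_N,\widetilde{\frak p}(\cdot)}[\scrC^0_\bullet(\calR)]\rangle_t\lesssim \Xi(N)^{-1}t$ up to $T_b$, with analogous $\Delta_{p_c}(R)^4\Xi(N)^{-1}t$ bounds for the $\Cov_{\widetilde{\frak p}(t)}(\omega_e,\omega_f)$ martingales via \eqref{eq:sum_kappa3_squared}. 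Standard martingale large-deviation estimates combined with a union bound over the $O(N^3)$ rectangles give, for $\sigma_N\le c_1\cdot\Xi(N)^{1/2}\log(N)^{-1/2}$,
\begin{equation*}
    \mathbf P_N[T_b<\sigma_N^2]\lesssim N^3\exp\left[-c\delta^2\Xi(N)/\sigma_N^2\right]\lesssim \exp\left[-c_2(\Xi(N)^{1/2}/\sigma_N)^2\right],
\end{equation*}
which yields the RSW conclusion. The logarithmic factor $\log(N)^{-1/2}$ is precisely what is required for the union bound to absorb the $N^3$ cardinality of rectangles, and the exponent $2$ (rather than $1/2$ in Theorem \ref{thm:non-independent-bonds}) reflects the Gaussian tails of the $\mathbf p_e$.

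The size-of-correction estimate is the most delicate part. Writing $\widetilde{\mathbf p}_e(\sigma_N^2)-\mathbf p_e(\sigma_N^2)=\int_0^{\sigma_N^2}(\mu(q)-\mathbf g_e(s))\,\dd s$ and Taylor-expanding the drift around $(p_c,1/2)$ decomposes this as $\alpha_2\int_0^{\sigma_N^2}(\phi_{\bbT_N,\widetilde{\frak p}(s)}[\omega_e]-1/2)\,\dd s-\alpha_1\int_0^{\sigma_N^2}(\widetilde{\mathbf p}_e(s)-p_c)\,\dd s$ plus lower-order terms, for explicit positive constants $\alpha_{1,2}$. By Fubini, the first piece equals the martingale integral $\int_0^{\sigma_N^2}(\sigma_N^2-s)\,\dd\phi_{\bbT_N,\widetilde{\frak p}(s)}[\omega_e]$, whose bracket is $\lesssim \Xi(N)^{-1}\sigma_N^6$, yielding precisely the sub-Gaussian scale $\Xi(N)^{-1/2}\sigma_N^3$ claimed in the statement. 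The main obstacle will be controlling the companion contribution $\int(\widetilde{\mathbf p}_e(s)-p_c)\,\dd s$, which naively behaves like the integrated Brownian motion $\int_0^{\sigma_N^2}B^{(e)}_s\,\dd s$ of typical scale $\sigma_N^3$ with no $\Xi(N)^{-1/2}$ improvement; recovering the sharper scale will require a refinement of the SDE, for instance by absorbing the leading $-\alpha_1(\widetilde{\mathbf p}_e-p_c)$ term into a deterministic pre-drift on each edge so that only the genuinely martingale-type $\phi-1/2$ fluctuations survive in $\widetilde{\mathbf p}_e-\mathbf p_e$.
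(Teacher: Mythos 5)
Your construction of the adaptive drift is exactly the one the paper uses: writing $\widetilde{\mathbf p}_e(t)=\mathbf p_e(t)+\eps^{(e)}(t)$ as in the paper and expanding the ODE \eqref{eq:eps_SDE} gives $\dd\widetilde{\mathbf p}_e(t)=\dd B^{(e)}_t-\frac{\widetilde{\mathbf p}_e(t)-\phi_{\bbT_N,\widetilde{\frak p}(t)}[\omega_e]}{\widetilde{\mathbf p}_e(t)(1-\widetilde{\mathbf p}_e(t))}\dd t$, which is your $\mathbf g_e$. Your algebraic identity $\frac12\partial_e^2\phi_{\bbT_N,\underline p}[\calS]=\mathbf g_e\cdot\partial_e\phi_{\bbT_N,\underline p}[\calS]$ is correct and is precisely what makes the finite variation part of $\phi_{\bbT_N,\widetilde{\frak p}(t)}[\calS]$ vanish. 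The breaking-time analysis via bracket bounds and a union bound over $O(N^3)$ rectangles also matches. So the bulk of the proposal is a faithful reproduction of the paper's argument.

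There are, however, two points you get wrong. The first is minor: you claim the whole analysis is ``considerably simplified by the vanishing drift'', but the covariance process $\Cov_{\widetilde{\frak p}(t)}(\omega_e,\omega_f)$ used to control the mixing rates is \emph{not} a pure local martingale. The identity $\frac12\partial_g^2=\mathbf g_g\cdot\partial_g$ is linear in the probability, and for the covariance the quadratic cross-term $-\frac 2{\mathbf p_g^2(1-\mathbf p_g)^2}\Cov_{\widetilde{\frak p}(t)}(\omega_e,\omega_g)\Cov_{\widetilde{\frak p}(t)}(\omega_f,\omega_g)$ from Proposition \ref{prop:differentiate-bonds} survives. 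The paper controls this residual drift by \eqref{eq:sum_cov_g} (Step 2.1), yielding $|A^{e,f}_t-A^{e,f}_0|\lesssim\Delta_{p_c}(R)^2\,t/\Xi(N)$, and this bound is genuinely needed in the breaking-time argument; your sketch omits it.

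The second point is the more substantial misconception, and it is where you misdiagnose a problem that is not there. You call the contribution $\int_0^{\sigma_N^2}(\widetilde{\mathbf p}_e(s)-p_c)\,\dd s\sim\sigma_N^3$ a ``main obstacle'' because it comes ``with no $\Xi(N)^{-1/2}$ improvement'', and you propose a refinement of the SDE to fix it. But $\Xi(N)=\big(\sum_{r\le N}r\Delta_{p_c}(r)^2\big)^{-1}\le 1$, hence $\Xi(N)^{-1/2}\ge 1$, so $\Xi(N)^{-1/2}\sigma_N^3\ge\sigma_N^3$: the factor $\Xi(N)^{-1/2}$ makes the claimed bound \emph{looser}, not sharper. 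A term of order $\sigma_N^3$ is already well below the required $\alpha\cdot\Xi(N)^{-1/2}\sigma_N^3$ scale, and the Gaussian tail estimate $\mathbf P[|\cdot|>\alpha\Xi(N)^{-1/2}\sigma_N^3]\lesssim\exp(-c\alpha^2\Xi(N)^{-1})\le\exp(-c\alpha^2)$ is then automatic. No ``deterministic pre-drift'' is needed. The paper in fact avoids your Fubini decomposition entirely and applies Gronwall directly to the differential inequality $|\frac{\dd}{\dd t}\eps^{(e)}(t)|\lesssim|\eps^{(e)}(t)|+|B^{(e)}_t|+t+|\phi_{\widetilde{\frak p}(t)}[\omega_e]-\tfrac12|$, using the readily available martingale bound $\sup_{t\le\sigma_N^2}|\phi_{\widetilde{\frak p}(t)}[\omega_e]-\tfrac12|\lesssim\alpha\Xi(N)^{-1/2}\sigma_N$ and the trivial $\sup_t|B^{(e)}_t|\lesssim\alpha\sigma_N\le\alpha\Xi(N)^{-1/2}\sigma_N$; integration over $[0,\sigma_N^2]$ then gives the $\Xi(N)^{-1/2}\sigma_N^3$ bound at once. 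Your Fubini route would work too if completed, but the obstacle you flag does not exist.
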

Let us comment on the implications of this statement. Informally speaking, Theorem \ref{thm:non-independent-bonds} states that, one can correct at each edge the i.i.d\ Gaussian, whose typical order of magnitude is $\sigma_N$, by some random factor, depending on entire environment $\frak{p} $, whose typical order of magnitude is $\Xi(N)^{-1/2}\cdot \sigma_N^3 \ll \sigma_N$, while enlarging the critical window in random environment. Using the CLE predictions (asserting that $\Delta_{p_c}(R) \lesssim R^{-\frac{1}{2}-c}$ for $q<4$), one expects this enlargement to be at least of polynomial order $1<q<4$. For $1<q \leq 2$, one expects the critical window in the random environment to be, with high probability,  of logarithmic order. Indeed, it is conjectured that $\Delta_{p_c}(R)\lesssim R^{-1-c}$ for $q<2$ ensuring that $\Xi(N) \asymp 1$. For the limiting case $q=2$, it is known that $\Delta_{p_c}(R)\asymp R^{-1}$, so that $\Xi(N)\asymp \log(N)^{-1}$.
\begin{proof}[Proof of Proposition \ref{prop:non-independent-bonds_gaussian}]
We will once again embed $\mathbf p_e$ as a Brownian motion with some drift, given by
\begin{equation*}
    \mathbf p_e(t)=p_c+B^{(e)}_{t}-\mu(q)t
\end{equation*}
and couple the Brownian motions with $\frak p$ such that for each $e \in E_N $ one has $\mathbf p_e(\sigma_N^2)=\mathbf p_e$. From now on, define, on the same probability space, the process $\widetilde{\frak p}(t)=(\widetilde{\mathbf{p}}_e(t))_{e\in E_N}$ via
\begin{equation*}
    \widetilde{\mathbf{p}}_e(t)=\mathbf p_e(t)+\eps^{(e)}(t)
\end{equation*}
where $t\mapsto(\eps^{(e)}(t))_{e\in E_N}$ is an (adapted) continuously differentiable process such that for any $e\in E_N$, one has $\varepsilon^{(e)}(0)=0$ and $(\eps^{(e)}(t))_{e\in E_N}$ satisfies the SDE
\begin{equation}
\label{eq:eps_SDE}
\frac{\dd}{\dd t}\eps^{(e)}(t)=\mu(q)\cdot t-\frac1{1-\widetilde{\mathbf p}_e(t)}\left(1-\frac 1{\widetilde{\mathbf p}_e(t)}\phi_{\bbT_N,\widetilde{\frak p}(t)}[\omega_e]\right).
\end{equation}
The stochastic differential system satisfied by $(\eps^{(e)}(t)_{t\geq 0})_{e\in E_N}$ involves \emph{the entire environment via} $\phi_{\bbT_N,\widetilde{\frak p}(t)}[\omega_e]$, making the bonds parameters in $\widetilde{\mathbf{p}}_e=\widetilde{\mathbf{p}}_e(\sigma_N^2)$ not independent from each other. In this setup, we once again use the notation $T_b=T_b(\delta,\rho)$ from the previous sections.
For an event $\calS$, one can write the random process $\phi_{\bbT_N,\widetilde{\frak p}(t)}[\calS]=M^{\calS}_t+A^{\calS}_t$ where $M^{\calS}$ is a local martingale started at $0$ and $A^{\calS}$ is an adapted process with bounded variation. The (a priori unnatural) choice of $(\eps^{(e)}(t)_{t\geq 0})_{e\in E_N}$ is made to leave $A^{\calS}_t$ constant, meaning that
\begin{align*}
    \frac{d}{\dd t}A^{\calS}_t &= \sum_{e\in E_N}\frac1{\widetilde{\mathbf p}_e(t)(1-\widetilde{\mathbf p}_e(t))^2}\left(1-\frac 1{\widetilde{\mathbf p}_e(t)}\phi_{\bbT_N,\widetilde{\frak p}(t)}[\omega_e]\right)\Cov_{\widetilde{\frak p}(t)}(\calS,\omega_e) \\
    &\quad + \sum_{e\in E_N}\frac1{\widetilde{\mathbf p}_e(t)(1-\widetilde{\mathbf p}_e(t))}\left(-\mu^{(e)}+\frac{\dd}{\dd t}\eps^{(e)}(t)\right)\Cov_{\widetilde{\frak p}(t)}(\calS,\omega_e) \\
    &= 0.
\end{align*}
Therefore, all the $\phi_{\bbT_N,\widetilde{\frak p}(t)}[\calS]=M^{\calS}_t+A^{\calS}_t$ are pure local martingales, whose bracket is given by
\begin{equation*}
    \frac{\dd}{\dd t}\langle\phi_{\bbT_N,\widetilde{\frak p}(t)}[\calS]\rangle_t = \sum_{e\in E}\frac1{\widetilde{\mathbf p}_e(t)^2(1-\widetilde{\mathbf p}_e(t))^2}\Cov_{\widetilde{\frak p}(t)}(\calS,\omega_e)^2.
\end{equation*}
To conclude, it is enough to prove that one can chose positive constants $\delta,\rho$ such that the event $T_b>\sigma_N^2$ happens with high $\mathbf{P}_N $ probability. The proofs are very close to those of Propositions \ref{prop:L2-case} and \ref{prop:L3-case}, allowing us to only make a sketch.

\paragraph{Step 1: Estimating the quadratic variation of $\phi_{\bbT_N,\widetilde{\frak p}(t)}[\scrC^0_{\bullet}(\calR)]$ and $\phi_{\bbT_N,\widetilde{\frak p}(t)}[\calA^{(e)}_{\#}(R)]$.}
For each $2$ by $1$ rectangle $\calR\subset \bbT_N$ and any $\bullet\in\{h,v\}$, the process $\phi_{\bbT_N,\widetilde{\frak p}(t)}[\scrC^0_{\bullet}(\calR)]$ is a local martingale, whose quadratic variation is given by
\begin{equation*}
    \frac{\dd}{\dd t}\langle \phi_{\bbT_N,\widetilde{\frak p}(\cdot)}[\scrC^0_{\bullet}(\calR)]\rangle_t=\sum_{e\in E_N}\frac1{\widetilde{\mathbf p}_e(t)^2(1-\widetilde{\mathbf p}_e(t))^2}\Cov_{\widetilde{\frak p}(t)}(\scrC^0_{\bullet}(\calR),\omega_e)^2
\end{equation*}
For $0\leq t\le \sigma_N^2\wedge T_b$, we may use Lemma \ref{rem:sum-Cov-crossing-edge} and then integrate in time to deduce that
\begin{equation*}
     \langle\phi_{\bbT_N,\widetilde{\frak p}(\cdot)}[\scrC^0_{\bullet}(\calR)]\rangle_t\lesssim \Xi(N)^{-1}\cdot t.
\end{equation*}
Similarly, using Remark \ref{rem:sum-Cov-arms-edge}, one gets that for any $0\leq t\le \sigma_N^2\wedge T_b$, any $e\in E_N$, $R\le N/4$ and any $\#\in\{4,3^+,3^-,3^{\textrm{lf}},3^{\textrm{rg}}\}$ one has
\begin{equation*}
   \langle\phi_{\bbT_N,\widetilde{\frak p}(\cdot)}[\calA^{(e)}_{\#}(R)]\rangle_t\lesssim \pi_{\#,p_c}(R)^2\Xi(N)^{-1}\cdot t.
\end{equation*}

\paragraph{Step 2: Estimates for $\Cov_{\widetilde{\frak p}(t)}(\omega_e,\omega_f)$.}
Fix two edges $e,f\in E_N$ at distance $R$ from each other. The process $\Cov_{\widetilde{\frak p}(t)}(\omega_e,\omega_f)$ is not a local martingale, so we decompose it into $M^{e,f}_t+A^{e,f}_t$ where $M^{e,f}_t$ is a local martingale started at $0$ and $A^{e,f}_t$ is an adapted process with bounded variation.

\paragraph{Step 2.1: Estimating the bounded variation term $A^{e,f}_t$.}
Using the Itô's formula gives the derivative of $A^{e,f}_t$. Note that most terms cancel out, and the only term left is
\begin{equation*}
    \frac{\dd}{\dd t}A^{e,f}_t = -\sum_{g\in E}\frac 1{\widetilde{\mathbf p_g}(t)^2(1-\widetilde{\mathbf p_g}(t))^2}\Cov_{\widetilde{\frak p}(t)}(\omega_e,\omega_g)\Cov_{\widetilde{\frak p}(t)}(\omega_f,\omega_g).
\end{equation*}
For $0\leq t\le \sigma_N^2\wedge T_b$, we may estimate this term by \eqref{eq:sum_cov_g} and integrating in $t$, which gives
\begin{equation*}
 |A^{e,f}_t-A^{e,f}_0|\lesssim \Delta_{p_c}(R)^2\frac{t}{\Xi(N)}.
\end{equation*}

\paragraph{Step 2.2: Estimating the local martingale term $M^{e,f}_t$.} Itô's formula gives that
\begin{equation*}
    \frac{\dd}{\dd t}\langle M^{e,f}\rangle_t = \sum_{g\in E}\frac1{\mathbf p_g(t)^2(1-\mathbf p_g(t))^2}\kappa^{\widetilde{\frak p}(t)}_3(e,f,g)^2.
\end{equation*}
Applying \eqref{eq:sum_kappa3_squared} and integrating up to $0\leq t \leq \sigma_N^2\wedge T_b$ gives that
\begin{equation*}
    \langle M^{e,f}\rangle_t\lesssim \Delta_{p_c}(R)^4\frac{t}{\Xi(N)}.
\end{equation*}
\paragraph{Step 3: Concluding via Gronwall's lemma.} The above estimates allow to concludes as in Propositions \ref{prop:L2-case} and \ref{prop:L3-case}. Still, one should check that the bond parameters $\widetilde{\mathbf p}_e$ indeed remain close to $\mathbf p_e$, by bounding the terms $\eps^{(e)}(t)$. Assume now that $T_b>\sigma_N^2$. This can be done by noticing that as long as $\mathbf p_e(t)$ remains away from $0$ and $1$, one has for $0\leq t \leq \sigma_N^2$
\begin{align*}
    \left|\frac{\dd}{\dd t}\eps^{(e)}(t)\right|&\lesssim |\mathbf p_e(t)-p_c|+\left|\phi_{\widetilde{\frak p}(t)}[\omega_e]-\frac12\right| \\
    &\lesssim |B^{(e)}_t|+t+|\eps^{(e)}(t)|+\left|\phi_{\widetilde{\frak p}(t)}[\omega_e]-\frac12\right|
\end{align*}
One can bound the linear term $t\le \sigma_N^2$, while, with probability at least $\exp(-c\alpha^2)$, one can bound $\sup_{t\le \sigma_N^2}|B_t^{(e)}|\le \alpha\cdot \sigma_N$ for all edges $e \in E_N$. Repeating Step $1$ of the present proof ensures that with probability at least $1-\exp(-c\alpha^2)$, we can bound
$\sup_{t\le \sigma_N^2}\left|\phi_{\widetilde{\frak p}(t)}[\omega_e]-\frac12\right|$ by $\alpha\cdot \Xi(N)^{-1/2}\sigma_N$. Therefore, with probability at least $1-\exp(-c\alpha^2)$, one has for any $0\leq t \leq \sigma_N^2 $,
\begin{equation*}
     \left|\frac{\dd}{\dd t}\eps^{(e)}(t)\right|\lesssim |\eps^{(e)}(t)|+ \alpha\cdot\Xi(N)^{-1/2}\sigma_N.
\end{equation*}
Using Gronwall's lemma up to time $t=\sigma_N^2\ll 1$ ensures that, with probability at least $1-\exp(-c\alpha^2)$, for any $0\leq t \leq \sigma_N^2$,
\begin{equation*}
    |\widetilde{\mathbf p}_e-\mathbf p_e|=|\eps^{(e)}(t)|\lesssim \frac{\sigma_N^3}{\Xi(N)^{1/2}}.
\end{equation*}
\end{proof}

\begin{proof}[Proof of Theorem \ref{thm:non-independent-bonds}]
As in the proof of Theorem \ref{thm:extension_scaling_window_drift}, we embed the $\mathbf p_e$ into Brownian motions with drift $\mathbf p_e(t)$, using the stopping times $T^{(e)}$. We then define the $\widetilde{\mathbf p}_e(t)$ by setting
\begin{equation*}
    \widetilde {\mathbf p}_e(t)=\mathbf p_e(t)+\eps^{(e)}(t\wedge T^{(e)})
\end{equation*}
where the $\eps^{(e)}$ are determined by $\eps^{(e)}(0)=0$ and the stochastic differential equation \eqref{eq:eps_SDE}. We can then run the proof as in that of Theorem \ref{thm:extension_scaling_window_drift}, using the crude estimate

\begin{align*}
    \mathbf P_N\Big[\sup_e T^{(e)}>\Sigma_{\frak p}^{1/2}\Xi(N)^{3/4}\Big]&\lesssim N^2\left(\exp\left(-c'\left(\frac{\Sigma_{\frak p}^{1/2}\Xi(N)^{3/4}}{\Sigma_{\frak p}^2}\right)^{1/3}\right)+\exp\Big(-c'\Sigma_{\frak p}^{-1/2}\Big)\right)\\
    &\lesssim \exp\left(-\frac 12c'\left(\frac{\Xi(N)^{1/2}}{\Sigma_{\frak p}}\right)^{1/2}\right),
\end{align*}
where we used the fact that $\Xi(N)\le 1$ and that $\Sigma_{\frak p}\le c\cdot \Xi(N)^{1/2}\log(N)^{-2}$ in the second line. Finally, the bound on $|\eps^{(e)}(t)|$ is carried out as in the previous proof, except here we are only considering one random sum of the $T^{(f)}$, so we use Lemma \ref{lem:tail_sum_stretched_exp} to bound
\begin{equation*}
    \mathbf P_N\left[\sum_{f\in E_N}\Cov_{p_c}(\omega_e,\omega_f)^2T^{(f)}>\alpha\cdot\Xi(N)^{-1}\Sigma_{\frak p}^2\right]\lesssim \exp\left(-\tilde c\cdot \alpha^{1/3}\right)
\end{equation*}
and conclude the proof from there.
\end{proof}

\begin{remarks}
    Recall that assuming $\Delta_{p_c}(R)\lesssim R^{-1/2-c}$ allows a polynomial improvement in the corrected environment compared to the independent one. We would like to argue here that in this case, the modified variables $(\widetilde {\mathbf p}_e)_{e\in E_N}$ are actually decorrelated in space, in the sense that if $e$ and $f$ are two edges far apart in $\bbT_N$, the variables $\widetilde {\mathbf p}_e$ and $\widetilde {\mathbf p}_f$ are essentially independent. We do not state or prove any result rigorous here. The variable $\widetilde {\mathbf p}_e$ can be written as $\mathbf p_e+\eps^{(e)}_{T^{(e)}}$, and we now focus on the impact of the correction term $\eps^{(e)}_{T^{(e)}}$, coming from the SDE \eqref{eq:eps_SDE}. In that equation, the correlations between different $\widetilde{\mathbf p}_e$ variables come from the term $\phi_{\bbT_N,\widetilde{\frak p}(t)}[\omega_e]$. But this term is simply a local martingale, given explicitely by Itô's formula
    \begin{equation*}
        \dd\phi_{\bbT_N,\widetilde{\frak p}(t)}[\omega_e]=\sum_{g\in E_N}\frac1{\widetilde{\mathbf p}_g(1-\widetilde{\mathbf p}_g)}\Cov_{\widetilde{\frak p}(t)}(\omega_e,\omega_g)\dd B^{(g)}_t.
    \end{equation*}
    Now assuming that some RSW property and all stability of mixing rates and events hold, the coefficient in front of $\dd B^{(g)}_t$ is of order $\Delta_{p_c}(d(e,g))^2$. Then the derivative of the bracket of this local martingale is of order $\sum_{r\le N}r\Delta_{p_c}(r)^4\asymp 1$ (this last estimate assumes that $\Delta_{p_c}(R)\lesssim R^{-1/2-c}$). Furthermore, the contribution from edges $g$ at distance at least some constant $R$ from $e$ to this sum is of order $\sum_{R\le r\le N}r\Delta_{p_c}(r)^4\asymp R^2\Delta_{p_c}(R)^4$, which goes to $0$ as $R\to \infty$. Therefore, the local martingale $\phi_{\bbT_N,\widetilde{\frak p}(t)}[\omega_e]$ will mainly depend on edges close to $e$, and in the same way the local martingale $\dd\phi_{\bbT_N,\widetilde{\frak p}(t)}[\omega_f]$ will mainly depend on edges close to $f$. Hence, if $e$ and $f$ are far apart, the process $\phi_{\bbT_N,\widetilde{\frak p}(t)}[\omega_e]$ is essentially independent of $\mathbf p_f(t)$ and of $\phi_{\bbT_N,\widetilde{\frak p}(t)}[\omega_f]$. Plugging this back into \eqref{eq:eps_SDE}, one gets that the processes $\widetilde{\mathbf p}_e(t)$ and $\widetilde{\mathbf p}_f(t)$ are essentially independent.
\end{remarks}

\section{The special case of percolation}\label{sec:percolation}
\subsection{Near-critical random bond percolation on the square lattice}\label{sub:percolation-square}
Throughout this section, we will be working with $q=1$. We will prove Theorem \ref{thm:percolation-square} for bond percolation, first in the special case of i.i.d.\ Gaussian variables whose law is given by 
\begin{equation}\label{eq:Gaussian-case-perco}
    \mathbf{p}_e \overset{(d)}{=} \calN\left(\frac{1}{2},\sigma_N^2\right),
\end{equation}
and then in the general case using once again the Skorokhod embedding theorem. The reason for treating independent bond percolation separately (i.e.\ separating the proofs for FK percolation of parameter $q=1$ and the proofs for $1<q\leq 4$ treated in Sections \ref{sec:naive-random-bonds} and \ref{sec:non-centred-gaussian}) is that the near-critical window can be extended up to $O(\log(N)^{-2})$, almost reaching macroscopic deformations. The previous section showed that for $1<q\leq 4$, one can pass from a critical window $\textrm{W}(N,q)$, which is expected to decay like $N^{-\nu+o(1)}$ to near-critical random environments in the larger window $\widetilde{\textrm{W}}(N,q)^{\frac{1}{3}}=N^{-\frac{\nu}{3}+o(1)}$ with high $\mathbf{P}_N$-probability. For Bernoulli percolation, the CLE(6) conjecture implies that for $q=1$, the critical window is $\textrm{W}(N,q=1)=N^{-\frac{3}{4}+o(1)}$ (which has been rigorously proven for the triangular lattice in \cite{WerSmi,camia2006sle}). The fact that the near-critical window in a random environment is much larger comes as no surprise, as independence shows that \emph{annealed} random bond percolation on a random environment where each bond variable is centred at $p_c(1)=\frac{1}{2} $ has \emph{exactly} the law of the critical percolation of parameter $p_c(1)=\frac{1}{2} $. Moreover, using some hands-on noise sensitivity argument, we prove in Section \ref{sub:noise-sensitivy} that one can use even larger random variables centred at $\frac{1}{2}$ and whose variance is \emph{is not bounded from above as} $N\to\infty$, while keeping the large scale strong box-crossing property with high probability. Unlike the approach used up to here, this result doesn't assert that crossing probabilities are preserved at every scale, but rather that they are preserved in the large scale regime. 

In view of the next proposition, we remark that in the case of Gaussian variables $\mathbf p_e$, the scaling $O(\log(N)^{-1/2})$ corresponds exactly to what is required to ensure that none of the $N^2$ variables $\mathbf p_e$ leave the interval $[0,1]$. It is remarkable that, up to a constant factor, keeping the $\mathbf p_e$ from deviating too much from the critical value allows us to keep the RSW property at every scale, as detailed in Proposition \ref{prop:log-case-perco} presented below. As we did previously, we first prove Theorem \ref{thm:percolation-square} in a simplified Gaussian case and then pass to the general case using Skorokhod embedding theorem. This reads in the following proposition.
\begin{proposition}\label{prop:log-case-perco}
      Assume that $q=1$, and that the random environment $(\mathbf{p}_e)_{e\in E_N}$ under $\mathbf{P}_N$ is given by i.i.d\ Gaussian variables satisfying \eqref{eq:Gaussian-case-perco} with some uniform parameter $\sigma_N$. Then there exists some $\delta>0$ and some $c>0$ such that for any $N\geq 1 $, if $\sigma_N\le c\cdot\log(N)^{-1/2}$, then 
    \begin{equation*}
        \mathbf{P}_N\Bigg[ \phi_{\bbT_N,\frak p}\in \textrm{RSW}(\delta,N)\Bigg] >1-O\left(\exp\Big(-c\cdot \sigma_N^{-2}\Big)\right).
    \end{equation*}
\end{proposition}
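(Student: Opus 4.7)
The plan is to mimic the deformation strategy of Section 3, but to exploit heavily the two simplifications coming from independence in the $q=1$ case. Set $\mathbf{p}_e(t) := \frac12 + B^{(e)}_t$ for independent Brownian motions $(B^{(e)})_{e \in E_N}$, coupled so that $\mathbf{p}_e(\sigma_N^2) = \mathbf{p}_e$, and define a breaking time
\begin{equation*}
T_b := \inf\left\{t \ge 0 \,\Big|\, \phi_{\bbT_N,\frak p(t),1} \not\in \textrm{RSW}(\delta,N) \cap \bigcap_{\#} \textrm{Stab}_{\#}(\delta,N)\right\}.
\end{equation*}
The crucial observation is that since edges are independent under Bernoulli percolation, $\phi_{\bbT_N,\underline p,1}[\omega_e] = p_e$ exactly, so the second-derivative formula of Proposition \ref{prop:differentiate-bonds} gives $\frac{\partial^2}{\partial p_e^2}\phi_{\underline p}[A] = 0$ for any event $A$. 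Thus applying Itô's formula, for any event $\calS$ the process $t \mapsto \phi_{\bbT_N,\frak p(t),1}[\calS]$ is a \emph{pure local martingale} with no drift correction, namely
\begin{equation*}
\dd \phi_{\bbT_N,\frak p(t),1}[\calS] = \sum_{e \in E_N} \frac{1}{\mathbf{p}_e(t)(1-\mathbf{p}_e(t))} \Cov_{\frak p(t)}(\calS,\omega_e)\dd B^{(e)}_t.
\end{equation*}

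The second simplification is that by Lemma \ref{lem:geometric-estimates_perco} and the fact that the four-arm exponent for percolation is strictly larger than $1$, one has $\Xi(N) \asymp 1$. Consequently, for $0 \le t \le \sigma_N^2 \wedge T_b$ and a fixed $2$ by $1$ rectangle $\calR\subset \bbT_N$, using $\eqref{eq:sum_cov_crossing_edge_squared}$ adapted via Lemma \ref{lem:geometric-estimates_perco}, the bracket of $M^\calR_t := \phi_{\bbT_N,\frak p(t),1}[\scrC^0_{\bullet}(\calR)]$ satisfies $\langle M^\calR\rangle_t \lesssim t \le \sigma_N^2$. Classical Bernstein-type large deviation estimates for local martingales with bounded bracket then yield, for each rectangle,
\begin{equation*}
\mathbf P_N\left[\sup_{0 \le s \le \sigma_N^2 \wedge T_b} |M^\calR_s - M^\calR_0| > \delta\right] \lesssim \exp\Big(-c \sigma_N^{-2}\Big).
\end{equation*}
An identical analysis for arm events $\calA^{(e)}_{\#}(R)$ gives a bracket bounded by $\pi_{\#,p_c}(R)^2 \sigma_N^2$, so the ratio $\phi_{\bbT_N,\frak p(t),1}[\calA^{(e)}_{\#}(R)]/\pi_{\#,p_c}(R)$ deviates from its critical value by more than a constant with probability $\lesssim \exp(-c\sigma_N^{-2})$, uniformly in $e$ and $R$.

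The final step is a union bound over the $O(N^3)$ rectangles and the $O(N^3)$ choices of edges and scales for the arm events. The hypothesis $\sigma_N \le c\cdot\log(N)^{-1/2}$ ensures that $\sigma_N^{-2} \ge c^{-2}\log(N)$, so for $c$ small enough the factor $N^{O(1)}$ is absorbed by the exponential, leaving a bound of $\exp(-c'\sigma_N^{-2})$. On this high-probability event, no crossing or arm event probability can deviate too much from its initial value over $[0,\sigma_N^2 \wedge T_b]$, which, by a standard continuity argument (as at the end of the proof of Proposition \ref{prop:L2-case}), forces $T_b > \sigma_N^2$, and one concludes that $\phi_{\bbT_N,\frak p,1} \in \textrm{RSW}(\delta,N)$ with the claimed probability. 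The only minor technical point to handle is ensuring the Gaussian variables $\mathbf{p}_e$ remain in $[0,1]$; this is a standard Gaussian tail estimate plus union bound over $N^2$ edges, again controlled by $\exp(-c/\sigma_N^2)$ under our hypothesis on $\sigma_N$. The main conceptual obstacle, namely the drift term that dominated the analysis for $1 < q \le 4$, is simply absent here, which is what permits the window $\log(N)^{-1/2}$ rather than $\textrm{W}(N)^{1/2}$ or $\widetilde{\textrm W}(N)^{1/3}$.
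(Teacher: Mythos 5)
Your proposal is correct and follows essentially the same route as the paper: you exploit the vanishing of the finite-variation part via $\phi_{\underline p,1}[\omega_e]=p_e$ (equivalently, the factor $1-\phi_{\underline p}[\omega_e]/p_e$ in the second-derivative formula vanishes), bound the local-martingale bracket by $\Xi(N)^{-1}\sigma_N^2\asymp\sigma_N^2$ using the percolation geometric estimates, and close with large deviations plus a union bound over $O(N^{3})$ rectangles and arm events, absorbed by the $\log N$ hypothesis. The only cosmetic difference is your use of $\scrC^0_{\bullet}(\calR)$ where the paper notes one can use $\scrC_{\bullet}(\calR)$ directly since boundary conditions are irrelevant at $q=1$.
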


We keep the notations of the previous sections and implement a similar strategy. The main reason behind the improvement from a polynomial to a logarithmic bound lies in the fact that the finite variation processes involved in Proposition \ref{prop:L2-case} vanish for percolation due to the independence between edges. Therefore, as it represented the dominant term in the stochastic derivatives of crossings and edge influences, one should only control here the local martingale term. Set once again for $t\geq 0$ the process $\frak p(t):=(\mathbf{p}_e(t))_{e\in E_N}$ formed of i.i.d\ Brownian motions 
\begin{equation*}
    \mathbf{p}_e(t) := p_c(q)+B^{(e)}_t,
\end{equation*}
and still denote by $\mathcal{F}^{N}_t$ the naturally associated filtration. We intend to prove once again that if $t\lesssim \log(N)$, all the measures $s\mapsto (\phi_{\bbT_N,\frak p(s)})_{s\leq t}$ remain within the $\textrm{RSW}(\delta,N)$ class with high probability. In the case of percolation, the independence of edges implies that the mixing rates $\Delta^{(e)}(r)$  all vanish, making this notion irrelevant. One should therefore go back to the original study of Kesten \cite{kesten1987scaling} who studied the scaling relation for percolation via some refined understanding of the $4$ arm exponent $\pi_4(r)$ at each edge, which encodes the pivotality of edges. We refer to Section \ref{sub:geometric-estimates} where this can be explicitely seen in the estimates. Since there are no mixing rates for percolation, the breaking time $T_b=T_b(\delta)$ is given simply by
\begin{equation*}
    T_b:=\inf\Bigg\{t\ge 0\Bigg|\phi_{\bbT_N,\frak p(t)}\not\in \textrm{RSW}(\delta,N)\cap\left(   \bigcap_{\#}\textrm{Stab}_{\#}(\delta,N)\right)\Bigg\},
\end{equation*}
where once again the intersection is taken on $\#\in\{4,3^+,3^-,3^{\textrm{rg}},3^{\textrm{lf}}\}$. As long as $0\leq t\leq T_b$, the probability $\mathbf{p}_e(t) $ that a the $e$ is open under the measure $\phi_{\bbT_N,\frak p(t)}$ remains bounded away from $0$ and $1$. One can once again reformulate Proposition \ref{prop:log-case-perco} as the existence of some $\delta>0$ and some $c>0$ such that for $\sigma_N\le c\cdot \log(N)^{-1/2}$,
\begin{equation}
    \mathbf P\Big[T_b< \sigma_N^2 \Big]\lesssim \exp\Big(-c\sigma_N^{-2}\Big).
\end{equation}
Once again, Proposition \ref{prop:log-case-perco} will follow from the following lemma, which is a an adaptation of Lemma \ref{lem:stability-RSW}.

\begin{lemma}
\label{lem:stability-RSW-perco}
    Assume that $\delta$ is chosen small enough. There exist some positive constant $c=c(\delta)>0$ and $O_{\delta}$ such that if $\sigma_N^2\le c\cdot \log(N)^{-1}$, one has
\begin{equation*}
    \mathbf P_N \Bigg[ \forall \ 0\leq  s\leq \sigma_N^2\wedge T_b, \ \phi_{\bbT_N,\frak p(s)}\in \textrm{RSW}(2\delta,N) \Bigg]\geq 1-O_\delta\Bigg( \exp\left(-c \cdot \frac{1}{\sigma_N^2}\right) \Bigg),
\end{equation*}
\begin{equation*}
    \mathbf P_N \Bigg[ \forall \ 0\leq  s\leq\sigma_N^2\wedge T_b, \ \phi_{\bbT_N,\frak p(s)}\in \bigcap_{\#}\textrm{Stab}_{\#}(2\delta,N) \Bigg]\geq 1-O_\delta\Bigg( \exp\left(-c \cdot \frac{1}{\sigma_N^2}\right) \Bigg).
\end{equation*}
\end{lemma}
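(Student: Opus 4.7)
The crucial simplification for $q=1$ is that edges are independent under $\phi_{\bbT_N,\underline p,1}$, so $\phi_{\underline p}[\omega_e]=p_e$ for every edge $e$. Plugging this into the second-derivative identity of Proposition \ref{prop:differentiate-bonds} gives
\[
\frac{\partial^2}{\partial p_e^2}\phi_{\underline p}[\mathcal{S}]=\frac{2}{p_e(1-p_e)^2}\Bigl(1-\tfrac{1}{p_e}\phi_{\underline p}[\omega_e]\Bigr)\Cov_{\underline p}(\mathcal{S},\omega_e)=0,
\]
so the Itô decomposition of $\phi_{\bbT_N,\frak p(t)}[\mathcal{S}]$ has \emph{no} finite-variation part. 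For any event $\mathcal{S}$ one therefore has the pure local martingale
\[
\dd\phi_{\bbT_N,\frak p(t)}[\mathcal{S}]=\sum_{e\in E_N}\frac{1}{\mathbf p_e(t)(1-\mathbf p_e(t))}\Cov_{\frak p(t)}(\mathcal{S},\omega_e)\,\dd B^{(e)}_t.
\]
This removes the dominant term from the analysis of Section \ref{sub:centred-gaussian} and leaves only a bracket to control.

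Fix a $2$ by $1$ rectangle $\calR\subset \bbT_N$ of width $R\le N/8$, set $M^{\calR}_t:=\phi_{\bbT_N,\frak p(t)}[\scrC^0_{\bullet}(\calR)]-\phi_{\bbT_N,1/2}[\scrC^0_{\bullet}(\calR)]$ and compute, using \eqref{eq:bound-away-0-1} (which still holds for $0\le t\le T_b$ since $\textrm{RSW}(\delta,N)$ forces bonds away from $0$ and $1$),
\[
\frac{\dd\langle M^{\calR}\rangle_t}{\dd t}\lesssim_\delta\sum_{e\in E_N}\Cov_{\frak p(t)}\bigl(\scrC^0_{\bullet}(\calR),\omega_e\bigr)^2.
\]
For $0\le t\le T_b$, the measure $\phi_{\bbT_N,\frak p(t)}$ satisfies the full set of stability events in the $q=1$ form, so Lemma \ref{lem:geometric-estimates_perco} (applied to the analogue of \eqref{eq:sum_cov_crossing_edge_squared}) together with the fact that $\Xi(N)\asymp 1$ when $q=1$ yields the $N$-uniform bound
\[
\frac{\dd\langle M^{\calR}\rangle_t}{\dd t}\lesssim_\delta 1.
\]
Integrating gives $\langle M^{\calR}\rangle_{\sigma_N^2\wedge T_b}\lesssim_\delta \sigma_N^2$, and standard exponential martingale inequalities (e.g.\ Bernstein for continuous martingales with bounded jumps, here continuous) imply
\[
\mathbf P_N\Bigl[\sup_{0\le t\le \sigma_N^2\wedge T_b}|M^{\calR}_t|>\delta\Bigr]\lesssim_\delta \exp\bigl(-c\,\delta^2/\sigma_N^2\bigr).
\]

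The remainder of the argument parallels Step 3 of Lemma \ref{lem:stability-RSW}. Choose $\delta>0$ small enough (depending only on $q=1$) so that $\phi_{\bbT_N,1/2}[\scrC^0_{\bullet}(\calR)]\ge 4\delta$ for every $2$ by $1$ rectangle and every $\bullet\in\{h,v\}$, and take the union bound over the $O(N^3)$ such rectangles: provided $\sigma_N^2\le c\log(N)^{-1}$ with $c$ small, one absorbs the $N^3$ factor and obtains
\[
\mathbf P_N\Bigl[\exists \calR,\bullet,\ 0\le t\le \sigma_N^2\wedge T_b: |M^{\calR}_t|>\delta\Bigr]\lesssim_\delta \exp\bigl(-c/\sigma_N^2\bigr),
\]
which gives the $\textrm{RSW}(2\delta,N)$ conclusion on the complementary event. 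The analogous statement for $\textrm{Stab}_{\#}(2\delta,N)$ is obtained verbatim by applying the same reasoning to $\phi_{\bbT_N,\frak p(t)}[\calA^{(e)}_{\#}(R)]$ and using the $q=1$ version of \eqref{eq:sum_cov_arm_squared}; one must keep track of the prefactor $\pi_{\#,p_c}(R)^2$ in the bracket and of $\pi_{\#,p_c}(R)$ in the deviation threshold, but these cancel inside the Gaussian tail so the same logarithmic window and the same union bound over pairs $(e,R)$ (also $O(N^{O(1)})$) suffice. The only genuine obstacle is the union bound: it is the factor that sets the regime $\sigma_N^2\lesssim \log(N)^{-1}$, and without the miraculous vanishing of the drift term it would fail polynomially rather than logarithmically.
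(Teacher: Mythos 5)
Your proof is correct and follows essentially the same approach as the paper: decompose $\phi_{\bbT_N,\frak p(t)}[\mathcal{S}]$ via Itô, observe that for $q=1$ the finite-variation part vanishes because $\phi_{\underline p}[\omega_e]=p_e$ (you see this through the $\bigl(1-\tfrac{1}{p_e}\phi_{\underline p}[\omega_e]\bigr)$ factor in Proposition \ref{prop:differentiate-bonds}; the paper states it directly), bound the bracket by $O(1)$ per unit time via Lemma \ref{lem:geometric-estimates_perco} and $\Xi(N)\asymp 1$, and conclude by a martingale tail bound together with a union bound over $O(N^{O(1)})$ objects, which is exactly where the logarithmic constraint $\sigma_N^2\lesssim\log(N)^{-1}$ enters. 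The only cosmetic deviation is your use of the decorated crossing events $\scrC^0_{\bullet}(\calR)$: for $q=1$ boundary conditions are irrelevant (as the paper remarks after stating $(\star)_N^{q=1}$), so the plain events $\scrC_{\bullet}(\calR)$ suffice and the paper works with those; your version is harmless but unnecessary. Your remark that the $\pi_{\#,p_c}$ prefactors in the arm-event bracket and threshold cancel in the Gaussian tail is the right observation for the $\textrm{Stab}_{\#}$ part.
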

\begin{proof}
We will focus here in the proof of the first estimate, as one can adapt the proof to the second estimate as discussed at the end of the proof of Lemma \ref{lem:stability-RSW-L3}. Note that we do not need the events $\scrC^0_{\bullet}(\calR)$ and $\scrC^{\star1}_{\bullet}(\calR)$ in this section as boundary conditions have no effect in percolation, so that the RSW property is given simply by estimates on the crossing events $\scrC_{\bullet}(\calR)$ and $\scrC^{\star}_{\bullet}(\calR)$

\paragraph{Step 0: Canonical decomposition of the semimartingale.}
Fix a $2$ by $1$ rectangle $\calR\in \bbT_N$. Using the decomposition of semimartingales, one can write $\phi_{\bbT_N,\frak p(t)}[\scrC_{\bullet}(\calR)]=M^\calR_t+A^\calR_t$ where $M^\calR$ is a local martingale started at $0$ and $A^\calR$ is an adapted finite variation process, whose exact expression are given by
\begin{align*}
	\dd M^{\calR}_t&=\sum_{e\in E_N}\frac 1{\mathbf{p}_e(t)(1-\mathbf{p}_e(t))}\Cov_{\frak p(t)}(\scrC_{\bullet}(\calR),\omega_e)\dd B^{(e)}_t \\
	\dd A^\calR_t &= \sum_{e\in E_N}\frac1{\mathbf{p}_e(t)(1-\mathbf{p}_e(t))^2}\left(1-\frac 1{\mathbf{p}_e(t)}\phi_{\bbT_N,\frak p(t)}[\omega_e]\right)\Cov_{\frak p(t)}(\scrC_{\bullet}(\calR),\omega_e)\dd t
\end{align*}

\paragraph{Step 1: Evaluating the order of magnitude of $A^\calR_t$.} 
In the case of percolation, the edges parameter are independent therefore for any $e\in E_N$ one has 
\begin{equation*}
	\phi_{\bbT_N,\frak p(t)}[\omega_e]=\mathbf{p}_e(t),
\end{equation*}
and so $\dd A^{\calR}_t = 0$.

\paragraph{Step 2: Evaluating the order of magnitude of $M^\calR_t$.}
We estimate once again the rate of growth of the local-martingale $M^\calR_t$ via some control of its bracket, which is given by
\begin{equation*}
    \dd \langle M^\calR\rangle_t = \sum_{e\in E_N}\left(\frac 1{\mathbf{p}_e(t)(1-\mathbf{p}_e(t))}\Cov_{\frak p(t)}(\scrC_{\bullet}(\calR),\omega_e)^2\right)\dd t.
\end{equation*}
Using again \eqref{eq:bound-away-0-1}, one deduces that
\begin{equation}
    \frac{\dd \langle M^\calR\rangle_t}{\dd t}\asymp \sum_{e\in E_N}\Cov_{\frak p(t)}(\scrC_{\bullet}(\calR),\omega_e)^2.
\end{equation}
Now Lemma \ref{lem:near-critical_estimates} allows us to use the critical estimates of Lemma \ref{lem:geometric-estimates_perco} to get
\begin{equation}\label{eq:rate-growth-M-perco}
	\forall\ 0\leq t \leq \sigma_N^2\wedge T_b,\quad \frac{\dd \langle M^{\calR}\rangle_t}{\dd t}\lesssim 1.
\end{equation}

\paragraph{Step 3: Concluding via large deviations estimates for $M^\calR_t$.}
We conclude the proof by some straightforward large deviation estimates for the $M^\calR$ as in the previous section, we omit the details. Once again, adapting for arm events poses no difficulties.
\end{proof}

\begin{proof}[Proof of Theorem \ref{thm:percolation-square}]
Once Proposition \ref{prop:log-case-perco} is proven, one can apply once again the Skorokhod embedding theorem as in Section \ref{sec:naive-random-bonds} to pass from Brownian motions to a sequence of general random variables satisfying $(\star)^{q=1}_{N} $. We simply point out the intermediate estimate which is used
\begin{equation*}
    \mathbf P_N\left[\sup_e T^{(e)}>\Sigma_{\frak p}^{1/2}\right]\lesssim \exp(-c\cdot \Sigma_{\frak p}^{-1/2}).
\end{equation*}
\end{proof}

\subsection{Random bond percolation via noise sensitivity}\label{sub:noise-sensitivy}
The goal of this section is to extend, in the large scale limit the RSW box crossing property to deformations from the critical point which are \emph{macroscopic}, meaning that one \emph{doesn't} need to scale the maximal variance $\Sigma_{\frak{p}}$ inside $\bbT_N$ to $0$ as $N\to \infty $, working in \emph{true random bond context}. From the previous section, this theorem comes as no surprise. Indeed, we showed that, in the case of Brownian deformations centred at the critical point, as long as the model stays critical \emph{at every scale}, the stochastic derivative \eqref{eq:rate-growth-M-perco} is \emph{bounded} independently from $N$. The only reason that one needs additional logarithmic factors is to brutally ensure that \emph{none} of the probabilities $\phi_{\frak p}[\scrC_{\bullet}(\calR)]$ have deviated far from their original value, while one can expect that even if few boxes became too off-critical, it will not have a big influence on the criticality of the large scale model. Therefore, one should find some kind of renormalisation argument to bypass the relevance of criticality being preserved at each scale. The idea to use noise sensitivity in our context is not so surprising. In fact, the key estimate in our proof for showing that RSW holds up to windows of logarithmic size depends essentially on the estimate, for all $2$ by $1$ rectangles $\calR$,
\begin{equation*}
    \sum_{e\in \bbZ^2}\Cov_{p_c}(\scrC_{\bullet}(\calR),\omega_e)^2\lesssim 1
\end{equation*}
where we point out that the covariance is zero when $e\not\in \calR$. Using half-plane two arm events (see \cite[Chapter VI.1]{garban2012noise}), one can actually also show with the arguments of Section \ref{sub:geometric-estimates} that as the width of $\calR$ goes to infinity,
\begin{equation}\label{eq:estimate-noise-sensitivity}
    \sum_{e\in \bbZ^2}\Cov_{p_c}(\scrC_{\bullet}(\calR),\omega_e)^2\to 0.
\end{equation}
This is exactly the usual condition used to show that the event $\scrC_{\bullet}(\calR)$ is asymptotically noise sensitive as the side length of $\calR$ goes to infinity, see \cite[Theorem 1.3]{benjamini1999noise}. The main difficulty with our method is that it requires that the RSW property is preserved at all scales and in all boxes, which is not enough to prove that \eqref{eq:estimate-noise-sensitivity} is preserved up to macroscopic deformations. Still, one wants to use this intuition, which hints that the missing renormalisation argument can be derived via some noise sensitivity of Bernoulli percolation, heavily studied in \cite{benjamini1999noise,garban2013pivotal,tassion2023noise}. We end up using a rather hands-on argument to conclude. 
Let us start with a sequence of random variables $\frak{p}=(\mathbf p_e)_{e\in E(\mathbb{Z}^2)}$, chosen under some probability measure $\mathbf{P}$ such that 
\begin{enumerate}
    \item The variables $\mathbf p_e$ are mutually independent.
    \item For each edge $e$, one has $\bbE[\mathbf p_e]=\frac12$
    \item There is some $\eps>0$ such that almost surely, all of the $\mathbf p_e$ belong to $[\eps,1-\eps]$.
\end{enumerate}
We denote by $\phi_{\frak{p}}$ the associated \emph{full-plane} percolation measure. We are now in position to state the asymptotic strong box crossing property for crossings. Denote by $S_N$ a square of side-length $N$ in the plane.
\begin{proposition}
\label{prop:random-bond-perco}
    Assume that the variables $\frak p$ satisfy the above conditions. Then as $N\to\infty$, one has the following convergence in probability
    \begin{equation*}
        \phi_{\frak p}[\scrC_h(S_N)] \xlongrightarrow{(\bbP)}\frac12.
    \end{equation*}
\end{proposition}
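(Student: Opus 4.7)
The plan is to deduce convergence in probability via Chebyshev's inequality, by controlling the first two $\mathbf P$-moments of $F_N(\frak p):=\phi_{\frak p}[\scrC_h(S_N)]$. The crucial input will be the noise sensitivity of the crossing event for critical Bernoulli percolation on $\bbZ^2$, due to Benjamini--Kalai--Schramm \cite{benjamini1999noise}.

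\textbf{Step 1 (Annealed mean).} Since the $\mathbf p_e$ are independent with $\bbE[\mathbf p_e]=\tfrac12$, the annealed law $\mathbf P\otimes \phi_{\frak p}$ coincides with critical Bernoulli percolation $\phi_{1/2}$ on $\bbZ^2$. Self-duality combined with the horizontal--vertical symmetry of $S_N$ yields $\bbE_{\mathbf P}[F_N(\frak p)]=\phi_{1/2}[\scrC_h(S_N)]=\tfrac12$ for a suitably chosen self-dual square (and up to $o(1)$ in general by RSW).

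\textbf{Step 2 (Identification of the two-copy measure as BKS noise).} The second moment can be written
\begin{equation*}
    \bbE_{\mathbf P}\bigl[F_N(\frak p)^2\bigr] = \bbE\bigl[\ind_{\scrC_h(S_N)}(\omega)\,\ind_{\scrC_h(S_N)}(\omega')\bigr],
\end{equation*}
where $\bbE$ is now the expectation under the annealed two-copy measure $\mathbf P\otimes \phi_{\frak p}^{\otimes 2}$, in which the pairs $(\omega_e,\omega'_e)$ are mutually independent across edges $e\in E(\bbZ^2)$. Using $\bbE[\mathbf p_e]=\tfrac12$, each pair has marginals $\mathrm{Bern}(\tfrac12)$ and satisfies
\begin{equation*}
    \mathbf P[\omega_e\ne \omega'_e] = 2\,\bbE[\mathbf p_e(1-\mathbf p_e)] = \tfrac12-2\sigma_e^2,\qquad \sigma_e^2:=\textrm{Var}(\mathbf p_e).
\end{equation*}
Equivalently, $(\omega,\omega')$ is jointly distributed as a critical sample $\omega\sim \phi_{1/2}$ together with its bit-flip noise $\omega'$, in which each edge is independently flipped with probability $\rho_e:=\tfrac12-2\sigma_e^2$. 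Popoviciu's inequality and $\mathbf p_e\in[\eps,1-\eps]$ give $\sigma_e^2\le (1-2\eps)^2/4$, so $\rho_e\ge \rho_0:=2\eps(1-\eps)>0$ uniformly.

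\textbf{Step 3 (Applying BKS and concluding).} Setting $g_N:=\ind_{\scrC_h(S_N)}$, viewed as a function on $\{-1,+1\}^{E(\bbZ^2)}$, its Fourier--Walsh expansion gives, in our edge-dependent coupling,
\begin{equation*}
    \Cov\bigl(g_N(\omega),g_N(\omega')\bigr) = \sum_{S\ne \emptyset}\hat g_N(S)^2\prod_{e\in S}(1-2\rho_e) \;\leq\; \sum_{S\ne \emptyset}\hat g_N(S)^2(1-2\rho_0)^{|S|},
\end{equation*}
the right-hand side being the covariance for uniform $\rho_0$-noise. The Benjamini--Kalai--Schramm noise sensitivity theorem applied to the crossing event at $p_c=\tfrac12$ on $\bbZ^2$ ensures that this last quantity tends to $0$ as $N\to\infty$ for every fixed $\rho_0>0$. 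Combined with Step 1, this yields $\textrm{Var}_{\mathbf P}(F_N)\to 0$, and Chebyshev's inequality concludes that $F_N\to \tfrac12$ in $\mathbf P$-probability.

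\textbf{Main obstacle.} The substantive work is the identification in Step 2 of the annealed two-copy measure with a BKS-style noise coupling whose flip probabilities are uniformly bounded below by $2\eps(1-\eps)>0$. Once this is in place, the edge-dependent nature of $\rho_e$ is handled by a termwise domination in the Fourier--Walsh expansion, reducing everything to the standard noise sensitivity statement for critical crossings; no new stochastic-geometric estimate on critical percolation is required.
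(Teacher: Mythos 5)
Your proof is correct, and it takes a genuinely different route from the paper's. The paper first handles the extremal two-point case $\mathbf p_e^\eps\in\{\eps,1-\eps\}$ (Lemma \ref{lem:random-bond-perco}) by observing that sampling from $\phi_{\frak p^\eps}$ is exactly $\eps$-noising the ``environment configuration'' $\omega_{\text{env}}$, then invoking the conditional-expectation reformulation of BKS noise sensitivity; the general case is reduced to this one by constructing auxiliary two-point variables $\mathbf q_e\in\{\eps,1-\eps\}$ with $\bbE[\mathbf q_e\mid\frak p]=\mathbf p_e$ and applying the tower property with Markov's inequality. You instead compute $\mathrm{Var}_{\mathbf P}(F_N)$ directly by identifying the annealed two-copy measure with an edge-inhomogeneous noise coupling $\mathbf P[\omega_e\ne\omega'_e]=\rho_e=\tfrac12-2\sigma_e^2\ge 2\eps(1-\eps)>0$, and then dominate termwise in the Fourier--Walsh expansion to reduce to the uniform-noise case covered by BKS. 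Both arguments rest on the same core theorem (noise sensitivity of critical crossings on $\bbZ^2$), but your approach sidesteps the auxiliary randomization $\frak q$ and handles the edge-dependent noise rate in one line via the pointwise inequality $0\le 1-2\rho_e\le 1-2\rho_0$, at the cost of passing through the Fourier--Walsh representation; the paper's approach is more probabilistic and avoids the spectral language entirely. One small point worth stating explicitly is that the pairs $(\omega_e,\omega'_e)$ are mutually independent across edges because the joint law factorizes as $\prod_e\mathbf P(\dd\mathbf p_e)\,\phi_{\mathbf p_e}(\dd\omega_e)\,\phi_{\mathbf p_e}(\dd\omega'_e)$ by the independence of the $\mathbf p_e$ and of the edges under $\phi_{\frak p}$; you assert this but do not justify it, and it is the ingredient that makes the Fourier--Walsh diagonalization $\bbE[\chi_S(\omega)\chi_T(\omega')]=\delta_{S,T}\prod_{e\in S}(1-2\rho_e)$ legitimate.
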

We start by proving a particular case of the result above, which is actually just a rewording of noise sensitivity of crossing events for percolation.

\begin{lemma}[A simple consequence of \cite{benjamini1999noise,garban2012noise}]\label{lem:random-bond-perco}
    Assume $\frak p^{\eps}=(\mathbf p_e^{\eps})_e$ has the specific following law: the $\mathbf p_e^{\eps}$ are i.i.d. variables equal to $\eps$ or $1-\eps$, each with probability $\frac 12$. Then the result of proposition \ref{prop:random-bond-perco} holds.
\end{lemma}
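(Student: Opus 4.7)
The plan is to reduce the statement to the classical noise sensitivity of crossing events for critical Bernoulli percolation, by viewing the two-level randomness (first the environment $\frak p^\eps$, then the percolation configuration) as producing a noise-coupled pair of critical percolations.

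First, I will compute the annealed one-point marginals. Since $\bbE[\mathbf p_e^\eps]=\frac12$ and the $\mathbf p_e^\eps$ are independent, integrating over $\mathbf P$ gives, for every edge $e$,
\begin{equation*}
\mathbf P\otimes \phi_{\frak p^\eps}[\omega_e=1]= \bbE[\mathbf p_e^\eps]=\tfrac 12,
\end{equation*}
and the edges remain mutually independent. Hence the annealed law of $\omega$ is exactly critical Bernoulli bond percolation. In particular,
\begin{equation*}
    \bbE_{\mathbf P}\big[\phi_{\frak p^\eps}[\scrC_h(S_N)]\big]=\phi_{1/2}[\scrC_h(S_N)]\xrightarrow[N\to\infty]{} \tfrac12,
\end{equation*}
where the limit follows from self-duality of critical bond percolation on $\bbZ^2$ (for the standard choice of $S_N$ making the model self-dual) and the RSW estimates already used in the paper.

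The main step is to show $\operatorname{Var}_{\mathbf P}(\phi_{\frak p^\eps}[\scrC_h(S_N)])\to 0$. To do this, I will take two configurations $\omega^{(1)},\omega^{(2)}$ that are conditionally i.i.d.\ given $\frak p^\eps$ under $\phi_{\frak p^\eps}$. Then
\begin{equation*}
\bbE_{\mathbf P}\big[\phi_{\frak p^\eps}[\scrC_h(S_N)]^2\big]=\mathbf P\otimes \phi_{\frak p^\eps}^{\otimes 2}\big[\scrC_h(S_N)(\omega^{(1)})\cap \scrC_h(S_N)(\omega^{(2)})\big].
\end{equation*}
A direct computation on a single edge shows that after averaging over $\mathbf p_e^\eps\in\{\eps,1-\eps\}$, the marginal law of $(\omega_e^{(1)},\omega_e^{(2)})$ is that of two $\operatorname{Bernoulli}(\tfrac12)$ random variables with
\begin{equation*}
    \mathbf P\otimes\phi_{\frak p^\eps}^{\otimes 2}[\omega_e^{(1)}=\omega_e^{(2)}=1]=\tfrac12\eps^2+\tfrac12(1-\eps)^2=\tfrac12-\eps(1-\eps),
\end{equation*}
so their correlation coefficient equals $(1-2\eps)^2$. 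Equivalently, $(\omega^{(1)},\omega^{(2)})$ can be realized as a standard noise-pair of critical Bernoulli percolations with noise parameter $\delta_\eps=4\eps(1-\eps)\in(0,1)$ (each edge is independently resampled with probability $\delta_\eps$, and otherwise kept identical between the two copies).

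The last step applies the noise sensitivity of the horizontal crossing event $\scrC_h(S_N)$ for critical Bernoulli percolation, which is the main theorem of \cite{benjamini1999noise} (see also \cite{garban2013pivotal}): for every fixed $\delta\in(0,1)$, as $N\to\infty$,
\begin{equation*}
    \operatorname{Cov}\!\big(\mathbf 1_{\scrC_h(S_N)}(\omega^{(1)}),\mathbf 1_{\scrC_h(S_N)}(\omega^{(2)})\big)\xrightarrow[N\to\infty]{}0.
\end{equation*}
Applying this with $\delta=\delta_\eps>0$ yields
\begin{equation*}
    \bbE_{\mathbf P}\big[\phi_{\frak p^\eps}[\scrC_h(S_N)]^2\big]-\bbE_{\mathbf P}\big[\phi_{\frak p^\eps}[\scrC_h(S_N)]\big]^2\xrightarrow[N\to\infty]{}0,
\end{equation*}
which is exactly $\operatorname{Var}_{\mathbf P}(\phi_{\frak p^\eps}[\scrC_h(S_N)])\to 0$. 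Combined with the annealed mean computation, Chebyshev's inequality then gives convergence in probability to $\tfrac12$.

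The one point of care is the identification of $(\omega^{(1)},\omega^{(2)})$ with a genuine noise pair of critical configurations; the computation above at a single edge, combined with independence across edges (since the $\mathbf p_e^\eps$ are independent and the $\omega_e^{(i)}$'s are conditionally independent given the environment), makes this immediate. The main conceptual input is therefore noise sensitivity of crossing events, and no new combinatorial or geometric estimates beyond those already in \cite{benjamini1999noise} are required.
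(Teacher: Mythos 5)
Your proof is correct and follows essentially the same route as the paper: both reduce the statement to the BKS noise-sensitivity theorem for crossing events. The only difference is presentational — the paper conditions on the deterministic ``environment percolation'' $\omega_{env}$ and invokes a standard equivalence (stated as an exercise in \cite{garban2012noise}) between noise sensitivity and vanishing variance of the conditional crossing probability, whereas you make that same equivalence explicit by taking two conditionally i.i.d.\ copies, identifying their joint law as a noise pair with parameter $4\eps(1-\eps)$, and finishing with Chebyshev.
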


\begin{proof}
    Consider a realisation $\frak p^{\eps}$ of the environment. We define a percolation configuration $\mathbf \omega_{env}$ by setting $\omega_{env}(e)$ to be open if $\mathbf p_e^{\eps}=1-\eps$, and closed otherwise. Note that $\omega_{env}(e)$ is a deterministic function of the environment $\frak p^{\eps}$, but its annealed law is that of critical percolation, because each $\mathbf p^{\eps}_e$ is independently $\eps$ or $1-\eps$ with probability $\frac12$.

    Now consider the following way of sampling the percolation configuration $\omega$ from the random bond percolation $\phi_{\frak p^{\eps}}$: one starts with $\omega_{env}$, and for each edge $e$, one flips the state of edge $e$ with probability $\eps$. Indeed, if for example $\omega_{env}(e)=0$, this means that $\mathbf p_e=\eps$, and so $\omega(e)=0=\omega_{env}(e)$ with probability $1-\eps$, and $\omega(e)=1=1-\omega_{env}(e)$ with probability $\eps$, independently at each edge. Then \cite[Theorem 1.2]{benjamini1999noise} states that the crossing events $\scrC_h(S_N)$ are asymptotically noise sensitive as $N\to \infty$. According to \cite[Exercise IV.6]{garban2012noise}, this is equivalent to saying that $\mathbf P[\omega\in \scrC_h(S_N)|\omega_{env}]-\mathbf P[\omega\in\scrC_h(S_N)]$ converges to $0$ in probability. But under the annealed probability measure $\mathbf P$, $\omega$ is simply a Bernoulli percolation with parameter $\frac12$, so $\mathbf P[\scrC_h(S_N)]\to \frac12$ as $N\to\infty$. This exactly gives the desired result.
\end{proof}

We now pass to the proof of Proposition \ref{prop:random-bond-perco} for a general set of independent random variables.
\begin{proof}
    Let $\frak p=(\mathbf p_e)_{e\in E(\mathbb{Z}^2)}$ be random variables satisfying the above condition. We will write the environment $\frak p$ as some averaged version of the environment $\frak p^{\eps}$. Given a realisation $\frak p=(\mathbf p_e)_{e\in E(\mathbb{Z}^2)}$ of the environment, one can define random variables $(\mathbf q_e)_{e\in E(\mathbb{Z}^2)}$ as follows: for each edge $e$, take $\mathbf q_e$ to be
    \begin{equation*}
        \mathbf q_e :=
        \begin{cases}
            \eps & \text{ with probability } \frac{1-\eps-\mathbf p_e}{1-2\eps}\\
            1-\eps & \text{ with probability } \frac{\mathbf p_e-\eps}{1-2\eps},
        \end{cases}
    \end{equation*}
in a way that the variables $\mathbf q_e$ are independent, both unconditionally and conditionally on $\frak p$. Note that this definition makes sense precisely because $\mathbf p_e$ is almost surely in $[\eps;1-\eps]$, and it is straightforward to see that $\bbE[\mathbf q_e|\frak p]=\mathbf p_e$. Consider the random environment $\frak q=(\mathbf q_e)_{e\in E(\mathbb{Z}^2)}$. Not conditioning anymore on the random variables $\frak p$, this environment $\frak q$ has independent bonds taking values in $\{\eps,1-\eps\}$, and it satisfies, for each $e\in E(\mathbb{Z}^2)$, $\bbE[\mathbf q_e]=\bbE[\bbE[\mathbf q_e|\frak p]]=\bbE[\mathbf p_e]=\frac12$. Therefore,   $\frak q$ actually has the law of the environment $\frak p^{\eps}$ given in Lemma \ref{lem:random-bond-perco}. This allows to conclude that for any $\gamma>0$,
    \begin{equation*}
        \mathbf P\Bigg[\left|\phi_{\frak q}[\scrC(S_N)]-\frac12\right|>\gamma\Bigg] \underset{N\to \infty}{\longrightarrow}  0,    \end{equation*}
    which, by Markov's inequality, implies that
    \begin{equation*}
        \mathbf P\Bigg[\mathbf P\Bigg[\left|\phi_{\frak q}[\scrC(S_N)]-\frac12\right|>\gamma\ \Bigg|\frak p\Bigg]>\gamma \Bigg]\underset{N\to \infty}{\longrightarrow}  0. 
     \end{equation*}
To conclude, it is enough to notice that to sample the percolation configuration $\omega$ under $\phi_{\frak p}$, one can first sample $\frak q$ from $\frak p$, and then sample $\omega$ from $\phi_{\frak q}$ (as $\bbE[\mathbf q_e|\frak p]=\mathbf p_e$). Therefore
    \begin{equation*}
        \phi_{\frak p}[\scrC(S_N)]=\bbE[\phi_{\frak q}[\scrC(S_N)]|\frak p],
    \end{equation*}
which allows to write
    \begin{align*}
        \left|\phi_{\frak p}[\scrC(S_N)]-\frac12\right|&\le \bbE\Bigg[\left|\phi_{\frak q}[\scrC(S_N)]-\frac12\right|\ \Bigg| \frak p\Bigg]\\
        &\le \gamma + \mathbf P\Bigg[\left|\phi_{\frak q}[\scrC(S_N)]-\frac12\right|>\gamma\ \Bigg|\frak p\Bigg].
   \end{align*}
This allows to conclude that
    \begin{equation*}
        \mathbf P\Bigg[\left|\phi_{\frak p}[\scrC(S_N)]-\frac12\right|>2\gamma\Bigg]\le \mathbf P\Bigg[\mathbf P\Bigg[\left|\phi_{\frak q}[\scrC(S_N)]-\frac12\right|>\gamma\ \Bigg|\frak p\Bigg]>\gamma \Bigg]\underset{N\to \infty}{\longrightarrow}  0.
    \end{equation*}
 This concludes the proof.
\end{proof}

Let us now move on to proving that Cardy's formula stays true in a random environment.

\begin{proof}[Proof of Theorem \ref{thm:Cardy-random-environment}]
    Fix some $x\in[0,1]$, and consider the crossing event denoted $F_N(x)=\calC([C_N;A_N],[x_N;B_N])$. If $x=0$ or $x=1$, then $\phi_{\frak p}[F_N(x)]$ is identically $0$ or $1$ respectively. Assuming that $x\in(0,1)$, we claim that as $N\to\infty$, the events $F_N(x)$ are asymptotically noise-sensitive. Indeed, one can follow the same computations as in \cite[Chapter VI.1]{garban2012noise} by considering the quadrilateral $A_NB_Nx_NC_N$. The key point is that all of the sides of this quadrilateral are straight lines, so one can use half-plane three arm events, and the corners have an opening angle of $\pi$ at $x_N$ and $\pi/3\le \pi$ at $A_N,B_N,C_N$ (see Figure \ref{fig:cardy} in the introduction), so one can use half-plane two arm events to control pivotality in the corners. Then one can run the proof of Proposition \ref{prop:random-bond-perco} to get that as $N\to\infty$,
    \begin{equation*}
        \phi_{\frak p}[F_N(x)]-\mathbf P[F_N(x)]\xlongrightarrow{(\bbP)}0.
    \end{equation*}
    Once again, the annealed model is simply critical Bernoulli percolation, which ensures via Theorem \ref{thm:cardy_formula} that one has $\bbP[F_N(x)]\to 1-x$ as $N\to \infty$. Therefore, one gets that $\phi_{\frak p}[F_N(x)]$ converges to $1-x$ in probability as $N\to\infty$. 
    
    Now we quickly explain how to pass from this ``pointwise" convergence in probability to the ``uniform" convergence in probability of Theorem \ref{thm:Cardy-random-environment}. This is a basic consequence of the fact that the $x\mapsto\phi_{\frak p}[F_N(x)]$ are increasing. Indeed, fix some $K$ and consider $x_i = \frac iK$ for $i=0,1,\ldots,K$. We may take $N$ large enough so that for each $i$, one has \begin{equation*}
        \mathbf P\big[|\phi_{\frak p}[F_N(x_i)]-x_i|>1/K\big]\le \frac1{K(K+1)}.
    \end{equation*}
    so that by a union bound,
    \begin{equation*}
        \mathbf P\big[\forall\ 0\le i\le K,\quad |\phi_{\frak p}[F_N(x_i)]-x_i|\le 1/K\big]>1- 1/K.
    \end{equation*}
    Under this event, for each $x\in (0,1)$, let $i$ be the index such that $x_i\le x<x_{i+1}$. Then we may write
    \begin{equation*}
        x_i-1/K\le\phi_{\frak p}[F_N(x_i)]\le\phi_{\frak p}[F_N(x)]\le \phi_{\frak p}[F_N(x_{i+1})]\le x_{i+1}+1/K.
    \end{equation*}
    Plugging in the fact that $x_{i+1}\le x+1/K$ and $x_i\ge x-1/K$ gives that ${|\phi_{\frak p}[F_N(x)]-x|\le 2/K}$. Therefore, for $N$ large enough,
    \begin{equation*}
        \mathbf P\big[\forall\ x\in [0,1],\quad|\phi_{\frak p}[F_N(x)]-x|\le 2/K\big]>1-1/K
    \end{equation*}
    Taking $N\to\infty$ and $K\to\infty$ sufficiently slowly gives exactly the statement of Theorem \ref{thm:Cardy-random-environment}.
\end{proof}

\appendix

\section{Computations}\label{sec:appendix}

\subsection{Derivatives in the $p_e$ in non translation invariant environments}\label{app:derivative_formulas}
We prove now Proposition \ref{prop:differentiate-bonds} which generalises the formulae used extensively in \cite{FK_scaling_relations} when differentiating the probability of an event with respect to some uniform  parameter $p$, which is in our context no longer the same at each edge. All results in this section are valid on general graphs $G=(V,E)$ and for all $q$, so we will ignore all dependencies on the graph and on $q$. Recall the weight of a configuration $w_{\underline p}(\omega)$ whose expression is given just above the statement of Proposition \ref{prop:differentiate-bonds}.
\begin{proof}[Proof of Proposition \ref{prop:differentiate-bonds}]
First, using the formula for $w_{\underline p}(\omega)$, we may write, for all edges $e\in E_N$ and all $\omega$
\begin{align*}
    \frac{\partial}{\partial p_e}w_{\underline p}(\omega)&=\frac1{p_e(1-p_e)}\ind(\omega_e=1)w_{\underline p}(\omega),\\
    \frac{\partial^2}{\partial p_e^2}w_{\underline p}(\omega)&=\frac1{p_e(1-p_e)^2}\ind(\omega_e=1)w_{\underline p}(\omega).
\end{align*}
For an event $\calS$, one can write directly that 
\begin{equation}
    \phi_{\underline p}[\calS] = \frac{Z_{\underline p}(\calS)}{Z_{\underline p}}, \quad \textrm{ with } Z_{\underline p}(\calS) := \sum_{\omega\in \calS}w_{\underline p}(\omega)
\end{equation}
and $Z_{\underline p}=Z_{\underline p}(\{0,1\}^E)$ is the partition function of the model. For any edge $e\in E_N$, plugging in the expressions for the derivatives of $w_{\underline p}(\omega)$ gives
\begin{align*}
	 \frac{\partial}{\partial p_e}Z_{\underline p}(\calS) &= \frac 1{p_e(1-p_e)}Z_{\underline p}(\calS\cap\{\omega_e=1\}),\\
	\frac{\partial^2}{\partial p_e^2}Z_{\underline p}(\calS) & = \frac 1{p_e(1-p_e)^2}Z_{\underline p}(\calS\cap\{\omega_e=1\}).\\
\end{align*}
In particular
\begin{align*}
	\frac{\partial}{\partial p_e}\phi_{\underline{p}}[\calS]&=\frac{1}{Z_{\underline p}} \frac{\partial}{\partial p_e}Z_{\underline p}(\calS) - \frac{\frac{\partial}{\partial p_e}Z_{\underline p}}{Z_{\underline p}^2} Z_{\underline p}(\calS)\\
	 &=  \frac1{p_e(1-p_e)}\left[\frac 1{Z_{\underline p}}Z_{\underline p}(\calS\cap\{\omega_e=1\})-\frac{Z_{\underline p}(\omega_e=1)}{Z_{\underline p}^2}Z_{\underline p}(\calS)\right] \\
	&  =\frac1{p_e(1-p_e)}\Big[\phi_{\underline{p}}[\calS\cap \{\omega_e=1\}]-\phi_{\underline{p}}[\omega_e]\phi[\calS]\Big]\\
	&=\frac1{p_e(1-p_e)}\textrm{Cov}_{\underline p }(\calS,\omega_e).
\end{align*}
As a corollary, one deduces that for any edges $e_1,e_2,\ldots,e_n,f$, not necessarily distinct, one has
\begin{equation*}
    \frac{\partial}{\partial p_f}\phi_{\underline{p}}\left[\prod_{k=1}^n\omega_{e_i}\right]=\frac1{p_f(1-p_f)}\left(\phi_{\underline{p}}\left[\omega_f\prod_{k=1}^n\omega_{e_i}\right]-\phi_{\underline{p}}[\omega_f]\phi_{\underline{p}}\left[\prod_{k=1}^n\omega_{e_i}\right]\right).
\end{equation*}
Applying this to $\Cov_{\underline p}(\omega_e,\omega_f)=\phi_{\underline p}[\omega_e\omega_f]-\phi_{\underline p}[\omega_e]\phi_{\underline p}[\omega_f]$, one gets for any $e,f,g\in E_N$,
\begin{align*}
    \frac{\partial}{\partial p_g}\Cov_{\underline p}(\omega_e,\omega_f) &= \frac1{p_g(1-p_g)}\Bigg(\phi_{\underline p}[\omega_g\omega_e\omega_f]-\phi_{\underline p}[\omega_g]\phi_{\underline p}[\omega_e\omega_f]-\phi_{\underline p}[\omega_e]\phi_{\underline p}[\omega_g\omega_f] \\
    &\quad  +\phi_{\underline p}[\omega_e]\phi_{\underline p}[\omega_g]\phi_{\underline p}[\omega_f]-\phi_{\underline p}[\omega_f]\phi_{\underline p}[\omega_g\omega_e]
    +\phi_{\underline p}[\omega_f]\phi_{\underline p}[\omega_g]\phi_{\underline p}[\omega_e]\Bigg)\\
    &=\frac1{p_g(1-p_g)}\kappa_3^{\underline p}(e,f,g)
\end{align*}
We compute now the second derivatives of $\phi_{\underline p}[\calS]$.
\begin{align*}
	\frac{\partial ^2}{\partial p_e^2}\phi_{\underline{p}}[\calS]&=\frac{1}{Z_{\underline p}} \frac{\partial ^2}{\partial p_e^2}Z_{\underline p}(\calS) - 2\frac{\frac{\partial}{\partial p_e}Z_{\underline p}}{Z_{\underline p}^2} \frac{\partial}{\partial p_e}Z_{\underline p}(\calS) + \frac{-Z_{\underline p}\frac{\partial^2}{\partial p_e^2}Z_{\underline p}+2(\frac{\partial}{\partial p_e}Z_{\underline p})^2}{Z_{\underline p}^3}Z_{\underline p}(\calS)\\
	& =\frac 2{p_e(1-p_e)^2}\frac{Z_{\underline p}(\calS\cap \{\omega_e=1\})}{Z_{\underline p}}- \frac 2{p_e^2(1-p_e)^2}\frac{Z_{\underline p}(\omega_e=1)}{Z_{\underline p}}\frac{Z_{\underline p}(\calS\cap \{\omega_e=1\})}{Z_{\underline p}}\\
	& \quad -\frac 2{p_e(1-p_e)^2}\frac{Z_{\underline p}(\omega_e=1)}{Z_{\underline p}}\frac{Z_{\underline p}(\calS)}{Z_{\underline p}}+\frac 2{p_e^2(1-p_e)^2}\frac{Z_{\underline p}(\omega_e=1)^2}{Z_{\underline p}^2}\frac{Z_{\underline p}(\calS)}{Z_{\underline p}} \\
	&= \frac 2{p_e(1-p_e)^2}\left(1-\frac1{p_e}\phi_{\underline{p}}[\omega_e]\right)\textrm{Cov}_{\underline p }(\calS,\omega_e).
\end{align*}
And now differentiating twice covariances with respect to an edge bond $p_g$: 
\begin{align*}
	  \frac{\partial ^2}{\partial p_g^2}\Cov(\omega_e,\omega_f)&=\frac{\partial^2}{\partial p_g^2}\phi_{\underline p}[\omega_e\omega_f]-\left(\frac{\partial^2}{\partial p_g^2}\phi_{\underline p}[\omega_e]\right)\phi_{\underline p}[\omega_f]-\phi_{\underline p}[\omega_e]\left(\frac{\partial^2}{\partial p_g^2}\phi_{\underline p}[\omega_f]\right)\\
      & \hspace{4cm} -2\left(\frac{\partial}{\partial p_g}\phi_{\underline p}[\omega_e]\right)\left(\frac{\partial}{\partial p_g}\phi_{\underline p}[\omega_f]\right)\\
	  & =\frac 2{p_g(1-p_g)^2}\left(1-\frac1{p_g}\phi_{\underline p}[\omega_g]\right)\Big[\phi_{\underline p}[\omega_e\omega_f\omega_g]-\phi_{\underline p}[\omega_e\omega_f]\phi_{\underline p}[\omega_g]\Big] \\
	  & \quad  -\frac 2{p_g(1-p_g)^2}\left(1-\frac1{p_g}\phi_{\underline p}[\omega_g]\right)\Big[\phi_{\underline p}[\omega_e\omega_g]-\phi_{\underline p}[\omega_e]\phi_{\underline p}[\omega_g]\Big]\phi_{\underline p}[\omega_f]\\
	  & \quad -\frac 2{p_g(1-p_g)^2}\left(1-\frac1{p_g}\phi_{\underline p}[\omega_g]\right)\Big[\phi_{\underline p}[\omega_f\omega_g]-\phi_{\underline p}[\omega_f]\phi_{\underline p}[\omega_g]\Big]\phi_{\underline p}[\omega_e] \\
	  &  \quad -\frac 2{p_g^2(1-p_g)^2}\Big[\phi_{\underline p}[\omega_e\omega_g]-\phi_{\underline p}[\omega_e]\phi_{\underline p}[\omega_g]\Big]\Big[\phi_{\underline p}[\omega_f\omega_g]-\phi_{\underline p}[\omega_f]\phi_{\underline p}[\omega_g]\Big]\\
	  &=\frac 2{p_g(1-p_g)^2}\left(1-\frac1{p_g}\phi_{\underline p}[\omega_g]\right)\kappa_3^{\underline p}(e,f,g)\\
	  & \quad -\frac 2{p_g^2(1-p_g)^2}\Cov_{\underline p}(\omega_e,\omega_g)\Cov_{\underline p}(\omega_f,\omega_g).
\end{align*}
\end{proof}
We mention that the computations of the first derivatives can be seen as a special case of the following more general result on cumulants of events. Since we will not be needing this result, we do not recall the definition of cumulants and we skip its proof.

\begin{lemma}
Let $A_1,A_2,\ldots,A_n$ be events, and let $e\in E_N$ be an edge. Then one has the identity for all $\underline p$:

\begin{equation*}
    \frac {\partial}{\partial p_e}\kappa^{\underline p}_n(A_1,A_2,\ldots,A_n)=\frac1{p_e(1-p_e)}\kappa^{\underline p}_{n+1}(A_1,A_2,\ldots,A_n,\omega_e).
\end{equation*}
\end{lemma}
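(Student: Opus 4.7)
I would prove this identity via the joint cumulant generating function, which transforms the combinatorial statement into a clean computation of a single scalar derivative. Introduce the auxiliary quantity
\begin{equation*}
    G(t_1,\ldots,t_n,t_{n+1}) := \log \phi_{\underline p}\!\left[\exp\!\left(\sum_{i=1}^n t_i\, \mathbbm{1}_{A_i} + t_{n+1}\, \omega_e\right)\right].
\end{equation*}
The defining property of cumulants as coefficients of the log-moment generating function gives immediately
\begin{equation*}
    \kappa_n^{\underline p}(A_1,\ldots,A_n) = \partial^n_{t_1 \cdots t_n} G(\underline t, 0)\big|_{\underline t=0}, \qquad \kappa_{n+1}^{\underline p}(A_1,\ldots,A_n,\omega_e) = \partial^{n+1}_{t_1 \cdots t_{n+1}} G\big|_{\underline t=0,\ t_{n+1}=0}.
\end{equation*}
Since the configuration space is finite, one may freely swap $\partial_{p_e}$ with $\partial^n_{t_1\cdots t_n}$, so it suffices to show that $\partial_{p_e} G(\underline t, 0)$ coincides, up to an additive constant in $\underline t$, with $\frac{1}{p_e(1-p_e)}\,\partial_{t_{n+1}}G(\underline t, t_{n+1})\big|_{t_{n+1}=0}$.

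The key step is the computation of $\partial_{p_e} G(\underline t, 0)$. Writing $Y := \exp(\sum_i t_i \mathbbm{1}_{A_i})$, which does not depend on $p_e$, the very same argument already used in the proof of Proposition~\ref{prop:differentiate-bonds} (differentiating the FK weight $w_{\underline p}(\omega)$ and the partition function separately, and regrouping) applies to any bounded random variable $Y$ and yields
\begin{equation*}
    \frac{\partial}{\partial p_e} \phi_{\underline p}[Y] = \frac{1}{p_e(1-p_e)}\,\mathrm{Cov}_{\underline p}(Y,\omega_e).
\end{equation*}
Dividing by $\phi_{\underline p}[Y]$, one obtains
\begin{equation*}
    \frac{\partial}{\partial p_e} G(\underline t, 0) = \frac{1}{p_e(1-p_e)}\left[\frac{\phi_{\underline p}[Y\omega_e]}{\phi_{\underline p}[Y]} - \phi_{\underline p}[\omega_e]\right].
\end{equation*}
Recognizing $\phi_{\underline p}[Y\omega_e]/\phi_{\underline p}[Y]$ as precisely $\partial_{t_{n+1}}G(\underline t, t_{n+1})\big|_{t_{n+1}=0}$, and noting that $\phi_{\underline p}[\omega_e]$ is independent of $\underline t$, one concludes
\begin{equation*}
    \frac{\partial}{\partial p_e} G(\underline t, 0) = \frac{1}{p_e(1-p_e)}\, \partial_{t_{n+1}} G(\underline t, t_{n+1})\big|_{t_{n+1}=0} + C,
\end{equation*}
with $C$ constant in $\underline t$.

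To finish, apply $\partial^n_{t_1\cdots t_n}$ and evaluate at $\underline t = 0$: the constant $C$ is killed (since $n\ge 1$), while the two surviving terms are exactly $\partial_{p_e} \kappa_n^{\underline p}(A_1,\ldots,A_n)$ and $\frac{1}{p_e(1-p_e)}\kappa_{n+1}^{\underline p}(A_1,\ldots,A_n,\omega_e)$. This gives the claim. There is essentially no hard step here; the only mild subtlety is justifying the exchange of $\partial_{p_e}$ and the $t$-derivatives, which is automatic because $G$ is smooth as a finite sum of exponentials with parameters varying smoothly in $(\underline t, \underline p)$ on the relevant open set.
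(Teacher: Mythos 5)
The paper explicitly omits the proof of this lemma, remarking only that it generalizes the first-derivative identities of Proposition~\ref{prop:differentiate-bonds}, so there is nothing to compare against directly. Your cumulant-generating-function argument is correct and self-contained: the formula $\partial_{p_e}\phi_{\underline p}[Y]=\tfrac{1}{p_e(1-p_e)}\Cov_{\underline p}(Y,\omega_e)$ does extend verbatim from indicator observables to arbitrary bounded $Y$ (the proof in Appendix~\ref{app:derivative_formulas} only uses the factorization $\partial_{p_e}w_{\underline p}(\omega)=\tfrac{\ind(\omega_e=1)}{p_e(1-p_e)}w_{\underline p}(\omega)$ and the quotient rule), the identification of $\phi_{\underline p}[Y\omega_e]/\phi_{\underline p}[Y]$ as $\partial_{t_{n+1}}G\vert_{t_{n+1}=0}$ is a one-line check, and the elimination of the $\underline t$-independent term $-\tfrac{1}{p_e(1-p_e)}\phi_{\underline p}[\omega_e]$ under $\partial^n_{t_1\cdots t_n}$ requires only $n\ge 1$. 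The exchange of $\partial_{p_e}$ with the $t$-derivatives is indeed automatic on the finite configuration space, as you note. One small stylistic remark: the paper's authors most likely had in mind an inductive proof by Leibniz differentiation of the M\"obius/partition formula for $\kappa_n$ (in the spirit of their direct computation of the $n=1,2$ cases in Proposition~\ref{prop:differentiate-bonds}); your generating-function route avoids the combinatorics entirely and is arguably cleaner, at the cost of relying on the CGF characterization of joint cumulants, which the paper deliberately does not introduce.
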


\subsection{Proofs of crossing estimates via arm events}\label{app:geometric_estimates}
We collect here the proofs of geometric estimates stated in Section \ref{sub:geometric-estimates}. In these proofs, we do not detail the geometric estimates based on RSW, and for the sake of readability, we will often identify lengths at the same scale, for example instead of writing ``there is a four-arm event from scales $2r$ to $R/2$", we would write ``there is a four-arm event from scales $r$ to $R$".

\begin{proof}[Proof of Lemma \ref{lem:Cov-crossing-edge}]
The first item was already derived in \cite{FK_scaling_relations}, therefore we focus on the second item. Let $e$ be an edge at distance $n\lesssim R$ from the corners of $\calR$ and set $\ell = \textrm{dist}(e,\partial \calR)$. According to \cite[Lemma 5.3]{FK_scaling_relations}, one may write
\begin{equation*}
    \Cov_{p_c}(\scrC_{\bullet}(\calR),\omega_e)\lesssim \sum_{r=1}^{3R}\frac{\Delta_{p_c}(r)}r\phi_{p_c}[\textrm{Piv}_{r,e}(\scrC_{\bullet}(\calR))]
\end{equation*}
where for an event $\calS$, $\textrm{Piv}_{r,e}(\calS)$ is the event that the event that $\calS$ holds and that the box $\Lambda_r(e)$ is pivotal for the event $\calS$. To conclude, one should evaluate $\phi_{p_c}[\textrm{Piv}_{r,e}(\scrC_{\bullet}(\calR))]$, which is done by dichotomy on the value of $r$. We refer to Figure \ref{fig:pivotal_crossing} for an illustration of the following cases. Since there is no risk of confusion, we will not write the subscript $p_c$ for $\Delta,\pi_4,\pi_{3^+}$ in this proof.

\begin{itemize}
	\item \textbf{Case 1:} $n\leq r \leq 3R$. The pivotality event $\textrm{Piv}_{r,e}(\scrC_{\bullet}(\calR))$  implies the existence of two arms, of different parity, going from $\Lambda_r(e)$ to a distance $R$ inside of $\calR$. By standard RSW estimates, the probability of the latter decays polynomially fast, at a speed $\lesssim (r/R)^c$ for some $c>0$.
	\item \textbf{Case 2:} $\ell \le r \le n$. The pivotality event implies the existence of $3$ arms of alternating parity going from $\Lambda_r(e)$ to distance $n$ in $\calR$, and together with two arms of different parity going from $\Lambda_n(e)$ to distance $R$ in $\calR$. Therefore, Remark \ref{rem:arm-estimates-transation-invariant-environment} together with standard RSW estimates ensures that the pivotality event happens with probability at most $(r/n)^2(n/R)^c$ for some $c=c(\delta)>0$.
	\item \textbf{Case 3:} $r\leq \ell $. The pivotality event implies existence of four alternating parity arms from $\Lambda_r(e)$ to distance $\ell$, together with three alternating parity arms from $\Lambda_{\ell}(e)$ to distance $n$ in $\calR$, and finally two alternating arms from $\Lambda_n(e)$ to distance $R$ in $\calR$. Using again Remark \ref{rem:arm-estimates-transation-invariant-environment} and RSW, the pivotality event happens with probability at most $\pi_4(r,\ell)(\ell/n)^2(n/R)^c$.
\end{itemize}

\begin{figure}
    \centering
    \includegraphics[width=0.34\textwidth]{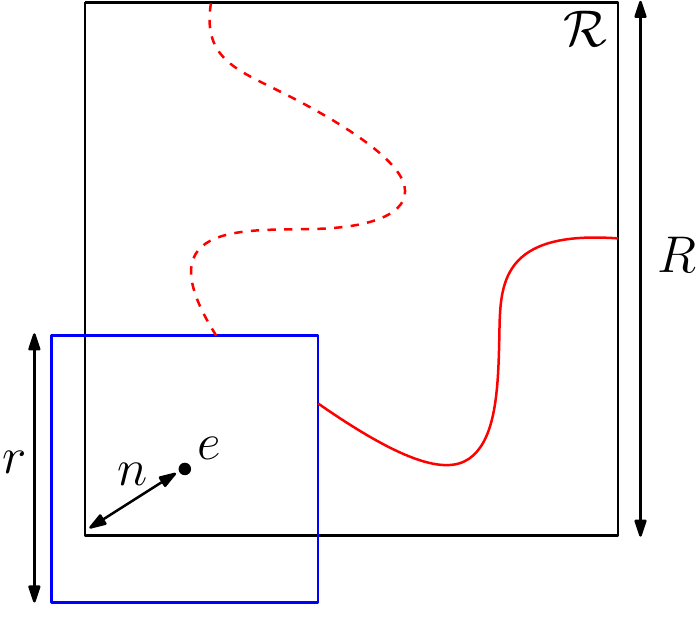}
    \includegraphics[width=0.32\textwidth]{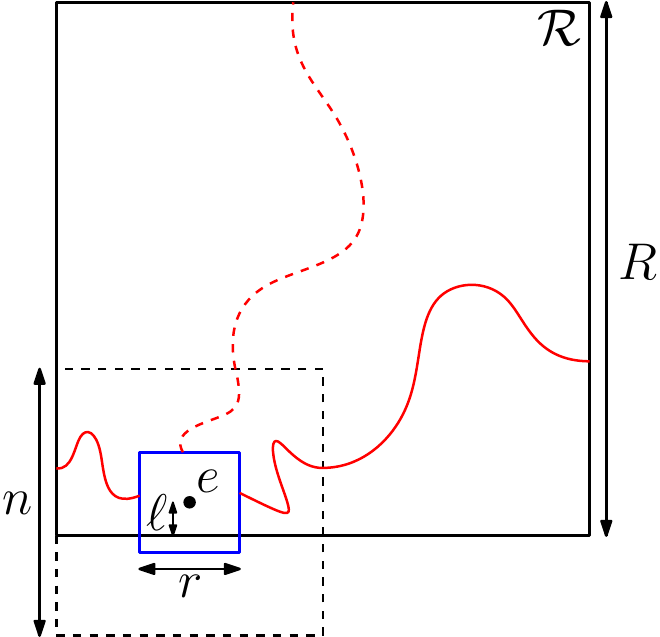}
    \includegraphics[width=0.32\textwidth]{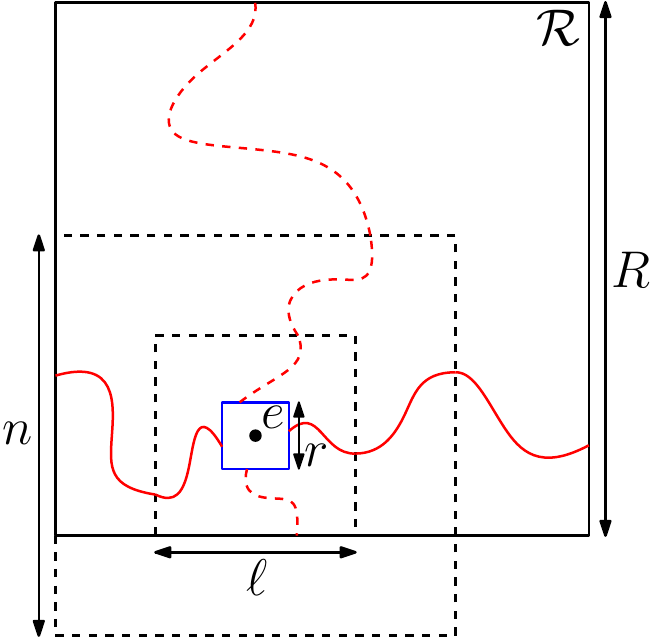}
    \caption{An illustration of the pivotality event $\textrm{Piv}_{r,e}(\scrC_h(\calR))$ in the three cases explicited in the proof. In these figures, all distances denoted by $r,n,\ell$ are to be understood as being of order $r,n,\ell$, and $\calR$ is represented as a square. We only illustrate the case where $e$ is inside $\calR$ but the case when $e$ is outside is analogous.}
    \label{fig:pivotal_crossing}
\end{figure}
This implies that
\begin{align*}
    \Cov_{p_c}(\scrC^{0}_{\bullet}(\calR),\omega_e)&\lesssim\sum_{r=1}^{\ell}\frac{\Delta(r)}r\pi_4(r,\ell)(\ell/n)^2(n/R)^c+\sum_{r=\ell}^n \frac{\Delta(r)}r (r/n)^2(n/R)^c +\sum_{r=n}^R \frac{\Delta(r)}r(r/R)^c \\
    &\asymp\frac{\ell^2\Delta(\ell)}{n^{2-c}R^c}\sum_{r=1}^{\ell}\frac{\pi_4(r,\ell)}{r\Delta(r,\ell)} +\frac 1{n^{2-c}R^c}\sum_{r=\ell}^nr\Delta(r) + \frac 1{R^c}\sum_{r=n}^R \frac{\Delta(r)}{r^{1+c}} \\
    &\lesssim \frac{\ell^2\Delta(\ell)}{n^{2-c}R^c}+\frac {n^c}{R^c}\Delta(n) + \Delta(n) \lesssim \Delta(n).
\end{align*}
Passing from the second to the third line, we used Remark \ref{rem:arm-estimates-transation-invariant-environment} by writing
\begin{itemize}
    \item In the first sum, $\frac{\pi_4(r,\ell)}{\Delta(r,\ell)}\lesssim (r/\ell)^{-c}$.
    \item In the second sum, $\sum_{r=\ell}^nr\Delta(r)\lesssim n^2\Delta(n)$ as $\Delta(r,n)\gtrsim (r/n)^{2-c}$. 
    \item In the third sum $\Delta(r)\le \Delta(n)$ for $r\ge n$.
\end{itemize}
The last inequality uses that $\ell^2\Delta(\ell)\lesssim n^2\Delta(n)$ for $\ell\le n$ and that $n\le R$.
\end{proof}

\begin{proof}[Sketch of proof of Lemma \ref{lem:Cov-arms-edge}]
    We sketch the proof for $\#=3^+$, the case $\#=4$ is analogous (and even simpler). One again one has
 \begin{equation*}
        \Cov_{p_c}(\calA_{3^+}^{(e)}(R),\omega_f)\lesssim \sum_{r=1}^R\frac{\Delta(r)}{r}\phi_{p_c}[\textrm{Piv}_{r,f}(\calA_{3^+}^{(e)}(R))]
 \end{equation*}
Recall (as it was not relevant for the proof of Lemma \ref{lem:Cov-crossing-edge}) that the event $\textrm{Piv}_{r,f}(\calA_{3^+}^{(e)}(R))$ not only requires that the box $\Lambda_r(f)$ is pivotal for the arm event, but also that the associated arm event itself holds, bringing an additional multiplicative factor of small probability. We run once again a dichotomy, depending on the position of $f$, encoded by $n$, and $\ell$, the distance from $f$ to the boundary of the half-plane centred at $e$. This implies that $n\le\ell$. We now estimate $\phi_{p_c}[\textrm{Piv}_{r,f}(\calA_{3^+}^{(e)}(R))]$ depending on $r$ with respect to $\ell$ and $n$. See Figure \ref{fig:pivotal_arms} for an illustration of the following cases. Once again, we do not write the subscript $p_c$ in this proof.
    \begin{itemize}
        \item \textbf{Case 1:} $r\ge n$. One can crudely bound $\phi_{p_c}[\textrm{Piv}_{r,f}(\calA_{3^+}^{(e)}(R))]$ by the probability of $\calA_{3^+}^{(e)}(R)$ arm event, which is (up to constant) $\pi_{3^+}(R)$. This bound is sharp (up to constant) when $f$ is at distance at most $r$ from $e$.
        \item \textbf{Case 2:} $\ell\le r\le n$ while $n=d(e,f)$. In that case, the pivotality of $f$ at scale $r$ implies the existence of a half-plane three arm event from $\Lambda_r(f)$ to distance $\ell$, together with some half-plane three arm event to scale $n$ from $e$, and a half-plane three arm event from scales $n$ to scale $R$ around $e$ and $f$. Therefore, one can bound the probability of the pivotality by $\pi_{3^+}(r,n)\pi_{3^+}(n)\pi_{3^+}(n,R)$.
        \item \textbf{Case 3:} $\ell\le r\le n$ while $n$ is the distance from $f$ to one of the bottom corners of $\Lambda_R(e)\cap \mathbb{H}^{(e)}_N$. Then the pivotality event implies some half-plane three arm event from $\Lambda_r(f)$ to distance $n$, together with a half-plane three arm event from $e$ to distance $R$. Therefore, one can upper bound the probability of the pivotality  by $\pi_{3^+}(r,n)\pi_{3^+}(R)$. By quasi-multiplicativity, this bound is of the same order of magnitude as the one of Case $2$.  Notice that this is of the same order as the previous case.
        \item \textbf{Case 4:} $r\le \ell$. In that case, the pivotality implies the existence a four-arm event from $\Lambda_r(f)$ up to distance $\ell$, together with the event described in case $3$, replacing $r$ by $\ell$. One can then bounds the pivotality probability by $\pi_4(r,\ell)\pi_{3^+}(\ell,n)\pi_{3^+}(n)\pi_{3^+}(n,R)$.
    \end{itemize}

\begin{figure}
    \centering
    \hspace{0.03\textwidth}
    \includegraphics[width=0.45\textwidth]{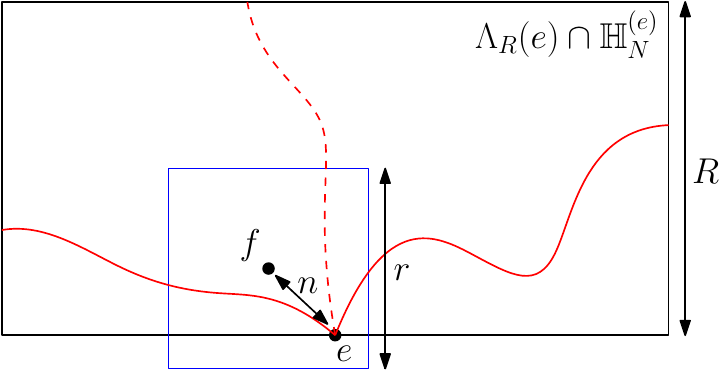}
    \includegraphics[width=0.45\textwidth]{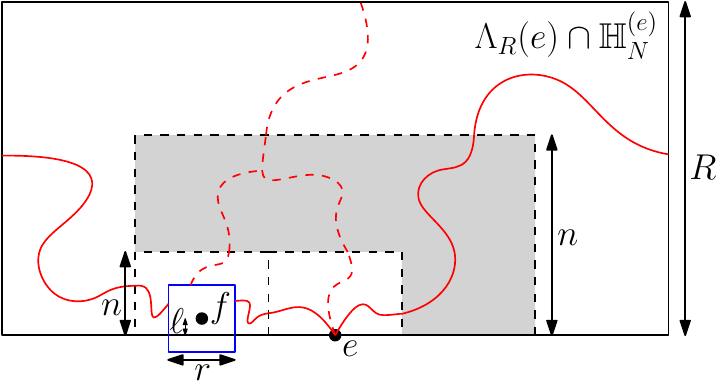}
    \par\bigskip
    \includegraphics[width=0.49\textwidth]{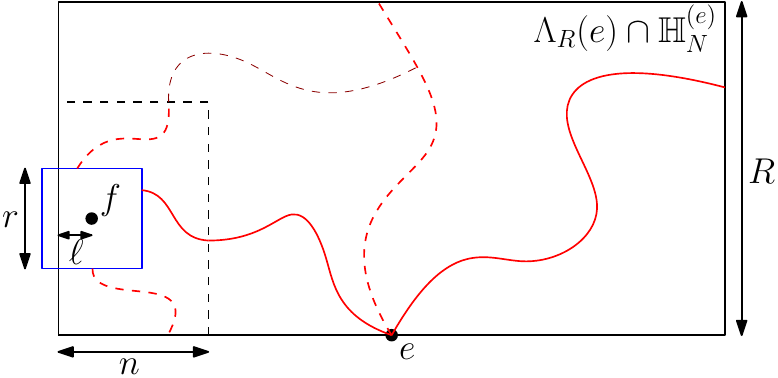}
    \includegraphics[width=0.45\textwidth]{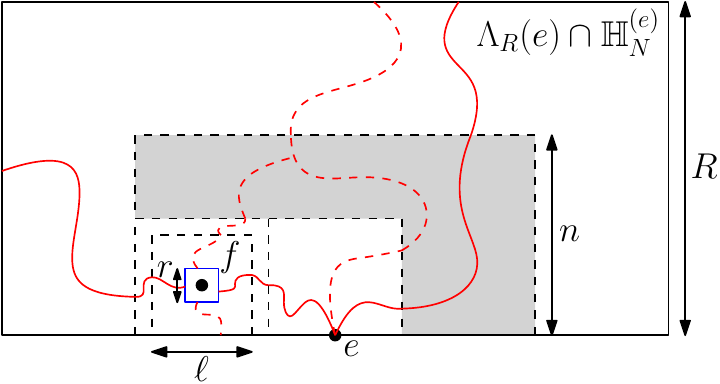}
    \caption{An illustration of the four cases in the proof. Once again, any length denoted by $r,n,\ell$ is to be understood as being of order $r,n,\ell$ (see figure of Case $2$ where two different lengths are denoted $n$). The greyed out parts in Cases $2$ and $4$ and the darker path in Case $3$ are not taken into account in the upper bound on the probability of being pivotal.}
    \label{fig:pivotal_arms}
\end{figure}
 Putting everything together and using quasi-multiplicativity, one gets
 \begin{align*}
 	 \Cov_{p_c}(\calA_{3^+}^{(e)}(R),\omega_f)&\lesssim  \sum_{r=1}^{\ell}\frac{\Delta(r)}r\pi_4(r,\ell)\pi_{3^+}(\ell,n)\pi_{3^+}(n)\pi_{3^+}(n,R)\\
 	 & +\sum_{r=\ell}^{n}\frac{\Delta(r)}r\pi_{3^+}(r,n)\pi_{3^+}(n)\pi_{3^+}(n,R)+\sum_{r=n}^R\frac{\Delta(r)}r \pi_{3^+}(R).
 \end{align*}
This allows to conclude that 
\begin{align*}
        \frac{1}{\pi_{3^+}(R)}\Cov_{p_c}(\calA_{3^+}^{(e)}(R),\omega_f)&\lesssim  \sum_{r=1}^{\ell}\frac{\Delta(r)}{r}\pi_4(r,\ell)\pi_{3^+}(\ell,n)+\sum_{r=\ell}^{n}\frac{\Delta(r)}{r}\pi_{3^+}(r,n)+\sum_{r=n}^{R}\frac{\Delta(r)}r \\
        &\lesssim \Delta(\ell)+\Delta(\ell)+\Delta(\ell),
    \end{align*}
where passing to the last line one uses the existence of $c>0$ (see Remark \ref{rem:arm-estimates-transation-invariant-environment}) such that
    \begin{itemize}
        \item $\pi_{3^+}(d,\ell)\lesssim \Delta(d,\ell)(d/\ell)^c$ and $\pi_4(r,d)\lesssim \Delta(r,d)(r/d)^c$ in the first sum.
	\item $\pi_{3^+}(r,\ell)\asymp (r/\ell)^2\lesssim \Delta(r,\ell)(r/\ell)^c$ in the second sum.
        \item $\Delta(r)\lesssim \Delta(\ell)(\ell/r)^c$ for the last sum.
    \end{itemize}
 This concludes the proof.
\end{proof}

\begin{proof}[Proof of Lemma \ref{lem:geometric-estimates_perco} in the case of crossing events]

In the case of percolation, one may rewrite
\begin{equation*}
    \Cov_{p_c}(\scrC_{\bullet}(\calR),\omega_e)\asymp\phi_{\bbT_N,p_c,1}[\textrm{Piv}_e(\scrC_{\bullet}(\calR))]
\end{equation*}
where $\textrm{Piv}_e(\scrC_{\bullet}(\calR))$ is the event that $e$ is pivotal for the event $\scrC_{\bullet}(\calR)$. Note that if $e$ does not belong to $\calR$, this probability is simply $0$. Assume now that $e$ belongs to $\calR$, and let $\ell\le n$ be respectively the distance from $e$ to $\partial \calR$ and from $e$ to a corner of $\calR$. The event that $e$ is pivotal for $\scrC_{\bullet}(\calR)$ implies the existence of a four-arm event around $e$ from scales $1$ to $\ell$, then a half-plane three arm event from scales $\ell$ to $n$, and finally a quarter-plane two arm event from scales $n$ to $1$. This is exactly what is depicted in Figure \ref{fig:pivotal_crossing} when setting $r=1$. By point (iv) of Remark \ref{rem:arm-estimates-transation-invariant-environment} (lower bound) there exists a constant $c>0$ such that
\begin{align*}
    \phi_{\bbT_N,p_c,1}[\textrm{Piv}_e(\scrC_{\bullet}(\calR))]&\lesssim \pi_{4}(\ell)\pi_{3^+}(\ell,n) \lesssim \pi_{4}(\ell)(\ell/n)^2\lesssim \pi_{4,p_c}(n),
\end{align*}
where we once again choose to not write the subscripts $p_c$. Summing over all edges $e\in \calR$ gives, using point (iv) of Remark \ref{rem:arm-estimates-transation-invariant-environment} (lower bound)
\begin{align*}
    \sum_{e\in \calR}\Cov_{p_c}(\scrC_{\bullet}(\calR),\omega_e)&\lesssim \sum_{n}n\cdot\pi_{4}(n)\asymp \textrm{W}(N)^{-1}.
\end{align*}
Finally, using the upper bound of the fourth item of Remark \ref{rem:arm-estimates-transation-invariant-environment}, one gets
\begin{align*}
    \sum_{e\in \calR}\Cov_{p_c}(\scrC_{\bullet}(\calR),\omega_e)^2&\lesssim \sum_{n}n\cdot\pi_{4}(n)^2\asymp 1.
\end{align*}
\end{proof}

\begin{proof}[Proof of Lemma \ref{lem:window-inequalities}]
    For this proof, we recall that the scaling relations ensures that one has $\textrm W(N)\asymp N^{-2}\Delta_{p_c}(N)^{-1}\asymp (\sum_{r\le N}r\Delta_{p_c}(r))^{-1}$. The inequality on the left without the polynomial improvement comes from the trivial bound $\sum_{r\le N}r\Delta_{p_c}(r)^4\lesssim \sum_{r\le N}r\Delta_{p_c}(r)$, which is of order $\textrm W(N)^{-1}$. In order to get the polynomial improvement, we recall the inequality $(r/R)^c\gtrsim\Delta_{p_c}(r,R)\asymp \frac{\Delta_{p_c}(R)}{\Delta_{p_c}(r)}\gtrsim (r/R)^{2-c}$ for some $c>0$, which allows us to write
\begin{align*}
        \sum_{r\le N}r\Delta_{p_c}(r)^4&\le \sum_{r\le \sqrt N}r\Delta_{p_c}(r) + \Delta_{p_c}\left(\sqrt N\right)^3\sum_{r\le N}r\Delta_{p_c}(r) \\
        &\lesssim \left(\sqrt N\right)^2\Delta_{p_c}\left(\sqrt N\right)+\left(\sqrt N\right)^{-3c}\textrm W(N)^{-1} \\
        &\lesssim \left(\sqrt N/N\right)^{-c}N^2\Delta_{p_c}(N)+\left(\sqrt N\right)^{-3c}\textrm W(N)^{-1} \\
        &\lesssim N^{-c/2}\textrm W(N)^{-1},
 \end{align*}
where in the second to last line, we used that $\textrm W(N)^{-1}\asymp N^2\Delta_{p_c}(N)$. Plugging this into the expression of $\widetilde{\textrm W}(N)$ gives the first inequality of the lemma with constant $c/6$. For the inequality on the right, simply rearranging the terms shows that it is equivalent to
    \begin{equation*}
        \sum_{r\le N}r\Delta_{p_c}(r)^2\lesssim \left(\sum_{r\le N}r\Delta_{p_c}(r)\right)^{2/3}\left(\sum_{r\le N}r\Delta_{p_c}(r)^4\right)^{1/3},
    \end{equation*}
which is just the Hölder inequality applied to $r\Delta_{p_c}(r)^2=(r\Delta_{p_c}(r))^{2/3}(r\Delta_{p_c}(r)^4)^{1/3}$.
\end{proof}

\section{Estimates on Skorokhod embedding stopping times}
\label{app:tail-bounds}

\subsection{Tails for Skorokhod embedding stopping times}
\label{app:skorokhod_tails}

In this section, we prove Lemma \ref{lem:skorohod_tails}. We point out that when the random variable $X$ is bounded, there are simpler arguments that show that $T$ has exponential tails (which is the case $\gamma=\infty$).

\begin{proof}[Proof of Lemma \ref{lem:skorohod_tails}]
The goal will be to use the so-called \emph{Vallois embedding} \cite{skorokhod_vallois} in order to construct some stopping time $T$ adapted to a standard Brownian motion $(B_t)_{t\geq 0}$ such that $B_T \overset{(d)}{=}X$. We denote $\mathcal{F}_t:=\sigma\Big( B_s, 0\leq s \leq t \Big)$ the naturally associated filtration. Let us recall explicitly this construction here, which corresponds to \cite{skorokhod_azema-yor_and_vallois} in the special case where $h(s)=s$ and $F=0$. We mostly keep the notations of \cite{skorokhod_azema-yor_and_vallois} in order to simplify cross-reading between the articles. All the constants involved in the statements depend only on $c$ and $\gamma$. Let $\mu$ be the law of the random variable $X$ and set for $x\in \bbR $ its \emph{potential} denoted $U_{\mu}(x)$ (which is related to the function $c(x)$ in \cite{skorokhod_azema-yor_and_vallois}) and given by
\begin{equation*}
    U_{\mu}(x):=-\int_{\bbR}|x-y|d\mu(y)=-x-2\int_x^{\infty}(y-x)d\mu(y).
\end{equation*}
This allows to define the functions $\theta,\phi,R,S,\Gamma$ as:
\[
\left\{
\begin{array}{l}
\theta(s) := \textrm{argmin}_{x>0}\left\{\frac{-s-U_{\mu}(x)}{x}\right\}, \\
\phi(s): = \textrm{argmax}_{x<0}\left\{\frac{-s-U_{\mu}(x)}{x}\right\},
\end{array}
\right.
\quad
\left\{
\begin{array}{l}
R(s):=\frac{-s-U_{\mu}(\theta(s))}{\theta(s)}, \\
S(s):=\frac{-s-U_{\mu}(\phi(s))}{\phi(s)}, \\
\Gamma(s) := \frac{R(s)-S(s)}2.
\end{array}
\right.
\]
One can then parametrize the time using $ H(s) := \int_0^s \frac{dz}{\Gamma(z)}$ and define the reparametrized versions of $\theta$ and $\phi$ by setting:
\begin{equation*}
    a(\ell) := \theta(H^{-1}(\ell)),\quad b(\ell) := \phi(H^{-1}(\ell)).
\end{equation*}
Let $L_t$ be the local time of $B_t$ at $0$ \cite{le2016brownian}. The Vallois stopping time \cite{skorokhod_vallois} is given by \cite[Equation (2)]{skorokhod_azema-yor_and_vallois} to the filtration $\mathcal{F}_t$ and is defined by
\begin{equation}\label{eq:T_V}
    T_V := \inf\{t\ge 0|B_t \not\in (b(L_t),a(L_t))\}.
\end{equation}
The main goal of this section is to obtain tail estimates for the variable $T_V$. In order to simplify the reading, we assume that the law $\mu$ has a density with respect to the Lebesgue measure (denoted here by $f$), but the announced results hold in full generality. Given the formulation $\theta(s)$ as an $\textrm{argmax}$ on $\bbR_+^{*}$, it is straightforward to see that
\begin{align*}
	 \frac{\dd}{\dd x}\left(\frac{-s-U_{\mu}(x)}x\right)|_{x=\theta(s)}&= \frac{\dd}{\dd x}\left(\frac{-s+2\int_x^{\infty}(y-x)f(y)dy}x+1\right)|_{x=\theta(s)}\\
	 &=\frac{s-2\int_{\theta(s)}^{\infty}yf(y)dy}{\theta(s)^2}=0,
\end{align*}
which ensures that
\begin{equation*}
    s=2\int_{\theta(s)}^{\infty}yf(y)dy\lesssim \exp(-c \cdot\theta(s)^{\gamma}).
\end{equation*}
The above equation reads as $\theta(s)\lesssim \log(1/s)^{1/\gamma}$, and the same estimate holds by symmetry for $-\phi(s)$. This ensures that as long as $s$ remains far away from $0$ and $\infty$, both functions $\theta,-\phi$ remain also bounded away from $0$ and $\infty$. In order to finish the proof, one needs to estimate all three functions $R,S,\Gamma$ in the $s\to 0$ regime. One has
\begin{align*}
	R(s) &= \frac{-s-U_{\mu}(\theta(s))}{\theta(s)}=\frac{\theta(s)+\int_{\theta(s)}^{\infty}(2(y-\theta(s))-2y)f(y)dy}{\theta(s)}\\
&= 1-2\mu[\theta(s),\infty).
\end{align*}
Similarly one gets that  $S(s) = -1-2\mu(-\infty,\phi(s)]$ and therefore
\begin{equation*}
    \Gamma(s) = 1-\big(\mu[\theta(s),\infty)-\mu(-\infty,\phi(s)]\big).
\end{equation*}
It is clear from the definitions of $\theta(s)$ and $\phi(s)$ that when $s\to 0$, the functions $\theta(s),-\phi(s)$ diverge to $+\infty$, which ensure that $\Gamma$ is bounded from below and $H(s)$ grows at linear speed when $s\to 0$. This ensures that for $\ell>0$ small enough, the functions $a(\ell)$ and $b(\ell)$  satisfy
\begin{equation*}
    a(\ell),-b(\ell)\lesssim \log(1/\ell)^{1/\gamma}.
\end{equation*}
When $\ell\gtrsim 1$, we simply bound $a(\ell),b(\ell)$ by some constant $C>0$. Then $T_V$ is bounded by the stopping time $\inf\{t|B_t\not\in(-C,C)\}$ which has exponential tails, plus the stopping time $T^*$ defined by
\begin{equation*}
    T^* := \inf\big\{t\ge 0\big||B_t|\ge C\log(1/L_t)^{1/\gamma}\big\}
\end{equation*}
Fix some $\varepsilon>0$ (which will be adjusted later in the proof). Using the union bound, for any $t>0$, one can write
\begin{align*}
	\bbP \Big[T^*>t\Big]&\leq \bbP\Big[T^*>t,L_{t/2}>\eps\Big]+\bbP\Big[T^*>t,L_{t/2}\le \eps\Big]\\
	&  \le \bbP\left[\sup_{s\le t/2}|B_s|< 2\log(1/\eps)^{1/\gamma}\right]+\bbP\Big[L_{t/2}\le \eps\Big]\\
	& \lesssim \exp\left(\frac{-c}{\log(1/\eps)^{2/\gamma}}t\right)+\frac{\eps}{\sqrt t}.
\end{align*}
Set $\eps=\exp\left(-t^{\frac{\gamma}{\gamma+2}}\right)$, which ensures that for $t\gtrsim 1$ one has, for $c>0$ small enough,
\begin{equation*}
  \bbP\Big[T_V>t\Big] \lesssim \bbP\Big[T^*>t/2\Big]+\exp(-ct) \lesssim   \exp\left(-ct^{\frac{\gamma}{\gamma+2}}\right),
\end{equation*}
since $\frac{\gamma}{\gamma+2}\le 1$, which concludes the proof.    
\end{proof}

\subsection{Tails for Skorokhod embedding stopping times with drift}
\label{app:skorokhod_drift_tails}

In this section, we prove Lemma \ref{lem:skorokhod_drift_tails}. Once again, when $X$ is bounded, one may show that the result holds with $\gamma=\infty$.

\begin{proof}[Proof of Lemma \ref{lem:skorokhod_drift_tails}]
Set $Y_t=B_t-\mu t$. From the process $(Y_t)_{t\geq 0}$, one can naturally construct a martingale $Z_t = e^{2\mu Y_t}-1$ such that $Z_0=0$ almost surely. The Dubins-Schwarz Theorem ensures the existence of a Brownian motion $\widetilde{B}$ adapted to some modified filtration (potentially defined on some extended probability space), such that $Z_t = \widetilde{B}_{\langle Z,Z\rangle_t}$. Applying the Skorokhod embedding theorem recalled above to the centred random variable $e^{2\mu X}-1$, there exists some stopping time $T_V$ such that $\widetilde{B}_{T_V} \overset{(d)}{=} e^{2\mu X}-1$. One can then define the stopping time
\begin{equation*}
    T := \inf\{t\ge 0| \langle Z,Z\rangle_t = T_V\}.
\end{equation*}
Let us see why this $T$ satisfies the first condition of the lemma. One can couple $(Y,\widetilde B,T_V)$ to $X$ in such a way that $\widetilde B_{T_V}=e^{2\mu X}-1$. Then one may write, on the event that $\{T<\infty\}$,
\begin{equation*}
    e^{2\mu X}-1 = \widetilde B_{T_V} = \widetilde B_{\langle Z,Z\rangle_T}=Z_T=e^{2\mu Y_T}-1
\end{equation*}
Since $x\mapsto e^{2\mu X}-1$ is injective, one gets that $X=Y_T$ on the event $\{T<\infty\}$.

Given the definition of $T$, it is not mandatory that $\langle Z,Z\rangle_t\to \infty$ as $t\to\infty$, therefore the event $\{T=\infty \} $ may happen with positive probability. Still, in the regime $\sigma\ll 1$ which is relevant in our setup, this will happen with a small probability depending on $\sigma$. Fix some parameter $\eps\ll 1$, whose value will be fixed later in the reasoning. In order to estimate the tail of the stopping time $T$, one can make the dichotomy
\begin{equation}\label{eq:bound-stopping-time-drifted-process}
    \bbP\Big[T>t\Big]\le \bbP\Big[T_V>\mu t\Big] + \bbP\Big[T>T_V/\mu, T_V\le \mu t\Big].
\end{equation}
Combining the fact that $|X|\le 1$ and that $\mu$ is bounded away from $0$ and $\infty $ ensures that $e^{2\mu X}-1$ also has the same stretched exponential tail as $X$. Therefore
\begin{equation*}
    \bbP\Big[T_V>\mu t\Big]\lesssim \exp\Big(-c' \cdot \Big(\frac{t}{\sigma^2}\Big)^{\frac{\gamma}{\gamma+2}}\Big)
\end{equation*}
for some constant $c'$ only depending on $c$. In order to derive tails bounds for the the stopping time $T$, it remains to evaluate the second term in the RSH of \eqref{eq:bound-stopping-time-drifted-process}. One has
\begin{equation*}
    \bbP\Big[T>T_V/\mu, T_V\le \mu t\Big]=\bbP\Big[\langle Z,Z\rangle_{T_V/\mu}<T_V,T_V\le \mu t\Big]
\end{equation*}
Computing explicitly the bracket of $Z$, one gets
\begin{equation*}
    \langle Z,Z\rangle_{T_V/\mu}=   2\mu\int_0^{T_V/\mu}e^{2\mu Y_t}\dd t\ge 2T_V\cdot  \exp\Big(2\mu \Big(\inf_{0\leq s\le T_V/\mu}Y_s\Big)\Big).
\end{equation*}
Let $\mu_{T_V}$ be the law of the stopping time $T_V$. The previous equation 
implies that
\begin{align*}
	\bbP\Big[\langle Z,Z\rangle_{T_V/\mu}<T_V,T_V \le \mu t\Big] & \leq \bbP\left[\inf_{0\leq s\le T_V/\mu}Y_s<-\frac{\log 2}{2\mu}, T_V\le \mu t\right]\\
	& =   \int_0^{\mu t} \bbP\left[\inf_{0\leq s\le u/\mu}Y_s<-\frac{\log2}{2\mu}\right]d\mu_{T_V}(u)\\
&\le \int_0^{\mu t} \bbP\left[\inf_{0\leq s\le u/\mu}B_s<u-\frac{\log2}{2\mu}\right]d\mu_{T_V}(u).
\end{align*}
Assuming that $t$ is chosen smaller than some constant depending on $\mu$, and that $\mu t$ is not an atom of $\mu_{T_V}$, there exist $c_1,c_2,c_3>0$, only depending on $c,\gamma,\mu $ such that
\begin{align*}
	\int_0^{\mu t} \bbP\left[\inf_{0\leq s\le u/\mu}B_s<u-\frac{\log2}{2\mu}\right]d\mu_{T_V}(u)&\lesssim  \int_0^{\mu t}\exp\Big(-\frac{c_1}{u}\Big)d\mu_{T_V}(u)\\
	& \lesssim \Bigg(\int_0^{\mu t}\exp\Big(-\frac{c_2}{u}-c_2\cdot\Big(\frac{u}{\sigma^2}\Big)^{\frac{\gamma}{\gamma+2}}\Big)du\Bigg)\\
    &\qquad +\exp\Big( -\frac{c_2}{t}-c_2\cdot\Big(\frac{t}{\sigma^2}\Big)^{\frac{\gamma}{\gamma+2}}\Big)\\
	& \lesssim \exp\Big(-c_3\cdot\sigma^{-\frac{\gamma}{\gamma+1}} \Big)+\exp\Big(-c_3 \cdot \Big(\frac{t}{\sigma^2}\Big)^{\frac{\gamma}{\gamma+2}}\Big),
\end{align*}
where the passage from the first to the second line is made by integration by parts and the passage to the last line is simply a weighted arithmetic mean-geometric mean inequality. Combining everything, there exists a constant $c_4>0$ such that for $t$ smaller than some constant,
\begin{equation*}
    \bbP\Big[T>t\Big]\lesssim \exp\Big(-c_4 \cdot \sigma^{-\frac{\gamma}{\gamma+1}}\Big)+\exp\left(-c_4 \left(\frac{t}{\sigma^2}\right)^{\frac{\gamma}{\gamma+2}}\right).
\end{equation*}
Note that this inequality extends to all $t$ because of the first term which is constant in $t$ and dominates as long as $t\gg \sigma^{\gamma/(\gamma+1)}$. This first term encapsulates the fact that $T$ may be very large (or even infinite) because $\langle Z,Z\rangle_t$ does not tend to $\infty$ almost surely, and so this first term is notably a bound for $\bbP[T=\infty]$. Conversely, when $t \ll \sigma^{\frac{\gamma}{\gamma+1}}$ (which is much larger than $\sigma$), it is the second term which dominates.
\end{proof}

\subsection{Large deviation principles for sums of random variables with stretched exponential tails}\label{app:large-deviation-sum}

Finally, in this section, we follow a strategy similar to \cite{aurzada2020large} to prove Lemma \ref{lem:tail_sum_stretched_exp}.

\begin{proof}[Proof of Lemma \ref{lem:tail_sum_stretched_exp}]
First start by separating the probability
\begin{equation*}
    \bbP[X>t]\le \bbP\left[\sup_i (a_iT_i)>t\right]+\bbP\left[X> t,\ \sup_i (a_iT_i)\le T\right]
\end{equation*}
The first term of the RHS of the above equation can be estimated by a simple union bound, using the fact that $t\gtrsim S$:
\begin{align*}
    \bbP\left[\sup_i (a_iT_i)>t\right]&\lesssim \sum_i\exp\left(-c\left(\frac t{a_i}\right)^{\gamma}\right) \lesssim \sum_i\frac{a_i}S\exp\left(-\frac c2\left(\frac t{a_i}\right)^{\gamma}\right) \\
    &\le \exp\left(-\frac c2\left(\frac t{m}\right)^{\gamma}\right).
\end{align*}
We pass to the second term. Here, we introduce a parameter $\lambda$, which we set equal to $\lambda := \frac{ct^{\gamma-1}}{2m^{\gamma}}$. Then we may write
\begin{align*}
    \bbP\left[X> t,\ \sup_i (a_iT_i)\le T\right]&\le e^{-\lambda t}\prod_i\bbE\big[e^{\lambda a_i T_i}\ind(a_i T_i\le t)\big] \\
    &\le \exp\left(-\lambda t + \sum_i\bbE\big[(e^{\lambda a_iT_i}-1)\ind(a_iT_i\le t)\big]\right).
\end{align*}
We now estimate the expectations inside the exponential. Write $\bbP_{T_i}$ for the law of $T_i$, then using integration by parts, one gets (as long as $t/a_i$ is not an atom of $T_i$, while there are only countably many $t$ where this can occur),
\begin{align*}
    \bbE\big[(e^{\lambda a_iT_i}-1)\ind(a_iT_i\le t)\big]&=\lambda a_i\int_0^{t/a_i}e^{\lambda a_i s}\bbP_{T_i}([s,t/a_i])ds\lesssim \lambda a_i\int_0^{t/a_i}e^{\lambda a_i s-cs^{\gamma}}ds.
\end{align*}
For $s\le t/a_i$, one may use that $0<\gamma<1$ to write that
\begin{equation*}
    \lambda a_i s \le \frac{ct^{\gamma-1}}{2m^{\gamma}}a_i\left(\frac t{a_i}\right)^{1-\gamma}s^{\gamma}\le \frac c2s^{\gamma}.
\end{equation*}
Hence one may bound the integral $\int_0^{t/a_i}e^{\lambda a_i s-cs^{\gamma}}ds$ by $\int_0^{\infty}e^{-cs^{\gamma}/2}ds$ which is a finite constant depending only on $c$. From this, one gets that $\bbE[(e^{\lambda a_iT_i}-1)\ind(a_iT_i\le t)]\lesssim \lambda a_i$. Putting everything together and using the value of $\lambda$, we obtain
\begin{equation*}
    \bbP\left[X> t,\ \sup_i (a_iT_i)\le T\right]\le \exp\left(-\frac{c}2 \left(\frac tm\right)^{\gamma}+K\frac{t^{\gamma-1}}{m^{\gamma}}S\right),
\end{equation*}
where $K$ is a constant depending on $c$, which can be made explicit depending on $c$ and the constant of the $\lesssim$ in the previous equation. Now we set $C$ to be $C:=\frac{4K}{c}$ so that if $t\ge CS$, one may write $K\frac{t^{\gamma-1}}{m^{\gamma}}S\le \frac c4\left(\frac tm\right)^{\gamma}$ and therefore for $t\ge CS$, one gets
\begin{equation*}
    \bbP[X>t]\lesssim \exp\left(-\frac c4\left(\frac tm\right)^{\gamma}\right).
\end{equation*}
This concludes the proof.
\end{proof}

\printbibliography

\end{document}